\newtheorem{theorem}{Theorem}[section]
\newtheorem{proposition}[theorem]{Proposition}
\newtheorem{conjecture}[theorem]{Conjecture}
\newtheorem{corollary}[theorem]{Corollary}
\newtheorem{lemma}[theorem]{Lemma}
\theoremstyle{definition}
\newtheorem{remark}[theorem]{Remark}
\newtheorem{example}[theorem]{Example}
\newtheorem{definition}[theorem]{Definition}
\def\bb{\mathbf{b}}
\def\cc{\mathbf{c}}
\def\dd{\mathbf{d}}
\def\ee{\mathbf{e}}
\def\ff{\mathbf{f}}
\def\gg{\mathbf{g}}
\def\vv{\mathbf{v}}
\def\mm{\mathbf{m}}
\def\ww{\mathbf{w}}
\def\xx{\mathbf{x}}
\def\yy{\mathbf{y}}
\def\TT{\mathbb{T}}
\def\PP{\mathbb{P}}
\def\ZZ{\mathbb{Z}}
\def\QQ{\mathbb{Q}}
\def\Acal{\mathcal{A}}
\def\Fcal{\mathcal{F}}
\def\Xcal{\mathcal{X}}
\def\QQsf{\mathbb{Q}_{\text{sf}}}
\newcommand{\opname}[1]{\operatorname{\mathsf{#1}}}
\newcommand{\Hom}{\opname{Hom}}
\newcommand{\End}{\opname{End}}
\newcommand{\go}{\opname{G_0}}
\newcommand{\add}{\opname{add}\nolimits}
\newcommand{\ie}{{\em i.e.}\ }
\newcommand{\id}{\mathbf{1}}
\newcommand{\im}{\opname{im}\nolimits}
\title[Compatibility degree of cluster complexes]{Compatibility degree of cluster complexes}
\author{Changjian Fu}
\author{Yasuaki Gyoda}
\keywords{cluster algebra, mutation, f-vector, compatibility degree, cluster complex}
\subjclass[2010]{13F60, 16G70}
\address{Changjian Fu: Department of Mathematics, Sichuan University, Chengdu, 610064 PR China}
\email{changjianfu@scu.edu.cn}
\address{Yasuaki Gyoda: Graduate School of Mathematics, Nagoya University, Chikusa-ku, Nagoya, 464-8602 Japan}
\email{m17009g@math.nagoya-u.ac.jp}
\begin{document}
\begin{abstract}
We introduce a new function on the set of pairs of cluster variables via $f$-vectors, which is called the compatibility degree (of cluster complexes). The compatibility degree is a natural generalization of the classical compatibility degree introduced by Fomin and Zelevinsky. In particular, we prove that the compatibility degree has the duality property, the symmetry property, the embedding property and the compatibility property, which the classical one has. We also conjecture that the compatibility degree has the exchangeability property. As pieces of evidence of this conjecture, we establish the exchangeability property for cluster algebras of rank 2, acyclic skew-symmetric cluster algebras, cluster algebras arising from weighted projective lines, and cluster algebras arising from marked surfaces.
\end{abstract}
\maketitle
\section{Introduction}
\emph{Cluster algebras} were defined in \cite{fzi} to study the dual canonical bases and the total positivity in semisimple algebraic groups at first. They are commutative algebras generated by the \emph{cluster variables}. These generators are gathered into overlapping sets of fixed finite cardinality, called {\it clusters}, which are defined recursively from an initial one via {\it mutation}. The {\it cluster complex} of a cluster algebra is the simplicial complex whose simplexes are subsets of cluster variables in each cluster \cite{fzii} . It encodes the mutation of cluster variables and has played an important role in the study of cluster algebras and their interaction with different mathematical subjects. For example, \cite{bmrrt} showed that a simplicial complex whose simplexes are subsets of tilting sets in a cluster category coincides with one of cluster complexes. This identification yields an application of representation theory to cluster algebras. Also, \cite{fst} pointed out that triangulations of marked surfaces and their flips have cluster structures by proving that arc complexes correspond with cluster complexes. This indicates a connection of cluster algebras with hyperbolic geometry.

A {\it compatibility degree} of a cluster complex (or cluster algebra) is a function on the set of pairs of cluster variables satisfying various properties. Such a function was first introduced by Fomin and Zelevinsky \cite{fzy} for {\it generalized associhedra} associated with finite root systems in their study of Zamolodchikov's periodicity for $Y$-systems, which is a special kind of cluster complexes of cluster algebras. In particular, for each finite root system $\Phi$, there is a cluster algebra $\mathcal{A}(\Phi)$ whose cluster complex coincides with the generalized associahedron associated with $\Phi$. 
Let $\Phi_{\geq -1}$ be the set of almost positive roots of $\Phi$, {\it i.e.} the union of all negative simple roots and all positive roots.
 In this case, there is a bijection between the set of cluster variables of $\mathcal{A}(\Phi)$ and $\Phi_{\geq -1}$ given by the denominator vectors ($d$-vectors) of cluster variables~\cites{fzii,fziv}. 
Under this bijection, the (classical) compatibility degree of $\mathcal{A}(\Phi)$ introduced in~\cite{fzy} is a function $(\cdot\parallel\cdot)_{\rm{cl}}$ from pairs of almost positive roots to nonnegative integers. 
{The classical compatibility degree $(\cdot\parallel\cdot)_{\rm{cl}}$ was generalized by Reading \cite{rea}, who introduced the $c$-compatibility degree for not necessary crystallographic root systems. Moreover, Ceballos and Pilaud shows that $c$-compatibility degree is also given by using $d$-vectors in \cite{cp}.} Recently, Cao and Li~\cite{cl2} introduced a compatibility degree (in this paper, it is called the $d$-compatibility degree) for any cluster complexes by using $d$-vectors. However, the $d$-compatibility degree does not preserve many properties of the classical one.

The main theme of this paper is to introduce a new compatibility degree for any cluster complexes (Definition \ref{f-compatibility} and Theorem \ref{classical-fcorrespondence}). Instead of using $d$-vectors as in \cite{cl2}, we propose to generalize the classical compatibility degree by using the $f$-vectors. We simply call it the \emph{compatibility degree}. The $f$-vectors are maximal degree vectors of $F$-polynomials, which are polynomials of coefficients in cluster variables. They were studied in \cites{fziv,fk,fg,y,gy,g}. It is conjectured that $F$-matrices, which consist of $f$-vectors in a cluster, determine a cluster uniquely. 

{Our main result is that the compatibility degree defined by the $f$-vectors gives a more ``natural" generalization of the classical compatibility degree than the one defined by the $d$-vectors.}
First, the compatibility degree has the \emph{duality property}, the \emph{symmetry property} and the \emph{embedding property} (Proposition \ref{pr:f-compatibility-symmetry}). In classical cases, the degree of $\alpha$ and $\beta$ in $\Phi_{\geq-1}$ equals to that of coroots of $\beta$ and $\alpha$ in the dual root system $\Phi_{\geq-1}^{\vee}$, that is, $(\alpha\parallel\beta)_{\rm{cl}}=(\beta^\vee\parallel\alpha^\vee)_{\rm{cl}}$. This is the duality property. The compatibility degree has the same property for a cluster complex and its dual cluster complex. Next, we give the symmetry property. If $\Phi$ is simply laced, then the compatibility degree is symmetric, that is, $(\alpha\parallel\beta)_{\rm{cl}}=(\beta\parallel\alpha)_{\rm{cl}}$. If $\Phi$ is non simply laced, then this equation does not hold. However, if $(\alpha\parallel\beta)_{\rm{cl}}=0$, then $(\beta\parallel\alpha)_{\rm{cl}}=0$. By using the compatibility degree, we generalize this property to cluster complexes. Moreover, we give a relation between $(\alpha\parallel\beta)_{\rm{cl}}$ and $(\beta\parallel\alpha)_{\rm{cl}}$ in non-simply laced cases.
 The embedding property is the property that the degree of $\alpha$ and $\beta$ in $\Phi_{\geq-1}$ equals that in a root subsystem $\Phi'_{\geq-1}$ of $\Phi_{\geq-1}$. Considering a cluster complex and their cluster subcomplex, we can generalize this property.
 
 Second, the compatibility degree has the \emph{compatibility property} (Theorem \ref{fcompatibilityproperty}), that is, for any cluster variables $x$ and $x'$, there is a simplex which contains both $x$ and $x'$ if and only if the compatibility degree of $x$ and $x'$ is $0$. This property implies that the $f$-vectors determine the cluster complex. 
 
 In this paper, we also discuss the \emph{exchangeability property}. In classical cases, it is known that for distinct positive roots $\alpha$ and $\beta$, there exists a subset $X$ of almost positive roots such that $X\cup{\alpha}$ and $X\cup{\beta}$ are both maximal simplexes of a generalized associahedron if and only if $(\alpha\parallel\beta)_{\rm{cl}}=(\beta\parallel\alpha)_{\rm{cl}}=1$ (\cites{cfz,fzii}). We consider whether the compatibility degree satisfies an analogy of this property. We prove the ``only if" part for the general cluster complex case (Theorem \ref{f-exchangeability1}). However, the ``if" part is still open (Conjecture \ref{f-exchangeability2}). We solve this problem partially by a description of $F$-matrices of cluster algebras of rank 2 (Theorem \ref{f-exchangeability2rank2}) and by using 2-Calabi-Yau categorification (Theorems \ref{t:conj-acyclic}, \ref{t:conj-weighted projective}, and Corollary \ref{c:conj-marked surface}). We remark that the exchangeability property for $d$-compatibility degree is not true in general (cf. Section \ref{ss:counterexam}).
 
The structure of this paper is as follows:

In Section 2, we recall the fundamentals of cluster algebras. We also introduce some important properties of cluster algebras which will be used in later sections. In Section 3, we compare the $d$-vectors and the $f$-vectors. In \cite{cl2}, the $d$-compatibility degree was defined by some properties of $d$-vectors conjectured in \cite{fziv}.
To define the compatibility degree, we prove some properties of $f$-vectors which are analogous to properties of $d$-vectors (Theorem~\ref{ftheorem}). In Section 4, we introduce the classical compatibility degree and the compatibility degree, and we prove that the compatibility degree is a generalization of the classical one (Theorem~\ref{classical-fcorrespondence}). We compare properties of the compatibility degree with those of the $d$-compatibility degree (Proposition~\ref{pr:f-compatibility-symmetry}, Theorem \ref{fcompatibilityproperty}, Example \ref{counterexsym}). We also formulate the exchangeability conjecture for the compatibility degree (Theorem \ref{f-exchangeability1} and Conjecture \ref{f-exchangeability2}). By a description of $F$-matrices, we prove Conjecture \ref{f-exchangeability2} for cluster algebras of rank $2$ (Theorem \ref{f-exchangeability2rank2}).
In Section 5 and 6, we investigate the exchangeability property for certain important classes of cluster algebras. In particular, we establish Conjecture~\ref{f-exchangeability2} for acyclic cluster algebras of skew-symmetric type (Theorem~\ref{t:conj-acyclic}), cluster algebras arising from weighted projective lines (Theorem~\ref{t:conj-weighted projective}) and cluster algebras arising from marked surfaces (Corollary~\ref{c:conj-marked surface}).
Our approach relies on the existence of additive categorifications by $2$-Calabi-Yau triangulated categories for these cluster algebras. In Section~\ref{ss:cluster-tilting} and \ref{ss:cluster-character}, we recollect basic results on cluster-tilting theory in $2$-Calabi-Yau triangulated categories and the associated cluster character. Under mild conditions, we give a categorical interpretation of compatibility degree (Theorem~\ref{p:categorical-f-degree}). In Section~\ref{ss:exchangeable-property-2-CY}, we prove an exchangeability property for $2$-Calabi-Yau triangulated categories with cluster-tilting objects (Theorem~\ref{t:exchangeable-2-CY}), which is applied in Section~\ref{ss:ex-acylic}, \ref{ss:ex-wpl} and \ref{subsection:markedsurface} to deduce the exchangeability property for the corresponding cluster algebras.

\subsection*{Acknowledgement} The authors thank Tomoki Nakanishi and Osamu Iyama for their valuable comments. We thank Fang Li for informing us that he also obtained a counterexample for the exchangeability property of $d$-compatibility degree. We are grateful to the anonymous referee for significant comments and corrections. Fu's work was partially supported by NSF of China (No. 11971326). Gyoda's work was supported by JSPS KAKENHI Grant number JP20J12675.

\section{Preliminaries}
\subsection{Cluster algebras}
We start by recalling definitions of seed mutations and cluster patterns according to \cite{fziv}. A \emph{semifield} $\mathbb P$ is an abelian multiplicative group equipped with an addition $\oplus$ which is distributive over the multiplication. We particularly make use of the following two semifields.

Let $\mathbb Q_{\text{sf}}(u_1,\dots,u_{\ell})$ be the set of rational functions in $u_1,\dots,u_{\ell}$ which have subtraction-free expressions. Then, $\mathbb Q_{\text{sf}}(u_1,\dots,u_{\ell})$ is a semifield by the usual multiplication and addition. It is called the \emph{universal semifield} of $u_1,\dots,u_{\ell}$ (\cite{fziv}*{Definition 2.1}).

Let Trop$(u_1,\dots, u_\ell)$ be the abelian multiplicative group freely generated by the elements $u_1,\dots,u_\ell$. Then, $\text{Trop}(u_1,\dots,u_{\ell})$ is a semifield by the following addition: 
\begin{align}
\prod_{j=1}^\ell u_j^{a_j} \oplus \prod_{j=1}^{\ell} u_j^{b_j}=\prod_{j=1}^{\ell} u_j^{\min(a_j,b_j)}.
\end{align}
It is called the \emph{tropical semifield} of $u_1,\dots,u_\ell$ (\cite{fziv}*{Definition 2.2}).
For any semifield $\PP$ and $p_1, \dots, p_{\ell}\in\PP$, there exists a unique semifield homomorphism $\pi$ such that
\begin{align} \label{qsfuniv}
	\pi:\QQsf(y_1, \dots, y_{\ell}) &\longrightarrow \PP\\
	y_i &\longmapsto p_i. \nonumber
\end{align} 
For $F(y_1,\dots,y_\ell ) \in \QQsf(y_1, \dots, y_{\ell})$, we denote 
\begin{align}
	F|_{\PP}(p_1, \dots, p_{\ell}):=\pi(F(y_1, \dots, y_\ell)).
\end{align}
and it is called the \emph{evaluation} of $F$ at $p_1, \dots, p_{\ell}$.
We fix a positive integer $n$ and a semifield $\PP$. Let $\mathbb{ZP}$ be the group ring of $\mathbb{P}$ as a multiplicative group. Since $\mathbb{ZP}$ is a domain (\cite{fzi}*{Section 5}), its total quotient ring is a field $\mathbb{Q}(\mathbb P)$. Let $\mathcal{F}$ be the field of rational functions in $n$ indeterminates with coefficients in $\mathbb{Q}(\mathbb P)$. 

A \emph{labeled seed with coefficients in $\PP$} is a triplet $(\mathbf{x}, \mathbf{y}, B)$, where
\begin{itemize}
\item $\mathbf{x}=(x_1, \dots, x_n)$ is an $n$-tuple of elements of $\mathcal F$ forming a free generating set of $\mathcal F$.
\item $\mathbf{y}=(y_1, \dots, y_n)$ is an $n$-tuple of elements of $\mathbb{P}$.
\item $B=(b_{ij})$ is an $n \times n$ integer matrix which is \emph{skew-symmetrizable}, that is, there exists a positive integer diagonal matrix $S$ such that $SB$ is skew-symmetric. Also, we call $S$ a \emph{skew-symmetrizer} of $B$.
\end{itemize}

We say that $\xx$ is a \emph{cluster} and refer to $x_i,y_i$ and $B$ as the \emph{cluster variables}, the \emph{coefficients} and the \emph{exchange matrix}, respectively.

Throughout the paper, for an integer $b$, we use the notation $[b]_+=\max(b,0)$. We note that
\begin{align}\label{eq:b--b}
b=[b]_+-[-b]_+.
\end{align}
Let $(\mathbf{x}, \mathbf{y}, B)$ be a labeled seed with coefficients in $\PP$, and let $k \in\{1,\dots, n\}$. The \emph{seed mutation $\mu_k$ in direction $k$} transforms $(\mathbf{x}, \mathbf{y}, B)$ into another labeled seed $\mu_k(\mathbf{x}, \mathbf{y}, B)=(\mathbf{x'}, \mathbf{y'}, B')$ defined as follows:
\begin{itemize}
\item The entries of $B'=(b'_{ij})$ are given by 
\begin{align} \label{eq:matrix-mutation}
b'_{ij}=\begin{cases}-b_{ij} &\text{if $i=k$ or $j=k$,} \\ 
b_{ij}+\left[ b_{ik}\right] _{+}b_{kj}+b_{ik}\left[ -b_{kj}\right]_+ &\text{otherwise.}
\end{cases}
\end{align}
\item The coefficients $\mathbf{y'}=(y'_1, \dots, y'_n)$ are given by 
\begin{align}\label{eq:y-mutation}
y'_j=
\begin{cases}
y_{k}^{-1} &\text{if $j=k$,} \\ 
y_j y_k^{[b_{kj}]_+}(y_k \oplus 1)^{-b_{kj}} &\text{otherwise.}
\end{cases}
\end{align}
\item The cluster variables $\mathbf{x'}=(x'_1, \dots, x'_n)$ are given by
\begin{align}\label{eq:x-mutation}
x'_j=\begin{cases}\dfrac{y_k\mathop{\prod}\limits_{i=1}^{n} x_i^{[b_{ik}]_+}+\mathop{\prod}\limits_{i=1}^{n} x_i^{[-b_{ik}]_+}}{(y_k\oplus 1)x_k} &\text{if $j=k$,}\\
x_j &\text{otherwise.}
\end{cases}
\end{align}
\end{itemize}

Let $\mathbb{T}_n$ be the \emph{$n$-regular tree} whose edges are labeled by the numbers $1, \dots, n$ such that the $n$ edges emanating from each vertex have different labels. We write 
\begin{xy}(0,1)*+{t}="A",(10,1)*+{t'}="B",\ar@{-}^k"A";"B" \end{xy} 
to indicate that vertices $t,t'\in \mathbb{T}_n$ are joined by an edge labeled by $k$. We fix an arbitrary vertex $t_0\in \TT_n$, which is called the \emph{rooted vertex}.

A \emph{cluster pattern with coefficients in $\PP$} is an assignment of a labeled seed $\Sigma_t=(\mathbf{x}_t, \mathbf{y}_t,B_t)$ with coefficients in $\PP$ to every vertex $t\in \mathbb{T}_n$ such that the labeled seeds $\Sigma_t$ and $\Sigma_{t'}$ assigned to the endpoints of any edge 
\begin{xy}(0,1)*+{t}="A",(10,1)*+{t'}="B",\ar@{-}^k"A";"B" \end{xy} 
are obtained from each other by the seed mutation in direction $k$. The elements of $\Sigma_t$ are denoted as follows:
\begin{align} \label{den:seed_at_t}
\mathbf{x}_t=(x_{1;t},\dots,x_{n;t}),\ \mathbf{y}_t=(y_{1;t},\dots,y_{n;t}),\ B_t=(b_{ij;t}).
\end{align}
In particular, at $t_0$, we denote
\begin{align} \label{initialseed}
\mathbf{x}=\mathbf{x}_{t_0}=(x_1,\dots,x_n),\ \mathbf{y}=\mathbf{y}_{t_0}=(y_1,\dots,y_n),\ B=B_{t_0}=(b_{ij}).
\end{align}

\begin{definition}
A \emph{cluster algebra} $\Acal$ associated with a cluster pattern $v\mapsto \Sigma_v$ is the $\ZZ\PP$-subalgebra of $\Fcal$ generated by $\Xcal=\{x_{i;t}\}_{1\leq i\leq n, t\in \TT_n}$.
 \end{definition}

The degree $n$ of the regular tree $\TT_n$ is called the \emph{rank} of $\Acal$, and $\Fcal$ is the \emph{ambient field} of $\Acal$.
We also denote by $\mathcal{A}(B)$ a cluster algebra with the initial matrix $B$. 
\begin{example}\label{A2}
We give an example for mutations in the case of type $A_2$.
Let $n=2$, and we consider a tree $\TT_2$ whose edges are labeled as follows:
\begin{align}\label{A2tree}
\begin{xy}
(-10,0)*+{\dots}="a",(0,0)*+{t_0}="A",(10,0)*+{t_1}="B",(20,0)*+{t_2}="C", (30,0)*+{t_3}="D",(40,0)*+{t_4}="E",(50,0)*+{t_5}="F", (60,0)*+{\dots}="f"
\ar@{-}^{1}"a";"A"
\ar@{-}^{2}"A";"B"
\ar@{-}^{1}"B";"C"
\ar@{-}^{2}"C";"D" 
\ar@{-}^{1}"D";"E"
\ar@{-}^{2}"E";"F" 
\ar@{-}^{1}"F";"f" 
\end{xy}.
\end{align}
We set $B=\begin{bmatrix}
 0 & 1 \\
 -1 & 0
\end{bmatrix}
$ as the initial exchange matrix at $t_0$.
Then, coefficients and cluster variables are given by Table \ref{A2seed} \cite{fziv}*{Example 2.10}.
\begin{table}[ht]
\begin{equation*}
\begin{array}{|c|cc|cc|}
\hline
&&&&\\[-4mm]
t& \hspace{25mm}\yy_t &&& \xx_t \hspace{30mm}\\
\hline
&&&&\\[-3mm]
0 &y_1 & y_2& x_1& x_2 \\[1mm]
\hline
&&&&\\[-3mm]
1& y_1(y_2\oplus 1)& \dfrac{1}{y_2} & x_1& \dfrac{x_1y_2+1}{(y_2\oplus 1)x_2} \\[3mm]
\hline
&&&&\\[-3mm]
2& \dfrac{1}{y_1(y_2\oplus 1)} & \dfrac{y_1y_2\oplus y_1\oplus 1}{y_2} & \dfrac{x_1y_1y_2 + y_1+ x_2}{(y_1y_2\oplus y_1\oplus 1)x_1x_2} & \dfrac{x_1y_2+1}{(y_2\oplus 1)x_2} \\[3mm]
\hline
&&&&\\[-3mm]
3& \dfrac{y_1\oplus1}{y_1y_2} & \dfrac{y_2}{y_1y_2\oplus y_1\oplus 1} & \dfrac{x_1y_1y_2+y_1+x_2}{(y_1y_2\oplus y_1\oplus 1)x_1x_2} & \dfrac{y_1+x_2}{x_1(y_1\oplus 1)} \\[3mm]
\hline
&&&&\\[-2mm]
4& \dfrac{y_1y_2}{y_1\oplus 1} &\dfrac{1}{y_1} & x_2 & \dfrac{y_1+x_2}{x_1(y_1\oplus 1)} \\[3mm]
\hline
&&&&\\[-2mm]
5& y_2 & y_1 & x_2 & x_1\\[1mm]
\hline
\end{array}
\end{equation*}
\caption{Coefficients and cluster variables in type~$A_2$\label{A2seed}}
\end{table}

Therefore, we have
\begin{align*}
\Acal(B)=\ZZ\PP\left[x_1,x_2,\dfrac{x_1y_2+1}{(y_2\oplus 1)x_2},\dfrac{x_1y_1y_2+y_1+x_2}{(y_1y_2\oplus y_1\oplus 1)x_1x_2}, \dfrac{y_1+x_2}{x_1(y_1\oplus 1)}\right].
\end{align*}
\end{example}

Next, to define the class of cluster algebras of finite type, we define the non-labeled seeds according to \cite{fziv}. For a cluster pattern $v\mapsto \Sigma_v$, we introduce the following equivalence relations of labeled seeds: We say that \begin{align*}
\Sigma_t=(\xx_t, \yy_t, B_t),\quad \xx_t=(x_{1;t,}\dots,x_{n;t}),\quad
\yy_t=(y_{1;t},\dots,y_{n;t}),\quad B_t=(b_{ij;t})
\end{align*}
and
\begin{align*}
\Sigma_s=(\xx_{s}, \yy_{s}, B_{s}),\quad \xx_s=(x_{1;s},\dots,x_{n;s}),\quad
\yy_s=(y_{1;s},\dots,y_{n;s}),\quad B_s=(b_{ij;s})
\end{align*}
are equivalent if there exists a
permutation~$\sigma$ of indices~$1, \dots, n$ such that
\begin{align*}
x_{i;s} = x_{\sigma(i);t}, \quad y_{j;s} = y_{\sigma(j);t}, \quad
b_{ij;s} = b_{\sigma(i), \sigma(j);t}
\end{align*}
for all~$i$ and~$j$.
We denote by $[\Sigma]$ the equivalence classes represented by a labeled seed
$\Sigma$ and call it the \emph{non-labeled seed}. We define the \emph{non-labeled clusters} (resp., \emph{non-labeled coefficients}) as clusters (resp., coefficients) of non-labeled seeds.
\begin{definition}
The \emph{exchange graph} of a cluster algebra is the regular connected graph whose vertices are the non-labeled seeds of the cluster pattern and whose edges connect the non-labeled seeds related by a single mutation.
\end{definition}
Using the exchange graph, we define cluster algebras of finite type.
\begin{definition}
A cluster algebra $\Acal$ is of \emph{finite type} if the exchange graph of $\Acal$ is a finite graph.
\end{definition}
\subsection{$d$-vectors, $c$-vectors, $g$-vectors, and $f$-vectors}
In this subsection, we define the $d$-vectors, $c$-vectors, $g$-vectors, and the $f$-vectors. First, we define the $d$-vectors according to \cites{fzii,fziv}. 
Let $\Acal$ be a cluster algebra.
By the \emph{Laurent phenomenon} \cite{fziv}*{Theorem 3.5}, every cluster variable
$x_{j;t} \in \Acal$ can be uniquely written as
\begin{align}\label{eq:Laurent-normal-form}
x_{j;t} = \frac{N_{j;t}(x_1, \dots, x_n)}{x_1^{d_{1j;t}} \cdots x_n^{d_{nj;t}}},\quad d_{kj;t}\in \ZZ,
\end{align}
where $N_{j;t}(x_1, \dots, x_n)$ is a polynomial with coefficients in~$\ZZ \PP$
which is not divisible by any initial cluster variable~$x_j\in\xx$.
\begin{definition}
We define the \emph{$d$-vector} $\dd_{j;t}$ as the degree vector of $x_{j;t}$, that is, 
\begin{align}
\dd_{j;t}^{B;t_0}=\dd_{j;t}=\begin{bmatrix}d_{1j;t}\\ \vdots \\ d_{nj;t} \end{bmatrix},
\end{align}
in \eqref{eq:Laurent-normal-form}. We define the \emph{$D$-matrix} $D_t^{B;t_0}$ as 
\begin{align}
D_t^{B;t_0}:=(\dd_{1;t},\dots,\dd_{n;t}).
\end{align}
\end{definition}
\begin{remark}
We remark that
$\dd_{j;t}$ is independent of the choice of the
coefficient system (see \cite{fziv}*{Section 7}). Thus, we can also regard $d$-vectors as vectors associated with vertices of $\TT_n$. They are also given by the following recursion: 
For any $j\in\{1,\dots,n\}$,
\begin{align*}
\dd_{j;t_0}=-\mathbf{e}_j,
\end{align*}
and for any \begin{xy}(0,1)*+{t}="A",(10,1)*+{t'}="B",\ar@{-}^k"A";"B" \end{xy}, 
\begin{align}\label{dvectorrecursion}
\mathbf{d}_{j;{t'}}&=\begin{cases}
\mathbf{d}_{j;t} \ \ & \text{if } j\neq k;\\
-\mathbf{d}_{k;t}+\max \left(\mathop{\sum}\limits_{i=1}^n[b_{ik;t}]_+\mathbf{d}_{i;t},\ +\mathop{\sum}\limits_{i=1}^n[-b_{ik;t}]_+\mathbf{d}_{i;t}\right)\ \ &\text{if } j=k,
\end{cases}
\end{align}
where $\mathbf{e}_j$ is the $j$th canonical basis.
\end{remark}

Next, we define the $c$-vectors, the $g$-vectors and the $f$-vectors according to \cites{fziv, fk, fg}. We introduce the principal coefficients to define them.
\begin{definition}
We say that a cluster pattern $v\mapsto \Sigma_v$ or a cluster algebra $\Acal$ of rank $n$ has \emph{principal coefficients} at the rooted vertex $t_0$ if $\mathbb{P}=\text{Trop}(y_1,\dots,y_n)$ and $\mathbf{y}_{t_0}=(y_1,\dots,y_n)$. In this case, we denote $\Acal=\Acal_\bullet(B)$.
\end{definition}
First, we define the $c$-vectors. For $\bb=(b_1,\dots,b_n)^{\top}$, we use the notation $[\bb]_+=([b_1]_+,\dots,[b_n]_+)^{\top}$, where $\top$ means transposition. 
\begin{definition}
Let $\Acal_{\bullet}(B)$ be a cluster algebra with principal coefficients at $t_0$. We define the \emph{$c$-vector} $\cc_{j;t}$ as the degree of $y_i$ in $y_{j;t}$, that is, if $y_{j;t}=y_1^{c_{1j;t}}\cdots y_n^{c_{nj;t}}$, then
\begin{align}
\cc_{j;t}^{B;t_0}=\cc_{j;t}=\begin{bmatrix}c_{1j;t}\\ \vdots \\ c_{nj;t} \end{bmatrix}.
\end{align}
We define the \emph{$C$-matrix} $C_t^{B;t_0}$ as 
\begin{align}
C_t^{B;t_0}:=(\cc_{1;t},\dots,\cc_{n;t}).
\end{align}
\end{definition}
The $c$-vectors are the same as those defined by the following recursion: For any $j \in\{1,\dots,n\}$,
\begin{align*}
\cc_{j;t_0}=\ee_j\quad \text{(canonical basis)},
\end{align*}
and for any \begin{xy}(0,1)*+{t}="A",(10,1)*+{t'}="B",\ar@{-}^k"A";"B" \end{xy}, 
\begin{align*}
\cc_{j;t'} =
\begin{cases}
-\cc_{j;t} & \text{if $j=k$;} \\[.05in]
\cc_{j;t} + [b_{kj;t}]_+ \ \cc_{k;t} +b_{kj;t} [-\cc_{k;t}]_+
 & \text{if $j\neq k$}
 \end{cases}
\end{align*}
(see \cite{fziv}). Since the recursion formula only depends on exchange matrices,
we can regard $c$-vectors as vectors associated with vertices of $\TT_n$. 

{In this way, we remark that it is possible to define $c$-vectors for a cluster algebra that does not have principal coefficients.} 

The $C$-matrices have the following property:

\begin{theorem}[{\cite{GHKK}*{Corollary 5.5}}]
\label{thm:signs-ci}
For any initial exchange matrix $B$ and every column of the $C$-matrix $C_t^{B;t_0}\ (t\in \TT_n)$, that is, any $c$-vectors, its entries are either all nonnegative, or all nonpositive, and not all zero. 
\end{theorem}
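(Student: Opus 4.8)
The statement is the \emph{sign-coherence} of $c$-vectors, conjectured by Fomin and Zelevinsky. The first thing to record is why the defining recursion does not settle it by a naive induction along $\TT_n$. Suppose sign-coherence holds at a vertex $t$ and we mutate in direction $k$. The mutated column $\cc_{k;t'}=-\cc_{k;t}$ is harmless, but for $j\neq k$ we have $\cc_{j;t'}=\cc_{j;t}+[b_{kj;t}]_+\,\cc_{k;t}+b_{kj;t}\,[-\cc_{k;t}]_+$. Even granting that $\cc_{k;t}$ already has a definite sign, this writes $\cc_{j;t'}$ as a sum of vectors whose signs may disagree (for instance $\cc_{j;t}$ all-nonpositive against a nonnegative multiple of $\cc_{k;t}$), so coherence is not preserved entry-by-entry by the local rule, and some global input is needed.

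The plan is to prove it through the consistent scattering diagram attached to the initial data, following \cite{GHKK}. First I would fix principal coefficients at $t_0$, which is harmless since the $c$-vectors are independent of the coefficient system; then the columns of $C_t^{B;t_0}$ are exactly the tropical $y$-data. Next I would build the scattering diagram $\mathfrak{D}$ associated with $B$ on the relevant lattice equipped with the skew-symmetric bilinear form determined by $SB$, starting from the $n$ initial walls and closing it up under the path-ordered product. The essential structural input is the existence and uniqueness, up to equivalence, of a \emph{consistent} such $\mathfrak{D}$; this is the technical heart and the step I expect to be the main obstacle, since it requires controlling the path-ordered product order by order and the decomposition of $\mathfrak{D}$ into finitely many walls in each degree.

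Granting consistency, the remaining steps are geometric. The cluster chambers of $\mathfrak{D}$ are in bijection with the vertices $t\in\TT_n$, and crossing the wall between the chambers of $t$ and $t'$ (joined by an edge labelled $k$) realizes the mutation $\mu_k$. I would then identify the $c$-vectors with the direction data of the walls crossed along the mutation sequence from $t_0$ to $t$, equivalently, via the tropical duality relating $C$-matrices to $G$-matrices, with the $g$-vectors of the adjacent seeds. The decisive point is that every wall-crossing automorphism is attached to a function of the form $\prod(1+z^{m})$ with $m$ a fixed lattice vector of definite sign; hence each wall has a single well-defined primitive direction, and pulling this back through the mutation sequence forces each $c$-vector to be a nonnegative or a nonpositive multiple of a primitive vector, which is precisely sign-coherence. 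That the vector is nonzero is built into the construction, since every wall carries a nontrivial wall-crossing function and thus a genuine nonzero primitive normal.

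In the skew-symmetric case one may instead run the argument through additive categorification, which this paper exploits in later sections. There one realizes $\Acal_\bullet(B)$ inside a $2$-Calabi-Yau triangulated category with seeds corresponding to cluster-tilting objects, and the $c$-vectors arise, up to sign, as dimension vectors of the finite-dimensional modules over the endomorphism algebra of the relevant cluster-tilting object (equivalently, as indices). Since each such invariant is either an honest dimension vector or the negative of a projective, its sign-coherence is immediate, and the skew-symmetrizable case follows by folding. In either route the genuinely hard ingredient is a single structure theorem, consistency of the scattering diagram or the existence with good homological properties of the categorification, after which the passage to sign-coherence is formal.
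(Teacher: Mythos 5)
The paper does not actually prove this statement: it is imported verbatim as \cite{GHKK}*{Corollary 5.5}, so there is no in-paper argument to compare yours against. Your primary route is precisely the architecture of the cited proof. You correctly observe that the mutation recursion for $c$-vectors does not yield sign-coherence by a naive induction along $\TT_n$, and that the real content is the existence and uniqueness of the consistent scattering diagram $\mathfrak{D}_0(B)$ (recorded in this paper as Lemma \ref{consistentdiagram}), together with the identification of reachable chambers with seeds (Lemma \ref{gvectortheorem}) and of the $c$-vectors at $t$ with signed primitive normals of the walls bounding the corresponding chamber. As a proof, however, what you have written is a roadmap rather than an argument: every step you ``grant'' --- consistency, the chamber--seed bijection, the wall-normal description of the $c$-vectors --- is exactly the content of the theorem being cited, so nothing beyond a reduction to the GHKK machinery has been established. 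That is acceptable here only because the paper itself treats the result as a black box.

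One concrete error in your closing aside: deducing the general skew-symmetrizable case from the skew-symmetric one ``by folding'' does not work, because not every skew-symmetrizable exchange matrix arises as the folding of a skew-symmetric one under an admissible group action. This is precisely why the categorification proofs (via $2$-Calabi--Yau categories, indices, and dimension vectors) settled only the skew-symmetric case, and why the full skew-symmetrizable statement had to wait for the scattering-diagram argument. If you intend the categorical route as an alternative proof of the theorem as stated --- for arbitrary skew-symmetrizable $B$ --- that last step is a genuine gap.
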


We call this property the \emph{sign-coherence of the $C$-matrices}.

Next, we define the $g$-vectors. We can regard cluster variables in cluster algebras with principal coefficients as homogeneous Laurent polynomials:

\begin{theorem}[\cite{fziv}*{Proposition 6.1}]
Let $\Acal_{\bullet}(B)$ be a cluster algebra with principal coefficients at $t_0$. Each cluster variable $x_{i;t}$ is a homogeneous Laurent polynomial in $x_1,\dots,x_n,y_1,\dots,y_n$ by the following $\ZZ^n$-grading:
\begin{align}\label{grading}
\deg x_{i}=\ee_i,\quad \deg y_{i}=-\mathbf{b}_i,
\end{align}
where $\ee_i$ is the $i$th canonical basis of $\ZZ^n$ and $\mathbf{b}_i$ is the $i$th column vector of $B$.
\end{theorem}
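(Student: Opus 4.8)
The plan is to prove that each cluster variable $x_{i;t}$ in $\Acal_\bullet(B)$ is homogeneous under the stated $\ZZ^n$-grading by inducting on the distance in $\TT_n$ from the rooted vertex $t_0$. First I would verify the base case: at $t_0$ the cluster variables are $x_1,\dots,x_n$, each assigned degree $\ee_i$, which is trivially homogeneous. For the inductive step, I would assume that at a vertex $t$ all of $x_{1;t},\dots,x_{n;t}$ are homogeneous Laurent polynomials and show that mutation in any direction $k$ preserves this property. Since $x'_j=x_j$ for $j\neq k$, only the exchanged variable $x'_k$ needs attention, and the whole argument reduces to checking that the exchange relation \eqref{eq:x-mutation} is an identity between homogeneous elements of the \emph{same} degree.

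The key computation is to confirm that the two monomials in the numerator of \eqref{eq:x-mutation}, namely $y_k\prod_i x_i^{[b_{ik}]_+}$ and $\prod_i x_i^{[-b_{ik}]_+}$, have equal degree, so that their sum is homogeneous, and then to track the degree through the division by $x_k$. Using $\deg x_i=\ee_i$ and $\deg y_k=-\bb_k=-\sum_i b_{ik}\ee_i$, the first monomial has degree $-\sum_i b_{ik}\ee_i+\sum_i [b_{ik}]_+\ee_i=\sum_i(-b_{ik}+[b_{ik}]_+)\ee_i=\sum_i[-b_{ik}]_+\ee_i$, using the elementary identity \eqref{eq:b--b} in the form $[b]_+-b=[-b]_+$. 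This coincides exactly with the degree $\sum_i[-b_{ik}]_+\ee_i$ of the second monomial, so the numerator is homogeneous of that degree. Here I would note that in the principal-coefficient case $\PP=\trop(y_1,\dots,y_n)$ the denominator factor $y_k\oplus 1$ evaluates to $1$, so it contributes nothing to the degree; dividing by $x_k$ then subtracts $\ee_k$, giving the homogeneous degree of $x'_k$ as an element of $\mathcal F$.

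To make this rigorous I would emphasize that the grading is defined on the ambient field $\mathcal F$ by declaring $\deg x_i=\ee_i$ and $\deg y_i=-\bb_i$, so that homogeneity is a well-defined notion for Laurent monomials and hence for any Laurent polynomial all of whose monomials share a common degree. The inductive hypothesis guarantees each $x_{i;t}$ is such a homogeneous Laurent polynomial, and since the grading on the initial data is fixed once and for all, the degrees computed above are consistent across the tree. The main obstacle, though a mild one, is bookkeeping: one must be careful that the grading is defined with respect to the \emph{initial} variables $x_1,\dots,x_n$ and $y_1,\dots,y_n$ and that the exchange relation is being read inside $\mathcal F$ rather than after any reindexing, so that the degree additivity $\deg(fg)=\deg f+\deg g$ for homogeneous $f,g$ applies cleanly. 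Once the single exchange relation is shown to balance, homogeneity propagates automatically to every vertex of $\TT_n$ by induction, which completes the proof.
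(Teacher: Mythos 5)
The paper does not actually prove this statement—it is quoted from \cite{fziv}*{Proposition 6.1}—so your proposal has to stand on its own, and as written it has a real gap. Your base case and your treatment of the \emph{first} mutation out of $t_0$ are correct: with $\deg x_i=\ee_i$ and $\deg y_k=-\bb_k$, both monomials $y_k\prod_i x_i^{[b_{ik}]_+}$ and $\prod_i x_i^{[-b_{ik}]_+}$ have degree $\sum_i[-b_{ik}]_+\ee_i$, and $y_k\oplus 1=1$ in $\trop(y_1,\dots,y_n)$. But this computation is only the first mutation; it is not the general inductive step. At an arbitrary vertex $t$ the exchange relation \eqref{eq:x-mutation} reads
\[
x'_{k}=\frac{y_{k;t}\prod_i x_{i;t}^{[b_{ik;t}]_+}+\prod_i x_{i;t}^{[-b_{ik;t}]_+}}{(y_{k;t}\oplus 1)\,x_{k;t}},
\]
where (i) $\deg x_{i;t}$ is whatever degree the inductive hypothesis has produced (the $g$-vector $\gg_{i;t}$, not $\ee_i$); (ii) $y_{k;t}=\prod_j y_j^{c_{jk;t}}$ is a Laurent monomial in the initial $y$'s with degree $-B\cc_{k;t}$, not $-\bb_k$; and (iii) $y_{k;t}\oplus 1=\prod_j y_j^{-[-c_{jk;t}]_+}$ is in general \emph{not} $1$ (in Example \ref{ex:prinA2}, at the edge labeled $2$ joining $t_2$ and $t_3$ one has $y_{2;t_2}=y_2^{-1}$, so $y_{2;t_2}\oplus 1=y_2^{-1}$). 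The degree balance of the numerator at $t$ is therefore the identity $\sum_i[b_{ik;t}]_+\gg_{i;t}-B\cc_{k;t}=\sum_i[-b_{ik;t}]_+\gg_{i;t}$, i.e.\ $G_t\bb_{k;t}=B\cc_{k;t}$ where $\bb_{k;t}$ is the $k$th column of $B_t$. This is a substantive compatibility between the $C$- and $G$-matrices and does not follow from the elementary identity $[b]_+-b=[-b]_+$ alone. Your sentence ``the degrees computed above are consistent across the tree'' is precisely where this is assumed rather than proved.

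To close the gap you must either carry the identity $BC_t=G_tB_t$ along as an additional inductive hypothesis and verify that it propagates under mutation, or follow the route of \cite{fziv}: rewrite the numerator as $(\hat y_{k;t}+1)\prod_i x_{i;t}^{[-b_{ik;t}]_+}$ with $\hat y_{k;t}=y_{k;t}\prod_i x_{i;t}^{b_{ik;t}}$, and use the fact that each $\hat y_{j;t}$ is a Laurent monomial in the initial $\hat y_1,\dots,\hat y_n$, each of which has degree $0$ by your own computation; then $\hat y_{k;t}+1$ is homogeneous of degree $0$ and one only has to track the degrees of the genuine monomials $\prod_i x_{i;t}^{[-b_{ik;t}]_+}$, $y_{k;t}\oplus 1$ and $x_{k;t}$. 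Either repair turns your outline into a proof; without one of them the inductive step is not established.
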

We denote by $\begin{bmatrix}g_{1j;t}\\ \vdots \\ g_{nj;t} \end{bmatrix}$ the $\ZZ^n$-grading of $x_{j;t}$. 
\begin{definition}
Let $\Acal_{\bullet}(B)$ be a cluster algebra with principal coefficients at $t_0$. We define the \emph{$g$-vector} $\gg_{j;t}$ as the degree of the homogeneous Laurent polynomial $x_{j;t}$ by the $\ZZ^n$-grading \eqref{grading}, that is, 
\begin{align}
\gg_{j;t}^{B;t_0}=\gg_{j;t}=\begin{bmatrix}g_{1j;t}\\ \vdots \\ g_{nj;t} \end{bmatrix}.
\end{align}
We define the \emph{$G$-matrix} $G_t^{B;t_0}$ as 
\begin{align}
G_t^{B;t_0}:=(\gg_{1;t},\dots,\gg_{n;t}).
\end{align}
\end{definition}
The $g$-vectors are the same as those defined by the following recursion: For any $j\in\{1,\dots,n\}$,
\begin{align*}
\gg_{j;t_0}=\ee_j\quad \text{(canonical basis)},
\end{align*}
and for any \begin{xy}(0,1)*+{t}="A",(10,1)*+{t'}="B",\ar@{-}^k"A";"B" \end{xy}, 
\begin{align}\label{g-recursion}
\gg_{j;{t'}}&=\begin{cases}
\gg_{j;t} \ \ & \text{if } j\neq k;\\
-\gg_{k;t}+\mathop\sum\limits_{i=1}^{n}[b_{ik;t}]_+\gg_{i;t}-\mathop\sum\limits_{i=1}^{n}[c_{ik;t}]_+\mathbf{b}_j &\text{if } j=k
\end{cases}
\end{align}
(see \cite{fziv}). Since the recursion formula only depend on exchange matrices,
we can regard $g$-vectors as vectors associated with vertices of $\TT_n$. 


{In this way, we remark that it is possible to define $g$-vectors for a cluster algebra that does not have principal coefficients.} 

The $G$-matrices have the following property, which is the dual one of the sign-coherence of the $C$-matrices: 

\begin{theorem}[{\cite{GHKK}*{Corollary 5.11}}]
\label{thm:signs-g}
For any initial exchange matrix $B$ and every row of the $G$-matrix $G_t^{B;t_0}\ (t\in \TT_n)$, its entries are either all nonnegative, or all nonpositive, and not all zero. 
\end{theorem}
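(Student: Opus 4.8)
The plan is to deduce the sign-coherence of the $G$-matrices from that of the $C$-matrices (Theorem~\ref{thm:signs-ci}), which is already available, rather than to attack the recursion~\eqref{g-recursion} directly. The bridge is the tropical duality of Nakanishi and Zelevinsky, which expresses each $G$-matrix as a conjugated transpose-inverse of the corresponding $C$-matrix. Concretely, if $S$ is a skew-symmetrizer of $B$, one has
\begin{align*}
G_t^{B;t_0} = S\,\bigl(C_t^{B;t_0}\bigr)^{-\top} S^{-1},
\end{align*}
and in the skew-symmetric case this reduces to $G_t^{B;t_0}=\bigl(C_t^{B;t_0}\bigr)^{-\top}$. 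I would first recall (or reprove) this identity; its derivation uses only the recursions for the $c$- and $g$-vectors together with the sign-coherence of the $C$-matrices, so no circularity with the statement to be proved is introduced.

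Second, I would invoke the composition (cocycle) rule for $C$-matrices, namely
\begin{align*}
\bigl(C_t^{B;t_0}\bigr)^{-1}=C_{t_0}^{B_t;t},
\end{align*}
where $B_t$ is the exchange matrix at $t$ in the pattern started from $B$ at $t_0$. Since matrix mutation preserves skew-symmetrizability, $B_t$ is again skew-symmetrizable, so Theorem~\ref{thm:signs-ci} applies to the cluster pattern with initial matrix $B_t$ and shows that every column of $C_{t_0}^{B_t;t}$ --- equivalently every column of $\bigl(C_t^{B;t_0}\bigr)^{-1}$ --- has entries all nonnegative or all nonpositive, and is nonzero.

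Finally I would transport this sign information through the duality. Transposing turns the sign-coherent columns of $\bigl(C_t^{B;t_0}\bigr)^{-1}$ into sign-coherent rows of $\bigl(C_t^{B;t_0}\bigr)^{-\top}$, and conjugation by the positive diagonal matrix $S$ scales the $(i,j)$-entry by $s_i s_j^{-1}>0$, hence preserves the sign of every entry and therefore the sign pattern of every row. Thus every row of $G_t^{B;t_0}$ has entries that are either all nonnegative or all nonpositive. Nonvanishing follows because $C_t^{B;t_0}$ is invertible (indeed unimodular), so $\bigl(C_t^{B;t_0}\bigr)^{-1}$ has no zero column and $G_t^{B;t_0}$ no zero row.

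The main obstacle is the first step: establishing the tropical duality identity without presupposing the $G$-sign-coherence we are after. The delicate point is that the clean transpose-inverse relation, as well as the cocycle identity $\bigl(C_t^{B;t_0}\bigr)^{-1}=C_{t_0}^{B_t;t}$, are customarily derived under a standing assumption of sign-coherence of $c$-vectors; I would therefore be careful to route the entire argument through $C$-sign-coherence only, handling $g$-vectors purely via their recursion until the duality is in hand. The alternative, self-contained route --- the original argument of Gross--Hacking--Keel--Kontsevich~\cite{GHKK} via the consistency and positivity of the cluster scattering diagram, in which the $g$-vectors index the chambers of the $g$-vector fan --- avoids any appeal to $C$-sign-coherence but requires building the scattering-diagram machinery, which is considerably heavier than the duality reduction proposed here.
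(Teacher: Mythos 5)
The paper offers no proof of Theorem~\ref{thm:signs-g}: it is quoted verbatim from \cite{GHKK}*{Corollary 5.11}, where it is obtained from the consistency of the cluster scattering diagram. So there is nothing in the paper to compare against, and your proposal should be judged on its own terms; it is correct, and it is the standard alternative route. Deducing row sign-coherence of $G$-matrices from column sign-coherence of $C$-matrices via the Nakanishi--Zelevinsky tropical duality is exactly how this implication is organized in \cite{nz}, and your worry about circularity is resolved exactly as you say: the duality identities there are proved by induction on $\TT_n$ using only the mutation recursions together with sign-coherence of $c$-vectors, which the paper already has as Theorem~\ref{thm:signs-ci} from \cite{GHKK}*{Corollary 5.5}. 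Two minor remarks. First, your displayed formula is slightly off in the skew-symmetrizable case: combining the duality with the inverse relation $\big(C_t^{B;t_0}\big)^{-1}=C_{t_0}^{B_t;t}$ and with $C_t^{-B^\top;t_0}=S\,C_t^{B;t_0}S^{-1}$ (the identity the paper itself invokes as \cite{nz}*{(2.7)} in the proof of Lemma~\ref{lem:F-symmetry}) yields $G_t^{B;t_0}=S^{-1}\big(C_t^{B;t_0}\big)^{-\top}S$ rather than $S\big(\cdot\big)^{-\top}S^{-1}$; this is immaterial, since conjugation by any positive diagonal matrix preserves the sign of every entry, which is all your argument uses. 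Second, the argument can be compressed: the single identity $\big(G_t^{B;t_0}\big)^{\top}=C_{t_0}^{-B_t^{\top};t}$ already exhibits the rows of $G_t^{B;t_0}$ as the columns of a $C$-matrix of the (still skew-symmetrizable) pattern rooted at $t$ with initial matrix $-B_t^{\top}$, so Theorem~\ref{thm:signs-ci} applies to it directly and delivers both the sign-coherence and the ``not all zero'' clause in one stroke, making your separate unimodularity argument for nonvanishing unnecessary (though it is of course also valid). What the scattering-diagram proof of \cite{GHKK} buys instead is independence from $C$-sign-coherence and the stronger geometric statement that the $g$-vector cones of Lemma~\ref{gvectortheorem} form a fan; since the paper takes both Corollaries 5.5 and 5.11 of \cite{GHKK} as black boxes anyway, your reduction is a perfectly reasonable and lighter-weight way to see that only one of the two sign-coherence statements is logically primitive.
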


We call this property the \emph{sign-coherence of the $G$-matrices}.


Next, we define the $F$-polynomials and the $f$-vectors.
\begin{definition}
Let $\Acal_{\bullet}(B)$ be a cluster algebra with principal coefficients at $t_0$. we define the \emph{$F$-polynomial} $F^{B;t_0}_{i;t}(\yy)$ as 
\begin{align}
F^{B;t_0}_{i;t}(\yy)=x_{i;t}(x_1,\dots,x_n;y_1,\dots,y_n)|_{x_1=\cdots=x_n=1},
\end{align}
where $x_{i;t}(x_1,\dots,x_n;y_1,\dots,y_n)$ means the expression of $x_{i;t}$ by $x_1,\dots,x_n,y_1,\dots,y_n$. 
\end{definition}
The following property of $F$-polynomials is a consequence of Theorem \ref{thm:signs-ci} and \cite{fziv}*{Proposition 5.6}.

\begin{proposition}\label{fconstant1}
Every polynomial $F_{\ell;t}^{B;t_0}(\yy)$ has constant term~$1$.
\end{proposition}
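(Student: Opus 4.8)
The plan is to induct on the distance $d(t_0,t)$ from the rooted vertex in $\TT_n$, using the mutation recursion for $F$-polynomials (\cite{fziv}*{Proposition 5.6}) together with the sign-coherence of the $c$-vectors (Theorem~\ref{thm:signs-ci}). Note first that the constant term of a polynomial $F(\yy)$ is exactly its value $F(0,\dots,0)$, so it suffices to prove that $F_{\ell;t}^{B;t_0}(0,\dots,0)=1$ for all $\ell$ and $t$. The base case $t=t_0$ is immediate, since $x_{i;t_0}=x_i$ gives $F_{i;t_0}(\yy)=1$.

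For the inductive step I would take an edge of $\TT_n$ joining a vertex $t$ to a vertex $t'$ farther from $t_0$, with label $k$, and assume that every $F_{i;t}(\yy)$ has constant term $1$. For $j\neq k$ the cluster variable is unchanged, so $F_{j;t'}=F_{j;t}$ and there is nothing to prove. For $j=k$ the $F$-polynomial recursion reads
\[
F_{k;t'}(\yy)=\frac{\displaystyle\prod_{i=1}^n y_i^{[c_{ik;t}]_+}\prod_{i=1}^n F_{i;t}(\yy)^{[b_{ik;t}]_+}+\prod_{i=1}^n y_i^{[-c_{ik;t}]_+}\prod_{i=1}^n F_{i;t}(\yy)^{[-b_{ik;t}]_+}}{F_{k;t}(\yy)},
\]
and I would evaluate this identity at $y_1=\dots=y_n=0$.

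The crux is the behaviour of the two monomials $\prod_i y_i^{[c_{ik;t}]_+}$ and $\prod_i y_i^{[-c_{ik;t}]_+}$ at $\yy=0$. By Theorem~\ref{thm:signs-ci} the $c$-vector $\cc_{k;t}$ is nonzero with all entries of one sign. If its entries are all nonnegative, then $[\cc_{k;t}]_+=\cc_{k;t}\neq 0$ forces the first monomial to vanish at $\yy=0$, while $[-\cc_{k;t}]_+=\mathbf 0$ makes the second monomial equal to $1$; if they are all nonpositive the two roles are interchanged. In either case exactly one numerator monomial survives, and by the induction hypothesis the product accompanying it is $\prod_i F_{i;t}(0)^{[\pm b_{ik;t}]_+}=\prod_i 1=1$. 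Since $F_{k;t}(0)=1$ as well, we obtain $F_{k;t'}(0)=1$, which closes the induction.

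The one genuinely substantive ingredient is the sign-coherence of the $c$-vectors: it is precisely what guarantees that one of the two numerator monomials degenerates to a nonconstant monomial vanishing at the origin while the other degenerates to $1$, so that the a priori possible cancellation between the two terms cannot affect the constant term. Without it both monomials could survive (or both vanish) at $\yy=0$, and the argument would collapse. A minor point to verify en route is that $F_{k;t'}$ is honestly a polynomial, so that specialization at $\yy=0$ is legitimate; this follows from the Laurent phenomenon, since $F_{k;t'}$ is the specialization $x_{k;t'}|_{x_1=\dots=x_n=1}$ of a cluster variable of $\Acal_\bullet(B)$.
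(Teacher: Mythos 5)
Your proof is correct and follows essentially the same route as the paper, which simply observes that the proposition is a consequence of the sign-coherence of $c$-vectors (Theorem~\ref{thm:signs-ci}) together with \cite{fziv}*{Proposition 5.6}; your induction on the distance from $t_0$ using the recursion \eqref{Fpoly-recursion} is precisely the argument underlying that implication, written out in full. The one small point worth tightening is the justification that each $F_{j;t}$ is a genuine polynomial in $\yy$ (so that evaluation at $\yy=\mathbf{0}$ makes sense): this is not the Laurent phenomenon itself but the polynomiality of cluster variables in the coefficients $y_i$ (\cite{fziv}*{Proposition 3.6}), which the paper takes as given in its definition of $F$-polynomials.
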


Using the $F$-polynomials, we define the $f$-vectors. 

\begin{definition}\label{def:f-vector}
Let $\Acal_{\bullet}(B)$ be a cluster algebra with principal coefficients at $t_0$. We denote by $f_{ij;t}$ the maximal degree of $y_i$ in $F_{j;t}^{B;t_0}(\yy)$. Then, we define the \emph{$f$-vector} $\ff_{j;t}$ as

\begin{align}
\ff_{j;t}^{B;t_0}=\ff_{j;t}=\begin{bmatrix}f_{1j;t}\\ \vdots \\ f_{nj;t} \end{bmatrix}.
\end{align}
We define the \emph{$F$-matrix} $F_t^{B;t_0}$ as 

\begin{align}
F_t^{B;t_0}:=(\ff_{1;t},\dots,\ff_{n;t}).
\end{align}
\end{definition}
The $F$-polynomials are the same as those defined by the following recursion: 
For any $j\in\{1,\dots,n\}$,
\begin{align*}
F_{j;t_0}^{B;t_0}=1
\end{align*}
and for any \begin{xy}(0,1)*+{t}="A",(10,1)*+{t'}="B",\ar@{-}^k"A";"B" \end{xy}, 
\begin{align}\label{Fpoly-recursion}
F_{j;t'}^{B;t_0}(\yy)=\begin{cases}
F_{j;t}^{B;t_0}(\yy) &\text{if $j\neq k$};\vspace{2mm}\\
\dfrac{\mathop{\prod}\limits_{i=1}^{n}y_i^{[c_{ik}]_+}\mathop{\prod}\limits_{i=1}^{n} \left(F_{i;t}^{B;t_0}(\yy)\right)^{[b_{ik}]_+}+\mathop{\prod}\limits_{i=1}^{n}y_i^{[-c_{ik}]_+}\mathop{\prod}\limits_{i=1}^{n} \left(F_{i;t}^{B;t_0}(\yy)\right)^{[-b_{ik}]_+}}{F_{k;t}^{B;t_0}(\yy)} &\text{if $j=k$}.
\end{cases}
\end{align}
Also, the $f$-vectors are the same as those defined by the following recursion: For any $j\in\{1,\dots,n\}$,
\begin{align*}
\ff_{j;t_0}=\mathbf{0},
\end{align*}
and for any \begin{xy}(0,1)*+{t}="A",(10,1)*+{t'}="B",\ar@{-}^k"A";"B" \end{xy}, 
\begin{align}\label{f-recursion}
\mathbf{f}_{j;{t'}}&=\begin{cases}
\mathbf{f}_{j;t} \ \ & \text{if } j\neq k;\\
-\mathbf{f}_{k;t}+\max \left([\mathbf{c}_{k;t}]_++\mathop{\sum}\limits_{i=1}^n[b_{ik;t}]_+\mathbf{f}_{i;t},\ [-\mathbf{c}_{k;t}]_++\mathop{\sum}\limits_{i=1}^n[-b_{ik;t}]_+\mathbf{f}_{i;t}\right)\ \ &\text{if } j=k
\end{cases}
\end{align}
(about how to get the recursion of the $f$-vectors, see \cite{fg}). Comparing the $d$-vector's and $f$-vector's reccursions, \eqref{dvectorrecursion} and \eqref{f-recursion}, we can see that they are very similar. The similarity between the $d$-vectors and the $f$-vectors will be discussed in Section 3.
Just like $c$-vectors and $g$-vectors, we can regard $F$-polynomials and $f$-vectors as polynomials and vectors associated with vertices of $\TT_n$. 


{In this way, we remark that it is possible to define $F$-polynomials and $f$-vectors for a cluster algebra that does not have principal coefficients.} 

The $c$-vector, the $g$-vector, the $F$-polynomials and the exchange matrices can restore the cluster variables and the coefficients:

\begin{proposition}[\cite{fziv}*{Proposition 3.13, Corollary 6.3}]
\label{pr:separation}
Let $\{\Sigma_t\}_{t\in\TT_n}$ be a cluster pattern with coefficients in $\PP$ with the initial seed \eqref{initialseed}. 
Then, for any $t \in \TT_n$ and $j \in \{1, \dots, n\}$, we have
\begin{align}
\label{eq:xjt=F/F}
x_{j;t} &= \left( \prod_{k=1}^n x_k^{g^{B; t_0}_{kj;t}} \right) \frac{F_{j;t}^{B;t_0}|_\Fcal(\hat y_1, \dots, \hat y_n)}{F_{j;t}^{B;t_0}|_\PP (y_1, \dots, y_n)}, \\
\label{eq:Y-F}
y_{j;t}&=\prod_{k=1}^n y_{k}^{c^{B; t_0}_{kj;t}}
\prod_{k=1}^n (F_{k;t}^{B;t_0}|_{\mathbb{P}}(y_{1}, \dots, y_{n}))^{b_{kj;t}},
\end{align}
where 
\begin{align}
\hat y_i = y_i \mathop{\prod}\limits_{j=1}^n x_j^{b_{ji}},
\end{align}
and $g_{ij;t}^{B;t_0}$ and $c_{ij;t}^{B;t_0}$ are the $(i,j)$ entry of $G_t^{B;t_0}$ and $C_t^{B;t_0}$, respectively. Also, the rational function $F_{j;t}^{B;t_0}|_\Fcal(\hat y_1, \dots, \hat y_n)$ is the element of $\Fcal$ obtained by substituting $\hat{y}_i$ for $y_i$ in $F_{j;t}^{B;t_0}(y_1, \dots, y_n)$.
\end{proposition}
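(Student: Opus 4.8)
The plan is to establish both identities by a single induction on the distance $d(t,t_0)$ in the tree $\TT_n$, reducing everything to a one-step mutation and matching the three recursions \eqref{g-recursion}, \eqref{Fpoly-recursion} and the recursion for the $c$-vectors against the seed mutations \eqref{eq:x-mutation} and \eqref{eq:y-mutation}. The base case $t=t_0$ is a direct check: there $g_{kj;t_0}=c_{kj;t_0}=\delta_{kj}$ and $F^{B;t_0}_{j;t_0}=1$, so the right-hand sides of \eqref{eq:xjt=F/F} and \eqref{eq:Y-F} are exactly $x_j$ and $y_j$. For the inductive step I would fix an edge joining $t$ and $t'$ labeled $k$ with $d(t',t_0)=d(t,t_0)+1$ and assume both formulas hold at $t$.

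I would prove the coefficient identity \eqref{eq:Y-F} first, since it is self-contained and lives entirely in the semifield $\PP$. For $j\ne k$ one substitutes the inductive expressions for $y_{j;t}$ and $y_{k;t}$ into \eqref{eq:y-mutation}, and then rewrites the factor $(y_{k;t}\oplus1)^{-b_{kj;t}}$ using $F^{B;t_0}_{k;t'}$ via the $F$-polynomial recursion \eqref{Fpoly-recursion}; collecting the exponents of the $y_\ell$ reproduces the $c$-vector recursion, while the exponents of the $F$-polynomials reproduce the exchange-matrix mutation \eqref{eq:matrix-mutation}. The case $j=k$ is the identity $y_{k;t'}=y_{k;t}^{-1}$, which matches the claimed formula because the relevant $c$-vector and $b$-exponents simply change sign. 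The sign-coherence of the $C$-matrices (Theorem \ref{thm:signs-ci}) is what lets me pass freely between $[\,\cc_{k;t}\,]_+$ and $[-\cc_{k;t}]_+$ when expanding the semifield addition $y_{k;t}\oplus1$.

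For the cluster-variable identity \eqref{eq:xjt=F/F} the indices $j\ne k$ are immediate, since by \eqref{eq:x-mutation}, \eqref{g-recursion} and \eqref{Fpoly-recursion} none of $x_{j;t}$, $\gg_{j;t}$ or $F^{B;t_0}_{j;t}$ changes. The whole content is the mutated variable $x_{k;t'}$. Here the organizing device is the family $\hat y_{i;t}$ obtained from $\hat y_i=y_i\prod_{\ell}x_\ell^{b_{\ell i}}$ by the $y$-mutation rule in $\Fcal$: because that rule uses only the semifield operations, the computation of the previous paragraph applies verbatim and gives the ``$\hat y$-separation'' $\hat y_{j;t}=\prod_\ell \hat y_\ell^{c_{\ell j;t}}\prod_\ell F^{B;t_0}_{\ell;t}|_{\Fcal}(\hat y)^{b_{\ell j;t}}$. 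I would then substitute the inductive formula for each $x_{i;t}$ and the coefficient formula for $y_{k;t}$ into \eqref{eq:x-mutation}, factor the prefactor $\prod_\ell x_\ell^{g_{\ell k;t}}$ out of the numerator, and recognize the two monomials in the numerator as the two terms of the $F$-polynomial recursion \eqref{Fpoly-recursion} evaluated at $\hat y$. Matching the resulting power of $x_k$ against \eqref{g-recursion} and cancelling $(y_{k;t}\oplus1)$ against $F^{B;t_0}_{k;t}|_\PP$ finishes the step.

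The hard part will be the bookkeeping in this last substitution: one must track simultaneously the exponents of the initial variables $x_\ell$ (which have to reassemble into the $g$-vector recursion \eqref{g-recursion}), the substituted $\hat y$-arguments inside the $F$-polynomials, and the normalizing denominators $F^{B;t_0}_{\ell;t}|_\PP(y)$ coming from the inductive hypothesis. The delicate cancellation is that the single tropical factor $(y_{k;t}\oplus1)$ produced by \eqref{eq:x-mutation} must combine with the inductive denominators to give precisely $F^{B;t_0}_{k;t}|_\PP(y)$ in the denominator at $t'$; this relies on Proposition \ref{fconstant1} (constant term $1$) together with the sign-coherence of both the $C$- and $G$-matrices (Theorems \ref{thm:signs-ci} and \ref{thm:signs-g}) to guarantee that the two numerator monomials carry the correct, non-overlapping powers of the $y_\ell$ and reassemble without spurious terms.
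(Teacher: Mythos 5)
This proposition is quoted from \cite{fziv} (Proposition 3.13 and Corollary 6.3) and the paper supplies no proof of its own, so the only meaningful comparison is with the argument in that reference. Your plan — simultaneous induction on the distance from $t_0$, matching the $c$-, $g$- and $F$-recursions against the seed mutation rules, with the $\hat y$-pattern as the bridge between the semifield $\PP$ and the ambient field $\Fcal$ — is exactly the skeleton of the Fomin--Zelevinsky proof, and the individual steps you describe (the base case, the $j=k$ case of \eqref{eq:Y-F} via $b_{kk;t'}=0$, the factorization of $(y_{k;t}\oplus 1)$ against $F^{B;t_0}_{k;t}|_\PP$ and $F^{B;t_0}_{k;t'}|_\PP$ through \eqref{Fpoly-recursion}) all check out. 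The one structural difference worth flagging: you route the induction through the recursions \eqref{g-recursion} and \eqref{Fpoly-recursion} in the form involving $[\cc_{k;t}]_+$, and hence through sign-coherence (Theorem \ref{thm:signs-ci}), whereas the original proof works in the universal semifield $\QQsf(y_1,\dots,y_n)$ and establishes the separation formulas unconditionally, with sign-coherence entering only later to simplify the recursions. Within this paper's logical setup that is harmless, since \eqref{g-recursion} and \eqref{Fpoly-recursion} are taken as known; but be aware that in a from-scratch development those $[c]_+$-form recursions are themselves usually derived from the principal-coefficient case of the separation formula (which is nearly definitional, by the $\ZZ^n$-homogeneity of $x_{j;t}$), so one should state that the principal case is handled first to avoid the appearance of circularity. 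Your final paragraph correctly identifies where all the actual work lies — the redistribution of the powers of $x_\ell$ between the $g$-vector prefactor and the $\hat y$-arguments, and the cancellation of the inductive denominators $\prod_\ell (F^{B;t_0}_{\ell;t}|_\PP)^{[-b_{\ell k;t}]_+}$ — and that bookkeeping does close up as you claim.
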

We call \eqref{eq:xjt=F/F} and \eqref{eq:Y-F} the \emph{separation formulas}.

\begin{example}\label{ex:prinA2} 
Let $\Acal(B)$ be the cluster algebra given in Example \ref{A2}. In particular, we take one with principal coefficients at $t_0$, that is, we consider $\Acal_{\bullet}(B)$. 
Then, clusters and coefficient tuples are given by Table \ref{prinA2} and the $F$-polynomials and the $F,D,C,G$-matrices are given by Table \ref{A2FDCG}.
\begin{table}[ht]
\begin{equation*}
\begin{array}{|c|cc|cc|}
\hline
&&&&\\[-4mm]
t& \hspace{12mm}\yy_t &&& \xx_t \hspace{30mm}\\
\hline
&&&&\\[-3mm]
0 &y_1 & y_2& x_1& x_2 \\[1mm]
\hline
&&&&\\[-3mm]
1& y_1& \dfrac{1}{y_2} & x_1& \dfrac{x_1y_2+1}{x_2} \\[3mm]
\hline
&&&&\\[-3mm]
2& \dfrac{1}{y_1} & \dfrac{1}{y_2} & \dfrac{x_1y_1y_2 + y_1+ x_2}{x_1x_2} & \dfrac{x_1y_2+1}{x_2} \\[3mm]
\hline
&&&&\\[-3mm]
3& \dfrac{1}{y_1y_2} & y_2 & \dfrac{x_1y_1y_2+y_1+x_2}{x_1x_2} & \dfrac{y_1+x_2}{x_1} \\[3mm]
\hline
&&&&\\[-2mm]
4& {y_1y_2} &\dfrac{1}{y_1} & x_2 & \dfrac{y_1+x_2}{x_1} \\[3mm]
\hline
&&&&\\[-2mm]
5& y_2 & y_1 & x_2 & x_1\\[1mm]
\hline
\end{array}
\end{equation*}
\caption{Coefficients and cluster variables in type~$A_2$\label{prinA2}}
\end{table}
\begin{table}[ht]
\begin{equation*}
\begin{array}{|c|cc|c|c|c|c|}
\hline
&&&&&&\\[-4mm]
t& \hspace{0mm}F^{B; t_0}_{1;t}(\yy)&F^{B; t_0}_{2;t}(\yy) & F^{B; t_0}_t \hspace{0mm} & D^{B;t_0}_t & C^{B; t_0}_t & G^{B; t_0}_t\\
\hline
&&&&&&\\[-3mm]
0& 1&1 
& \begin{bmatrix}
 0 & 0 \\
 0 & 0
\end{bmatrix}
&\begin{bmatrix}
 -1 & 0 \\
 0 & -1
\end{bmatrix}
&\begin{bmatrix}
 1 & 0 \\
 0 & 1
\end{bmatrix}
&\begin{bmatrix}
 1 & 0 \\
 0 & 1
\end{bmatrix}
 \\[4mm]
\hline
&&&&&&\\[-3mm]
1& 1&y_2+1
&\begin{bmatrix}
 0 & 0 \\
 0 & 1
\end{bmatrix}
&\begin{bmatrix}
 -1& 0 \\
 0 & 1
\end{bmatrix}
&\begin{bmatrix}
 1 & 0 \\
 0 & -1
\end{bmatrix}
&\begin{bmatrix}
 1 & 0 \\
 0 & -1
\end{bmatrix}
\\[4mm]
\hline
&&&&&&\\[-3mm]
2& y_1y_2+y_1+1&y_2+1
&\begin{bmatrix}
 1 & 0 \\
 1 & 1
\end{bmatrix}
&\begin{bmatrix}
 1 & 0 \\
 1 & 1
\end{bmatrix}
&\begin{bmatrix}
 -1 & 0 \\
 0 & -1
\end{bmatrix}
&\begin{bmatrix}
 -1 & 0 \\
 0 & -1
\end{bmatrix}
\\[4mm]
\hline
&&&&&&\\[-3mm]
3& y_1y_2+y_1+1&y_1+1
&\begin{bmatrix}
 1 & 1 \\
 1 & 0
\end{bmatrix}
&\begin{bmatrix}
 1 & 1 \\
 1& 0
\end{bmatrix}
&\begin{bmatrix}
 -1 & 0 \\
 -1 & 1
\end{bmatrix}
&\begin{bmatrix}
 -1 & -1 \\
 0 & 1
\end{bmatrix}
\\[4mm]
\hline
&&&&&&\\[-3mm]
4& 1&y_1+1
&\begin{bmatrix}
 0 & 1 \\
 0& 0
\end{bmatrix}
&\begin{bmatrix}
 0 & 1 \\
 -1 & 0
\end{bmatrix} 
&\begin{bmatrix}
 1 & -1 \\
 1 & 0
\end{bmatrix}
&\begin{bmatrix}
 0 & -1 \\
 1 & 1
\end{bmatrix}
\\[4mm]
\hline
&&&&&&\\[-3mm]
5& 1&1
&\begin{bmatrix}
 0 & 0 \\
 0 & 0
\end{bmatrix}
&\begin{bmatrix}
 0 & -1 \\
 -1 & 0
\end{bmatrix}
&\begin{bmatrix}
 0 & 1 \\
 1 & 0
\end{bmatrix}
&\begin{bmatrix}
 0 & 1 \\
 1 & 0
\end{bmatrix}
\\[4mm]
\hline
\end{array}
\end{equation*}
\caption{$F$-polynomials, $F,D,C,G$-matrices in type~$A_2$\label{A2FDCG}}
\end{table}
\end{example}
Proposition \ref{pr:separation} implies that if $x_{i;t}=x_{j;t'}$ in $\Acal_{\bullet}(B)$, then for any cluster algebra $\Acal(B)$ having the same exchange matrix as $\Acal_{\bullet}(B)$ at $t_0$, the $i$th cluster variable associated with $t\in \TT_n$ is same as the $j$th one associated with $t'\in \TT_n$. More generally, the following fact is known:
\begin{proposition}[\cite{cl2}*{Prposition 6.1 (i)}]\label{independentP}
Let $\Acal_1(B)$ and $\Acal_2(B)$ be cluster algebras having the same exchange matrix at $t_0$. Let $\PP_1$ and $\PP_2$ be coefficients of $\Acal_1(B)$ and $\Acal_2(B)$, respectively. Denoted by $(\xx_t(k),\yy_t(k), B_t(k))$, the seed of $\Acal_k(B)$ at $t\in\TT_n$, $k= 1,2$. Then, $x_{i;t}(1) =x_{j;t'}(1)$ if and only if $x_{i;t}(2) =x_{j;t'}(2)$, where $t, t'\in\TT_n$ and $i, j\in \{1,2,\cdots, n\}$.
\end{proposition}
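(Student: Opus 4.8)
The plan is to reduce the coincidence of two cluster variables, in an arbitrary coefficient semifield, to a purely combinatorial condition on $g$-vectors and $F$-polynomials, which by the discussion preceding the statement are independent of the coefficient system. Concretely, I would prove the sharper statement that for a fixed initial exchange matrix $B$ at $t_0$, every coefficient semifield $\PP$, all $t,t'\in\TT_n$ and $i,j\in\{1,\dots,n\}$,
\begin{align}
x_{i;t}=x_{j;t'}\ \text{in}\ \Acal(B)\ \Longleftrightarrow\ \gg_{i;t}^{B;t_0}=\gg_{j;t'}^{B;t_0}\ \text{and}\ F_{i;t}^{B;t_0}=F_{j;t'}^{B;t_0}.\tag{$\star$}
\end{align}
Since the right-hand side of $(\star)$ depends only on $B$ and on the combinatorial data $(i,t),(j,t')$, and both $\Acal_1(B)$ and $\Acal_2(B)$ share the same initial matrix, applying $(\star)$ once to $\PP_1$ and once to $\PP_2$ immediately yields the proposition.

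The ``$\Leftarrow$'' direction of $(\star)$ is the easy one and holds for every $\PP$: I would substitute the hypotheses directly into the separation formula \eqref{eq:xjt=F/F}. Equality of $g$-vectors makes the monomials $\prod_k x_k^{g_{ki;t}}$ coincide, and equality of the $F$-polynomials makes both the numerator $F^{B;t_0}|_\Fcal(\hat y_1,\dots,\hat y_n)$ and the denominator $F^{B;t_0}|_\PP(y_1,\dots,y_n)$ coincide, so $x_{i;t}=x_{j;t'}$.

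For the ``$\Rightarrow$'' direction I would first settle the principal-coefficient case $\Acal_\bullet(B)$ and then transfer. In the principal case the tropical evaluation of $F^{B;t_0}_{i;t}$ equals $1$: its constant term is $1$ (Proposition~\ref{fconstant1}) and all of its exponents are nonnegative, so the minimum under $\oplus$ is the zero vector. Hence the separation formula reduces to $x_{i;t}=\bigl(\prod_k x_k^{g_{ki;t}}\bigr)F^{B;t_0}_{i;t}(\hat y)$, which is homogeneous of degree $\gg_{i;t}$ for the $\ZZ^n$-grading \eqref{grading}. Thus $x_{i;t}=x_{j;t'}$ forces equal degrees, i.e.\ $\gg_{i;t}=\gg_{j;t'}$; cancelling the common monomial gives $F^{B;t_0}_{i;t}(\hat y)=F^{B;t_0}_{j;t'}(\hat y)$, and since $\hat y_1,\dots,\hat y_n$ are algebraically independent (each $\hat y_i$ is the only one involving $y_i$) this is equality of polynomials. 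This establishes $(\star)$ for $\Acal_\bullet(B)$.

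To pass from a general $\PP$ to the principal case, I would use the specialization homomorphism $\phi\colon\ZZ[y_1^{\pm},\dots,y_n^{\pm}]\to\mathbb{Q}(\PP)$ sending each variable $y_i$ to the coefficient $y_i\in\PP$; the separation formula then gives $\phi(x_{i;t})=F^{B;t_0}_{i;t}|_\PP(y)\cdot x_{i;t}$ in $\Fcal=\mathbb{Q}(\PP)(x_1,\dots,x_n)$, so that an equality $x_{i;t}=x_{j;t'}$ over $\PP$ becomes, after clearing the scalar factors $\lambda:=F|_\PP(y)\in\PP$, an identity between the images of two homogeneous principal cluster variables inside the group ring $\ZZ\PP[x_1^{\pm},\dots,x_n^{\pm}]$. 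The goal is to deduce that these principal cluster variables were already equal, i.e.\ that a general coefficient system creates no coincidences beyond those present in $\Acal_\bullet(B)$. This last transfer is the hard part: the naive attempt to read off $\gg_{i;t}$ as the exponent coming from the constant term of $F^{B;t_0}_{i;t}$ fails because $\mathbf v\mapsto B\mathbf v$ on the support of the $F$-polynomial need not be injective and the monomials $\mathbf y^{\mathbf v}\in\PP$ may satisfy multiplicative relations in $\mathbb{Q}(\PP)$ (for instance when $\PP$ is trivial, or $B$ is degenerate), so several monomials can collapse onto the same $x$-exponent and $\gg_{i;t}$ need not be a vertex of the $x$-Newton polytope. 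I expect to resolve this using the sign-coherence of the $G$-matrices (Theorem~\ref{thm:signs-g}) together with the injectivity of the $g$-vector parametrization of cluster variables to recover $\gg_{i;t}=\gg_{j;t'}$ from the surviving terms, and then, after cancellation, to recover $F^{B;t_0}_{i;t}=F^{B;t_0}_{j;t'}$ from the $\ZZ$-linear independence of the group elements of $\PP$ in $\ZZ\PP$. Showing that non-generic coefficients do not merge cluster variables is the crux of the argument.
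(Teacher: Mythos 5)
This proposition is not proved in the paper at all: it is quoted verbatim from \cite{cl2}*{Proposition 6.1 (i)}, so there is no internal argument to compare against, and your attempt has to stand on its own.

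Your reduction to the statement $(\star)$ is the right idea, and two of its three pieces are correct: the ``$\Leftarrow$'' direction via the separation formula works for every semifield, and the ``$\Rightarrow$'' direction for $\Acal_\bullet(B)$ is clean (tropical evaluation equal to $1$ by Proposition~\ref{fconstant1}, homogeneity forces equal $g$-vectors, algebraic independence of the $\hat y_i$ in the principal-coefficient ambient field forces equal $F$-polynomials). The genuine gap is exactly where you flag it: the ``$\Rightarrow$'' direction of $(\star)$ over an arbitrary $\PP$, i.e.\ showing that a coincidence $x_{i;t}=x_{j;t'}$ in $\Acal_1(B)$ already forces $\gg_{i;t}=\gg_{j;t'}$ and $F_{i;t}=F_{j;t'}$. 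Without this, knowing $(\star)$ for principal coefficients only gives one implication of the proposition (coincidences present for principal coefficients persist for every $\PP$), not the equivalence. The tools you propose do not obviously close it. The $\ZZ$-linear independence of group elements of $\PP$ in $\ZZ\PP$ is vacuous for the trivial semifield $\PP=\{1\}$, which is precisely the worst case: there the separation formula collapses to $x_{i;t}=\xx^{\gg_{i;t}}\sum_{\vv}c_\vv\xx^{B\vv}$, and when $B$ is rank-deficient distinct $(\gg,F)$ data could a priori produce the same Laurent polynomial. And ``the injectivity of the $g$-vector parametrization'' of cluster variables, as elements of a cluster algebra with arbitrary coefficients, is essentially the theorem you are trying to prove — it is the content of \cite{cl2}'s Theorem 5.5, whose proof runs through the enough-$g$-pairs property (Theorem~\ref{enoughtheorem}) and Lemma~\ref{d-gtheorem} to recover each entry of the $g$-vector from intrinsic data (membership of the element in clusters obtained as $g$-pairs along the coordinate hyperplane subsets). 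So either you import that machinery explicitly, in which case the argument becomes the one in \cite{cl2} and your $(\star)$ is a corollary rather than a route to it, or the crucial step remains unproved.
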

\begin{remark}
By recursions \eqref{eq:x-mutation}, \eqref{g-recursion} ,\eqref{Fpoly-recursion}, \eqref{f-recursion}, and Proposition \ref{independentP}, for any cluster algebra $\Acal(B)$, if $x_{i;t}=x_{j;t'}$ then we have $\gg_{i;t}=\gg_{j;t'},\ F_{i;t}^{B;t_0}(\yy)=F_{j;t'}^{B;t_0}(\yy)$, and $\ff_{i;t}=\ff_{j;t'}$ (we remark that these vectors and polynomials depend only on $\TT_n$ and index $i\in\{1,\dots,n\}$). Therefore, we can say that $\gg_{i;t},\ F_{i;t}^{B;t_0}(\yy)$, and $\ff_{i;t}$ are the $g$-vector, the $F$-polynomial and the $f$-vector associated with $x_{i;t}$, respectively.
\end{remark}
Let us introduce cluster complexes which were defined in \cite{fzii}.
\begin{definition}
Let $\Acal(B)$ be a cluster algebra. We define the cluster complex $\Delta(\Acal(B))$ as the simplicial complex whose simplexes are subsets of cluster variables which are contained in a cluster. 
\end{definition}

\begin{example}\label{clustercomplexofA2}
We consider the cluster algebra in Example \ref{ex:prinA2}. We give a cluster complex corresponding to this cluster algebra in Figure \ref{A2complex}.

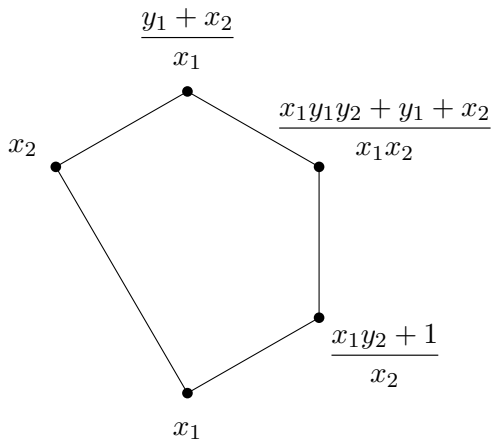
\begin{figure}[ht]
\caption{Cluster complex of type $A_2$}\label{A2complex}
\[
\begin{tikzpicture}
 \coordinate (0) at (0,0) ;
 \coordinate (1) at (30:2) ;
 \coordinate (2) at (90:2);
 \coordinate (3) at (150:2);
 \coordinate (4) at (270:2);
 \coordinate (5) at (330:2);
 \node (6) at (30:3) {$\dfrac{x_1y_1y_2 + y_1+ x_2}{x_1x_2}$};
 \node (7) at (90:2.7){$\dfrac{y_1+x_2}{x_1}$};
 \node (8) at (150:2.5){$x_2$};
 \node (9) at (270:2.5){$x_1$};
 \node (10) at (330:3){$\dfrac{x_1y_2+1}{x_2}$};
 \fill(1) circle (0.7mm);
 \fill(2) circle (0.7mm);
 \fill(3) circle (0.7mm);
 \fill(4) circle (0.7mm);
 \fill(5) circle (0.7mm);
 \draw(1) to (2);
 \draw(2) to (3);
 \draw(3) to (4);
 \draw(4) to (5);
 \draw(5) to (1);
\end{tikzpicture}
\]
\end{figure}
\end{example}

By Proposition \ref{independentP}, a cluster complex depends only on $B$ and does not depend on coefficients $\PP$.
\subsection{Scattering diagrams} The scattering diagrams were introduced in \cite{GHKK} to study the canonical basis of cluster algebras. In this paper, we give only a few definitions and properties used in proofs along \cites{mur,cl2,rea20, nak21}. 
For any $\vv=(v_1,\dots,v_n)^{\top}$ and $\mathbf{z}=(z_1,\dots,z_n)$, we abbreviate $z_1^{v_1}\cdots z_n^{v_n}$ as $\mathbf{z}^{\vv}$.

\begin{definition}\label{scatteringdiagram}
We fix a skew-symmetrizable matrix $B$ of order $n$ and take a skew-symmetrizer $S=\text{diag}(s_1,\dots,s_n)$ of $B$ so that $s_1,\dots,s_n$ are relatively prime. We call the pair $(\vv, W)$ a \emph{wall} associated with $B$, where $\vv\in \ZZ^n_{\geq 0}$ is a non-zero whose entries are relatively prime to each other, and $W$ is a convex cone spanning $\vv^{\perp}:=\{\mm\in \mathbb{R}^n \mid \vv^{\top}S\mm=0\}$.
\end{definition}

The open half space $\{\mm\in \mathbb{R}^n \mid \vv^{\top}S\mm>0\}$ is the \emph{green side} of $W$ (or a hyperplain including $W$), and $\{\mm\in \mathbb{R}^n \mid \vv^{\top}S\mm<0\}$ is the \emph{red side} of $W$. We consider $R=\QQ[x_1^{\pm1},\dots,x_n^{\pm1}][[y_1,\dots,y_n]]$, that is, formal power series in the variables $y_1,\dots,y_n$ with coefficients in $\QQ[x_1^{\pm1},\dots,x_n^{\pm1}]$.
For $\vv=(v_1,\dots,v_n)^\top \in \ZZ_{\geq 0}^n$, we define the \emph{formal elementary transformation} $E_\vv \in \text{Aut}(R)$ as 
\begin{align*}
 E_\vv(\xx^\ww)=\xx^\ww(1+\hat{\yy}^\vv)^{\frac{\vv^{\top}S\ww}{\gcd(s_1v_1,\dots,s_nv_n)}}, \quad E_\vv(\yy^{\ww'})=\yy^{\ww'}
\end{align*}
with the inverse 
\begin{align*}
 E_\vv^{-1}(\xx^\ww)=\xx^\ww(1+\hat{\yy}^\vv)^{-\frac{\vv^{\top}S\ww}{\gcd(s_1v_1,\dots,s_nv_n)}}, \quad E_\vv^{-1}(\yy^{\ww'})=\yy^{\ww'},
\end{align*}
where $\hat{\yy}=(\hat{y_1},\dots,\hat{y_n})$ and $\hat y_i$ is given by 
\begin{gather*}
\hat y_i = y_i \mathop{\prod}\limits_{j=1}^n x_j^{b_{ji}}.
\end{gather*}

Let $\mathfrak D(B)$ be a set of walls associated with $B$ and $I$ a monomial ideal of $\QQ[[y_1,\dots,y_n]]$. The \emph{reduction} $\mathfrak{D}(B)/I$ of $\mathfrak{D}(B)$ is obtained from $\mathfrak{D}(B)$ by deleting all walls of the form $(\vv, W)$ with $y^\vv \in I$. For a non-negative integer $k$, denote by $I_k$ the ideal of $\QQ[[y_1,\dots,y_n]]$ generated by all the monomials of $\yy$ with total degree $\geq k$. 

\begin{definition}
We say that a set $\mathfrak{D}(B)$ of walls associated with $B$ is a \emph{scattering diagram} of $B$ if it satisfies the following finite condition:

\begin{itemize}
  \item For each $k\in\ZZ_{\geq 0}$, $\mathfrak{D}(B)$ has finitely many walls in $\mathfrak{D}(B)/I_k$.
\end{itemize}
In this case, each connected compartment of $\mathbb{R}^n-\mathfrak{D}(B)$ is a \emph{chamber} of $\mathfrak{D}(B)$. 
\end{definition}
\begin{definition}\label{finitetranverse}
Let $\mathfrak{D}(B)$ be a scattering diagram in $\mathbb{R}^n$. We say that a continued path $\rho\colon [0,1]\rightarrow \mathbb{R}^n$ is \emph{generic} for $\mathfrak{D}(B)$ if $\rho$ satisfies the following conditions:
\begin{itemize}
\item $\rho(0)$ and $\rho(1)$ is not in any wall $W$ of $\mathfrak{D}(B)$.
\item The image of $\rho$ crosses each wall $W$ of $\mathfrak{D}(B)$ transversely.
\item The image of $\rho$ does not cross the boundaries of walls or intersection of walls spanning two distinct hyperplanes. 
\end{itemize}
\end{definition}

Let $\rho$ be a generic path of a scattering diagram $\mathfrak{D}(B)$, and we assume that $\rho$ crosses walls of $\mathfrak{D}(B)/I_k$ in the following order:
\begin{align*}
 (\vv_{1,k},W_{1,k}),\dots,(\vv_{s,k},W_{s,k}).
\end{align*}
We set 
\begin{align*}
 E^k_\rho=E_{\vv_{s,k}}^{\varepsilon_{s,k}}\circ\cdots\circ E_{\vv_{1,k}}^{\varepsilon_{1,k}}\in \text{Aut}(R),
\end{align*}
where $\varepsilon_{i,k} =1$ (resp., $\varepsilon_{i,k} =-1$) if $\rho$ crosses $W_{i,k}$ from its green side to its red side (resp., from its red side to its green side). We define the \emph{path-ordered product} of $\rho$ as 
\begin{align*}
 E_\rho=\lim_{k\to\infty}E^k_\rho \in \text{Aut}(R).
\end{align*}

A scattering diagram $\mathfrak{D}(B)$ is \emph{consistent} if for any generic path $\rho$ whose starting point coincides with terminal point, $E_\rho=1$ holds. We say that two scattering diagrams $\mathfrak{D}_1(B)$ and $\mathfrak{D}_2(B)$ for $B$ are \emph{equivalent} if any generic path in both diagrams determines the same path-ordered product.

\begin{lemma}[{\cite{GHKK}*{Theorem 1.12}}]\label{consistentdiagram}
Let $B$ be a skew-symmetrizable matrix of order $n$. There exists a unique consistent scattering diagram $\mathfrak{D}_0(B)$, {up to equivalence}, satisfying the following conditions:
\begin{itemize}
\item For any $i\in\{1,\dots,n\}$, $(\ee_i,\ee_i^{\perp})$ are walls of $\mathfrak{D}_0(B)$.
\item For any other walls $(\vv,W)$ of $\mathfrak{D}_0(B)$, $B\vv\notin W$ holds.
\end{itemize}
\end{lemma}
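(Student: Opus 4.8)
The plan is to prove existence and uniqueness by the standard order-by-order construction in the pro-nilpotent group of automorphisms of $R$, following the Kontsevich--Soibelman/Gross--Siebert method adapted to this setting. First I would organize the relevant automorphisms by the $\mathfrak m$-adic filtration, where $\mathfrak m = (y_1, \dots, y_n)$: since each elementary transformation $E_\vv$ alters $\xx^\ww$ only by a factor congruent to $1$ modulo $\yy^\vv$, it lies in the subgroup $G_{\geq |\vv|}$ of automorphisms that are trivial modulo $\mathfrak m^{|\vv|}$, where $|\vv| = v_1 + \cdots + v_n$. The completed group $G = \varprojlim G/G_{\geq \ell}$ is pro-nilpotent, and the successive quotients $G_{\geq \ell}/G_{\geq \ell+1}$ are abelian; this is the structural fact that makes the inductive construction run.

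\textbf{Existence.} I would build the diagram degree by degree. The base datum is the collection of incoming walls $(\ee_i, \ee_i^\perp)$ with their transformations $E_{\ee_i}$; note that $B\ee_i = \mathbf{b}_i \in \ee_i^\perp$, since skew-symmetry of $SB$ forces $\ee_i^\top S \mathbf{b}_i = 0$, so these walls are genuinely incoming. Assume inductively a scattering diagram $\mathfrak D_\ell$ that is consistent modulo $\mathfrak m^{\ell+1}$ and whose non-incoming walls $(\vv, W)$ all satisfy $B\vv \notin W$. To pass to order $\ell+1$, I measure the failure of consistency: for a small loop $\rho$ around each codimension-two locus, the path-ordered product $E_\rho$ lies in $G_{\geq \ell+1}$, and its class in the abelian quotient $G_{\geq \ell+1}/G_{\geq \ell+2}$ is the defect. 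Because this quotient is abelian and freely generated by the degree-$(\ell+1)$ directions, the defect factors uniquely into contributions, each of which can be cancelled by attaching a new wall of degree $\ell+1$ on the outgoing side. Adding these walls yields $\mathfrak D_{\ell+1}$, consistent modulo $\mathfrak m^{\ell+2}$; the inverse limit over $\ell$ produces the desired $\mathfrak D_0(B)$.

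\textbf{Uniqueness.} Here the constraint $B\vv \notin W$ is decisive. Given two consistent diagrams with the prescribed incoming walls and all other walls outgoing, I would compare them order by order. At degree $\ell+1$ the new walls of each diagram must cancel exactly the common lower-order defect; since the factorization in the abelian graded piece is unique and the outgoing condition forbids introducing compensating incoming walls, the degree-$(\ell+1)$ walls of the two diagrams agree up to equivalence. Induction together with the passage to the limit then gives uniqueness.

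\textbf{Main obstacle.} The technical heart is the order-by-order cancellation lemma: that the consistency defect, living in the abelian quotient $G_{\geq \ell+1}/G_{\geq \ell+2}$, admits a unique factorization into contributions attached to outgoing walls. This is where the pro-nilpotent-group machinery is needed, together with the positivity built into the requirement $\vv \in \ZZ^n_{\geq 0}$, which supplies the well-defined $\mathfrak m$-adic grading and separates incoming from outgoing walls; it is precisely this separation that forces uniqueness. The filtration setup and the passage to the limit are then formal once the lemma is in hand. Since the statement is quoted from \cite{GHKK}, in practice I would invoke their construction for this cancellation lemma rather than redevelop the full nilpotent formalism.
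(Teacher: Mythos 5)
The paper gives no proof of this lemma; it is imported verbatim as \cite{GHKK}*{Theorem 1.12}, and your order-by-order construction in the pro-nilpotent automorphism group (with the incoming walls $(\ee_i,\ee_i^\perp)$ as seed data and the outgoing condition $B\vv\notin W$ forcing uniqueness) is precisely the argument of that cited source. Your sketch is correct in outline, and deferring the cancellation lemma to \cite{GHKK} is exactly what the paper itself does.
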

We note that since $\vv\in \ZZ_{\geq 0}$, we have $W\cap(\mathbb{R}_{>0})^n=W\cap(\mathbb{R}_{<0})^n=\emptyset$. Therefore, $(\mathbb{R}_{>0})^n$ and $(\mathbb{R}_{<0})^n$ are chambers of $\mathfrak{D}_0(B)$, and we call them the \emph{all positive chamber} and the \emph{all negative chamber} respectively. If there exists a generic path which is finitely transverse from the all positive chamber to a chamber $\mathcal{C}$, then we call $\mathcal{C}$ a \emph{reachable chamber}. Moreover, if there exists a path which is finitely transverse from the all positive chamber crossing a wall $(\vv,W)$, then we call $(\vv,W)$ a \emph{reachable wall}.
\begin{lemma}[{\cite{GHKK}*{Lemma 2.10}}, \cite{nak21}*{Theorem 3.3}]\label{gvectortheorem}
Let $\Acal(B)$ be a cluster algebra of rank $n$ whose initial matrix is $B$. Each reachable chamber of $\mathfrak{D}_0(B)$ is expressed by the following form: 
\begin{align}
\mathbb{R}_{>0}\gg_1+\cdots+\mathbb{R}_{>0}\gg_n,
\end{align}
where $G=(\gg_1,\dots,\gg_n)$ is one of the $G$-matrices of $\Acal(B)$.
\end{lemma}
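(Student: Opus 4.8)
The plan is to exhibit the asserted form by matching the combinatorics of wall-crossing in $\mathfrak{D}_0(B)$ with the mutation of $G$-matrices, proceeding by induction on the number of walls a finite transverse path must cross to reach a given chamber from the all positive chamber. Concretely, I would attach to each vertex $t\in\TT_n$ the simplicial cone $\mathcal{C}_t:=\mathbb{R}_{>0}\gg_{1;t}+\cdots+\mathbb{R}_{>0}\gg_{n;t}$ generated by the columns of $G_t^{B;t_0}$, and prove that $t\mapsto\mathcal{C}_t$ surjects onto the reachable chambers. The base case is immediate: at $t_0$ we have $\gg_{j;t_0}=\ee_j$, so $\mathcal{C}_{t_0}=(\mathbb{R}_{>0})^n$ is exactly the all positive chamber.

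For the inductive step I would show that crossing a single wall out of a chamber $\mathcal{C}_t$ reproduces, on the level of generating cones, the $g$-vector recursion \eqref{g-recursion}. First, sign-coherence of the $G$-matrices (Theorem~\ref{thm:signs-g}) is used to guarantee that the generators $\gg_{j;t}$ are linearly independent, so $\mathcal{C}_t$ is a full-dimensional simplicial cone with exactly $n$ facets, the facet opposite $\gg_{k;t}$ being spanned by $\{\gg_{j;t}:j\neq k\}$. The heart of the argument is to identify this facet hyperplane with the supporting hyperplane $\vv^{\perp}=\{\mm\in\mathbb{R}^n\mid \vv^{\top}S\mm=0\}$ of a genuine wall of $\mathfrak{D}_0(B)$: by the tropical duality between $C$- and $G$-matrices the normal of that facet is, up to the symmetrizer $S$, the $c$-vector $\cc_{k;t}$, which by sign-coherence lies in $\pm\ZZ_{\geq 0}^n$ and thus supplies the required wall vector $\vv=|\cc_{k;t}|\in\ZZ_{\geq 0}^n$. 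One then computes the wall-crossing automorphism across this facet and checks that it fixes every $\gg_{j;t}$ with $j\neq k$ (these lie on the wall) while sending $\gg_{k;t}$ to the mutated vector $\gg_{k;t'}$ of \eqref{g-recursion}; this identifies the adjacent chamber $\mathcal{C}'$ with $\mathcal{C}_{t'}$, where $t'$ is the $\mu_k$-neighbour of $t$.

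Completeness follows formally: every reachable chamber is connected to $(\mathbb{R}_{>0})^n$ by a finite transverse path, $\TT_n$ is connected, and the inductive step shows that each chamber encountered along the way is of the form $\mathcal{C}_t$ for some $G$-matrix, while conversely each $G$-matrix cone is reachable. The main obstacle is exactly the wall-crossing computation of the previous paragraph: one must verify that the path-ordered product $E_\rho$ attached to crossing the shared facet realizes the piecewise-linear transformation \eqref{g-recursion} and that any walls interior to the passage between $\mathcal{C}_t$ and $\mathcal{C}_{t'}$ contribute trivially. This is precisely where the consistency of $\mathfrak{D}_0(B)$ from Lemma~\ref{consistentdiagram} is indispensable, since it forces the composite of wall-crossings across the common boundary to depend only on the two endpoints, so that the local mutation formula glues coherently across the whole diagram. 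The remaining verification, that distinct $G$-matrices yield distinct cones, is routine given the linear independence of the $\gg_{j;t}$.
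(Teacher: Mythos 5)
The paper does not prove Lemma \ref{gvectortheorem} at all; it is quoted verbatim from \cite{GHKK}*{Lemma 2.10}, so there is no internal argument to compare against, and your text should be judged as a standalone proof attempt. On those terms it has a genuine gap at its central step. You attach to each $t\in\TT_n$ the cone $\mathcal{C}_t=\mathbb{R}_{>0}\gg_{1;t}+\cdots+\mathbb{R}_{>0}\gg_{n;t}$ and propose to cross the facet opposite $\gg_{k;t}$, but for this to make sense you must already know (i) that no wall of $\mathfrak{D}_0(B)$ meets the interior of $\mathcal{C}_t$, so that $\mathcal{C}_t$ really is a single chamber rather than a union of several, and (ii) that the facet in question is covered by walls whose combined attached automorphism is the single elementary transformation $E_{\vv}$ with $\vv$ the sign-corrected $c$-vector $\cc_{k;t}$, so that crossing it reproduces \eqref{g-recursion}. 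Neither is supplied by the consistency of Lemma \ref{consistentdiagram}: consistency only makes the path-ordered product $E_\rho$ independent of the path; it says nothing about which walls exist or where they sit, and the order-by-order construction of $\mathfrak{D}_0(B)$ produces infinitely many walls in general whose position relative to a given $G$-matrix cone is precisely what must be controlled. In \cite{GHKK} this control comes from the mutation invariance of consistent scattering diagrams (a piecewise-linear map $T_k$ carries $\mathfrak{D}_0(B)$ to $\mathfrak{D}_0(\mu_kB)$ up to equivalence), which transports the statement ``the all-positive orthant is a chamber'' from one seed to its mutation; your outline never invokes anything playing this role, and without it the induction does not close.

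A secondary point: sign-coherence (Theorem \ref{thm:signs-g}) does not by itself give linear independence of the $\gg_{j;t}$; that follows from $\det G_t^{B;t_0}=\pm1$, i.e.\ from the $C$--$G$ (tropical) duality, which is also what you need to identify the facet normal with $\cc_{k;t}$ up to the symmetrizer $S$. These are true facts, but they are inputs you would have to state and source rather than consequences of what you cite.
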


\subsection{Enough $g$-pairs property}
The definition of $g$-pairs was introduced in \cite{cl2} for cluster algebras with principal coefficients with the aim to study $d$-vectors. By our convention of $g$-vectors and Proposition \ref{independentP}, it generalizes to cluster algebras with arbitrary coefficients directly.

\begin{definition}\label{def:g-pair}
Let $\Acal(B)$ be a skew-symmetrizable cluster algebra, and $U$ be a subset of a non-labelled cluster $\xx$. Let $\xx_t$, $\xx_{t'}$ be two non-labeled clusters of $\Acal(B)$. 
The pair $(\xx_t , \xx_{t'})$ of clusters is called a \emph{$g$-pair} {associated with $U$ if it satisfies the
following two conditions.
\begin{itemize}
 \item [(a)] $U$ is a subset of $\xx_{t'}$ ;
 \item[(b)] Each row of $G_t^{B_{t'};t'}$ corresponding to $\xx\setminus U$ is a nonnegative vector.
\end{itemize}}
\end{definition}

{\begin{remark}
In \cite{cl2}, the $g$-pair is defined by using $R_t^{t'}:=(G_{t'}^{B;t_0})^{-1}G_{t}^{B;t_0}$ instead of $G_t^{B_{t'};t'}$. In fact, Definition \ref{def:g-pair} is equivalent to the original definition \cite{cl2}*{Definition 6}. This fact is given by \cite{cao}*{Corollary 7.18}.
\end{remark}}

Let $\Acal(B)$ be a cluster algebra of rank $n$ with the rooted vertex $t_0$, and $I=\{i_1,\dots,i_p\}$ be a subset of $\{1,2,\dots,n\}$. We say that a seed $\Sigma_t=(\xx_t,\yy_t, B_t)$ of $\Acal(B)$ is \emph{connected with $(\xx, \yy, B)$ by an $I$-sequence}, if there exists a composition of mutations $\mu_{k_s}\cdots\mu_{k_2}\mu_{k_1}$, such that $(\xx_t,\yy_t, B_t)=\mu_{k_s}\cdots\mu_{k_2}\mu_{k_1}(\xx, \yy, B)$, where $k_1,\dots,k_s\in I$.



\begin{theorem}[\cite{cl2}*{Theorems 8,9}]\label{enoughtheorem}
Let $\Acal(B)$ be a cluster algebra of rank $n$ with arbitrary coefficients. For any subset $U\subset\xx$ and any cluster $\xx_t$, there exists a seed $\Sigma_{t'}=(\xx_{t'},\yy_{t'},B_{t'})$ uniquely such that 
\begin{itemize}
  \item [(1)] the seed $\Sigma_{t'}=(\xx_{t'},\yy_{t'},B_{t'})$ is connected with the initial seed by an $I$-sequence, where $I=\{1,\dots,n\}\setminus J$ is the subset of $\{1,\dots,n\}$ such that $U=\{x_{j;t_0}\mid j\in J\}$.
  \item[(2)] $(\xx_t,\xx_{t'})$ is a $g$-pair associated with $U$.
\end{itemize}
\end{theorem}

We often refer to Theorem \ref{enoughtheorem} the {\it enough $g$-pairs property} of $\Acal(B)$.

{\begin{example}
We consider the cluster algebra in Example \ref{ex:prinA2}. We set $t=t_3$, that is, 
\begin{align*}
\xx_{t}=\left(\dfrac{x_1y_1y_2+y_1+x_2}{x_1x_2},\dfrac{y_1+x_2}{x_1}\right)
\end{align*}
The vertices in $\TT_2$ connected to $t_0$ are $t_{-1}$, $t_0$, and $t_1$.The $G$-matrices associated these vertices are given by Table \ref{g-mattt_-1t_1}.
\end{example}
\begin{table}[ht]
\begin{equation*}
\begin{array}{|c|c|c|c|}
\hline
&&&\\[-4mm]
t'& \hspace{0mm}t_{-1}&t_0 & t_1 \\
\hline
&&&\\[-3mm]
G_{t}^{B_t';{t'}}& \begin{bmatrix}
 1 & 1 \\
 -1 & 0
\end{bmatrix}& \begin{bmatrix}
 -1 & -1 \\
 0 & 1
\end{bmatrix}
& \begin{bmatrix}
 -1 & 0 \\
 0 & -1
\end{bmatrix}
 \\[4mm]
\hline
\end{array}
\end{equation*}
\caption{$G$-matrices $G_{t}^{B_{t'};t'}$}\label{g-mattt_-1t_1}
\end{table}
Thus, $(\xx_{t_3},\xx_{t_{0}})$ is the $g$-pair associated with $U=\{x_1\}$, and $(\xx_{t_3},\xx_{t_{-1}})$ is the $g$-pair associated with $U=\{x_2\}$.}
\section{$d$-vectors versus $f$-vectors}
Today, it is known that the $d$-vectors and the $f$-vectors are different vectors essentially, but it was pointed out that these two classes of vectors have similarities. Fomin and Zelevinsky expected that $d$-vectors of non-initial cluster variables coincide with $f$-vectors of them in any cluster algebras \cite{fziv}*{Conjecture 7.17}. A counterexample of it was given by \cite{fk}*{Example 6.7}, but it is known that this conjecture is true in cluster algebras of rank 2 and of finite type:
\begin{theorem}[\cite{g}*{Theorem 1.8, Remark 1.9}]\label{f=d}
\noindent
\begin{itemize}
\item[(1)]We fix an arbitrary cluster algebra of finite type. For any $i\in\{1,\dots,n\}$ and $t\in\TT_n$, we have the following relation:
\begin{align}\label{fdeq}
\ff_{i;t}=[\dd_{i;t}]_+,
\end{align}
\item[(2)]We fix an arbitrary cluster algebra of rank $2$. For any $i\in\{1,2\}$ and $t\in\TT_2$, we have the relation \eqref{fdeq}.
\end{itemize}
\end{theorem}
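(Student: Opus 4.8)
The plan is to exploit that both $\dd_{i;t}$ and $\ff_{i;t}$ are governed by explicit mutation recursions along $\TT_n$ — the $d$-recursion \eqref{dvectorrecursion} and the $f$-recursion \eqref{f-recursion} — and to argue by induction on the distance of $t$ from the rooted vertex $t_0$. Both sides of \eqref{fdeq} agree at $t_0$, since $\dd_{i;t_0}=-\ee_i$ gives $[\dd_{i;t_0}]_+=\mathbf{0}=\ff_{i;t_0}$, and for $j\neq k$ the recursions propagate the equality unchanged across an edge of $\TT_n$ labelled $k$ joining $t$ and $t'$. Thus everything reduces to the mutated index $j=k$. Comparing the two recursions there, the essential discrepancy is twofold: the $f$-recursion carries the extra tropical terms $[\cc_{k;t}]_+$ and $[-\cc_{k;t}]_+$ inside the maximum, and the positive-part operation $[\,\cdot\,]_+$ does not commute with either the componentwise maximum or the nonnegative linear combinations $\sum_i[b_{ik;t}]_+\dd_{i;t}$ appearing in \eqref{dvectorrecursion}. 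Controlling these two phenomena is precisely what forces the hypotheses of finite type or rank $2$, and is the reason the identity fails in general.

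For part (2) the tree $\TT_2$ is a single path, so the clusters are indexed by $\ZZ$ and the two mutation directions alternate; the $c$-vectors, $d$-vectors and $f$-vectors therefore satisfy two-term recurrences with a transparent two-periodic sign pattern. I would first record that in rank $2$ every non-initial $d$-vector is nonnegative, so that $[\dd_{i;t}]_+=\dd_{i;t}$ away from the finitely many seeds containing an initial variable, and that by the sign-coherence of the $C$-matrices (Theorem~\ref{thm:signs-ci}) exactly one of $[\cc_{k;t}]_+,\ [-\cc_{k;t}]_+$ is nonzero. Using these two facts one checks directly that, at the mutated index, the active branch of the maximum in \eqref{f-recursion} matches the active branch in \eqref{dvectorrecursion} and that the surviving $c$-term exactly accounts for the shift between the initial conditions $\ff_{i;t_0}=\mathbf{0}$ and $\dd_{i;t_0}=-\ee_i$; the verification is a short finite computation once the sign pattern is fixed.

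For part (1) I would first reduce the non-simply-laced types $B,C,F,G$ to the simply-laced ones by folding, since $d$-vectors and $F$-polynomials are compatible with the folding group action. For simply-laced finite type I would run the same induction as above, now using the denominator theorem of Fomin and Zelevinsky \cites{fzii,fziv}, which says that the $d$-vector of a non-initial cluster variable is the coordinate vector of a positive root and hence nonnegative, while $d$-vectors of initial variables are exactly the $-\ee_l$. This restricts the signs of the $\dd_{i;t}$ enough that, together with sign-coherence of the $\cc_{k;t}$, one can determine which branch of the maximum is active in each recursion and check that $[\,\cdot\,]_+$ commutes with the operations on that active branch. Equivalently, one may identify both $\ff_{i;t}$ and $[\dd_{i;t}]_+$ with the coordinate vector of the almost positive root attached to the cluster variable and observe that they coincide.

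The main obstacle I expect is the sign bookkeeping at the mutated index: one must show that the two extra terms $[\pm\cc_{k;t}]_+$ in the $f$-recursion, together with the discrepancy in initial conditions, conspire to let $[\,\cdot\,]_+$ pass through the maximum and the linear combinations correctly. In finite type and rank $2$ this is possible because the only negative $d$-vectors are the $-\ee_l$ and they appear in a controlled, sign-coherent manner; the failure of exactly this control — a non-initial $d$-vector acquiring a negative entry, or a clash of signs inside the maximum — is what produces the higher-rank counterexample recorded in \cite{fk}.
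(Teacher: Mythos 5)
The paper does not prove this statement at all: it is imported verbatim from \cite{g}*{Theorem 1.7}, so there is no internal argument to compare against, and your proposal has to stand on its own. As it stands it is a plan rather than a proof, and the plan leaves the genuinely hard step unresolved. The base case at $t_0$ and the propagation for $j\neq k$ are fine, but at the mutated index everything hinges on two assertions you never establish: (i) that the componentwise maxima in \eqref{dvectorrecursion} and \eqref{f-recursion} are resolved by matching branches, and (ii) that the surviving term $[\pm\cc_{k;t}]_+$ exactly compensates both for the negative parts $-\ee_l$ of the $d$-vectors of whatever initial variables happen to sit in $\xx_t$ and for the absence of any $c$-term in the $d$-recursion. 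Note that these are not the same correction: far from $t_0$ (already in rank $2$) all relevant $d$-vectors are nonnegative, so the inductive hypothesis gives $\ff_{i;t}=\dd_{i;t}$ on the nose, and yet the winning branch of \eqref{f-recursion} still carries the nonzero vector $|\cc_{k;t}|$; for the identity to propagate one needs a nontrivial identity expressing $|\cc_{k;t}|$ in terms of the difference of the two branches of the $d$-recursion. That identity is precisely the $c$-versus-$d$ duality that holds in finite type and rank $2$ but fails in general, and "a short finite computation once the sign pattern is fixed" is exactly where the entire content of the theorem lives. There is also no a priori reason the vector $\cc_{k;t}$ (attached to the mutating direction $k$) knows which initial variables occupy the \emph{other} positions of $\xx_t$, which is what (ii) implicitly requires.

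For part (1) the proposed shortcut --- identify both $\ff_{i;t}$ and $[\dd_{i;t}]_+$ with the coordinate vector of the attached almost positive root and "observe that they coincide" --- is close to assuming the conclusion: the denominator theorem identifies $[\dd_{i;t}]_+$ with the root, but the identification of $\ff_{i;t}$ with that same root is the statement to be proved, and you supply no independent argument for it (one exists in the simply laced case via categorification, $\ff_{i;t}=\underline{\dim}\,M_{i;t}$ together with Gabriel's theorem, but it must be invoked explicitly). Likewise the folding reduction for types $B,C,F,G$ needs the compatibility of $F$-polynomials and $f$-vectors with folding to be stated and justified rather than asserted. A route that actually closes the rank-$2$ case is the one taken in \cite{g} and implicitly used later in this paper: compute the $D$-matrices in closed form from \cite{llz}*{(1.13)} and the $F$-matrices in closed form by an independent Chebyshev-polynomial induction, and compare the two families entry by entry, instead of trying to push $[\,\cdot\,]_+$ through the coupled recursions.
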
 
In this section, we show a weaker similarity of these two families of vectors than \eqref{f=d} in general cluster algebras.

Fomin and Zelevinsky conjectured the following properties about $d$-vectors in \cite{fziv}*{Conjecture 7.4}, which was proved in \cite{cl2}:
\begin{theorem}[{\cite{cl2}*{Theorem 11}}]\label{dtheorem}
Let $\Acal(B)$ be any cluster algebra. The following statements hold:
\begin{itemize}
\item[(1)] A cluster variable $x_{i;t}$ is not one of the initial cluster variables if and only if $\dd_{i;t}$ is a non-negative vector.
\item[(2)] The $(i,j)$ entry $d_{ij;t}^{B_{t'};t'}$ of $D_{t}^{B_{t'};t'}$ equals the $(k,\ell)$ entry $d_{k\ell;s'}^{B_{s};s}$ of $D_{s'}^{B_{s};s}$ if $x_{i;t'}=x_{k;s'}$ and $x_{j;t}=x_{\ell;s'}$.
\item[(3)] There is a cluster containing $x_{i;t}(\neq x_k)$ and $x_k$ if and only if $d_{ki;t}=0$.
\end{itemize}
\end{theorem}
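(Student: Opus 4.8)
The plan is to establish all three statements inside the principal-coefficient cluster algebra $\Acal_\bullet(B)$. This is legitimate because, by the remark following Proposition~\ref{independentP}, the $d$-vectors and the question of which cluster variables share a common cluster depend only on $B$, not on the coefficient semifield. Working with principal coefficients makes the separation formula \eqref{eq:xjt=F/F} available, and the whole argument rests on combining it with the enough $g$-pairs property (Theorem~\ref{enoughtheorem}). The first step is to convert each coordinate of $\dd_{i;t}$ into $g$-vector and Newton-polytope data. Writing $F_{i;t}^{B;t_0}(\yy)=\sum_{a}c_a\,\yy^{a}$ with $c_0=1$ (Proposition~\ref{fconstant1}) and substituting $\hat y_m=y_m\prod_\ell x_\ell^{b_{\ell m}}$ into \eqref{eq:xjt=F/F}, every exponent of $x_k$ occurring in the Laurent expansion of $x_{i;t}$ has the shape $g_{ki;t}+(Ba)_k$, while the attached coefficients $c_a\yy^{a}$ are units of $\ZZ\PP=\ZZ[y_1^{\pm1},\dots,y_n^{\pm1}]$ and hence do not affect divisibility by $x_k$. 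Reading off the most negative such exponent gives the bridge relation
\begin{align}\label{eq:d-from-g-F}
d_{ki;t}=-g_{ki;t}-\min_{c_a\neq 0}(Ba)_k,
\end{align}
which turns statements about denominators into statements about $g$-vectors in the scattering diagram of Lemma~\ref{gvectortheorem}.

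For statement (3) I would apply the enough $g$-pairs property with $I=\{1,\dots,n\}\setminus\{k\}$. Since an $I$-sequence never mutates in direction $k$, the partner cluster $\xx_{t'}$ of the $g$-pair $(\xx_t,\xx_{t'})$ still contains the initial variable $x_k$, and the matching condition produces a cluster variable $x_{\ell;t'}$ with $\ell\neq k$ such that $\pi_I(\gg_{i;t})=\pi_I(\gg_{\ell;t'})$, i.e.\ $\gg_{i;t}$ and $\gg_{\ell;t'}$ agree in every coordinate except possibly the $k$-th. The crux is to show that $\gg_{i;t}=\gg_{\ell;t'}$, equivalently $x_{i;t}=x_{\ell;t'}$ since $g$-vectors separate cluster variables, holds precisely when $d_{ki;t}=0$: by \eqref{eq:d-from-g-F} the condition $d_{ki;t}=0$ pins down the remaining $k$-th coordinate of $\gg_{i;t}$, forcing the two $g$-vectors to coincide, whence $x_{i;t}=x_{\ell;t'}$ lies in the cluster $\xx_{t'}$ together with $x_k$. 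The reverse implication uses the uniqueness clause of Theorem~\ref{enoughtheorem}: if some cluster contains both $x_{i;t}$ and $x_k$, that cluster is forced to be the $g$-pair partner, so $x_{i;t}=x_{\ell;t'}$ and hence $d_{ki;t}=0$.

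Statement (1) then flows from the same analysis. An honest (non-initial) cluster variable is exactly one whose $F$-polynomial is non-constant, equivalently one whose $g$-vector is not a standard basis vector; the initial variables have $\dd=-\ee_k$ and are the only exceptions, which settles one direction. For the converse one must show $d_{ki;t}\geq 0$ for every $k$, i.e.\ via \eqref{eq:d-from-g-F} that the Newton polytope of $F_{i;t}$ always contains an exponent vector $a$ with $(Ba)_k\leq -g_{ki;t}$; here sign-coherence of the $G$-matrices (Theorem~\ref{thm:signs-g}) and the chamber structure of Lemma~\ref{gvectortheorem} supply the needed control. Statement (2) is a base-point independence: the entry in question is the multiplicity of the initial variable $x_{i;t'}$ in the denominator of $x_{j;t}$, and the coefficient-independence of $d$-vectors combined with the recursion \eqref{dvectorrecursion}, transported between the two root vertices by enough $g$-pairs, shows that it depends only on the ordered pair $(x_{i;t'},x_{j;t})$ of cluster variables.

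I expect the main obstacle to be the non-negativity in statement (1) and, equivalently, the forward direction of the crux in statement (3): proving that $d_{ki;t}=0$ genuinely forces the $g$-pair matching to be an equality of cluster variables rather than merely an equality of $I$-projections. This is exactly the point where a coordinate-by-coordinate argument is insufficient and one must invoke the global rigidity carried by the enough $g$-pairs property together with the $G$-matrix chamber description; the explicit relation \eqref{eq:d-from-g-F} is what makes that rigidity usable, while the known special cases $\ff_{i;t}=[\dd_{i;t}]_+$ of Theorem~\ref{f=d} serve as a consistency check on the formula.
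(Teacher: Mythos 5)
First, note that the paper does not prove Theorem~\ref{dtheorem} at all: it is imported verbatim from \cite{cl2}*{Theorem 6.3}, and the ingredient that actually carries the proof there is reproduced in this paper as Lemma~\ref{d-gtheorem} (\cite{cl2}*{Lemma 5.2}), which relates the $k$-th entry of the $g$-vector, the $k$-th entry of the $d$-vector \emph{with respect to the $g$-pair partner $\xx_{t'}$}, and membership $x_{i;t}\in\xx_{t'}$. Your proposal never invokes this lemma and instead tries to rebuild it from the bridge relation \eqref{eq:d-from-g-F}. That relation is correct (with principal coefficients the monomials $\yy^a$ prevent cancellation, so the minimal exponent of $x_k$ is indeed $g_{ki;t}+\min_{c_a\neq0}(Ba)_k$), but the step you yourself flag as the crux --- that $d_{ki;t}=0$ ``pins down'' the $k$-th coordinate of $\gg_{i;t}$ and forces $\gg_{i;t}=\gg_{\ell;t'}$ --- is asserted, not proved, and it is precisely the content of Lemma~\ref{d-gtheorem}. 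Two further problems sit upstream of that: the enough $g$-pairs property matches $x_{i;t}$ with a cluster \emph{monomial} $\xx_{t'}^{\vv'}$, not with a single cluster variable $x_{\ell;t'}$, and reducing to a single variable is a nontrivial step you skip; and the quantity controlled by the $g$-pair is the $d$-vector of $x_{i;t}$ relative to $\xx_{t'}$, whereas the theorem concerns $d_{ki;t}$ relative to $\xx_{t_0}$, so you need statement (2) to transport between base clusters --- yet your proof of (2) is itself only a one-sentence gesture, which makes the argument circular as written.

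The reverse implication of (3) is also wrong as stated: a cluster containing both $x_{i;t}$ and $x_k$ is \emph{not} ``forced to be the $g$-pair partner'' of $\xx_t$ by the uniqueness clause of Theorem~\ref{enoughtheorem}, since being a $g$-pair requires the matching condition for all cluster monomials, not merely containment of one variable. The correct route is elementary: relative to such a cluster $\xx_s$ the $d$-vector of $x_{i;t}$ is $-\ee_j$ with $j$ the position of $x_{i;t}$ in $\xx_s$, so its entry in the position of $x_k$ is $0$, and one concludes by base-point independence (2). Finally, for (1) your converse direction (``sign-coherence and the chamber structure supply the needed control'') is again only a pointer, and your characterization of non-initial variables as those with non-constant $F$-polynomial is essentially Theorem~\ref{ftheorem}(1), which this paper \emph{derives from} Theorem~\ref{dtheorem}(1) and (3) --- so it cannot be used as an input here. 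In short, the toolkit you assemble (separation formula, enough $g$-pairs, sign-coherence) is the right one and \eqref{eq:d-from-g-F} is a genuinely useful identity, but every one of the three statements still rests on an unproven step, so this is an outline rather than a proof.
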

{In this section, we prove the following theorem, which is the $f$-vector version of Theorem \ref{dtheorem}.}
\begin{theorem}\label{ftheorem}
Let $\Acal(B)$ be any cluster algebra. The following statements hold:
\begin{itemize}
\item[(1)] A cluster variable $x_{i;t}$ is not one of the initial cluster variables if and only if $\ff_{i;t}$ is a non-zero vector.
\item[(2)] The $(i,j)$ entry $f_{ij;t'}^{B_{t};t}$ of $F_{t'}^{B_{t};t}$ equals the $(k,\ell)$ entry $f_{k\ell;s'}^{B_{s};s}$ of $F_{s'}^{B_{s};s}$ if $x_{i;t}=x_{k;s}$ and $x_{j;t'}=x_{\ell:s'}$.
\item[(3)] There is a cluster containing $x_{i;t}$ and $x_k$ if and only if $f_{ki;t}=0$.
\item[(4)]A cluster $\xx_t$ contains $x_k$ if and only if entries of the $k$th row of $F_t^{B:t_0}$ are all $0$.
\end{itemize}
\end{theorem}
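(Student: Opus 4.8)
The plan is to prove the four statements of Theorem~\ref{ftheorem} about $f$-vectors by mirroring the structure of the $d$-vector theorem (Theorem~\ref{dtheorem}), exploiting the parallel between the $d$-vector recursion~\eqref{dvectorrecursion} and the $f$-vector recursion~\eqref{f-recursion}, together with the structural results already available: the enough $g$-pairs property (Theorem~\ref{enoughtheorem}), sign-coherence (Theorems~\ref{thm:signs-ci} and~\ref{thm:signs-g}), the separation formulas (Proposition~\ref{pr:separation}), and the scattering-diagram description of reachable chambers (Lemma~\ref{gvectortheorem}). The overall strategy is to establish (1) and (2) first, since they are the basic consistency and well-definedness statements, then deduce (3) as the key geometric characterization, and finally obtain (4) as an immediate reformulation of (3).

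For statement~(1), I would argue that $x_{i;t}$ is non-initial if and only if $\ff_{i;t}\neq\mathbf{0}$. The forward direction is the substantive one: I would track the $f$-vector recursion~\eqref{f-recursion} along a mutation path from $t_0$, using Proposition~\ref{fconstant1} (every $F$-polynomial has constant term $1$) to guarantee that once a cluster variable genuinely differs from every initial one, its $F$-polynomial is a nonconstant polynomial, so its maximal-degree vector $\ff_{i;t}$ has a positive entry. The reverse direction follows because an initial cluster variable $x_j$ has $F$-polynomial identically $1$, hence $\ff_{j;t_0}=\mathbf{0}$; combined with the coefficient-independence remark that $\ff_{i;t}$ depends only on the cluster variable $x_{i;t}$ (the Remark following Proposition~\ref{independentP}), this gives the equivalence. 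For statement~(2), the content is that the entries of $F$-matrices are attached to unordered \emph{pairs} of cluster variables rather than to tree positions: if $x_{i;t}=x_{k;s}$ and $x_{j;t'}=x_{\ell;s'}$ then the corresponding $f$-matrix entries agree. I would prove this by invoking the same Remark, which already records that $\ff_{i;t}$ and $F^{B;t_0}_{i;t}(\yy)$ are invariants of the cluster variable itself, and then noting that the $f$-vector computed with base seed at $t$ versus at $s$ agrees entry-by-entry once the initial data are matched along the relabeling.

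The crux is statement~(3): there is a cluster containing both $x_{i;t}$ and $x_k$ if and only if $f_{ki;t}=0$. This is the compatibility criterion and is where the real work lies. The plan is to use the enough $g$-pairs property (Theorem~\ref{enoughtheorem}): given the subset $I=\{k\}$ and the cluster $\xx_t$, there is a cluster $\xx_{t'}$ connected with $\xx_{t_0}$ by a $\{k\}$-sequence forming a $g$-pair along $\{k\}$ with $\xx_t$. Using the $g$-pair condition on the projection $\pi_{\{k\}}$ of $g$-vectors, together with the scattering-diagram description of reachable chambers in Lemma~\ref{gvectortheorem}, I would reduce the existence of a common cluster to the vanishing of the relevant $f$-vector coordinate. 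Concretely, I expect to relate the $k$th coordinate $f_{ki;t}$ to whether $x_{i;t}$ survives (keeps $x_k$ in its cluster) under the localized mutation sequence, invoking the analogous $d$-vector statement Theorem~\ref{dtheorem}(3) as a template and transporting the argument to $f$-vectors via the recursions. \textbf{The hard part} will be making the scattering-diagram/enough-$g$-pairs reduction precise for $f$-vectors, since unlike $d$-vectors the $f$-recursion~\eqref{f-recursion} carries an extra $[\pm\cc_{k;t}]_+$ term coming from sign-coherence, so I must verify that this $c$-vector contribution does not spoil the vanishing criterion; this is exactly where Theorem~\ref{thm:signs-ci} is needed to control signs. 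Finally, statement~(4) is immediate from~(3): the cluster $\xx_t$ contains $x_k$ precisely when $x_k$ is simultaneously compatible with each $x_{i;t}$ (indeed $x_k$ lies in a common cluster with every $x_{i;t}$, namely $\xx_t$ itself), which by~(3) means $f_{ki;t}=0$ for all $i$, i.e. the entire $k$th row of $F_t^{B;t_0}$ vanishes; conversely a vanishing $k$th row forces $x_k$ to be compatible with all of $\xx_t$, and since $\xx_t$ is a maximal simplex this forces $x_k\in\xx_t$.
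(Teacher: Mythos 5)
Your outline reproduces the easy parts of the theorem but omits the actual mechanisms needed for (1), (2) and (3). For (1), the substantive implication is that $\ff_{i;t}=\mathbf{0}$ forces $x_{i;t}$ to be initial, and ``tracking the recursion'' together with Proposition~\ref{fconstant1} does not establish this: the recursion \eqref{f-recursion} involves a componentwise maximum, and nothing in it a priori prevents a non-initial cluster variable from having constant $F$-polynomial. What must be ruled out is that a non-initial cluster variable is a Laurent monomial in the initial ones; the paper does this by writing $x_{i;t}=\prod_j x_j^{g_{ji}}$ via the separation formula \eqref{eq:xjt=F/F}, constraining $\gg_{i;t}=-\dd_{i;t}$ by Theorem~\ref{dtheorem}(1), and then using sign-coherence of $G$-matrices together with the scattering-diagram description of reachable chambers (Lemma~\ref{gvectortheorem}) to force $\gg_{i;t}=\ee_l$ after eliminating the case $\gg_{i;t}\in\ZZ_{\le 0}^n$ by a contradiction. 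None of this is in your plan. For (2), the Remark after Proposition~\ref{independentP} only gives invariance of $\ff^{B;t_0}_{j;t'}$ under change of the representative $(j,t')$ of the \emph{non-initial} variable for a \emph{fixed} initial vertex; statement (2) additionally requires that the $i$th row of the $F$-matrix is unchanged when the \emph{initial} seed is moved from $t$ to $s$ along mutations avoiding direction $i$. That is exactly the content of the initial-seed-mutation formula \eqref{eq:frs} (Lemma~\ref{initialseedmutation}), which you never invoke; without it, ``the $f$-vector computed with base seed at $t$ versus at $s$ agrees'' is a restatement of the claim, not a proof.

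For (3) there are two concrete problems. First, the $g$-pair must be taken along $I=\{1,\dots,n\}\setminus\{k\}$, not $I=\{k\}$ as you propose: one needs a cluster $\xx_{s'}$ reachable from $\xx_{t_0}$ without mutating in direction $k$, so that $x_{k;s'}=x_k$; a $g$-pair along $\{k\}$ only produces $\xx_{t_0}$ or $\mu_k(\xx_{t_0})$ and is useless here. Second, the bridge from $f_{ki;t}=0$ to $x_{i;t}\in\xx_{s'}$ is not supplied by the $d$-vector recursion or by worrying about the $[\pm\cc_{k;t}]_+$ term; it requires the identity $H^{B;t_0}_t=-[-G^{B;t_0}_t]_+$ (Lemma~\ref{prop:H=G}), which converts vanishing of an $f$-vector entry into a sign condition on the corresponding $g$-vector entry and then into cluster membership via Lemma~\ref{d-gtheorem}; the ``only if'' direction moreover already uses (2). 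By contrast, your argument for (4) is essentially sound and even shorter than the paper's route through the self-duality \eqref{eq:F-opposite}, provided you justify the passage from pairwise compatibility to a common cluster by Lemma~\ref{compatibility} applied with $\xx_t$ taken as the initial cluster.
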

\begin{remark}
Because of Proposition \ref{independentP}, it suffices to show Theorem \ref{ftheorem} in the case of cluster algebras with principal coefficients. In fact, for example, we assume that Theorem \ref{ftheorem} (1) holds in principal coefficient cases and there is a cluster algebra $\Acal_1(B)$ with coefficients in $\PP_1$ which does not satisfy Theorem \ref{ftheorem} (1). Then, there is a non initial cluster variable $x(1)$ whose $f$-vector is zero. Then, for any $i$, we have $x(1) \neq x_i(1)$ in $\Acal_1(B)$. Since $f$-vectors are independent of coefficients, we have a cluster variable $x$ whose $f$-vector is zero in $\Acal_{\bullet}(B)$ such that for any $i$, we have $x\neq x_i$ by Proposition \ref{independentP}. This is a contradiction. 
Therefore, we assume $\Acal(B)=\Acal_{\bullet}(B)$ in the proof of Theorem \ref{ftheorem}.
\end{remark}
The first statement implies there are no cluster variables whose expansions in the initial cluster variables are Laurent monomials except for the initial cluster variables, and this is a generalization of \cite{g}*{Corollaries 3.2, 4.2}. The second statement is important for defining the compatibility degree in the next section.
We prove Theorem \ref{ftheorem} in the rest of this section. 

To prove Theorem \ref{ftheorem} (1), we use the following two lemmas.

\begin{lemma}[{\cite{cl2}*{Lemma 2}}]\label{compatibility}
Let $\Acal(B)$ be any cluster algebra, and we fix any cluster variable $x$ of $\Acal(B)$. If, for all $i\in\{p+1,\dots,n\}$, there exists a cluster containing $x$ and $x_i$, then there exists a cluster containing $x$ and all the initial cluster variables $x_{p+1},\dots,x_{n}$.
\end{lemma}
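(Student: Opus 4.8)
The plan is to read both the hypothesis and the conclusion through the language of $d$-vectors and to realize the desired common cluster as the $g$-pair partner (Theorem~\ref{enoughtheorem}) of a cluster containing $x$. By the Remark following Theorem~\ref{ftheorem} together with Proposition~\ref{independentP}, compatibility of cluster variables is independent of the coefficient system, so I may assume $\Acal(B)=\Acal_\bullet(B)$ has principal coefficients at $t_0$. If $x$ is itself an initial cluster variable, then the initial cluster $\xx_{t_0}$ already contains $x$ together with $x_{p+1},\dots,x_n$ and there is nothing to prove; hence I assume $x$ is non-initial. Writing $I=\{1,\dots,p\}$ and $\dd=(d_1,\dots,d_n)^\top$ for the $d$-vector of $x$ with respect to $t_0$, Theorem~\ref{dtheorem}(3) tells us that $x$ and $x_i$ lie in a common cluster exactly when $d_i=0$. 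Thus the hypothesis is equivalent to $d_i=0$ for every $i\in\{p+1,\dots,n\}$, i.e. the $d$-vector of $x$ is supported on $I$; and the conclusion amounts to showing that $x$ occurs in some cluster obtained from $\xx_{t_0}$ by mutating only in the directions of $I$, since every such cluster retains the untouched initial variables $x_{p+1},\dots,x_n$.

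Next I would fix a cluster $\xx_t$ containing $x$, say $x=x_{j;t}$, and apply the enough $g$-pairs property (Theorem~\ref{enoughtheorem}) to the subset $I$ to obtain a cluster $\xx_{t'}$ for which $(\xx_t,\xx_{t'})$ is a $g$-pair along $I$. Because $\xx_{t'}$ is connected to $\xx_{t_0}$ by an $I$-sequence, which never mutates the directions $p+1,\dots,n$, the cluster $\xx_{t'}$ contains $x_{p+1},\dots,x_n$; it therefore suffices to prove $x\in\xx_{t'}$. Applying the defining property of the $g$-pair to the cluster monomial $\xx_t^{\ee_j}=x$, I obtain a cluster monomial $\xx_{t'}^{\vv'}$ with $v'_i=0$ for all $i\notin I$ and
\[
\pi_I(\gg_{j;t})=\pi_I(g(\xx_{t'}^{\vv'})).
\]
The strategy is then to promote this equality of the $I$-components to an equality of the full $g$-vectors $\gg_{j;t}=g(\xx_{t'}^{\vv'})$: granting this, the injectivity of the $g$-vector map on cluster monomials (a consequence of Lemma~\ref{gvectortheorem}, i.e. of the fact that the reachable chambers of $\mathfrak{D}_0(B)$ are the cones spanned by $G$-matrices, together with \cite{GHKK}) forces $\xx_{t'}^{\vv'}=x$. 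Since $x$ is a single cluster variable whereas $\xx_{t'}^{\vv'}$ is a cluster monomial, this compels $\vv'=\ee_k$ for some $k\in I$, whence $x=x_{k;t'}\in\xx_{t'}$ and the proof is complete.

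I expect the main obstacle to be precisely the matching of the complementary coordinates, namely $\pi_{I^c}(\gg_{j;t})=\pi_{I^c}(g(\xx_{t'}^{\vv'}))$. The difficulty is structural: cluster complexes are \emph{not} flag complexes, so the compatibility of $x$ with each individual $x_i$ $(i>p)$ does not formally entail joint compatibility, and there is no simple linear relation between $d$-vectors and $g$-vectors (via the separation formula, the denominator exponent is controlled by the full Newton polytope of the $F$-polynomial together with $B$, not by the $f$-vector alone). The plan to overcome this is to feed the support condition on $\dd$ through the separation formula (Proposition~\ref{pr:separation}): one shows that the $d$-vector of the matched monomial $\xx_{t'}^{\vv'}$ is again supported on $I$, and then reads off the $\pi_{I^c}$-components of both $g$-vectors using the sign-coherence of the $G$-matrices (Theorem~\ref{thm:signs-g}) to pin down signs. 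The uniqueness clause of Theorem~\ref{enoughtheorem} should finally be invoked to exclude the possibility that $\xx_{t'}^{\vv'}$ is a genuine higher monomial distinct from $x$.
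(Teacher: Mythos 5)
First, a point of comparison: the paper does not prove this lemma at all --- it is imported verbatim from Cao--Li (\cite{cl2}, Lemma 7.3) and used as a black box (e.g.\ in the proofs of Theorem \ref{ftheorem} (1) and Theorem \ref{fcompatibilityproperty}). So your attempt has to be judged against the machinery available, not against an in-paper argument. Your opening moves are sound: reducing to principal coefficients via Proposition \ref{independentP}, disposing of initial $x$, translating the hypothesis into ``$\dd(x)$ is supported on $I=\{1,\dots,p\}$'' via Theorem \ref{dtheorem} (1),(3), taking the $g$-pair $(\xx_t,\xx_{t'})$ along $I$ from Theorem \ref{enoughtheorem}, noting that $\xx_{t'}$ contains $x_{p+1},\dots,x_n$, and reducing everything to $x\in\xx_{t'}$. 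The endgame (injectivity of $g$-vectors on cluster monomials) is also fine.

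The genuine gap is the step you yourself flag: promoting $\pi_I(\gg_{j;t})=\pi_I(g(\xx_{t'}^{\vv'}))$ to equality of full $g$-vectors. This is where the hypothesis must enter --- if the promotion worked unconditionally, the argument would ``prove'' that \emph{every} cluster variable lies in a cluster with $x_{p+1},\dots,x_n$, which is false --- and your proposed mechanism does not supply it. Concretely: (i) the sub-claim that the $d$-vector of the matched monomial $\xx_{t'}^{\vv'}$ is supported on $I$ is automatic, since by the recursion \eqref{dvectorrecursion} every non-initial variable of an $I$-reachable cluster has $d$-vector supported on $I$; it therefore carries no information about $x$ and cannot force the match. (ii) Sign-coherence (Theorem \ref{thm:signs-g}) constrains signs within a single $G$-matrix, whereas here you must compare a column of $G_t$ with a nonnegative combination of columns of $G_{t'}$; the most one can extract from the support hypothesis (via Corollary \ref{f-dcorrespondence}, Proposition \ref{fconstant1} and Lemma \ref{prop:H=G}: $d_k=0\Rightarrow f_k=0\Rightarrow h_k=0\Rightarrow g_k\geq 0$) is that the $I^c$-coordinates on \emph{both} sides are nonnegative, which is far from the required equality of magnitudes. (iii) The uniqueness clause of Theorem \ref{enoughtheorem} pins down the cluster $\xx_{t'}$, not the exponent vector $\vv'$, so it cannot exclude a genuine higher monomial. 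To close the gap you would need either a general-$I$ analogue of Lemma \ref{d-gtheorem}/Lemma \ref{f-gtheorem} --- the paper states these only for $I$ the complement of a single index, and in that restricted form they only recover the pairwise compatibilities you already assumed --- or a different mechanism altogether, such as induction on $n-p$ by freezing $x_n$: by Lemma \ref{pathconnect} the clusters containing $x_n$ form the cluster pattern of the frozen algebra, $d$-vectors truncate under freezing, and Theorem \ref{dtheorem} (3) applied inside the frozen algebra restores the pairwise hypothesis there. Some such additional input, which is precisely the content of \cite{cl2}, is missing from your sketch: you have named the obstacle correctly but not overcome it.
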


\begin{lemma}[{\cite{cl2}*{Theorem 10}}]\label{pathconnect}
Let $\Acal(B)$ be any cluster algebra. We fix any subset $X$ of a cluster. Then, all seeds which have a cluster containing $X$ form a connected component of the exchange graph of $\Acal(B)$.
\end{lemma}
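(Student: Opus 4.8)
The plan is to read the statement through the geometry of the $g$-vector fan and to reduce connectivity to a reachability assertion that the enough $g$-pairs property can settle. Since neither the exchange graph nor the family $\mathcal{S}_X$ of seeds whose cluster contains $X$ depends on the choice of rooted vertex, I would first re-root at a seed with $X\subseteq\xx_{t_0}$ and relabel so that $X=\{x_1,\dots,x_p\}$; set $I=\{p+1,\dots,n\}$. Any edge of the exchange graph joining two seeds of $\mathcal{S}_X$ is necessarily a mutation in a direction of $I$ (it alters a variable outside $X$), and conversely an $I$-sequence issuing from $\xx_{t_0}$ stays inside $\mathcal{S}_X$ throughout. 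Hence it suffices to prove the single assertion that \emph{every} seed $\xx_t$ with $X\subseteq\xx_t$ is connected to $\xx_{t_0}$ by an $I$-sequence: once this is known, all vertices of $\mathcal{S}_X$ are joined to $\xx_{t_0}$ by paths lying in $\mathcal{S}_X$, so $\mathcal{S}_X$ is connected.

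For this reachability assertion I would invoke Theorem~\ref{enoughtheorem} for the set $I$ and the cluster $\xx_t$: it produces a seed $\xx_{t'}$ that \emph{is} connected to $\xx_{t_0}$ by an $I$-sequence and for which $(\xx_t,\xx_{t'})$ is a $g$-pair along $I$. The goal then becomes to prove $\xx_t=\xx_{t'}$. Here I would use that $x_1,\dots,x_p$ are initial variables with $g$-vectors $\ee_1,\dots,\ee_p$; since both $\xx_t$ and $\xx_{t'}$ contain $X$ and, by the recursion \eqref{g-recursion}, $I$-mutations never change the columns indexed by $\{1,\dots,p\}$, both $G$-matrices have the block shape $\left(\begin{smallmatrix} I_p & * \\ 0 & G'\end{smallmatrix}\right)$ with unimodular lower-right blocks $G'_t,G'_{t'}$. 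Applying the $g$-pair condition to the monomials $x_{j;t}$ for $j\in I$ then shows that each column of $G'_t$ is a nonnegative integer combination of columns of $G'_{t'}$, i.e.\ $\mathrm{cone}(G'_t)\subseteq\mathrm{cone}(G'_{t'})$ in $\RR^{\,n-p}$.

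The hard part will be to upgrade this inclusion of projected cones to an equality of the full clusters, and this is where I expect the real work to lie. The idea is to read the inclusion off the $g$-vector fan of $\mathfrak{D}_0(B)$. Write $\mathcal{C}_t,\mathcal{C}_{t'}$ for the chambers $\mathrm{cone}(G_t),\mathrm{cone}(G_{t'})$, which are genuine chambers by Lemma~\ref{gvectortheorem}, and let $\sigma=\RR_{\ge 0}\ee_1+\cdots+\RR_{\ge 0}\ee_p$ be their common face. Letting $q$ denote the quotient by $\mathrm{span}(\sigma)$, one has $q(\mathcal{C}_t)=\mathrm{cone}(G'_t)$ and $q(\mathcal{C}_{t'})=\mathrm{cone}(G'_{t'})$. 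At a point $z$ in the relative interior of $\sigma$ the tangent cones are $T_z\mathcal{C}_t=q^{-1}(\mathrm{cone}\,G'_t)$ and $T_z\mathcal{C}_{t'}=q^{-1}(\mathrm{cone}\,G'_{t'})$, so the inclusion above yields $T_z\mathcal{C}_t\subseteq T_z\mathcal{C}_{t'}$, hence $\mathcal{C}_t\subseteq\mathcal{C}_{t'}$ in a neighbourhood of $z$. Because the chambers are the closures of the connected components of the complement of the walls of the consistent diagram $\mathfrak{D}_0(B)$ (Lemma~\ref{consistentdiagram}), their interiors are pairwise disjoint and no wall crosses a chamber interior; a nonempty open piece of $\mathrm{int}\,\mathcal{C}_t$ lying in $\mathcal{C}_{t'}$ therefore forces $\mathrm{int}\,\mathcal{C}_t\subseteq\mathrm{int}\,\mathcal{C}_{t'}$, whence $\mathcal{C}_t=\mathcal{C}_{t'}$.

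Equal chambers have the same extremal rays, that is, the same $g$-vectors, and since a reachable chamber determines its $G$-matrix (Lemma~\ref{gvectortheorem}) and a cluster is recovered from its $g$-vectors via the separation formula (Proposition~\ref{pr:separation}), we conclude $\xx_t=\xx_{t'}$. This establishes the reachability assertion, and with the first paragraph the lemma follows. I expect the tangent-cone/fan step to be the genuine obstacle: it is precisely what converts the linear-algebraic data of the $g$-pair, which sees only the $I$-coordinates, into an identification of the entire seeds, and it rests on the fact that the reachable $g$-vector cones assemble into a fan with pairwise disjoint chamber interiors.
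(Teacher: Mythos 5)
The paper never proves this lemma itself---it is imported verbatim from \cite{cl2}*{Lemma 6.2}---so your proposal has to be measured against the Cao--Li argument the citation points to. Your skeleton coincides with theirs: re-root at a seed whose cluster contains $X$, reduce connectivity of the induced subgraph to the claim that every seed containing $X$ is reached from the root by an $I$-sequence, invoke Theorem~\ref{enoughtheorem} to produce the $g$-pair partner $\xx_{t'}$, and then prove $\xx_t=\xx_{t'}$. Your fan-geometric middle is, as far as it goes, correct: the block shape of the two $G$-matrices (using that $g$-vectors are attached to cluster variables and that a mutation in direction $k$ alters only the $k$th column in \eqref{g-recursion}), the inclusion $\operatorname{cone}(G'_t)\subseteq\operatorname{cone}(G'_{t'})$ extracted from the $g$-pair condition applied to $x_{j;t}$, $j\in I$, and the tangent-cone localization at a relative interior point of the common face $\sigma$, which together with the disjointness of chamber interiors (Lemma~\ref{gvectortheorem} plus unimodularity of $G$-matrices) does force $\operatorname{cone}(G_t)=\operatorname{cone}(G_{t'})$.

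The genuine gap is your endgame. Equal chambers give equal \emph{sets} of $g$-vectors, but the sentence ``a cluster is recovered from its $g$-vectors via the separation formula'' is not what Proposition~\ref{pr:separation} says: the separation formula reconstructs $x_{j;t}$ from its $g$-vector \emph{together with} its $F$-polynomial. To pass from $\gg_{j;t}=\gg_{i;t'}$ to $x_{j;t}=x_{i;t'}$ you need injectivity of the $g$-vector map on cluster variables (equivalently, that the $g$-vector determines the $F$-polynomial), a deep theorem in its own right (a consequence of the \cite{GHKK} machinery) that appears nowhere in the toolkit you cite; and your $g$-pair data cannot substitute for it, since the $g$-pair condition only asserts the existence of \emph{some} cluster monomial on $\xx_{t'}$ with matching $\pi_I$-projected $g$-vector---unimodularity of $G'_{t'}$ pins that monomial down to the single variable $x_{i;t'}$, but nothing in the definition identifies it with $x_{j;t}$. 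A second, related leak: even granting $\xx_t=\xx_{t'}$ as clusters, the lemma concerns \emph{seeds} (and is used at seed level in the proof of Theorem~\ref{ftheorem}~(2)); to place the seed $\Sigma_t$ itself on the $I$-path you would further need that a cluster (or a $G$-matrix) determines its seed, again an unproved nontrivial input, and one that cannot be borrowed from results downstream of this lemma without circularity. Cao--Li avoid both problems: rather than comparing cones, they convert the $g$-pair data directly into the membership statements $x_{i;t}\in\xx_{t'}$ via their sign/denominator criterion (the general-$I$ analogue of Lemma~\ref{d-gtheorem}), which rests on positivity of $d$-vectors rather than on $g$-vector injectivity. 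So your reduction and cone geometry are fine, but the final conversion of chamber equality into seed equality is exactly where the proof is still missing its key ingredient.
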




\begin{proof}[Proof of Theorem \ref{ftheorem} (1)]
The ``if" part is clear. We prove the ``only if "part by proving the contraposition of the statement. Since $\ff_{i;t}=0$ and by Proposition \ref{fconstant1}, we have $F_{i;t}(\yy)=1$. Then we have $x_{i;t}=\prod_j x_{j}^{g_{ji}}$ by the separation formula \eqref{eq:xjt=F/F}. We note that $\dd_{i;t}$ coincides with $-\gg_{i;t}$. According to Theorem \ref{dtheorem} (1), we have $\gg_{i;t}\in \ZZ_{\leq0}^n$ or $\gg_{i;t}=\ee_l$. We assume $\gg_{i;t}\in \ZZ_{\leq0}^n$. Without loss of generality, we can assume that
\begin{align*}
\gg_{i;t}=(a_1,\cdots, a_k,0,\cdots, 0)^\top\in \ZZ_{\leq 0}^n,\ a_1,\dots,a_k<0.
\end{align*}
By Theorem \ref{dtheorem} (3) and Lemma \ref{compatibility}, there exists a cluster $\xx$ such that $\xx$ contains $x_{i;t}$ and all the initial cluster variables $x_{k+1},\dots,x_{n}$. We set $\xx=(x_{i;t}, z_2,\dots, z_k, x_{k+1},\dots, x_{n})$. By the sign-coherence of the $G$-matrices (Theorem \ref{thm:signs-g}), the first $k$ components of $g$-vectors of $z_2,\dots, z_k$ lie in $\ZZ_{\leq 0}$. Then $G$-matrix associated with $\xx$ is a partitioned matrix $\begin{bmatrix} G&0\\ \ast&E_{n-k} \end{bmatrix}$, where each column of $G$ lies in $\ZZ_{\leq 0}^k$. We consider the cluster algebra $\Acal'$ by freezing all the initial cluster variables $x_{k+1},\dots, x_{n}$ of $\Acal(B)$. One can show that the $G$-matrix of the cluster $\tilde{\xx}=(x_{i;t}, z_2,\dots, z_k)$ of $\Acal'$ is $G$. We note that columns of $G$ are linearly independent because of $\det G=\pm1$. Then, according to Lemma \ref{gvectortheorem}, each column of $G$ is $-\ee_i$ and we have $k=1$. Thus we have $\gg_{i;t}=-\ee_1$ and $x_{i;t}=1/x_1$. We note that $\dd_{i;t}=\ee_{1}$ and thus there exists a cluster $\xx'$ such that $\xx'=\{1/x_1,x_2.\dots,x_n\}$ by Theorem \ref{dtheorem} (1) again. However, by Lemma \ref{pathconnect}, clusters containing $\{x_2,\dots,x_n\}$ are the initial cluster or a cluster which is obtained by mutating the initial cluster in direction $1$. Clearly, $\xx'$ is not the initial cluster. Since the numerator of $\xx'$ does not have any initial cluster variables, the entries of first column and the first row of $B$ are all 0. In this case, $\mu_1(x_1)=(y_1+1)/x_1$. Thus $\xx'\neq \mu_1(\xx_{t_0})$. This is a contradiction. Therefore, we have $\gg_{i;t}=-\dd_{i;t}=\ee_l$. Because of Theorem \ref{dtheorem} (1), we have $x_{i;t}=x_l$. This finishes the proof.
\end{proof}
Next, we prove Theorem \ref{ftheorem} (2). This statement follows from the fact that the $(i,j)$ entry of an $F$-matrix is invariant by mutations in direction $k$ such that $k\neq j$ and by initial-seed mutations in direction $\ell$ such that $\ell\neq i$. We prepare a lemma. This gives a recursion of the $F$-matrices by an initial-seed mutation.

Before describing the lemma, we introduce some notations. 
Let $J_{\ell}$ denote the $n\times n$ diagonal matrix obtained from the identity matrix $E_n$ by replacing the $(\ell, \ell)$ entry with $-1$.
For an $n\times n$ matrix $B=(b_{ij})$, let $[B]_+$ be the matrix obtained from $B$ by replacing every entry $b_{ij}$ with $[b_{ij}]_+$. 
Also, let $B^{k\bullet}$ be the matrix obtained from $B$ by replacing all entries outside of the $k$th row with zeros. Similarly, let $B^{\bullet k}$ be the matrix replacing all entries outside of the $k$th column.
Note that the maps $B\mapsto [B]_+$ and $B\mapsto B^{k\bullet}$ commute with each other, and the same is true for $B\mapsto [B]_+$ and $B\mapsto B^{\bullet k}$, so that the notations $[B]^{k\bullet}_+$ and $[B]^{\bullet k}_+$ make sense.
\begin{lemma}[{\cite{fg}*{Theorem 3.9}}]\label{initialseedmutation}

We set $\mu_k(B)=B_1$ and $\varepsilon \in \{\pm1\}$. We have 
\begin{gather} \label{eq:frs}
F^{B_1; t_1}_t=\big(J_k+[\varepsilon B]^{k\bullet}_+\big)F^{B; t_0}_t +\big[{-}\varepsilon G^{-B; t_0}_t\big]^{k\bullet}_+ +\big[\varepsilon G^{B; t_0}_t\big]^{k\bullet}_+.
\end{gather}
\end{lemma}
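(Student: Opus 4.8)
The plan is to prove \eqref{eq:frs} by induction on the distance in $\TT_n$ from the new rooted vertex $t_1$ to $t$, propagating the identity by means of the final-seed recursions \eqref{f-recursion} and \eqref{g-recursion}. The first observation is that only the $k$th row is at stake: the matrix $J_k+[\varepsilon B]^{k\bullet}_+$ agrees with the identity outside row $k$, and both correction terms $[-\varepsilon G^{-B;t_0}_t]^{k\bullet}_+$ and $[\varepsilon G^{B;t_0}_t]^{k\bullet}_+$ are supported on row $k$; this matches the expectation that changing the root by $\mu_k$ only alters the record of degrees in the $k$th initial coefficient. I would also record at the outset that the right-hand side is independent of the choice of $\varepsilon\in\{\pm1\}$: by the sign-coherence of the $C$- and $G$-matrices (Theorems \ref{thm:signs-ci}, \ref{thm:signs-g}), in each column exactly one of the two $G$-terms contributes, so the two values of $\varepsilon$ merely interchange the roles of those terms.

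For the base case take $t=t_0$. Here $F^{B;t_0}_{t_0}=0$ and $G^{B;t_0}_{t_0}=G^{-B;t_0}_{t_0}=E_n$, so for either sign of $\varepsilon$ the right-hand side collapses to the matrix $E_{kk}$ having a single $1$ in position $(k,k)$ and zeros elsewhere. On the other hand $t_0=\mu_k(t_1)$, so relative to the root $t_1$ the only non-initial cluster variable of the seed at $t_0$ is $x_{k;t_0}=x_k$, whose $F$-polynomial is $1+\tilde y_k$; hence $F^{B_1;t_1}_{t_0}=E_{kk}$ and the base case holds. For the inductive step, suppose \eqref{eq:frs} holds at $t$ and let $t''=\mu_m(t)$. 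Since mutation in direction $m$ changes only the $m$th columns of the $F$- and $G$-matrices, and the operator $M\mapsto (J_k+[\varepsilon B]^{k\bullet}_+)M+(\text{$G$-terms})$ acts column by column, it suffices to reconcile the $m$th column. The exchange matrix $B_t$ is intrinsic to $t$ and hence the same whether $t$ is reached from $t_0$ or $t_1$; the only ingredient that differs between the two sides is the $c$-vector $\cc^{B_1;t_1}_{m;t}$ versus $\cc^{B;t_0}_{m;t}$ entering \eqref{f-recursion}. I would supply the comparison of these $c$-vectors from the tropical-duality relations between $C$- and $G$-matrices under an initial-seed mutation in direction $k$ (due to Nakanishi and Zelevinsky), feed it into the column-$m$ instance of \eqref{f-recursion}, and match the result against the recursion obtained by applying \eqref{g-recursion} to the $G$-terms.

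The main obstacle will be the interaction between the nonlinear $\max$ in the $f$-vector recursion \eqref{f-recursion} and the purely linear matrix operation on the right-hand side of \eqref{eq:frs}: a priori, multiplication by $J_k+[\varepsilon B]^{k\bullet}_+$ need not commute with the componentwise maximum. The way through is to take $\varepsilon$ to be the tropical sign of $\cc^{B;t_0}_{k;t}$ (or its analogue), so that sign-coherence forces exactly one branch of each $\max$ to be active and simultaneously annihilates one of the two $G$-terms; on the active branch the recursion is linear and the two sides can be matched entry by entry. Equivalently, one may argue at the level of $F$-polynomials: derive the change-of-root relation between $F^{B_1;t_1}_{j;t}$ and $F^{B;t_0}_{j;t}$ from two applications of the separation formula (Proposition \ref{pr:separation}) to the single cluster variable $x_{j;t}$, and then pass to maximal $y$-degrees. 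Because $F$-polynomials have nonnegative coefficients, the maximal degree is additive over products and is the componentwise maximum over sums, so there is no cancellation in the top-degree part; the $G$-matrix terms in \eqref{eq:frs} then emerge exactly as the exponents of the tropical normalization $F^{B;t_0}_{j;t}|_{\PP}$ in the separation formula. Controlling this tropical normalization in the $k$th coordinate is the delicate point, and it too is resolved by sign-coherence.
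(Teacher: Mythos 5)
The paper does not prove this lemma; it is imported verbatim from \cite{fg}*{Theorem 3.9}, so your attempt can only be judged on its own terms. Your base case is correct, but the two load-bearing claims in your plan do not hold up. First, the assertion that the $\varepsilon$-independence of the right-hand side follows from sign-coherence because ``in each column exactly one of the two $G$-terms contributes'' is false: the two terms involve the \emph{different} matrices $G^{B;t_0}_t$ and $G^{-B;t_0}_t$, and flipping $\varepsilon$ also changes $[\varepsilon B]^{k\bullet}_+$ in the first term. A direct computation shows that independence of $\varepsilon$ is equivalent to the $k$th row of the identity $G^{-B;t_0}_t=G^{B;t_0}_t+BF^{B;t_0}_t$, a genuine fact that must be proved separately. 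Concretely, in type $A_2$ with $B=\begin{bmatrix}0&1\\-1&0\end{bmatrix}$, $k=1$ and $t=t_2$, one has $G^{B;t_0}_{t_2}=\begin{bmatrix}-1&0\\0&-1\end{bmatrix}$ and $G^{-B;t_0}_{t_2}=\begin{bmatrix}0&1\\-1&-1\end{bmatrix}$: for $\varepsilon=1$ \emph{both} $G$-terms vanish in row $1$, while for $\varepsilon=-1$ \emph{both} are nonzero, and the discrepancy is absorbed by $[\varepsilon B]^{1\bullet}_+F^{B;t_0}_{t_2}$. Since your induction would establish the formula only for a $t$-dependent choice of $\varepsilon$, this independence is needed to chain the steps, so the gap is not cosmetic.

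Second, the inductive step --- which is where all the work lies --- is not carried out, and the proposed device for taming the $\max$ targets the wrong direction. In the recursion \eqref{f-recursion} for $t\to t''=\mu_m(t)$, the branch choice inside $\max$ is governed by $\pm\cc_{m;t}$ and $\pm\bb_{m;t}$, i.e.\ by the mutation direction $m$; it is unaffected by the sign $\varepsilon$ attached to the initial direction $k$, so taking $\varepsilon$ to be the tropical sign of $\cc^{B;t_0}_{k;t}$ does not make the recursion linear. What actually has to be verified in row $k$ is: (i) the initial-seed mutation formula for $C$-matrices, to compare $\cc^{B_1;t_1}_{m;t}$ with $\cc^{B;t_0}_{m;t}$; (ii) the $G$-matrix recursion applied to the correction terms $\Gamma_{t''}-\Gamma_tJ_m$; and (iii) that the componentwise maximum is attained on corresponding branches for the two rooted patterns, so that multiplication by $J_k+[\varepsilon B]^{k\bullet}_+$ --- whose $k$th row carries a $-1$ turning $\max$ into $\min$ --- can be pushed through. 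None of these is supplied. Your alternative route, via the separation formula and the tropical evaluation $F^{B;t_0}_{j;t}|_{\PP}$ whose $k$th exponent is an $H$-matrix entry (hence a $G$-matrix entry by Lemma \ref{prop:H=G}), is the mechanism that actually produces the two terms $[\pm\varepsilon G]^{k\bullet}_+$ and is the more promising way to the statement, but in your write-up it remains a one-sentence gesture. As it stands, the proposal is an outline with the hardest step missing.
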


\begin{proof}[Proof of Theorem \ref{ftheorem} (2)]
Since $x_{i;t}=x_{k;s}$, according to Lemma \ref{pathconnect}, there exists a permutation $\sigma$ of indices such that $\sigma(k)=i$ and a vertex $s_0\in \mathbb{T}_n$ such that the seed $\Sigma_{s_0}$ is the permutation of $\Sigma_t$ by the permutation $\sigma$, that is
\[x_{u;s}=x_{\sigma(u);s_0}, y_{u;s}=y_{\sigma(u);s_0}, b_{uv;s}=b_{\sigma(u),\sigma(v);s_0}
\]
for all $u$ and $v$. Moreover, the seed $\Sigma_{s_0}$ is connected with $\Sigma_t$ by a $\{1,\dots,n\}\backslash\{i\}$-sequence. 

By definition of $f$-vectors, for any cluster variable $z$, the $f$-vector of $z$ with respect to the seed $\Sigma_{s_0}$ is the permutation of the $f$-vector of $z$ with respect to the seed $\Sigma_s$ by $\sigma$. In particular, $f_{k\ell;s'}^{B_s;s}=f_{i\ell;s'}^{B_{s_0};s_0}$. On the other hand, since $x_{j;t'}=x_{\ell;s'}$, we have $f_{i\ell;s'}^{B_t;t}=f_{ij;t'}^{B_t;t}$.
By Lemma \ref{initialseedmutation}, the initial-seed mutation at $m$ only change the $m$th row of $F$-matrices. Therefore, we have $f_{i\ell;s'}^{B_{s_0};s_0}=f_{i\ell;s'}^{{B}_{t};t}$. Putting all of these together, we obtain
\[f_{ij;t'}^{B_{t};t}=f_{k\ell;s'}^{B_{s};s}.
\]

\end{proof}
Let us prove Theorem \ref{ftheorem} (3). The following fact is known.
\begin{lemma}[\cite{cl2}*{Lemma 5.2}]\label{d-gtheorem}
 Suppose that $\mathcal A(B)$ is an arbitrary cluster algebra of rank $n$, and $({\bf x}_t,{\bf x}_{t^\prime})$ is a $g$-pair
 associated with $\{x_k\}$. 
 Let $G_t^{B_{t'};t'}=(g_{ij}')$ and $\dd^{B_{t'};t^\prime}_{i;t}=(d_{1}^\prime,\cdots,d_{n}^\prime)^\top$ be the $d$-vector of $x_{i;t}$ with respect to ${\bf x}_{t^\prime}$. We have that
\begin{itemize}
\item[(1)] $g_{ki}'>0$ if and only if $d_{k}^\prime=-1$, and if and only if $x_{i;t}\in{\bf x}_{t^\prime}$ and $x_{i;t}=x_{k;t^\prime}$.

\item[(2)] $g_{ki}'=0$ if and only if $d_{k}^\prime=0$, and if and only if $x_{i;t}\in{\bf x}_{t^\prime}$ and $x_{i;t}\neq x_{k;t^\prime}$.

\item[(3)] $g_{ki}'<0$ if and only if $d_{k}^\prime>0$, and if and only if $x_{i;t}\notin{\bf x}_{t^\prime}$.

\end{itemize}
\end{lemma}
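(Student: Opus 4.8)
The plan is to read the statement as three mutually exclusive and jointly exhaustive trichotomies — one on the position of $x_{i;t}$ relative to $\xx_{t'}$ (either $x_{i;t}=x_{k;t'}$, or $x_{i;t}\in\xx_{t'}$ with $x_{i;t}\neq x_{k;t'}$, or $x_{i;t}\notin\xx_{t'}$), one on the value of $d'_k$ (namely $-1$, $0$, or ${>}0$), and one on the sign of $g'_{ki}$ ($>0$, $=0$, or $<0$). Since each collection of alternatives partitions all possibilities, it suffices to prove the forward implications from the geometric trichotomy to the two arithmetic ones; the remaining equivalences then follow formally. I would first dispose of the two cases with $x_{i;t}\in\xx_{t'}$: here $x_{i;t}=x_{l;t'}$ is an initial cluster variable in the frame rooted at $t'$, so $\gg^{B_{t'};t'}_{i;t}=\ee_l$ and $\dd^{B_{t'};t'}_{i;t}=-\ee_l$ directly from the base cases of the recursions. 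Thus $(g'_{ki},d'_k)=(1,-1)$ when $l=k$ and $(0,0)$ when $l\neq k$, which is exactly the content of (1) and (2).

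It then remains to treat $x_{i;t}\notin\xx_{t'}$, where I must show $d'_k>0$ and $g'_{ki}<0$. Applying Theorem~\ref{dtheorem}(1) in the frame rooted at $t'$, the vector $\dd^{B_{t'};t'}_{i;t}$ is non-negative, so $d'_k\geq 0$ and it suffices to exclude $d'_k=0$. Suppose $d'_k=0$. Since $\xx_{t'}$ is joined to $\xx_{t_0}$ by an $I$-sequence, which never mutates in direction $k$, we have $x_{k;t'}=x_{k;t_0}=x_k$, and Theorem~\ref{dtheorem}(3) then produces a cluster containing both $x_{i;t}$ and $x_k$. The key step — and the one I expect to be the main obstacle — is to promote this abstract compatibility into membership in the \emph{particular} cluster $\xx_{t'}$: I claim that the defining cluster of a $g$-pair along $I=\{1,\dots,n\}\setminus\{k\}$ contains every element of $\xx_t$ that is compatible with $x_k$. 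Granting the claim, $x_{i;t}$ compatible with $x_k$ forces $x_{i;t}\in\xx_{t'}$, contradicting $x_{i;t}\notin\xx_{t'}$; hence $d'_k>0$.

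To establish the claim I would unwind the definition of the $g$-pair: for the cluster monomial $x_{i;t}=\xx_t^{\ee_i}$ there is $\vv'\in\ZZ^n_{\geq 0}$ with $v'_k=0$ and $\pi_I(\gg^{B;t_0}_{i;t})=\pi_I\!\big(\sum_j v'_j\,\gg^{B;t_0}_{j;t'}\big)$. Interpreting $\pi_I$ as passage to the rank-$(n-1)$ cluster pattern obtained by freezing direction $k$ — whose chambers are the $\pi_I$-images of the reachable chambers described by Lemma~\ref{gvectortheorem} — compatibility of $x_{i;t}$ with $x_k$ should force $\vv'$ to be a single unit vector $\ee_j$, whence $x_{i;t}=x_{j;t'}\in\xx_{t'}$; the uniqueness part of Theorem~\ref{enoughtheorem} guarantees there is no ambiguity in the choice of $\xx_{t'}$. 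This projection-to-a-subfan argument, rather than any single-seed computation, is where the real work lies, and coordinating the $k$-th coordinate between the two frames $t_0$ and $t'$ is the delicate point.

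For the sign of $g'_{ki}$ in the non-membership case I would run the same projection analysis through the $g$-coordinate directly: the $g$-pair is by definition a statement about $g$-vectors, so once $x_{i;t}\notin\xx_{t'}$ is known, the failure of $\pi_I(\gg^{B_{t'};t'}_{i;t})$ to be a coordinate vector, combined with the sign-coherence of $G$-matrices (Theorem~\ref{thm:signs-g}) and the homogeneity/separation structure of Proposition~\ref{pr:separation}, should pin $g'_{ki}<0$. Finally, because the three geometric cases are exhaustive and map onto the exhaustive arithmetic alternatives $(g'_{ki},d'_k)\in\{(>0,-1),(0,0),(<0,{>}0)\}$, the forward implications automatically upgrade to the full set of equivalences in (1)--(3), completing the proof.
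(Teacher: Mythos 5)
The paper itself offers no proof of this statement: it is imported verbatim from \cite{cl2}*{Lemma 5.2} and used as a black box, so there is no in-paper argument to compare against. Judged on its own terms, your skeleton is sound: reducing the three equivalences to the forward implications out of the exhaustive geometric trichotomy is legitimate (exhaustiveness of the three alternatives for $d'_k$ itself rests on Theorem \ref{dtheorem}(1) together with the initial-variable case, which you implicitly supply), and the two cases with $x_{i;t}\in\xx_{t'}$ are handled correctly by viewing $x_{i;t}$ as an initial cluster variable in the frame rooted at $t'$.

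The genuine gap is the case $x_{i;t}\notin\xx_{t'}$, which is where all the content of the lemma lives, and there your argument stops at two unproven assertions. The ``claim'' that $\xx_{t'}$ contains every element of $\xx_t$ compatible with $x_k$ is, in view of Theorem \ref{dtheorem}(3), essentially a restatement of the implication being proved, and the sketch offered for it (compatibility ``should force'' $\vv'$ to be a unit vector) is not an argument. Concretely, three things are missing. First, the $g$-pair condition is an equality of \emph{projected} $g$-vectors taken in the frame rooted at $t_0$, while the conclusion concerns the entry $g'_{ki}$ of $G_t^{B_{t'};t'}$, i.e.\ the frame rooted at $t'$; you flag the coordination of the $k$-th coordinate between the two frames as delicate but never carry it out. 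Second, even granting $\pi_I(\gg(x_{i;t}))=\pi_I(\gg(\xx_{t'}^{\vv'}))$ with $v'_k=0$, concluding $x_{i;t}=x_{j;t'}$ requires both lifting the equality from $\pi_I$-images to full $g$-vectors (this is exactly where the sign of the $k$-th component must enter) and invoking the injectivity of the map from cluster monomials to $g$-vectors; you use neither. Third, the deduction $g'_{ki}<0$ in the non-membership case is again left at ``should pin'': row sign-coherence (Theorem \ref{thm:signs-g}) only says the $k$-th row of $G_t^{B_{t'};t'}$ has a uniform sign, not that this particular entry is negative. These are precisely the steps that the enough-$g$-pairs machinery of \cite{cl2} is built to supply, so the proposal cannot stand as a self-contained proof.
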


We consider a similar lemma to Lemma \ref{d-gtheorem} for the $f$-vectors:

\begin{lemma}\label{f-gtheorem}
 Suppose that $\Acal(B)$ is an arbitrary cluster algebra of rank $n$, and $({\bf x}_t,{\bf x}_{t^\prime})$ is a $g$-pair 
 associated with $\{x_k\}$.
 Let $G_t^{B_{t'};t'}=(g_{ij}')$ and $\ff^{B_{t'};t^\prime}_{i;t}=(f_{1}^\prime,\cdots,f_{n}^\prime)^\top$ be the $f$-vector of $x_{i;t}$ with respect to ${\bf x}_{t^\prime}$. We have that
\begin{itemize}
\item[(1)] $g'_{ki}\geq 0$ if and only if $f_{k}^\prime=0$, and if and only if $x_{i;t}\in{\bf x}_{t^\prime}$.

\item[(2)] $g'_{ki}<0$ if and only if $f_{k}^\prime>0$, and if and only if $x_{i;t}\notin{\bf x}_{t^\prime}$.

\end{itemize}
\end{lemma}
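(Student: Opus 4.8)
The plan is to deduce this $f$-vector statement from the corresponding $d$-vector statement (Lemma~\ref{d-gtheorem}), isolating the genuinely new content in a single implication. First I would record two dichotomies that make the two items complementary: by Proposition~\ref{fconstant1} every $F$-polynomial has constant term $1$, so each $f$-vector entry is a maximal degree of a variable occurring with nonnegative exponent, whence $f'_k\geq 0$ always; and $g'_{ki}$ is either $\geq 0$ or $<0$. Thus item (2) is the logical negation of item (1), and it suffices to prove (1). Moreover the equivalence ``$g'_{ki}\geq 0 \iff x_{i;t}\in\xx_{t'}$'' is read off directly from Lemma~\ref{d-gtheorem}: combining its cases (1) and (2) gives $g'_{ki}\geq 0 \iff x_{i;t}\in\xx_{t'}$, while case (3) gives $g'_{ki}<0 \iff x_{i;t}\notin\xx_{t'}$. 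So everything reduces to proving $f'_k=0 \iff x_{i;t}\in\xx_{t'}$.

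The easy direction $x_{i;t}\in\xx_{t'}\Rightarrow f'_k=0$ is immediate: if $x_{i;t}$ is one of the cluster variables of the rooted seed $\xx_{t'}$, then with $t'$ as rooted vertex it is an \emph{initial} cluster variable, so by Theorem~\ref{ftheorem}(1) its entire $f$-vector $\ff_{i;t}^{B_{t'};t'}$ vanishes, in particular $f'_k=0$. The crux, and the step I expect to be the main obstacle, is the converse $f'_k=0\Rightarrow x_{i;t}\in\xx_{t'}$, equivalently the contrapositive $x_{i;t}\notin\xx_{t'}\Rightarrow f'_k>0$. Here I would exploit the $g$-pair hypothesis together with a support analysis of the $F$-polynomial recursion~\eqref{Fpoly-recursion}, all computed with $t'$ as rooted vertex. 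The key auxiliary fact I plan to establish, by a direct induction on the $c$-vector and $F$-polynomial recursions, is that along any $I$-sequence issuing from $t'$ the $k$-th row of the $C$-matrix stays equal to $\ee_k^{\top}$; consequently the factor $\prod_i y_i^{[c_{ik}]_+}$ in~\eqref{Fpoly-recursion} never introduces a power of $y_k$, so $y_k$ occurs in no $F$-polynomial reachable from $t'$ by an $I$-sequence, and $f'_k=0$ should force $x_{i;t}$ to be $I$-reachable from $t'$. Feeding this back into the $g$-pair structure for $(\xx_t,\xx_{t'})$ along $I=\{1,\dots,n\}\setminus\{k\}$, together with the uniqueness of $g$-pairs (Theorem~\ref{enoughtheorem}) and the sign-coherence of $G$-matrices (Theorem~\ref{thm:signs-g}), should pin the $I$-reachable cluster carrying $x_{i;t}$ down to $\xx_{t'}$ itself, yielding $x_{i;t}\in\xx_{t'}$.

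The delicate point, where I expect to spend the most care, is upgrading ``$y_k$ is absent from the $F$-polynomial'' into ``$x_{i;t}$ lies in the \emph{specific} cluster $\xx_{t'}$'' rather than merely in some $I$-reachable cluster; the $A_2$ computation of Example~\ref{ex:prinA2} shows that it is exactly the projection condition of the $g$-pair (with matching monomials supported on $I$) that rigidifies the identification, since the same cluster variable can sit in several $I$-reachable seeds with different labelings. For this reason I would phrase the final identification as an application of $g$-pair uniqueness rather than a direct separation-formula calculation: the separation formula~\eqref{eq:xjt=F/F} ties $-d'_k$ to $x$-degrees only through the exchange matrix and does not by itself isolate the $y_k$-degree $f'_k$, which is precisely why $\ff=[\dd]_+$ can fail and why a purely $d$-vector mirror of Lemma~\ref{d-gtheorem} via the $f$-vector recursion~\eqref{f-recursion} does not go through mechanically.
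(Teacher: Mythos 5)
Your reduction is sound as far as it goes: items (1) and (2) are indeed complementary (since $f'_k\geq 0$ always), the equivalence $g'_{ki}\geq 0\Leftrightarrow x_{i;t}\in\xx_{t'}$ does come straight from Lemma~\ref{d-gtheorem}, and the direction $x_{i;t}\in\xx_{t'}\Rightarrow f'_k=0$ is immediate. So, as you say, everything hinges on $x_{i;t}\notin\xx_{t'}\Rightarrow f'_k>0$, equivalently (via Lemma~\ref{d-gtheorem}) on $g'_{ki}<0\Rightarrow f'_k>0$. This is where your argument has a genuine gap. Your auxiliary fact --- that along any $I$-sequence issuing from $t'$ the $k$-th row of the $C$-matrix stays $\ee_k^{\top}$, so $y_k$ never enters the $F$-polynomials of $I$-reachable seeds --- is correct, but it proves the implication ``$x_{i;t}$ lies in an $I$-reachable cluster $\Rightarrow f'_k=0$'', which is the \emph{converse} of what you need. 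The sentence ``$f'_k=0$ should force $x_{i;t}$ to be $I$-reachable from $t'$'' is precisely the missing content, and nothing in your support analysis, in $g$-pair uniqueness (Theorem~\ref{enoughtheorem}), or in sign-coherence supplies it: a priori a cluster variable lying outside every $I$-reachable cluster could still have an $F$-polynomial (rooted at $t'$) free of $y_k$. The secondary issue you flag (upgrading ``$I$-reachable'' to ``in the specific cluster $\xx_{t'}$'') is moot until this first implication is established.

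The paper closes the gap with a single identity you do not invoke: Lemma~\ref{prop:H=G}, namely $H_t^{B;t_0}=-[-G_t^{B;t_0}]_+$. From $g'_{ki}<0$ one gets $h_{ki;t}^{B_{t'};t'}<0$, and by the very definition of the $H$-matrix a negative tropical evaluation of $F_{i;t}^{B_{t'};t'}$ at $y_k\mapsto u^{-1}$ and $y_j\mapsto u^{[-b_{kj}]_+}$ for $j\neq k$ (all nonnegative powers except in position $k$) forces some monomial of the $F$-polynomial to carry a positive power of $y_k$, i.e.\ $f'_k>0$. That $f$-versus-$g$ comparison is the one genuinely new input beyond Lemma~\ref{d-gtheorem}; without it, or an equivalent substitute, your outline does not close.
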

Let us prove Lemma \ref{f-gtheorem}. While Lemma \ref{d-gtheorem} describes the relation between the $d$-vectors and the $g$-vectors, Lemma \ref{f-gtheorem} describes the relation between the $f$-vectors and the $g$-vectors. The $H$-matrix gives some relation of the $f$-vectors and the $g$-vectors.

\begin{definition} Let $B$ be an initial exchange matrix at $t_0$. Then, for any $t$, the $(i, j)$ entry of $H^{B; t_0}_t=\big(h_{ij; t}^{B;t_0}\big)$ is given by
\begin{gather*}
 u^{h_{ij; t}^{B;t_0}}=F^{B; t_0}_{j;t}|_{\text{Trop}(u)}\big(u^{[-b_{i1}]_+}, \dots, u^{-1}, \dots, u^{[-b_{in}]_+}\big) \qquad \big(\text{$u^{-1}$ in the $i$th position}\big).
\end{gather*}
The matrix $H^{B; t_0}_t$ is called the \emph{$H$-matrix} at $t$. \end{definition}

\begin{lemma} [\cite{fg}*{Theorem 3.7}] \label{prop:H=G}
For any $t\in\TT_n$, we have the following relation:
\begin{align} \label{eq:H=G}
H^{B; t_0}_t=-[-G^{B; t_0}_t]_+.
\end{align}
\end{lemma}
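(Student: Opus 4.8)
The plan is to argue by induction on the distance of $t$ from the rooted vertex $t_0$ in $\TT_n$. Since both sides of \eqref{eq:H=G} are built only from $F$-polynomials and $g$-vectors, which are independent of the coefficient system, I may work throughout with principal coefficients. For the base case $t=t_0$ one has $F^{B;t_0}_{j;t_0}=1$ and $G^{B;t_0}_{t_0}=E_n$, so the defining tropical evaluation gives $H^{B;t_0}_{t_0}=0=-[-E_n]_+$. For the inductive step, consider an edge \begin{xy}(0,1)*+{t}="A",(10,1)*+{t'}="B",\ar@{-}^k"A";"B" \end{xy} and assume \eqref{eq:H=G} at $t$. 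For every column $j\neq k$ we have $F^{B;t_0}_{j;t'}=F^{B;t_0}_{j;t}$ and $\gg_{j;t'}=\gg_{j;t}$ by \eqref{Fpoly-recursion} and \eqref{g-recursion}, so those columns of both $H^{B;t_0}_{t'}$ and $-[-G^{B;t_0}_{t'}]_+$ agree with their counterparts at $t$ and the relation persists. Only the $k$th column requires work.

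The key step is to extract a mutation recursion for the $H$-matrix by tropicalizing the $F$-polynomial recursion. Fix a row index $i$ and let $\varphi_i\colon \QQsf(y_1,\dots,y_n)\to\trop(u)$ be the unique semifield homomorphism with $y_i\mapsto u^{-1}$ and $y_p\mapsto u^{[-b_{ip}]_+}$ for $p\neq i$, so that $\varphi_i(F^{B;t_0}_{j;t})=u^{h^{B;t_0}_{ij;t}}$ by the definition of $H^{B;t_0}_t$. Applying $\varphi_i$ to the polynomial identity obtained by clearing the denominator in \eqref{Fpoly-recursion}, and using that $\varphi_i$ respects both the product and the tropical sum $\oplus=\min$, I obtain, writing $\epsilon_i=-1$ and $\epsilon_p=[-b_{ip}]_+$ for $p\neq i$,
\begin{align*}
h^{B;t_0}_{ik;t'}=-h^{B;t_0}_{ik;t}+\min(A_1,A_2),
\end{align*}
where
\begin{align*}
A_1=\sum_{p=1}^n[c_{pk;t}]_+\epsilon_p+\sum_{p=1}^n[b_{pk;t}]_+h^{B;t_0}_{ip;t},\qquad
A_2=\sum_{p=1}^n[-c_{pk;t}]_+\epsilon_p+\sum_{p=1}^n[-b_{pk;t}]_+h^{B;t_0}_{ip;t}.
\end{align*}
(As a sanity check, in type $A_2$ this reproduces $h_{11;t_2}=-1=-[-g_{11;t_2}]_+$.)

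Substituting the inductive hypothesis $h^{B;t_0}_{ip;t}=-[-g^{B;t_0}_{ip;t}]_+$ and the $i$th coordinate of the $g$-vector recursion \eqref{g-recursion} for $\gg_{k;t'}$, the target equality $h^{B;t_0}_{ik;t'}=-[-g^{B;t_0}_{ik;t'}]_+$ reduces to the piecewise-linear scalar identity
\begin{align*}
[-g^{B;t_0}_{ik;t}]_++\min(A_1,A_2)=\min\!\big(g^{B;t_0}_{ik;t'},\,0\big).
\end{align*}
I expect this scalar identity to be the main obstacle. Its verification is a finite case analysis driven by two sign-coherence inputs: sign-coherence of the $C$-matrices (Theorem \ref{thm:signs-ci}) forces the entries $c_{pk;t}$ $(p=1,\dots,n)$ of the $c$-vector $\cc_{k;t}$ to share a single sign, which selects whether $A_1$ or $A_2$ carries the $\epsilon$-contribution; and sign-coherence of the $G$-matrices (Theorem \ref{thm:signs-g}) forces the entries $g_{ip;t}$ in the fixed $i$th row to share a single sign, which collapses the terms $[-g_{ip;t}]_+$. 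Splitting into the resulting sign regimes makes both $\min(A_1,A_2)$ and $\min(g^{B;t_0}_{ik;t'},0)$ explicit, and matching them in each regime uses the standard relations among the $c$-vectors, the $g$-vectors and the initial matrix $B$ that follow from their recursions. This regime-by-regime matching is elementary but unenlightening, which is precisely why the statement is convenient to record as a lemma.
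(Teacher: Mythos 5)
The paper gives no internal proof of this lemma: it is imported from \cite{fg}*{Theorem 3.7}, where it is obtained from \cite{fziv}*{Proposition 6.6} combined with the constant-term property of $F$-polynomials (Proposition \ref{fconstant1}, now unconditional via \cite{GHKK}) — a specialization argument through the separation formula, with no induction and no piecewise-linear case analysis. Your attempt must therefore stand on its own, and its scaffolding is fine as far as it goes: the base case, the columns $j\neq k$, and the tropicalized recursion for the $H$-matrix are all correct (since the $F$-polynomials lie in $\QQsf(y_1,\dots,y_n)$, applying the semifield homomorphism $\varphi_i$ to \eqref{Fpoly-recursion} is legitimate, and your formula $h_{ik;t'}=-h_{ik;t}+\min(A_1,A_2)$ is right), as is the reduction of the inductive step to the scalar identity you display.

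But there is a genuine gap exactly at what you call ``the main obstacle'': the scalar identity is asserted, not proved, and the two sign-coherence theorems you list cannot prove it. Take the regime where $\cc_{k;t}\geq 0$ and the $i$th row of $G_t^{B;t_0}$ is nonnegative. Then $A_2=0$, $[-g_{ik;t}]_+=0$, the inductive hypothesis kills all terms $h_{ip;t}=\min(0,g_{ip;t})$, and your identity becomes
\begin{align*}
\min\Bigl(0,\;-c_{ik;t}+\sum_{p\neq i}c_{pk;t}\,[-b_{ip}]_+\Bigr)
\;=\;
\min\Bigl(0,\;-g_{ik;t}+\sum_{p}[-b_{pk;t}]_+\,g_{ip;t}\Bigr),
\end{align*}
where even writing the right-hand side in this closed form already requires \eqref{eq:b--b} together with the identity $B\,C_t^{B;t_0}=G_t^{B;t_0}B_t$ of \cite{nz}, which you never invoke. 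More seriously, the left side pairs the $c$-vector $\cc_{k;t}$ against the $i$th row of the \emph{initial} matrix $B$, while the right side pairs the $i$th row of $G_t^{B;t_0}$ against the $k$th column of the \emph{mutated} matrix $B_t$; their agreement under $\min(0,-)$ is precisely an instance of the Nakanishi--Zelevinsky tropical duality $\bigl(G_t^{B;t_0}\bigr)^{\top}=C_{t_0}^{B_t^{\top};t}$, not a consequence of sign regimes. Sign-coherence of $C_t$ and of the rows of $G_t$ only normalizes the brackets $[\cdot]_+$; no local bookkeeping can match expressions built from different vector families over different matrices. In other words, your reduction has relocated the hard content of the lemma into the unproven identity rather than eliminated it, so the claimed ``elementary but unenlightening'' finite case analysis does not close. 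To repair the argument you would either have to import the duality identities of \cite{nz} (or prove them inside a strengthened simultaneous induction) and then genuinely carry out the regimes, or abandon the induction in favor of the specialization argument of \cite{fziv}*{Proposition 6.6} on which \cite{fg} relies.
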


\begin{proof}[Proof of Lemma \ref{f-gtheorem}]
By Lemma \ref{d-gtheorem}, it suffices to show that $g'_{ki}<0$ implies $f_{k}^\prime>0$. By Lemma \ref{prop:H=G}, $g'_{ki}<0$ implies $h_{ki;t}^{B';t'}<0$. Then, we have $f_{k}^\prime>0$ by definition of the $H$-matrix.
\end{proof}

\begin{proof}[Proof of Theorem \ref{ftheorem} (3)]
We set $I=\{1,\dots,n\}\backslash \{k\}$. First, we prove the ``only if" part. Let $\xx_{t'}$ be a cluster such that $\xx_{t'}$ contains both $x_{i;t}$ and $x_k$. Then, according to Lemma \ref{pathconnect}, $\xx_{t'}$ is connected with $\xx_{t_0}$ by an $I$-sequence. We set $\xx_{t'}=\mu(\xx_{t_0})$. If we regard $\xx_{t'}$ as the initial cluster, then $f_{ki;t}^{B_{t'};t'}=0$. We can change the initial cluster from $\xx_{t'}$ to $\xx_{t_0}$ by initial-seed mutation induced by $\mu^{-1}$. Then, by Theorem \ref{ftheorem} (2), we have $f_{ki;t}=f_{ki;t}^{B_{t'};t'}=0$. Second, we prove the ``if" part. Let $\xx_s$ be a cluster containing the cluster variable $x_{i;t}$. By Theorem \ref{enoughtheorem}, there is a cluster $\xx_{s'}$ such that $(\xx_s,\xx_{s'})$ is a $g$-pair 
associated with $\{x_k\}$.
Then $x_{k;s'}=x_k$ holds because $\xx_{s'}$ is connected with $\xx_{t_0}$ by an $I$-sequence. Since $f_{ki;t}=0$ implies $f_{ki;t}^{B_{s'};s'}=0$ by Theorem \ref{ftheorem} (2), we have $x_{i;t}\in \xx_{s'}$ by Lemma \ref{f-gtheorem}. This finishes the proof.
\end{proof}
To prove Theorem \ref{ftheorem} (4), we use the following theorem:
\begin{theorem}[\cite{fg}*{Theorem 3.10}]\label{thm:F-opposite} For any exchange matrix~$B$ and $t_0, t \in \TT_n$, we have
\begin{gather}\label{eq:F-opposite}
\big(F_t^{B;t_0}\big)^\top = F_{t_0}^{B_t^\top;t}.
\end{gather}
\end{theorem}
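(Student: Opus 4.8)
The plan is to prove \eqref{eq:F-opposite} entrywise: spelling out the $(i,j)$ entry, the identity reads $f_{ji;t}^{B;t_0}=f_{ij;t_0}^{B_t^\top;t}$ for all $i,j$, and I would establish it by induction on the distance $d(t_0,t)$ in $\TT_n$. The base case $t=t_0$ is immediate, since both $F_{t_0}^{B;t_0}$ and $F_{t_0}^{B_{t_0}^\top;t_0}$ are the zero matrix. For the inductive step, fix $B$ and $t_0$, assume the identity at $t$, and let $t'$ be adjacent to $t$ along an edge labelled $k$ with $d(t_0,t')=d(t_0,t)+1$; the goal is $(F_{t'}^{B;t_0})^\top=F_{t_0}^{B_{t'}^\top;t'}$.

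The key observation is that the two sides are governed by complementary recursions acting on the same slot. On the left, passing from $t$ to $t'$ is an ordinary mutation, so the forward recursion \eqref{f-recursion} alters only the $k$th column of $F_t^{B;t_0}$. On the right, passing from the initial datum $(B_t^\top;t)$ to $(B_{t'}^\top;t')$ is an initial-seed mutation; since matrix mutation commutes with transposition (a one-line check using $[b]_+-[-b]_+=b$ gives $\mu_k(B)^\top=\mu_k(B^\top)$, hence $B_{t'}^\top=\mu_k(B_t^\top)$), Lemma \ref{initialseedmutation}, i.e.\ \eqref{eq:frs}, applies and alters only the $k$th row of $F_{t_0}^{B_t^\top;t}$. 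After transposition a ``$k$th column'' becomes a ``$k$th row'', so both recursions touch the same entries, and for all entries outside row $k$ both act as the identity; there the claim for $t'$ follows at once from the inductive hypothesis at $t$.

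The substance is the $k$th row. Feeding the inductive hypothesis $(F_t^{B;t_0})^\top=F_{t_0}^{B_t^\top;t}$ into \eqref{eq:frs} and matching its $k$th row against the $k$th column produced by \eqref{f-recursion} reduces the theorem to the single componentwise vector identity
\[
\max\!\left([\cc_{k;t}]_+ + \sum_{i}[b_{ik;t}]_+\ff_{i;t},\ [-\cc_{k;t}]_+ + \sum_{i}[-b_{ik;t}]_+\ff_{i;t}\right) = \sum_{i}[\varepsilon b_{ik;t}]_+\ff_{i;t} + R_k,
\]
where $R_k$ is the $k$th row of $[-\varepsilon G_{t_0}^{-B_t^\top;t}]_+ + [\varepsilon G_{t_0}^{B_t^\top;t}]_+$ and $\varepsilon\in\{\pm1\}$. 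By the sign-coherence of the $C$-matrices (Theorem \ref{thm:signs-ci}), $\cc_{k;t}$ is entrywise nonnegative or entrywise nonpositive; choosing $\varepsilon$ to be its sign resolves the $\max$ to the $\varepsilon$-argument, whose constant offset is $[\varepsilon\cc_{k;t}]_+$, so the identity collapses to the requirement $R_k=[\varepsilon\cc_{k;t}]_+$.

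The main obstacle is precisely this last requirement: I must show that the two $G$-matrix correction terms of \eqref{eq:frs}, built from $G_{t_0}^{\pm B_t^\top;t}$, reconstruct the absolute value of the very $c$-vector $\cc_{k;t}^{B;t_0}$ that controls the forward recursion. This is a tropical-duality statement exchanging $c$-vectors (initial datum $(B;t_0)$, read at $t$) with $g$-vectors (initial datum $(B_t^\top;t)$, read at $t_0$): the $k$th row of $G_{t_0}^{B_t^\top;t}$ equals $\pm\cc_{k;t}^{B;t_0}$, and the sign-coherence of the $G$-matrices (Theorem \ref{thm:signs-g}) ensures that applying $[\,\cdot\,]_+$ to the two summands isolates exactly the nonnegative part $[\varepsilon\cc_{k;t}]_+$. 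I expect this duality step, together with verifying the comparability that legitimizes resolving the $\max$, to carry the full weight of the argument, the remainder being bookkeeping of which slot each recursion modifies.
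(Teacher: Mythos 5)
Your bookkeeping is sound: the base case, the one-line check that $\mu_k(B)^\top=\mu_k(B^\top)$ (so that passing from $(B_t^\top;t)$ to $(B_{t'}^\top;t')$ really is the initial-seed mutation covered by Lemma \ref{initialseedmutation}), and the reduction of the inductive step to a single identity in the $k$th row are all correct; this is in fact the architecture of the proof in \cite{fg} (the paper itself gives no proof, only the citation). But the step you yourself flag as carrying the full weight is false as stated, in two places. First, the $\max$ in \eqref{f-recursion} is \emph{not} resolved by the tropical sign of $\cc_{k;t}^{B;t_0}$. The paper's own type $A_2$ data (Example \ref{ex:prinA2}, Table \ref{A2FDCG}) is a counterexample: for the edge joining $t_2$ and $t_3$ (labeled $2$) one has $B_{t_2}=\begin{bmatrix}0&1\\-1&0\end{bmatrix}$, $\cc_{2;t_2}=(0,-1)^\top$ (so your $\varepsilon=-1$), $\ff_{1;t_2}=(1,1)^\top$, $\ff_{2;t_2}=(0,1)^\top$; the two arguments of the $\max$ are $(1,1)^\top$ and $(0,1)^\top$, and the $\max$, namely $(1,1)^\top$ (forced by $\ff_{2;t_3}=(1,0)^\top$ from the table), is attained by the branch \emph{opposite} to $\varepsilon$. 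Your resolved formula would output $\ff_{2;t_3}=(0,0)^\top$, which is wrong. Sign-coherence of $\cc_{k;t}$ governs the constant terms of $F$-polynomials, not their top degrees.

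The companion claim $R_k=[\varepsilon\cc_{k;t}]_+$ fails for the same structural reason. By Nakanishi--Zelevinsky tropical duality the $k$th row of $G_{t_0}^{B_t^\top;t}$ is indeed $(\cc_{k;t}^{B;t_0})^\top$ (with a definite $+$ sign, no ambiguity), but the $k$th row of the other summand $G_{t_0}^{-B_t^\top;t}$ is $(\cc_{k;t}^{-B;t_0})^\top$, the $c$-vector of the pattern with initial matrix $-B$, which in general is neither $\cc_{k;t}^{B;t_0}$ nor its negative: in the same example $\cc_{2;t_2}^{-B;t_0}=(1,0)^\top$ while $\cc_{2;t_2}^{B;t_0}=(0,-1)^\top$, and with $\varepsilon=-1$ one gets $R_k=(1,1)\neq(0,1)=[\varepsilon\cc_{k;t}]_+$. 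So the two $[\cdot]_+$ terms do not reconstitute $\lvert\cc_{k;t}\rvert$, and the two errors do not cancel. What is true, and what actually closes the induction, is the two-sided identity, valid for \emph{both} $\varepsilon=\pm1$,
\begin{equation*}
\ff_{k;t'}+\ff_{k;t}=\bigl[\varepsilon\cc_{k;t}^{B;t_0}\bigr]_+ +\bigl[-\varepsilon\cc_{k;t}^{-B;t_0}\bigr]_+ +\sum_{i=1}^n[\varepsilon b_{ik;t}]_+\ff_{i;t},
\end{equation*}
which shows the $\max$ is governed by the sign of $\cc_{k;t}^{-B;t_0}$, not of $\cc_{k;t}^{B;t_0}$ (consistent with the example: $\cc_{2;t_2}^{-B;t_0}$ is nonnegative and the $+$ branch wins). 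This identity --- essentially the signed recursion behind \cite{fg}*{Theorem 3.9}, and closely tied to $F_t^{-B;t_0}=F_t^{B;t_0}$ (Lemma \ref{thm:F=F}) --- is precisely the nontrivial input your argument assumes away; without it the inductive step does not go through.
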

We call this property the \emph{self-duality} for the $F$-matrices. Theorem \ref{thm:F-opposite} implies the rows of the $F$-matrices are the $f$-vectors of another cluster algebra. 
\begin{proof}[Proof of Theorem \ref{ftheorem} (4)]
The ``only if" part follows from the ``only if" part of Theorem \ref{ftheorem} (3). We prove the ``if" part. By Theorem \ref{thm:F-opposite}, the transposition of the $F$-matrix $F_t^{B;t_0}$ is another $F$-matrix $F_{t_0}^{B_t^\top;t}$. By assumption, the $k$th column of $F_{t_0}^{B_t^\top;t}$ is the zero vector. By Theorem \ref{ftheorem} (1), a cluster variable of $\mathcal A(B_t^\top)$ associated with this column is an initial cluster variable. Then, by Theorem \ref{ftheorem} (3), there is a $j\in\{1,\dots,n\}$ such that all entries of the $j$th row of $F_{t_0}
^{B_t^\top;t}$ are 0. This implies the $j$th column of $F_t^{B;t_0}$ is the zero vector. Therefore, all entries of the $j$th column and the $k$th row of $F_t^{B;t_0}$ are all 0. By Theorem \ref{ftheorem} (1), $\xx_t$ has at least one initial cluster variable. We show that one of these initial cluster variable is $x_k$. We assume that $x_k\not\in\xx_t$. Then, $\xx_t$ has an initial cluster variable which is not $x_k$. We assume that this cluster variable is $x_{k'}$. Then by Theorem \ref{ftheorem} (3), the $k'$th column of $F_{t_0}^{B_t^\top;t}$ is the zero vector. In the same way as the previous argument, there exists a $j'\in \{1,\dots,n\}$ such that the $j'$th column of $F_{t}^{B;t_0}$ is the zero vector. Since cluster variables in a cluster are algebraically independent, we note that $j\neq j'$. Therefore, all entries of the $j,j'$th columns and the $k,k'$th rows of $F_{t}^{B;t_0}$ are all 0. By Theorem \ref{ftheorem} (1) again, $\xx_t$ has at least two initial cluster variables. By assumption, $\xx_t$ has a cluster variable which is neither $x_k$ nor $x_{k'}$. By repeating this argument, we have $F_{t}^{B;t_0}=0$. Therefore by Theorem \ref{ftheorem} (1), $\xx_t=\xx_{t_0}$. This conflicts with the assumption. 
\end{proof}


{Theorem \ref{ftheorem} plays a fundamental role in Section 4 to define the compatibility degree and to prove its properties.}

By these theorems, we have the following corollary:

\begin{corollary}\label{f-dcorrespondence}
For any cluster algebra $\Acal(B)$, $f_{ij;t}=0$ if and only if $d_{ij;t}=0$ or $-1$.
\end{corollary}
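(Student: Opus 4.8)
The plan is to reduce the statement to the two ``compatibility'' characterizations already available, Theorem~\ref{ftheorem}~(3) and Theorem~\ref{dtheorem}~(3), together with the sign dichotomy of $d$-vectors in Theorem~\ref{dtheorem}~(1). The guiding observation is that both $f_{ij;t}=0$ and $d_{ij;t}=0$ encode the existence of a single cluster containing $x_{j;t}$ and the initial cluster variable $x_i$; the only place where the two conditions can diverge is the degenerate situation $x_{j;t}=x_i$, and I expect this to correspond exactly to the extra value $d_{ij;t}=-1$. So the first thing I would record is the elementary fact that $d_{ij;t}=-1$ holds if and only if $x_{j;t}=x_i$. By Theorem~\ref{dtheorem}~(1), if $x_{j;t}$ is non-initial then $\dd_{j;t}$ is a non-negative vector, so no entry can be $-1$; hence $d_{ij;t}=-1$ forces $x_{j;t}$ to be some initial cluster variable $x_l$, for which $\dd_{j;t}=-\ee_l$, and then $d_{ij;t}=-1$ selects precisely $l=i$. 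Conversely, $x_{j;t}=x_i$ gives $\dd_{j;t}=-\ee_i$, so $d_{ij;t}=-1$.

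Next I would run the case split through the two $(3)$-statements, matching indices so that the cluster variable is $x_{j;t}$ and the initial variable is $x_i$. By Theorem~\ref{ftheorem}~(3), $f_{ij;t}=0$ if and only if there is a cluster containing both $x_{j;t}$ and $x_i$. By Theorem~\ref{dtheorem}~(3), $d_{ij;t}=0$ if and only if there is a cluster containing both $x_{j;t}$ and $x_i$, under the additional hypothesis $x_{j;t}\neq x_i$. If $x_{j;t}=x_i$, then $f_{ij;t}=0$ trivially (a cluster containing $x_i$ contains both, or equivalently its $F$-polynomial is $1$ by Theorem~\ref{ftheorem}~(1)), and $d_{ij;t}=-1$ by the previous paragraph, so both sides of the claimed equivalence hold. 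If $x_{j;t}\neq x_i$, the conditions in Theorem~\ref{ftheorem}~(3) and Theorem~\ref{dtheorem}~(3) become literally the same existence statement, whence $f_{ij;t}=0\iff d_{ij;t}=0$, while $d_{ij;t}=-1$ is excluded here. Chaining these equivalences gives $f_{ij;t}=0\iff d_{ij;t}\in\{0,-1\}$, and since every link in the chain is an ``if and only if'', both directions are handled at once.

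Since each ingredient is already established, I do not anticipate a serious obstacle; the only point requiring care is the bookkeeping around the degenerate case $x_{j;t}=x_i$, namely making sure the hypothesis $x_{j;t}\neq x_i$ attached to Theorem~\ref{dtheorem}~(3) is correctly absorbed. This is harmless because that hypothesis is automatic once $d_{ij;t}=0$ (as $d_{ij;t}=-1$ is exactly the signature of $x_{j;t}=x_i$), so dropping it does not change the set of values of $d_{ij;t}$ being captured. As an alternative route, one could instead fix a $g$-pair along $I=\{1,\dots,n\}\setminus\{i\}$ and combine Lemma~\ref{f-gtheorem} with Lemma~\ref{d-gtheorem}, reading off that the relevant $f$-entry vanishes iff the corresponding $g$-entry is $\geq 0$ iff the $d$-entry lies in $\{0,-1\}$; but this requires translating the $t'$-relative statements back to $t_0$, so I would prefer the direct combinatorial argument above.
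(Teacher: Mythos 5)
Your proposal is correct and follows essentially the same route as the paper, which derives the corollary directly from Theorem~\ref{dtheorem}~(1),(3) and Theorem~\ref{ftheorem}~(3); you have merely made explicit the bookkeeping around the degenerate case $x_{j;t}=x_i$ (where $d_{ij;t}=-1$) that the paper leaves implicit. No gaps.
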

\begin{proof}
It follows from Theorem \ref{dtheorem} (1),(3) and Theorem \ref{ftheorem} (3).
\end{proof}

The property of $d$-vectors corresponding to Theorem \ref{ftheorem} (4) has not been known. However, we obtain it by using Theorem \ref{ftheorem} (4) and Corollary \ref{f-dcorrespondence}. 

\begin{corollary}
For any cluster algebra $\Acal(B)$, all entries of the $k$th row of $D_{t}^{B;t_0}$ are all non-positive if and only if $\xx_t$ contains $x_k$.
\end{corollary}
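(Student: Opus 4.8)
The plan is to combine Theorem \ref{ftheorem} (4) with Corollary \ref{f-dcorrespondence}, reducing the assertion about the $k$th row of the $D$-matrix to the already-established assertion about the $k$th row of the $F$-matrix. The $k$th row of $D_t^{B;t_0}$ consists of the entries $d_{ki;t}$ for $i=1,\dots,n$, and likewise the $k$th row of $F_t^{B;t_0}$ consists of $f_{ki;t}$. By Corollary \ref{f-dcorrespondence}, $f_{ki;t}=0$ holds precisely when $d_{ki;t}\in\{0,-1\}$; hence the $k$th row of $F_t^{B;t_0}$ is the zero vector if and only if every entry of the $k$th row of $D_t^{B;t_0}$ lies in $\{0,-1\}$.

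First I would dispatch the ``if'' direction. Assuming $\xx_t$ contains $x_k$, Theorem \ref{ftheorem} (4) gives that the $k$th row of $F_t^{B;t_0}$ is the zero vector, and then the observation above yields $d_{ki;t}\in\{0,-1\}$ for all $i$; in particular all these entries are non-positive, as required.

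For the ``only if'' direction, suppose every entry $d_{ki;t}$ of the $k$th row is non-positive. The crux is to upgrade the weak inequality ``$d_{ki;t}\le 0$'' to the membership ``$d_{ki;t}\in\{0,-1\}$'' for each $i$, and here I would invoke the dichotomy for $d$-vectors from Theorem \ref{dtheorem} (1). If $x_{i;t}$ is an initial cluster variable, then $\dd_{i;t}=-\ee_l$ for some $l$, so $d_{ki;t}\in\{0,-1\}$ automatically. If $x_{i;t}$ is not an initial cluster variable, then $\dd_{i;t}$ is a non-negative vector by Theorem \ref{dtheorem} (1), forcing $d_{ki;t}\ge 0$; combined with the hypothesis $d_{ki;t}\le 0$ this gives $d_{ki;t}=0$. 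In either case $d_{ki;t}\in\{0,-1\}$, so by the observation above the $k$th row of $F_t^{B;t_0}$ vanishes, and Theorem \ref{ftheorem} (4) concludes that $\xx_t$ contains $x_k$.

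Since the argument is essentially a translation between the $D$- and $F$-pictures via Corollary \ref{f-dcorrespondence}, the only genuinely substantive point—and the step I expect to require the most care—is the passage from $d_{ki;t}\le 0$ to $d_{ki;t}\in\{0,-1\}$. This relies crucially on the sign-coherent dichotomy of Theorem \ref{dtheorem} (1), namely that every $d$-vector is either of the form $-\ee_l$ (with entries in $\{0,-1\}$) or is a genuinely non-negative vector, which is exactly what rules out the possibility of a strictly negative entry other than $-1$.
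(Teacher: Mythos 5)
Your proof is correct and follows essentially the same route as the paper: translate between the $D$- and $F$-pictures via Corollary \ref{f-dcorrespondence} and apply Theorem \ref{ftheorem} (4). The only differences are cosmetic — the paper handles the ``if'' direction directly from Theorem \ref{dtheorem} (3) rather than going through the $F$-matrix, and your explicit use of the dichotomy in Theorem \ref{dtheorem} (1) to upgrade ``$d_{ki;t}\le 0$'' to ``$d_{ki;t}\in\{0,-1\}$'' fills in a step that the paper's one-line ``only if'' argument leaves implicit.
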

\begin{proof}
The ``if" part follows from Theorem \ref{dtheorem} (3). We show the ``only if" part. By Corollary \ref{f-dcorrespondence}, the $k$th row of $F_t^{B;t_0}$ are all 0. By Theorem \ref{ftheorem} (4), $\xx_t$ contains $x_k$. This finishes the proof.
\end{proof}

\section{Compatibility degree and its properties}
The classical compatibility degree was introduced to define the generalized associahedron. This is a function on the set of pairs of roots, and the generalized associahedra are simplicial complexes whose simplexes are sets consisting of roots such that each classical compatibility degree of a pair of roots is 0. In this section, we generalize it to a function on pairs of cluster variables in a different way from \cite{cl2} by using $f$-vectors and give some properties of the generalized one, the compatibility degree. 

\subsection{Classical compatibility degree and generalized associahedra}
In this subsection, we explain the \emph{classical compatibility degree} and the \emph{generalized associahedra} introduced in \cite{fzy}. 
Let $\Phi$ be a root system of finite type. We denote by $\Phi_{\geq -1}$ the set of almost positive roots, that is, the union of all negative simple roots and all positive roots. Let $C_\Phi$ be a Cartan matrix corresponding to $\Phi$ and $\Gamma_\Phi$ be a Dynkin graph corresponding to $\Phi$. {Denote by $I$ the set of vertices of $\Gamma_{\Phi}$ and $(I_-,I_+)$ the bipartition of $\Gamma_{\Phi}$.} 
Next, we define $t_\pm$ which are compositions of simple reflections as follows:
\begin{align}
t_+ = \prod_{i\in I_+} s_i,\quad t_- = \prod_{i\in I_-} s_i.
\end{align}
We define transformations $\tau_\pm\colon \Phi_{\geq-1}\to \Phi_{\geq-1}$ as follows:
\begin{align}
\tau_+(\alpha)&=
\begin{cases}\alpha \quad &\text{if }\alpha =-\alpha _{j},\ j\in I_-; \\ 
t_+\left( \alpha \right)&\text{otherwise},
\end{cases}\\
\tau_-(\alpha)&=
\begin{cases}\alpha \quad &\text{if }\alpha =-\alpha _{j},\ j\in I_+; \\ 
t_-\left( \alpha \right)&\text{otherwise}.
\end{cases}
\end{align}

For $k\in\ZZ$ and $i\in I$, we abbreviate
\begin{align}
\alpha(k;i) = (\tau_- \tau_+)^k( - \alpha_i).
\end{align}
In particular, $\alpha(0;i)=-\alpha_i$ for all~$i$
and $\alpha(\pm 1;i)=\alpha_i$ for $i\in I_\mp\,$.

Let $h$ be the Coxeter number of $\Phi$ and $w_\circ$ be the longest element of the Weyl group of $\Phi$. Let $i \mapsto i^*$ denote
the involution on $I$ defined by $- \alpha_{i^*}:=w_\circ (\alpha_i) $.
It is known that this involution preserves each of the sets $I_+$ and $I_-$
when $h$ is even, and interchanges them when $h$ is odd.

\begin{proposition}[\cite{fzy}*{Proposition 2.5}]
\label{pr:tau orbits}
\noindent
\begin{itemize}
\item [(1)]
Suppose $h = 2e$ is even.
Then the map $(k,i) \mapsto \alpha(k;i)$
restricts to a bijection
\begin{align}
[0,e] \times I \to \Phi_{\geq -1}.
\end{align}
Furthermore, $\alpha(e+1;i) = - \alpha_{i^*}$
for any $i$.

\item[(2)]
Suppose $h = 2e+1$ is odd.
Then the map $(k,i) \mapsto \alpha(k;i)$
restricts to a bijection
\begin{align}
([0,e+1] \times I_-) \cup ([0,e] \times I_+)\to
\Phi_{\geq -1}.
\end{align}
Furthermore, $\alpha(e+2;i) = - \alpha_{i^*}$
for $i\in I_-$,
and $\alpha(e+1;i) = - \alpha_{i^*}$
for~$i\in I_+$.
\end{itemize}
\end{proposition}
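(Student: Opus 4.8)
The plan is to study the piecewise-linear permutation $\gamma:=\tau_-\tau_+$ of the finite set $\Phi_{\geq -1}$, to match its orbits with the factors of the index set in each case, and to pin down the period using the longest element $w_\circ$. Throughout write $n=|I|$ for the rank.

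First I would record that $\tau_+$ and $\tau_-$ are involutions of $\Phi_{\geq -1}$. Since $I_+$ is an independent set in the Dynkin graph $\Gamma_\Phi$, the reflections $\{s_i\}_{i\in I_+}$ commute pairwise, so $t_+$ is a linear involution whose only effect on the positive roots is to send each $\alpha_i$ ($i\in I_+$) to $-\alpha_i$ while permuting the remaining positive roots. The modification fixing $-\alpha_j$ for $j\in I_-$ in the definition of $\tau_+$ is exactly what keeps the image inside $\Phi_{\geq -1}$: the net effect is that $\tau_+$ swaps $\alpha_i\leftrightarrow -\alpha_i$ for $i\in I_+$, fixes $-\alpha_j$ for $j\in I_-$, and permutes the remaining positive roots by $t_+$; in particular $\tau_+^2=\mathrm{id}$, and symmetrically for $\tau_-$.

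Next I would compare $\gamma$ with the bipartite Coxeter element $c:=t_-t_+$, which has order $h$. A direct check shows $\gamma(\beta)=c(\beta)$ unless $\beta$ lies among the boundary roots $\{-\alpha_j:j\in I_-\}\cup\{\alpha_i:i\in I_+\}$, where the piecewise rule intervenes and $\gamma$ either flips $-\alpha_j\mapsto\alpha_j$ (the start of an orbit) or $\alpha_i\mapsto -\alpha_i$ (its end). Hence the $\gamma$-orbit of a negative simple root $-\alpha_i$ follows a $c$-orbit through a string of positive roots and halts when it first meets another negative simple root. To locate that moment I would invoke the classical identities expressing the longest element through the bipartite Coxeter element: $c^{e}=w_\circ$ when $h=2e$, and, when $h=2e+1$, the companion relations $c^{e+1}(\alpha_i)=\alpha_{i^*}$ for $i\in I_+$ and $c^{e}(\alpha_i)=\alpha_{i^*}$ for $i\in I_-$. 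Combined with $-\alpha_{i^*}=w_\circ(\alpha_i)$ and the fact that $i\mapsto i^*$ preserves $I_\pm$ for even $h$ and interchanges them for odd $h$, these show that for even $h$ every orbit reaches $-\alpha_{i^*}$ at step $e+1$, so $\alpha(e+1;i)=-\alpha_{i^*}$ and the roots $\alpha(k;i)$ with $1\le k\le e$ are genuinely positive. For odd $h$ the color change caused by $i\mapsto i^*$ forces an asymmetry: an orbit started at $i\in I_+$ lands directly on $-\alpha_{i^*}$ after $e+1$ steps, whereas one started at $i\in I_-$ first reaches the positive simple root $\alpha_{i^*}$ and needs one extra flip, giving $\alpha(e+2;i)=-\alpha_{i^*}$.

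Finally, to upgrade the orbit description to the asserted bijection I would combine the positivity and pairwise distinctness of the $\alpha(k;i)$ inside each window with a count of cardinalities: $|\Phi_{\geq -1}|=n+\tfrac12 nh$ equals $(e+1)n$ in the even case and $(e+2)|I_-|+(e+1)|I_+|$ in the odd case, where $|I_+|=|I_-|=n/2$ since an odd Coxeter number occurs only in type $A_{2m}$. As the windows exhaust each orbit without repetition, the map $(k,i)\mapsto\alpha(k;i)$ is forced to be a bijection onto $\Phi_{\geq -1}$. I expect the main obstacle to be precisely the boundary analysis underlying the previous paragraph: showing that along each orbit the intermediate roots $\alpha(k;i)$ remain positive and pairwise distinct, and that the orbit first returns to a negative simple root at exactly the predicted step. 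This is where the interaction between the bipartite coloring and the Coxeter-element/longest-element identities is delicate, and it is what fixes both the period ($e+1$ or $e+2$) and the identification of the terminal root with $-\alpha_{i^*}$.
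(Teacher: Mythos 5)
First, a point of reference: the paper does not prove this statement at all --- it is quoted verbatim from Fomin--Zelevinsky \cite{fzy}*{Proposition 2.5} and used as a black box, so the only meaningful comparison is with the original argument in \cite{fzy}. Your outline does follow that argument's route: involutivity of $\tau_{\pm}$, comparison of $\tau_-\tau_+$ with the bipartite Coxeter element $c=t_-t_+$ away from the ``boundary'' roots $\{-\alpha_j : j\in I_-\}\cup\{\alpha_i : i\in I_+\}$, the expressions of $w_\circ$ as an alternating product of $t_+$ and $t_-$ with $h$ factors, and the cardinality count $|\Phi_{\geq-1}|=n+\tfrac12 nh$, all of which you state correctly (I checked your odd-case identities $c^{e+1}(\alpha_i)=\alpha_{i^*}$ for $i\in I_+$ and $c^{e}(\alpha_i)=\alpha_{i^*}$ for $i\in I_-$ in type $A_2$, and the two cardinality identities in both parities).

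The genuine gap is the one you yourself flag and then leave as an ``expected obstacle'': that along the orbit of $-\alpha_i$ the intermediate roots $\alpha(1;i),\dots,\alpha(e;i)$ are positive and pairwise distinct, and that the orbit meets a negative simple root for the \emph{first} time at step $e+1$ (resp.\ $e+2$). This is not a boundary technicality --- it is the entire content of the proposition, because your counting argument only closes if each window $[0,e]\times\{i\}$ contributes exactly $e+1$ distinct elements and the windows are pairwise disjoint; an early return $c^m(\alpha_i)=\alpha_j$ with $j\in I_+$ and $m<e-1$ would break both injectivity and the claimed value of $\alpha(e+1;i)$, and nothing you have written rules it out. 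What is missing is the classical Kostant--Steinberg input on the bipartite Coxeter element: every $\langle c\rangle$-orbit on $\Phi$ has exactly $h$ elements (equivalently $c$ has no eigenvalue $1$), and the roots
\begin{align*}
t_+\,,\quad t_-t_+\,,\quad t_+t_-t_+\,,\ \dots
\end{align*}
applied successively to the simple roots enumerate each positive root exactly once, which is what simultaneously yields positivity, distinctness, and the exact first-return time; this is precisely the lemma Fomin--Zelevinsky isolate before proving Proposition 2.5, and without stating and using it your argument is a plan rather than a proof. Two smaller caveats: the identity you quote as ``$c^e=w_\circ$'' must be matched to the chosen order $c=t_-t_+$ (versus $t_+t_-$), since the two conventions shift the orbit windows by one; and the claim that odd $h$ forces type $A_{2m}$ with $|I_+|=|I_-|$ is only valid for irreducible $\Phi$, so if reducible root systems are allowed the odd case should be handled componentwise.
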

By this proposition, we can express any root $\beta\in \Phi_{\geq -1}$ with $\beta=\tau(-\alpha_i)$, where $\tau$ is a composition of $\tau_+$ and $\tau_-$, and $-\alpha_i$ is a negative simple root.
We consider a function $(\cdot \parallel \cdot)_{\rm{cl}}\colon\Phi_{\geq -1}\times\Phi_{\geq -1}\to \ZZ_{\geq 0}$ characterized by the following property:
For any negative simple root $-\alpha_i$ and any root $\beta$, we have
\begin{align}
(-\alpha_i \parallel \beta)_{\rm{cl}}=[(\beta:\alpha_i)]_+,
\end{align}
and for any roots $\alpha,\beta$, we have
\begin{align}
(\alpha\parallel \beta)_{\rm{cl}}=(\tau_\varepsilon(\alpha)\parallel\tau_\varepsilon(\beta))_{\rm{cl}},
\end{align}
where$(\beta:\alpha_i)$ is the coefficient integer of $\alpha_i$ in the expansion of $\beta$ in simple roots, and $\varepsilon\in\{\pm1\}$. This function is well-defined by Proposition \ref{pr:tau orbits}. It is called the \emph{classical compatibility degree}. In \cite{fzy}, the classical compatibility degree is called simply the \emph{compatibility degree}, but we adopt this name in imitation of \cite{cp} to distinguish it from the other forthcoming degrees. For $\alpha,\beta \in \Phi_{\geq-1}$, we say that $\alpha$ and $\beta$ are \emph{compatible} if $(\alpha\parallel \beta)_{\rm{cl}}=(\beta\parallel \alpha)_{\rm{cl}}=0$. 

By using the classical compatibility degree, we define the generalized associahedra.

\begin{definition}
For a root system $\Phi$, we define the \emph{generalized associahedron} $\Delta(\Phi)$ as a simplicial complex whose simplexes are subsets of almost positive roots such that their elements are pairwise compatible. 
\end{definition}
\begin{example}
We consider the root system of type $A_2$. We give a generalized associahedron of type $A_2$ in Figure \ref{A2associahedra}. We remark that this complex is isomorphic to the cluster complex given in Example \ref{clustercomplexofA2}. We introduce the correspondence between cluster complexes and generalized associahedra in Theorems \ref{rootbijection} and \ref{cluster-rootidentification}.
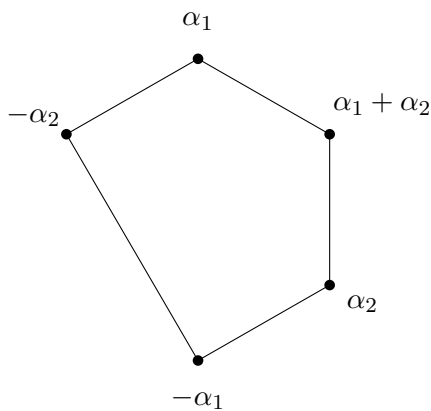
\begin{figure}[ht]
\caption{Generalized associahedron of type $A_2$}\label{A2associahedra}
\[
\begin{tikzpicture}
 \coordinate (0) at (0,0) ;
 \coordinate (1) at (30:2) ;
 \coordinate (2) at (90:2);
 \coordinate (3) at (150:2);
 \coordinate (4) at (270:2);
 \coordinate (5) at (330:2);
 \node (6) at (30:2.8) {$\alpha_1+\alpha_2$};
 \node (7) at (90:2.5){$\alpha_1$};
 \node (8) at (150:2.5){$-\alpha_2$};
 \node (9) at (270:2.5){$-\alpha_1$};
 \node (10) at (330:2.5){$\alpha_2$};
 \fill(1) circle (0.7mm);
 \fill(2) circle (0.7mm);
 \fill(3) circle (0.7mm);
 \fill(4) circle (0.7mm);
 \fill(5) circle (0.7mm);
 \draw(1) to (2);
 \draw(2) to (3);
 \draw(3) to (4);
 \draw(4) to (5);
 \draw(5) to (1);
\end{tikzpicture}
\]
\end{figure}
\end{example}
The function $(\cdot\parallel\cdot)_{\rm{cl}}$ can be regarded as a function on cluster variables of a cluster algebra in the following way: We fix a root system $\Phi$ and the sign of vertices $I=I_+\cup I_-$ of $\Gamma_{\Phi}$. We denote by $B(C_\Phi)=(b_{ij})$ a skew-symmetrizable matrix obtained from the Cartan matrix $C_{\Phi}=(C_{ij})$ by the following equation:
\begin{align*}
b_{ij}=\begin{cases}
0\quad &\text{if } i=j,\\
-\varepsilon C_{ij} & \text{if } i\neq j \text{and } i\in I_\varepsilon.
\end{cases}
\end{align*}
We call $\Acal(B(C_\Phi))$ the cluster algebra induced by $\Phi$.
\begin{theorem}[\cite{fzii}*{Theorem 1.9}]\label{rootbijection}
For a root system $\Phi$, there is a unique bijection $\alpha\mapsto x[\alpha]$ from $\Phi_{\geq-1}$ to the set $\Xcal$ of all cluster variables in $\Acal(B(C_\Phi))$, such that, for any $\alpha=\sum_{i}a_i\alpha_i \in\Phi_{\geq-1}$, the cluster variable $x[\alpha]$ is expressed in terms of the initial cluster $x_1,\dots,x_n$ as
\begin{align}
x[\alpha]=\dfrac{P(x_1,\dots,x_n)}{x_1^{a_1}\cdots x_n^{a_n}},
\end{align}
where $P(x_1,\dots,x_n)$is a polynomial over $\ZZ\PP$ which is not divisible by any $x_i$. Under this bijection, $x[-\alpha_i] =x_i$.
\end{theorem}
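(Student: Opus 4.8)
The plan is to realize the asserted bijection as the assignment of \emph{denominator vectors} to cluster variables, and to prove that this assignment intertwines the mutation dynamics on $\Xcal$ with the piecewise-linear $\tau_\pm$-dynamics on $\Phi_{\geq-1}$. Since $d$-vectors are independent of the coefficient system, I may compute throughout in $\Acal_\bullet(B(C_\Phi))$. For $\alpha=\sum_i a_i\alpha_i$ write $\mathbf a(\alpha)=(a_1,\dots,a_n)^\top$; the stated denominator condition says precisely that the $d$-vector of $x[\alpha]$ must equal $\mathbf a(\alpha)$. Because $\alpha\mapsto\mathbf a(\alpha)$ is injective (a root is determined by its coordinate vector), uniqueness is automatic once I know each $\mathbf a(\alpha)$ occurs as the $d$-vector of exactly one cluster variable: the bijection is then forced to send $\alpha$ to that variable. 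The base case $x[-\alpha_i]=x_i$ is immediate, since $\dd_{i;t_0}=-\ee_i=\mathbf a(-\alpha_i)$ and $P$ must have nonzero constant term, which forces $P=1$.

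The main step is to exploit the bipartite structure of $B(C_\Phi)$. As $I=I_+\sqcup I_-$ is a proper $2$-coloring of the tree $\Gamma_\Phi$, there are no edges inside $I_+$ or inside $I_-$, so the mutations $\{\mu_i:i\in I_\varepsilon\}$ pairwise commute; write $\mu_\varepsilon$ for their product. I would then show that a single application of $\mu_\varepsilon$ transforms $d$-vectors exactly as $\tau_\varepsilon$ transforms coordinate vectors. Indeed, for $k\in I_\varepsilon$ every neighbor of $k$ lies in $I_{-\varepsilon}$, so all nonzero $b_{ik}$ share one sign; consequently one of the two sums inside the $\max$ of the $d$-vector recursion \eqref{dvectorrecursion} vanishes, collapsing it to a single positive-part expression. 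This is precisely the coordinate description of $\tau_\varepsilon$ on $\Phi_{\geq-1}$, the positive-part bracket encoding the special treatment of the negative simple roots in the definition of $\tau_\pm$.

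Next I would run the induction along the \emph{bipartite belt}, the sequence obtained by alternately applying $\mu_+$ and $\mu_-$. By Proposition \ref{pr:tau orbits} the orbits $\alpha(k;i)=(\tau_-\tau_+)^k(-\alpha_i)$ exhaust $\Phi_{\geq-1}$ and are eventually periodic with period governed by the Coxeter number $h$. Combining this with the step above and with the finite-type fact that the bipartite belt produces \emph{every} cluster variable, I would match the variable reached after a prescribed number of alternating mutations with the root $\alpha(k;i)$ carrying the same coordinate vector. This yields surjectivity (every almost positive root arises as a $d$-vector) and, via the coincidence of the two periodicities, well-definedness: the belt closes up on the cluster side exactly when the $\tau$-orbits close up on the root side.

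The genuinely hard inputs are two. First, that the bipartite belt exhausts all cluster variables: this is where finiteness of type is indispensable and must be invoked so that the $d$-vectors computed along the belt are \emph{all} the $d$-vectors. Second, matching the casework on the nose, namely verifying that the positive-part bracket coming from the $\max$ in \eqref{dvectorrecursion} reproduces the piecewise-linear definition of $\tau_\pm$ including the fixed-point case where a negative simple root $-\alpha_j$ with $\varepsilon(j)=-\varepsilon$ is left invariant by $\tau_\varepsilon$. Injectivity of $d$-vectors on cluster variables ultimately rests on the finite-type count $|\Xcal|=|\Phi_{\geq-1}|$; I would close the argument by comparing cardinalities once surjectivity and the periodicity matching are established, which simultaneously upgrades the surjection to the desired bijection.
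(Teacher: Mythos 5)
The paper does not prove Theorem \ref{rootbijection}; it is imported verbatim from \cite{fzii}*{Theorem 1.9}, so there is no in-paper argument to compare against. Measured against the original Fomin--Zelevinsky proof, your sketch is faithful: realizing the bijection through denominator vectors, exploiting that $\mu_\pm$ negates a bipartite exchange matrix so the bipartite belt stays bipartite, collapsing the $\max$ in \eqref{dvectorrecursion} to a single positive-part expression that reproduces the piecewise-linear $\tau_\pm$, and then matching the belt periodicity with the $\tau$-orbit structure of Proposition \ref{pr:tau orbits} is exactly their route. Two remarks on where the weight of the argument actually sits. First, the claim that the bipartite belt exhausts $\Xcal$ is not a freestanding ``finite-type fact'' one can cite independently of Theorem \ref{rootbijection}: in \cite{fzii} it is established jointly with the bijection itself (via the periodicity of the belt and the structure theory of finite type), so a fully rigorous write-up cannot simply invoke it as a black box without risking circularity. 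Second, your closing cardinality comparison $|\Xcal|=|\Phi_{\geq-1}|$ is dispensable and slightly backwards: once you know every cluster variable lies on the belt, that belt positions map onto $\Phi_{\geq-1}$ compatibly with the two periodicities, and that distinct roots have distinct coordinate vectors, injectivity of $\alpha\mapsto x[\alpha]$ follows from distinctness of denominators rather than from a count (indeed the equality of cardinalities is a consequence, not an input). With those caveats, the proposal is a correct outline of the standard proof.
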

The bijection in Theorem \ref{rootbijection} is natural in the sense of the following.

\begin{theorem}[\cite{fzii}*{Theorem 1.12}]\label{cluster-rootidentification}
Under the bijection of Theorem \ref{rootbijection}, the cluster complex $\Delta(\Acal(B(C_{\Phi})))$ is identified with the simplicial complex $\Delta(\Phi)$. 
\end{theorem}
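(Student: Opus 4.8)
The plan is to show that the vertex bijection $\alpha \mapsto x[\alpha]$ of Theorem \ref{rootbijection} upgrades to an isomorphism of simplicial complexes, i.e. that a set $S \subseteq \Phi_{\geq -1}$ is a face of $\Delta(\Phi)$ if and only if $\{x[\alpha] \mid \alpha \in S\}$ lies in a common cluster of $\Acal(B(C_\Phi))$. Since $\Delta(\Phi)$ is a flag complex by definition (its faces are exactly the pairwise compatible sets), and since the cluster complex of a finite-type cluster algebra is likewise a flag complex — a set of cluster variables lies in a common cluster as soon as each pair does, which one extracts from Lemma \ref{compatibility}, Lemma \ref{pathconnect} and the enough-$g$-pairs property of Theorem \ref{enoughtheorem} — it suffices to match the two complexes on their edges. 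Thus the whole statement reduces to the single equivalence
\begin{align*}
(\alpha \parallel \beta)_{\rm{cl}} = (\beta \parallel \alpha)_{\rm{cl}} = 0 \iff x[\alpha] \text{ and } x[\beta] \text{ lie in a common cluster.}
\end{align*}

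First I would settle the base case in which one root, say $\alpha$, is a negative simple root $-\alpha_i$, so that $x[\alpha] = x_i$ is an initial cluster variable. For a positive root $\beta = \sum_j a_j \alpha_j$, Theorem \ref{rootbijection} identifies $\dd(x[\beta])$ with $(a_1, \dots, a_n)^\top$, while the defining property of the classical degree gives $(-\alpha_i \parallel \beta)_{\rm{cl}} = [(\beta : \alpha_i)]_+ = a_i$. On the cluster side, Theorem \ref{dtheorem}(3) says that $x_i$ and $x[\beta]$ share a cluster precisely when the $i$-th coordinate of $\dd(x[\beta])$ vanishes, i.e. $a_i = 0$. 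Combined with the symmetry of the classical compatibility degree (if one of the two values vanishes so does the other), this yields the edge-equivalence whenever one root is a negative simple root; the remaining sub-case, where $\beta$ is also a negative simple root, is immediate because distinct initial cluster variables always lie in the initial cluster.

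To reach arbitrary pairs I would propagate the base case along the $\tau$-orbits described in Proposition \ref{pr:tau orbits}. By construction the classical degree satisfies $(\alpha \parallel \beta)_{\rm{cl}} = (\tau_\varepsilon \alpha \parallel \tau_\varepsilon \beta)_{\rm{cl}}$, and every pair of almost positive roots can be carried by a composition of $\tau_+$ and $\tau_-$ into a pair one of whose members is a negative simple root. The crux — and the step I expect to be the main obstacle — is to show that this root-theoretic symmetry is matched by an equivariance on the cluster algebra side: the operators $\tau_\pm$ should correspond to the bipartite mutation sequences $\mu_\pm = \prod_{i \in I_\pm}\mu_i$ (well defined because $b_{ij}=0$ for $i,j$ in the same part), and under $x[\cdot]$ one must have $x[\tau_\varepsilon \alpha]$ obtained from $x[\alpha]$ by $\mu_\varepsilon$, so that ``lying in a common cluster'' is preserved. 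Establishing this compatibility between the Weyl-group combinatorics of $\tau_\pm$ and the mutation dynamics along the bipartite belt — tracking denominator vectors through each step — is exactly where the real work lies.

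Once the edge-equivalence is in hand, the two directions of the face statement close up quickly. If $\{x[\alpha]\}_{\alpha \in S}$ lies in a cluster then every pair does, hence $S$ is pairwise compatible and a face of $\Delta(\Phi)$; conversely, if $S$ is pairwise compatible then each pair $x[\alpha], x[\beta]$ shares a cluster, and the flag property of the cluster complex assembles these into a single common cluster. Therefore $\Delta(\Acal(B(C_\Phi)))$ and $\Delta(\Phi)$ have the same faces under $\alpha \mapsto x[\alpha]$, which is precisely the asserted identification.
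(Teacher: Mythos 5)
The paper offers no proof of this statement at all: it is imported verbatim as \cite{fzii}*{Theorem 1.12}, so your argument has to stand entirely on its own. Its architecture is sound and is in fact the route Fomin and Zelevinsky themselves take: reduce to edges using the flag property on both sides (the generalized associahedron is a clique complex by definition, and the cluster complex is one by Theorem \ref{fcompatibilityproperty}/\ref{dcompatibilityproperty}), settle the case where one root is negative simple via the denominator description in Theorem \ref{rootbijection} together with Theorem \ref{dtheorem}(3) and the symmetry property of $(\cdot\parallel\cdot)_{\rm{cl}}$, and then propagate along the $\tau_{\pm}$-orbits of Proposition \ref{pr:tau orbits}. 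The base case as you present it is correct.

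The problem is that the propagation step, which you yourself describe as ``where the real work lies,'' is precisely the entire mathematical content of the theorem, and you do not carry it out. What is needed is not merely that $\mu_{\varepsilon}=\prod_{i\in I_{\varepsilon}}\mu_i$ is a well-defined composite of commuting mutations; one must show that it induces a \emph{simplicial automorphism of the whole cluster complex} (using that $\mu_{I_+}(B(C_\Phi))=-B(C_\Phi)$, so the mutated seed is again bipartite of the same type), and then that this automorphism intertwines with $\tau_{\varepsilon}$ under $\alpha\mapsto x[\alpha]$, i.e.\ that the denominator vector of a cluster variable with respect to the \emph{mutated} bipartite cluster is obtained from its denominator vector with respect to the original one by the piecewise-linear action of $\tau_{\varepsilon}$. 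Theorem \ref{rootbijection} fixes the initial cluster once and for all and says nothing about denominators relative to $\mu_{I_+}(\xx_{t_0})$, so it cannot be cited for this; in Fomin--Zelevinsky's work this equivariance is the hard core of their Theorems 1.9 and 1.13 and rests on the periodicity of the bipartite belt. Without it your argument proves only the base case. (Within the results this paper quotes there is a shortcut — Theorem \ref{classical-dcorrespondence} plus Theorem \ref{dcompatibilityproperty} gives the edge equivalence in one line — but \cite{cp}*{Corollary 3.2} is itself logically downstream of the identification you are trying to prove, so invoking it here would be circular.)
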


By Theorems \ref{rootbijection} and \ref{cluster-rootidentification}, we can identify almost positive roots in $\Phi_{\geq -1}$ with the $d$-vectors of cluster variables of $\Acal(B(C_\Phi))$. By abusing of notation, we use $(\cdot \parallel \cdot)_{\rm{cl}}$ as a function on $\Xcal \times \Xcal$.
We denote by $\Phi^\vee$ the dual root system of $\Phi$, and for any $\alpha\in\Phi$, we denote by $\alpha^{\vee}\in\Phi^\vee$ the coroot of $\alpha$. By definition, it is clear that $\Acal(B(C_{\Phi^\vee}))$ is $\Acal(B(C_\Phi)^\top)$ or $\Acal(-B(C_\Phi)^\top)$ (depending on the choice of $I_+$). We remark that the classical compatibility on $\Xcal \times \Xcal$ depends only on root systems, therefore we can assume $\Acal(B(C_{\Phi^\vee}))=\Acal(-B(C_\Phi)^\top)$ without loss of generality.

The classical compatibility degree satisfies the following property:

\begin{proposition}[\cite{fzy}*{Proposition 3.3}]
\label{pr:compatibility-symmetry}
We fix $\Phi$ and an induced cluster algebra $\Acal(B(C_\Phi))$. 
\begin{itemize}
\item[(1)]
We have $(x[\alpha] \parallel x[\beta])_{\rm{cl}} = (x[\beta^\vee] \parallel x[\alpha^\vee])_{\rm{cl}}$
for every $\alpha, \beta \in \Phi_{\geq -1}$.\\
In particular, if $\Phi$ is simply-laced, then
$(x[\alpha] \parallel x[\beta]) _{\rm{cl}}= (x[\beta] \parallel x[\alpha])_{\rm{cl}}$.

\item[(2)]
If $(x[\alpha] \parallel x[\beta])_{\rm{cl}} = 0$, then
$(x[\beta] \parallel x[\alpha])_{\rm{cl}} = 0$.

\item[(3)]
If $\alpha$ and $\beta$ belong
to $\Phi(J)_{\geq -1}$ for some proper subset $J \subset I$,
then their compatibility degree with
respect to the root subsystem $\Phi(J)$ is equal to $(x[\alpha] \parallel x[\beta])_{\rm{cl}}$.
\end{itemize}
\end{proposition}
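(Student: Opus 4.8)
The plan is to use the recursive definition of $(\cdot\parallel\cdot)_{\rm{cl}}$ as a \emph{characterization}. By Proposition~\ref{pr:tau orbits}, applying a suitable common power of $\tau_-\tau_+$ (or its inverse) to a pair $(\alpha,\beta)$ carries $\alpha$ to a negative simple root; hence the base rule $(-\alpha_i\parallel\beta)_{\rm{cl}}=[(\beta:\alpha_i)]_+$ together with the invariance $(\tau_\varepsilon\alpha\parallel\tau_\varepsilon\beta)_{\rm{cl}}=(\alpha\parallel\beta)_{\rm{cl}}$ determines the function uniquely. To prove an identity of the form $(\cdot\parallel\cdot)_{\rm{cl}}=\Theta$ it therefore suffices to check that $\Theta$ obeys the same base rule and the same $\tau_\pm$-invariance. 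Throughout I use that $C_{\Phi^\vee}=C_\Phi^\top$, so $\Phi$ and $\Phi^\vee$ share the Dynkin graph $\Gamma_\Phi$, the bipartition $I=I_+\sqcup I_-$, and the sign function $\varepsilon$; only the root lengths change.

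For part (1) I would set $\Theta(\alpha,\beta):=(\beta^\vee\parallel\alpha^\vee)^{\Phi^\vee}_{\rm{cl}}$ and verify the two characterizing properties. The invariance is the soft half: since each $\tau_\varepsilon$ is assembled from the simple reflections $s_i$ and since taking coroots intertwines reflections, one proves the lemma $(\tau_\varepsilon^\Phi\gamma)^\vee=\tau_\varepsilon^{\Phi^\vee}(\gamma^\vee)$ (the coincidence of $I_\pm$ for $\Phi$ and $\Phi^\vee$ is what makes the two $\tau$'s correspond), and then the $\tau_\pm^{\Phi^\vee}$-invariance of $(\cdot\parallel\cdot)^{\Phi^\vee}_{\rm{cl}}$ transfers to $\Theta$. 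The base rule is the hard half: it amounts to evaluating $(\gamma\parallel-\alpha_i^\vee)^{\Phi^\vee}_{\rm{cl}}$, i.e.\ the degree when the \emph{second} slot is a negative simple (co)root. I expect to prove the transpose base formula $(\gamma\parallel-\alpha_i)_{\rm{cl}}=\big[(\gamma:\alpha_i)\,(\alpha_i,\alpha_i)/(\gamma,\gamma)\big]_+$, whose right-hand side is exactly $[(\gamma^\vee:\alpha_i^\vee)]_+$; applied in $\Phi^\vee$ to $\gamma=\beta^\vee$ this returns $[(\beta:\alpha_i)]_+$, the required base value. The simply-laced specialization is then immediate because $\alpha^\vee=\alpha$ there.

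For part (2) I would reduce to slot-symmetry of the \emph{zero locus}. The first-slot value vanishes exactly when $(\beta:\alpha_i)\le0$, and the transpose base formula above shows the second-slot value vanishes under the same condition $(\gamma:\alpha_i)\le0$ (the positive length ratio does not affect the sign). Thus for any pair in which one slot is a negative simple root, $(\alpha\parallel\beta)_{\rm{cl}}=0\Leftrightarrow(\beta\parallel\alpha)_{\rm{cl}}=0$. For a general pair I would apply a common $\tau$-sequence to move the first entry to a negative simple root and invoke this base symmetry, using that $\tau$-invariance preserves $(\alpha\parallel\beta)_{\rm{cl}}$ and $(\beta\parallel\alpha)_{\rm{cl}}$ simultaneously. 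For part (3), with $\alpha,\beta\in\Phi(J)_{\geq-1}$, I would again use uniqueness, now for $(\cdot\parallel\cdot)^{\Phi(J)}_{\rm{cl}}$: the base rule agrees because the coefficient $(\beta:\alpha_i)$ is the same computed in $\Phi$ or in $\Phi(J)$, so it remains to show that the restriction of $(\cdot\parallel\cdot)^{\Phi}_{\rm{cl}}$ to $\Phi(J)_{\geq-1}$ is invariant under $\tau_\pm^{\Phi(J)}$; one decomposes $t_\pm^\Phi=t_\pm^{\Phi(J)}\,t_\pm^{I\setminus J}$ into commuting factors and controls the effect of the extra reflections $s_k$, $k\notin J$, on roots supported on $J$.

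The main obstacle is the transpose base formula and, more generally, the evaluation of $(\cdot\parallel\cdot)_{\rm{cl}}$ when the second argument is a negative simple root: this is the single computation into which all the non-simply-laced subtlety is concentrated, since the root-length ratio $(\alpha_i,\alpha_i)/(\gamma,\gamma)$ is precisely what turns the coroot swap in (1) into an honest identity and what separates the full symmetry (simply-laced) from the weaker zero-symmetry in (2). I anticipate establishing it, \emph{without} invoking (1) to avoid circularity, either by an explicit analysis of the $\tau$-orbit of a fixed negative simple root (tracking its image while the other argument is reduced to base position) or, equivalently, by matching the degree with the corresponding denominator entry under the identification of Theorems~\ref{rootbijection} and~\ref{cluster-rootidentification}. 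The secondary difficulty, in (3), is reconciling $\tau_\pm^{\Phi(J)}$ with $\tau_\pm^{\Phi}$, where the reflections indexed by vertices of $I\setminus J$ adjacent to $J$ must be shown not to change the compatibility degree of $J$-supported roots.
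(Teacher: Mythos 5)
The paper does not actually prove this proposition: it is imported verbatim from Fomin--Zelevinsky (\cite{fzy}*{Proposition 3.3}) and used as a black box, so there is no in-paper argument to compare yours against. Judged on its own, your outline has the right architecture --- Proposition \ref{pr:tau orbits} makes the base rule plus $\tau_\pm$-invariance a \emph{uniqueness} statement, and you verify that the candidate $\Theta(\alpha,\beta)=(\beta^\vee\parallel\alpha^\vee)$ satisfies both --- and the soft half ($(\tau_\varepsilon^\Phi\gamma)^\vee=\tau_\varepsilon^{\Phi^\vee}(\gamma^\vee)$, using that $\Phi$ and $\Phi^\vee$ share $\Gamma_\Phi$ and the bipartition) is correct. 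But the proof is not complete: the ``transpose base formula'' $(\gamma\parallel-\alpha_i)_{\rm cl}=[(\gamma^\vee:\alpha_i^\vee)]_+$ is exactly the case $\alpha=-\alpha_i$ of part (1), i.e.\ it carries the entire content of (1) and (2), and you only ``anticipate'' it. Neither suggested route is executed: the $\tau$-orbit route requires computing the $\alpha_j$-coefficient of $\tau(-\alpha_i)$ for the word $\tau$ that reduces $\gamma$ to $-\alpha_j$ (this tracking along the bipartite Coxeter dynamics is precisely the hard computation in Fomin--Zelevinsky's original proof), and the denominator-vector route leans on Theorems \ref{rootbijection}--\ref{cluster-rootidentification} together with \cite{cp}, i.e.\ on machinery that in this paper is downstream of the very proposition being proved, so circularity would have to be checked carefully.

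There is a second, unacknowledged problem in part (3). The factorization $t_\pm^\Phi=t_\pm^{\Phi(J)}\,t_\pm^{I\setminus J}$ into commuting reflections is fine, but the extra reflections do \emph{not} act trivially on $J$-supported roots: if $k\in I\setminus J$ is adjacent to the support of $\gamma\in\Phi(J)$, then $s_k(\gamma)=\gamma-\langle\gamma,\alpha_k^\vee\rangle\alpha_k\notin\Phi(J)$. (Already for $\Phi=A_2$, $J=\{1\}$, one has $\tau_-^\Phi(\alpha_1)=\alpha_1+\alpha_2$ while $\tau_-^{\Phi(J)}(\alpha_1)=\alpha_1$.) So $\tau_\pm^\Phi$ does not even preserve $\Phi(J)_{\geq-1}$, and the $\tau_\pm^{\Phi(J)}$-invariance of the restricted degree cannot be obtained by ``controlling the effect of the extra reflections'' term by term; a genuinely different argument is needed for the embedding property. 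In short: the skeleton is right and you have correctly located where the difficulty sits, but the decisive lemma for (1)--(2) is unproven and the plan for (3) breaks at the step where the two $\tau$'s are reconciled.
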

We call (1) the \emph{duality property}, (2) the \emph{symmetry property}, and (3) the \emph{embedding property} respectively. 
Moreover, the classical compatibility degree satisfies the following two properties, the \emph{compatibility property} and the \emph{exchangeability property}:
\begin{proposition}\label{classical-compatibility}
Let $\Phi$ be a root system and $\Acal(B(C_\Phi))$ be an induced cluster algebra by $\Phi$. For any set of cluster variables $X$, there exists a cluster $\xx$ such that $\xx$ contains $X$ if and only if the classical compatibility degrees of any pairs of cluster variables in $X$ are $0$.
\end{proposition}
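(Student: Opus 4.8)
The plan is to derive the statement as an essentially immediate consequence of the identification of the cluster complex with the generalized associahedra, namely Theorem~\ref{cluster-rootidentification}, so that the whole argument reduces to translating the two sides of the equivalence across the bijection $\alpha\mapsto x[\alpha]$ of Theorem~\ref{rootbijection}. Fix the root system $\Phi$ and, for a set $X$ of cluster variables of $\Acal(B(C_\Phi))$, write $X^\flat=\{\alpha\in\Phi_{\geq-1}\mid x[\alpha]\in X\}$ for the corresponding set of almost positive roots.

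First I would unwind the two conditions purely definitionally. By the definition of the cluster complex $\Delta(\Acal(B(C_\Phi)))$, the condition that there exists a cluster $\xx$ containing $X$ is exactly the statement that $X$ is a simplex of $\Delta(\Acal(B(C_\Phi)))$, since by definition a simplex is a subset of a single cluster. On the other side, combining the abuse of notation that identifies $(x[\alpha]\parallel x[\beta])_{\rm{cl}}$ with $(\alpha\parallel\beta)_{\rm{cl}}$ and the definition of the generalized associahedra $\Delta(\Phi)$, the condition that every pair of cluster variables in $X$ has classical compatibility degree $0$ is exactly the statement that the roots of $X^\flat$ are pairwise compatible, \ie that $X^\flat$ is a simplex of $\Delta(\Phi)$. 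Here one should record the small symmetry point: ``any pair'' is read in both orders, but by Proposition~\ref{pr:compatibility-symmetry}(2) the vanishing of $(x[\alpha]\parallel x[\beta])_{\rm{cl}}$ already forces the vanishing of $(x[\beta]\parallel x[\alpha])_{\rm{cl}}$, so this reading agrees with the notion of compatibility used to define $\Delta(\Phi)$.

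It then remains only to invoke Theorem~\ref{cluster-rootidentification}, which says that $\alpha\mapsto x[\alpha]$ identifies $\Delta(\Phi)$ with $\Delta(\Acal(B(C_\Phi)))$ as simplicial complexes. Under such an identification, $X$ is a simplex of $\Delta(\Acal(B(C_\Phi)))$ if and only if $X^\flat$ is a simplex of $\Delta(\Phi)$, which closes both the ``only if'' and the ``if'' directions of the equivalence at once.

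I expect no genuine combinatorial obstacle here: all the substance is already packaged in the cited identification. In particular, the nontrivial input—that pairwise compatibility of roots suffices to place them in a common simplex, so that the cluster complex is a \emph{flag complex}—is built into the definition of $\Delta(\Phi)$ and transported by Theorem~\ref{cluster-rootidentification}. Thus the only care required is bookkeeping: checking that ``contained in a cluster'' matches ``is a simplex of the cluster complex'' and that the identification of $(\alpha\parallel\beta)_{\rm{cl}}$ with $(x[\alpha]\parallel x[\beta])_{\rm{cl}}$ is applied consistently to each pair drawn from $X$.
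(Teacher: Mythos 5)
Your proposal is correct and follows exactly the paper's route: the paper's proof is the one-line observation that the statement "follows from Theorem~\ref{cluster-rootidentification} immediately," and your argument is simply a careful unwinding of that same reduction. The extra remark about Proposition~\ref{pr:compatibility-symmetry}(2) handling the two orderings of each pair is a sensible bit of bookkeeping that the paper leaves implicit.
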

\begin{proof}
It follows from Theorem \ref{cluster-rootidentification} immediately.
\end{proof}

\begin{proposition}\label{classical-exchangeability}
Let $\Phi$ be a root system and $\Acal(B(C_\Phi))$ be an induced cluster algebra by $\Phi$. For any $x[\alpha],x[\beta]$, there exists a set $X$ of cluster variables such that $X\cup x[\alpha]$ and $X\cup x[\beta]$ are both clusters, if and only if $(x[\alpha]\parallel x[\beta])_{\rm{cl}}=(x[\beta]\parallel x[\alpha])_{\rm{cl}}=1$.
\end{proposition}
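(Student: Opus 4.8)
The plan is to pass to the cluster complex via Theorem \ref{cluster-rootidentification}, so that the assertion becomes: two distinct cluster variables $x[\alpha]\neq x[\beta]$ are the two completions of a common ridge $X$ (equivalently, $X\cup\{x[\alpha]\}$ and $X\cup\{x[\beta]\}$ are adjacent clusters, joined by a single mutation) if and only if $(x[\alpha]\parallel x[\beta])_{\rm{cl}}=(x[\beta]\parallel x[\alpha])_{\rm{cl}}=1$. Since $\tau_+$ and $\tau_-$ preserve the classical compatibility degree, they carry compatible pairs to compatible pairs and hence clusters to clusters; thus replacing $(\alpha,\beta)$ by $(\tau_\varepsilon\alpha,\tau_\varepsilon\beta)$ leaves both sides of the equivalence unchanged. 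By Proposition \ref{pr:tau orbits} some composition of $\tau_\pm$ sends $\alpha$ to a negative simple root, so I may assume $\alpha=-\alpha_i$ and $x[\alpha]=x_i$. Under this normalization $(x[\alpha]\parallel x[\beta])_{\rm{cl}}=[(\beta:\alpha_i)]_+$, so the hypothesis on the first degree becomes $(\beta:\alpha_i)=1$ (forcing $\beta$ to be a positive root), and the second degree $(x[\beta]\parallel x_i)_{\rm{cl}}$ supplies the remaining constraint.

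For the ``only if'' direction, suppose such an $X$ exists. Then $x[\alpha]$ and $x[\beta]$ cannot be compatible: otherwise every pair in $X\cup\{x[\alpha],x[\beta]\}$ would be compatible, and Proposition \ref{classical-compatibility} would place this $(n+1)$-element set inside a single $n$-element cluster. Hence at least one of the two degrees is positive, and by the symmetry property (Proposition \ref{pr:compatibility-symmetry}(2)) both are positive. To see that each is exactly $1$, view $\xx:=X\cup\{x[\alpha]\}$ as a seed; the adjacent cluster $X\cup\{x[\beta]\}$ is obtained by mutating its $x[\alpha]$-entry, so the exchange relation has the form $x[\alpha]\,x[\beta]=M_++M_-$ with $M_\pm$ monomials in $X$. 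Thus $x[\alpha]$ occurs to the first power in the denominator of $x[\beta]$ relative to $\xx$, and reading the classical compatibility degree off these denominators (the denominator interpretation of $(\cdot\parallel\cdot)_{\rm{cl}}$, which is seed-independent in the classical theory) gives $(x[\alpha]\parallel x[\beta])_{\rm{cl}}=1$; the symmetric exchange relation seen from $X\cup\{x[\beta]\}$ gives $(x[\beta]\parallel x[\alpha])_{\rm{cl}}=1$.

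The ``if'' direction is where the substance lies and is the step I expect to be the main obstacle. After the normalization $\alpha=-\alpha_i$ I must produce a ridge $X$ of size $n-1$ lying simultaneously in the link of $x_i$ and in the link of $x[\beta]$; equivalently, freezing $x_i$ to form the rank-$(n-1)$ cluster algebra whose cluster complex is the link of $x_i$, I must find a cluster $X$ of it for which $X\cup\{x[\beta]\}$ is still a cluster of $\Acal(B(C_\Phi))$. The hypotheses $(\beta:\alpha_i)=1$ and $(x[\beta]\parallel x_i)_{\rm{cl}}=1$ say that $x[\beta]$ sits at minimal distance from $x_i$ in both directions, and the claim is precisely that this minimality forces the two links to meet in codimension one. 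I would prove this by reducing to rank $2$: using the $\tau$-orbit description of Proposition \ref{pr:tau orbits} together with the embedding property (Proposition \ref{pr:compatibility-symmetry}(3)) to localize $\{x_i,x[\beta]\}$ inside a rank-$2$ subsystem, and then inspecting the finitely many rank-$2$ types ($A_2$, $B_2$/$C_2$, $G_2$), where the exchange partners of a vertex can be read directly off the polygon. The delicate point---and the reason the non-simply-laced case genuinely requires the second hypothesis $(x[\beta]\parallel x_i)_{\rm{cl}}=1$ rather than merely $(\beta:\alpha_i)=1$---is to arrange this localization so that both one-sided degrees are computed inside the subsystem; making that reduction precise, or else invoking the original computation of Fomin and Zelevinsky \cite{fzy}, is the crux of the argument.
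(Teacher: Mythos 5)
Your ``only if'' direction is sound and, in fact, more detailed than what the paper records: the exchange relation $x[\alpha]\,x[\beta]=M_++M_-$ with $M_\pm$ monomials in $X$ shows the denominator vector of $x[\beta]$ with respect to the cluster $X\cup\{x[\alpha]\}$ has entry exactly $1$ in the $x[\alpha]$-slot, and Theorem \ref{classical-dcorrespondence} (together with the seed-independence of $d$-vectors, Theorem \ref{dtheorem}(2)) converts this into $(x[\alpha]\parallel x[\beta])_{\rm{cl}}=1$; the symmetric relation gives the other degree. The normalization via $\tau_\pm$ is also legitimate, since $\tau_\pm$ preserve the compatibility degree and hence map clusters to clusters.

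The ``if'' direction, however, is a genuine gap, and your proposed repair does not work as stated. Your plan is to use the embedding property (Proposition \ref{pr:compatibility-symmetry}(3)) to localize the pair $\{x_i, x[\beta]\}$ inside a rank-$2$ subsystem and then check the finitely many rank-$2$ types. But the embedding property only applies to parabolic root subsystems $\Phi(J)$ for $J\subset I$ a subset of the \emph{simple} roots, and after your normalization $\alpha=-\alpha_i$ the root $\beta$ is merely a positive root with $(\beta:\alpha_i)=1$; its support is in general not contained in a two-element subset $\{i,j\}$ of $I$, so there is no rank-$2$ parabolic containing both $-\alpha_i$ and $\beta$. (Applying further $\tau$'s to shrink the support of $\beta$ destroys the normalization $\alpha=-\alpha_i$.) You flag this yourself as ``the crux,'' and your fallback --- invoking the original computation of Fomin--Zelevinsky --- is precisely what the paper does: its entire proof is to cite the exchangeability of almost positive roots from \cite{cfz}*{Lemma 2.2} and \cite{fzii}*{Corollary 4.4} and combine it with Theorem \ref{cluster-rootidentification}. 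So as a self-contained argument the proposal is incomplete; as a citation-based argument it coincides with the paper's, but then the first two paragraphs are unnecessary.
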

\begin{proof}
The exchangeability of almost positive roots is proved by \cite{cfz}*{Lemma 2.2} and \cite{fzii}*{Corollary 4.4}. The proposition is shown by combining it with Theorem \ref{cluster-rootidentification}. 
\end{proof}
We consider a natural generalization of the classical compatibility degree preserving these properties in the next subsection.
\subsection{Compatibility degree}
We introduce the compatibility degree. This is defined by using components of $f$-vectors. In this subsection, we prove that compatibility degree preserves the results of Proposition \ref{pr:compatibility-symmetry} and Proposition \ref{classical-compatibility}.
\begin{definition}\label{f-compatibility}
Let $\Acal(B)$ be a cluster algebra. We define the \emph{compatibility degree} $(\cdot\parallel \cdot) \colon \Xcal \times \Xcal \to \ZZ_{\geq 0}$ of $\Acal(B)$ as follows: For any two cluster variables $x$ and $x'$, if $x=x_{i;t}$ and $ x'=x_{j;t'}$, then 
\begin{align}
(x\parallel x')=f_{ij;t'}^{B_t;t}.
\end{align}
When we want to emphasize that this function is defined by $f$-vector, we use $(x\parallel x')_f$ as the notation.
\end{definition}
We remark that the choice of $i,j,t,t'$ satisfying $x=x_{i;t}$ and $x'=x_{j;t'}$ is not unique, but the compatibility degree is well-defined by Theorem \ref{ftheorem} (2).
This function is a generalization of the classical compatibility degree.
\begin{theorem}\label{classical-fcorrespondence}
We fix any root system $\Phi$ and its induced cluster algebra $\Acal(B(C_\Phi))$. For any cluster variables $x$ and $x'$, we have 
\begin{align}
(x\parallel x')_{\rm{cl}}=(x\parallel x').
\end{align}
\end{theorem}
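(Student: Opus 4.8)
The plan is to exploit the defining characterization of the classical compatibility degree: $(\cdot\parallel\cdot)_{\rm{cl}}$ is the unique function satisfying $(x[-\alpha_i]\parallel x[\beta])_{\rm{cl}}=[(\beta:\alpha_i)]_+$ for every negative simple root $-\alpha_i$ together with the invariance $(\alpha\parallel\beta)_{\rm{cl}}=(\tau_\varepsilon\alpha\parallel\tau_\varepsilon\beta)_{\rm{cl}}$, uniqueness being exactly what Proposition~\ref{pr:tau orbits} guarantees (every $\alpha\in\Phi_{\geq-1}$ is of the form $\tau(-\alpha_i)$, so the two properties reduce the evaluation of $(\alpha\parallel\beta)_{\rm{cl}}$ to the case where the first argument is a negative simple root). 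Hence it suffices to check that the $f$-compatibility degree $(\cdot\parallel\cdot)_f$ of $\Acal(B(C_\Phi))$ satisfies these same two properties, after which the identity follows formally. Throughout I will use that $\Acal(B(C_\Phi))$ is of finite type, so Theorem~\ref{f=d}(1) applies relative to any base seed and gives $\ff=[\dd]_+$ componentwise; consequently $(x_{i;t}\parallel x_{j;t'})_f=f^{B_t;t}_{ij;t'}=[d^{B_t;t}_{ij;t'}]_+$, the nonnegative part of the corresponding denominator exponent.

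For the negative-simple-root property, take $x[-\alpha_i]=x_i$, an initial cluster variable, and any $\beta=\sum_k a_k\alpha_k\in\Phi_{\geq-1}$; write $x[\beta]=x_{j;t'}$. By Definition~\ref{f-compatibility} the value $(x[-\alpha_i]\parallel x[\beta])_f$ is the $i$-th entry of $\ff^{B;t_0}_{j;t'}$. Theorem~\ref{rootbijection} identifies the denominator vector of $x[\beta]$ with respect to the initial cluster with the simple-root coordinate vector of $\beta$, that is $\dd^{B;t_0}_{j;t'}=(a_1,\dots,a_n)^\top$, whose $i$-th coordinate is $(\beta:\alpha_i)$. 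Applying $\ff=[\dd]_+$ yields $(x[-\alpha_i]\parallel x[\beta])_f=[(\beta:\alpha_i)]_+$, which is precisely the classical normalization (and in particular gives $0$ when $\beta$ is itself a negative simple root).

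The real work is the $\tau_\varepsilon$-invariance, which I expect to be the main obstacle. The guiding idea is that $\tau_\varepsilon$ is induced by the simultaneous mutation $\mu_\varepsilon=\prod_{i\in I_\varepsilon}\mu_i$ at the pairwise non-adjacent, hence commuting, vertices of $I_\varepsilon$: for instance $\tau_+$ sends the initial cluster $\{x[-\alpha_i]\}_i$ to $\{x[-\alpha_i]\}_{i\in I_-}\cup\{x[\alpha_i]\}_{i\in I_+}$, which is exactly the cluster obtained from $\Sigma_{t_0}$ by mutating at all of $I_+$ (each $\mu_i$ with $i\in I_+$ replaces $x_i=x[-\alpha_i]$ by the variable of denominator vector $\ee_i$, namely $x[\alpha_i]$), and on exchange matrices $\mu_+$ carries the bipartite matrix $B=B(C_\Phi)$ to $-B$, i.e. to $B(C_\Phi)$ with $I_+$ and $I_-$ interchanged. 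One then checks, via Theorem~\ref{rootbijection} applied to this mutated seed, that $x[\tau_\varepsilon\gamma]$ is the cluster variable attached to $\gamma$ relative to the new base seed $\mu_\varepsilon(\Sigma_{t_0})$; equivalently, $\tau_\varepsilon$ is realized by a symmetry $\Psi_\varepsilon$ of the cluster pattern. Since $(\cdot\parallel\cdot)_f$ depends only on the underlying pair of cluster variables and is insensitive to the choice of base seed (Theorem~\ref{ftheorem}(2)), while $f$-vectors are determined by the exchange matrices along $\TT_n$, such a symmetry transports them correctly and gives $(x[\alpha]\parallel x[\beta])_f=(x[\tau_\varepsilon\alpha]\parallel x[\tau_\varepsilon\beta])_f$. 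The delicate point is precisely the verification that $\Psi_\varepsilon$ is a genuine automorphism of the cluster structure and not merely of the cluster complex $\Delta(\Phi)$ of Theorem~\ref{cluster-rootidentification}, so that the finer $f$-vector data are preserved; this is where the bipartite and Coxeter combinatorics of \cite{fzii} must be invoked. Once both properties are established, Proposition~\ref{pr:tau orbits} forces $(\cdot\parallel\cdot)_f=(\cdot\parallel\cdot)_{\rm{cl}}$. Alternatively, one may bypass the invariance step entirely by appealing to the denominator-vector description of the classical compatibility degree for finite type due to Ceballos and Pilaud \cite{cp}, which combined with $\ff=[\dd]_+$ gives the identity at once.
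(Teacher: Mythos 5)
Your closing sentence is, in fact, the paper's entire proof: the authors prove Theorem \ref{classical-fcorrespondence} in one line by combining Theorem \ref{classical-dcorrespondence} (Ceballos--Pilaud's identity $(x\parallel x')_{\rm{cl}}=(x\parallel x')_d=[d^{B_t;t}_{ij;t'}]_+$) with Theorem \ref{f=d}\,(1) (in finite type $\ff=[\dd]_+$, applied with an arbitrary seed as base). So the ``alternative'' you mention at the end is complete, correct, and is exactly the intended argument; it should be the main proof rather than an afterthought.

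Your primary route, by contrast, has a genuine gap at precisely the step you flag as delicate. The normalization $(x[-\alpha_i]\parallel x[\beta])_f=[(\beta:\alpha_i)]_+$ is fine (Theorem \ref{rootbijection} plus $\ff=[\dd]_+$), and Proposition \ref{pr:tau orbits} does give the uniqueness you need. But the $\tau_\varepsilon$-invariance is only asserted: you claim $x[\gamma]\mapsto x[\tau_\varepsilon\gamma]$ is realized by a symmetry $\Psi_\varepsilon$ of the cluster pattern and then invoke Theorem \ref{ftheorem}\,(2), yet that theorem only says the degree is independent of how one presents a \emph{fixed} pair of cluster variables; it does not transport the degree along an arbitrary bijection of the set of cluster variables. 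What is actually required is that $x[\gamma]\mapsto x[\tau_\varepsilon\gamma]$ arises from a genuine isomorphism of cluster patterns --- a seed isomorphism from $\Sigma_{t_0}$ onto $\mu_\varepsilon(\Sigma_{t_0})$ commuting with all subsequent mutations --- which is the bipartite dynamics of \cite{fzii} and would have to be proved, not merely named. In particular, identifying $x[\tau_\varepsilon\gamma]$ as ``the cluster variable attached to $\gamma$ relative to the mutated base seed'' needs Theorem \ref{rootbijection} applied to $\mu_\varepsilon(\Sigma_{t_0})$ together with a matching of denominator vectors under $\tau_\varepsilon$, none of which is carried out. Until that is done, the invariance step is unestablished; since your last sentence already closes the argument, I would simply adopt it and drop the axiomatic route.
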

To prove the theorem, we introduce $d$-compatibility degree defined by \cite{cl2}. 
\begin{definition}
Let $\Acal(B)$ be a cluster algebra. We define the \emph{$d$-compatibility degree} $(\cdot\parallel \cdot)_d \colon \Xcal \times \Xcal \to \ZZ_{\geq 0}$ of $\Acal(B)$ as follows: For any two cluster variables $x$ and $x'$, if $x=x_{i;t}$ and $ x'=x_{j;t'}$, then 
\begin{align}
(x\parallel x')_d=\left[d_{ij;t'}^{B_t;t}\right]_+.
\end{align}
\end{definition}
In \cite{cl2}, the compatibility degree is not defined by $\left[d_{ij;t'}^{B_t;t}\right]_+$ but $d_{ij;t'}^{B_t;t}$. We adopt this definition for simplicity of the notation. 
The following theorem is essential for the proof of Theorem \ref{classical-fcorrespondence}:
\begin{theorem}[\cite{cp}*{Corollary 3.2}]\label{classical-dcorrespondence}
We fix any root system $\Phi$ and its induced cluster algebra $\Acal(B(C_\Phi))$. For any cluster variables $x$ and $x'$, we have 
\begin{align}
(x\parallel x')_{\rm{cl}}=(x\parallel x')_d.
\end{align}
\end{theorem}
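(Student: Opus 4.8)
The plan is to compute \emph{both} compatibility degrees in closed form by reducing, on each side, to the case where the first argument is a negative simple root, and then to observe that the two closed forms literally coincide. Transport everything to roots via the bijection $\alpha\mapsto x[\alpha]$ of Theorem \ref{rootbijection} and write $D(\alpha,\beta):=(x[\alpha]\parallel x[\beta])_d$; by Theorem \ref{dtheorem}(2) this is a well-defined function on ordered pairs, computable from \emph{any} seed containing $x[\alpha]$. The base case is immediate: Theorem \ref{rootbijection} says the $d$-vector of $x[\beta]$ relative to the initial seed is the simple-root coordinate vector $\big((\beta:\alpha_1),\dots,(\beta:\alpha_n)\big)^{\top}$, so since $x[-\alpha_i]=x_i=x_{i;t_0}$ one reads off $D(-\alpha_i,\beta)=[(\beta:\alpha_i)]_+$, which is exactly the defining value $(-\alpha_i\parallel\beta)_{\rm cl}$ of the classical degree.

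The crux is a sub-lemma identifying $\tau_\varepsilon$ with bipartite mutation at the level of $d$-vectors. Let $\mu_\varepsilon=\prod_{i\in I_\varepsilon}\mu_i$ (the factors commute because $b_{ij}=0$ for $i,j\in I_\varepsilon$, as $I_\varepsilon$ is an independent set in $\Gamma_\Phi$), and let $t_\varepsilon$ be the endpoint of $\mu_\varepsilon$ at $t_0$ with the natural labeling. I claim that for every $\gamma\in\Phi_{\geq-1}$ the $d$-vector of $x[\gamma]$ relative to $\Sigma_{t_\varepsilon}$ is the coordinate vector of $\tau_\varepsilon(\gamma)$. I would prove this by specializing the $d$-vector recursion \eqref{dvectorrecursion} at the bipartite matrix $B(C_\Phi)$ and checking that, on coordinate vectors of roots, it reproduces the piecewise-linear reflection defining $\tau_\varepsilon$ (this is the tropicalization underlying the Fomin--Zelevinsky analysis of Zamolodchikov periodicity). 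The initial variables already display the match: for $j\in I_\varepsilon$ the mutated variable $x_j$ acquires $d$-vector $\ee_j$, the coordinate vector of $\tau_\varepsilon(-\alpha_j)=\alpha_j$, while for $j\notin I_\varepsilon$ the variable $x_j$ retains $-\ee_j$, the coordinate vector of $\tau_\varepsilon(-\alpha_j)=-\alpha_j$. Iterating, any bipartite sequence realizing a composite $\tau$ of the $\tau_\pm$ carries $d$-vectors, relative to the resulting seed $\Sigma_{t_*}$, to coordinate vectors of $\tau(\gamma)$.

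Granting the sub-lemma, the theorem becomes a one-line computation. Given $\alpha,\beta$, Proposition \ref{pr:tau orbits} writes $\alpha=(\tau_-\tau_+)^k(-\alpha_i)$; since $\tau_\pm$ are involutions, $\tau:=(\tau_+\tau_-)^k$ satisfies $\tau(\alpha)=-\alpha_i$. Realizing $\tau$ by the corresponding bipartite mutation sequence produces a seed $\Sigma_{t_*}$ in which, by the sub-lemma, the $d$-vector of $x[\alpha]$ is the coordinate vector of $\tau(\alpha)=-\alpha_i$, namely $-\ee_i$; as $-\ee_i$ is the $d$-vector of the $i$-th variable of a seed relative to itself, this forces $x[\alpha]=x_{i;t_*}$. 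Reading off the $i$-th entry of the $d$-vector of $x[\beta]$ relative to $\Sigma_{t_*}$, which the sub-lemma identifies with $(\tau(\beta):\alpha_i)$, gives $D(\alpha,\beta)=[(\tau(\beta):\alpha_i)]_+$. On the classical side, $\tau$-invariance together with the base case yields $(\alpha\parallel\beta)_{\rm cl}=(\tau\alpha\parallel\tau\beta)_{\rm cl}=(-\alpha_i\parallel\tau\beta)_{\rm cl}=[(\tau(\beta):\alpha_i)]_+$. The two values agree, proving $(x\parallel x')_{\rm cl}=(x\parallel x')_d$; note that the operation $[\cdot]_+$ is precisely what reconciles the entry $-1$ arising when $x[\beta]=x[\alpha]$ with compatibility degree $0$.

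The hard part is entirely the sub-lemma: all real content lies in verifying that the tropical $d$-vector recursion \eqref{dvectorrecursion} for the bipartite exchange matrix $B(C_\Phi)$ coincides, on root coordinates, with the piecewise-linear maps $\tau_\pm$. The remaining steps are bookkeeping with the labeling of the bipartite belt and a direct application of Proposition \ref{pr:tau orbits}.
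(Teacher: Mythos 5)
First, a point of reference: the paper does not prove this statement at all --- it imports it verbatim from Ceballos--Pilaud (\cite{cp}*{Corollary 3.2}), so there is no internal proof to compare against. Your strategy --- characterize the classical degree by its two defining properties (value $[(\beta:\alpha_i)]_+$ on negative simples, and $\tau_\varepsilon$-invariance) and show the $d$-compatibility degree satisfies both via the bipartite belt --- is exactly the route standing behind the citation, and your base case and the final assembly (well-definedness via Theorem \ref{dtheorem}(2), $x[\alpha]=x_{i;t_*}$ from $d$-vector $-\ee_i$, the closing chain of equalities) are all correct.

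The genuine gap is in the justification of your sub-lemma. You propose to prove that the $d$-vector of $x[\gamma]$ relative to $\Sigma_{t_\varepsilon}$ is the coordinate vector of $\tau_\varepsilon(\gamma)$ by ``specializing the $d$-vector recursion \eqref{dvectorrecursion} at the bipartite matrix.'' But \eqref{dvectorrecursion} transports the $d$-vectors of the \emph{current cluster's} variables along mutations \emph{with the reference seed held fixed}; your sub-lemma instead compares the $d$-vectors of one fixed cluster variable with respect to \emph{two different reference seeds} ($t_0$ versus $t_\varepsilon$). There is no general initial-seed-mutation formula for $d$-vectors that a routine specialization could verify --- the paper itself emphasizes this when it discusses the failure of the self-duality \eqref{eq:D-opposite} outside special surface types --- so the claimed check cannot be purely local and must use finite type in an essential way. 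What actually closes the gap: since $I_\varepsilon$ is independent in $\Gamma_\Phi$, one computes $\mu_\varepsilon(B(C_\Phi))=-B(C_\Phi)$, which is again of the form $B(C_\Phi)$ with $I_+$ and $I_-$ interchanged, so Theorem \ref{rootbijection} applies verbatim to the pattern rooted at $t_\varepsilon$ and yields a second bijection $\gamma\mapsto x'[\gamma]$ via denominators at $t_\varepsilon$. Your sub-lemma is then the statement $x'[\tau_\varepsilon\gamma]=x[\gamma]$, i.e.\ a $\tau_\varepsilon$-equivariance of the labeling along the bipartite belt. Your computation at the initial variables verifies this only for $\gamma$ a negative simple root; extending it to all of $\Phi_{\geq-1}$ is a theorem of Fomin--Zelevinsky (the $\tau_\pm$-equivariance underlying \cite{fzii} and \cite{fzy}, tied to Proposition \ref{pr:tau orbits}), not a formality: a direct piecewise-linear commutation of $\tau_\varepsilon$ with \eqref{dvectorrecursion} would require $\max$ to interact linearly with a map that is only piecewise linear, and fails outside this bipartite finite-type setting. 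So the architecture of your proof is right and matches the cited source, but the sub-lemma must be either quoted from Fomin--Zelevinsky or proved by the two-reference-seed argument just sketched; as written, the ``tropical check'' conflates forward mutation of clusters with change of the reference seed.
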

\begin{proof}[Proof of Theorem \ref{classical-fcorrespondence}]
Since $\Acal(B(C_\Phi))$ is of finite type, it follows from Theorem \ref{f=d} and Theorem \ref{classical-dcorrespondence}.
\end{proof}
\begin{remark}
{Theorem \ref{classical-fcorrespondence} can be generalized from the classical compatibility degree to \emph{$c$-compatiblity degree}, which is a function of the set of pairs of almost positive root in finite root system, defined by \cite{rea}*{Proposition 7.2}.} This fact follows from \cite{cp}*{Corollary 3.3} and Theorem \ref{f=d}. 
\end{remark}
We will show that the compatibility degree satisfies properties which are analogous to Proposition \ref{pr:compatibility-symmetry} and Proposition \ref{classical-compatibility}.
First, we consider the following proposition, which is an analogue of Proposition \ref{pr:compatibility-symmetry}.
\begin{proposition}
\label{pr:f-compatibility-symmetry}
We fix any cluster algebra $\Acal(B)$ of rank $n$. For any $x=x_{i;t}$, we denote by $x^{\vee}$ the $i$th cluster variable in the cluster at $t$ of $\Acal(-B^\top)$. 
\begin{itemize}
\item[(1)]
For any two cluster variables $x, x'$, we have $(x \parallel x') = ((x')^\vee \parallel x^\vee)$. In particular, if $B$ is skew-symmetric, then
$(x \parallel x') = (x' \parallel x)$.

\item[(2)]
If $x=x_{i;t}$ and $x'=x_{j;t'}$, we have $(x \parallel x') = s_i^{-1}s_j(x' \parallel x)$, where $s_i$ is the $i$th entry of skew-symmetrizer $S$ of $B$. In particular, if $(x\parallel x')= 0$, then we have
$(x' \parallel x) = 0$.

\item[(3)]
 Let $J=\{k_1,\dots k_m\}$ be a subset of $\{1,\dots, n\}$ and $B_J$ be the submatrix of $B$ such that $B_J=(b_{k_{i}k_{j}})$. For any pair of cluster variables $x,x'$ of $\Acal(B_J)$, which we regard as a pair of cluster variables of $\Acal(B)$ by embedding, $(x\parallel x')$ on $\Acal(B_J)$ equals to $(x\parallel x')$ on $\Acal(B)$.

\end{itemize}
\end{proposition}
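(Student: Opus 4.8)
The plan is to reduce all three statements to identities among $F$-matrices and then feed them into the self-duality of $F$-matrices (Theorem \ref{thm:F-opposite}), $\big(F_t^{B;t_0}\big)^\top = F_{t_0}^{B_t^\top;t}$. I normalize by computing $(x\parallel x')$ with the root placed at $t$, so that $x=x_{i;t}$ is the $i$-th initial variable and $(x\parallel x')=f_{ij;t'}^{B_t;t}$ is the $(i,j)$ entry of $F_{t'}^{B_t;t}$; this makes the index bookkeeping uniform. Two auxiliary facts drive (1) and (2). The first, \emph{sign invariance}, is $F_t^{-B;t_0}=F_t^{B;t_0}$ for all $t$; the second, the \emph{symmetrizer--transpose relation}, is $F_t^{B^\top;t_0}=S\,F_t^{B;t_0}\,S^{-1}$, where $S=\mathrm{diag}(s_1,\dots,s_n)$ is a skew-symmetrizer of $B$. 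I would prove both by induction along the edges of $\TT_n$ using the $f$-vector recursion \eqref{f-recursion} together with the $c$-vector recursion and the sign-coherence of the $C$-matrices (Theorem \ref{thm:signs-ci}).

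The delicate point, and what I expect to be the main obstacle, is that the individual $c$-vectors are \emph{not} invariant under $B\mapsto -B$ (and do not merely rescale under $B\mapsto B^\top$); only the whole expression inside the $\max$ of \eqref{f-recursion} is preserved. Sign-coherence is precisely what forces one of $[\cc_{k;t}]_+$, $[-\cc_{k;t}]_+$ to equal $\cc_{k;t}$ up to sign, and I would check that swapping $[b_{ik}]_+\leftrightarrow[-b_{ik}]_+$ (for $-B$), or transposing $b_{ik}\mapsto b_{ki}$ together with the relation $s_ib_{ij}=-s_jb_{ji}$ (for $B^\top$), leaves the two arguments of the $\max$ intact up to interchange and the diagonal rescaling by $S$. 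A possibly cleaner route to sign invariance is the reciprocity $F_{j;t}^{-B;t_0}(y_1,\dots,y_n)=\big(\prod_i y_i^{f_{ij;t}}\big)\,F_{j;t}^{B;t_0}(y_1^{-1},\dots,y_n^{-1})$, whose verification by the same induction yields equality of maximal-degree vectors at once.

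Granting these, part (1) is a short index computation. Since mutation commutes with $B\mapsto B^\top$ and with $B\mapsto -B$ (both immediate from \eqref{eq:matrix-mutation}), writing $M=B_t$ and $M_{t'}=B_{t'}$ one gets $((x')^\vee\parallel x^\vee)=f_{ji;t}^{-M_{t'}^\top;t'}$; sign invariance replaces $-M_{t'}^\top$ by $M_{t'}^\top$, and self-duality applied to the matrix $M_{t'}^\top$ at root $t'$ turns this into $f_{ij;t'}^{M;t}=(x\parallel x')$. The skew-symmetric case is then immediate, since $-B^\top=B$ forces $x^\vee=x$. For part (2) I root at $t$ again: self-duality applied to $M_{t'}$ gives $(x'\parallel x)=f_{ij;t'}^{M^\top;t}$, and the symmetrizer--transpose relation rewrites the $(i,j)$ entry of $F_{t'}^{M^\top;t}$ as $\tfrac{s_i}{s_j}f_{ij;t'}^{M;t}$, i.e. $(x'\parallel x)=\tfrac{s_i}{s_j}(x\parallel x')$, which is the asserted $(x\parallel x')=s_i^{-1}s_j\,(x'\parallel x)$; positivity of $s_i^{-1}s_j$ gives the vanishing clause.

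Part (3) I expect to be the most self-contained, and I would prove it directly rather than through self-duality. Because we only mutate in directions of $J$, Lemma \ref{pathconnect} shows that the $J$-reachable vertices form a connected subtree and that each frozen variable $x_\ell$ with $\ell\notin J$ is never mutated, so $F_{\ell;t}^{B;t_0}=1$ there. Substituting this into the $F$-polynomial recursion \eqref{Fpoly-recursion}, and checking inductively that the $c$-vectors of the $J$-indexed variables stay supported on $J$, I conclude that for $j\in J$ and $J$-reachable $t$ the polynomial $F_{j;t}^{B;t_0}$ involves only $\{y_i:i\in J\}$ and agrees term-by-term with the corresponding $F$-polynomial of $\Acal(B_J)$, using that restriction to $J$ commutes with $J$-mutations, $(B_t)_J=(B_J)_t$. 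Taking maximal degrees identifies the $J$-block of the $F$-matrix, and re-rooting at $t$ via the connectivity above then identifies the single entry $f_{ij;t'}^{B_t;t}$ computed in $\Acal(B)$ with the one computed in $\Acal(B_J)$, which is exactly the claimed equality of compatibility degrees.
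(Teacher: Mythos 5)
Your proposal is correct and follows essentially the same route as the paper: parts (1) and (2) are reduced to the self-duality $\big(F_t^{B;t_0}\big)^\top=F_{t_0}^{B_t^\top;t}$ combined with the sign-invariance $F_t^{-B;t_0}=F_t^{B;t_0}$ and the symmetrizer relation (your $F_t^{B^\top;t_0}=SF_t^{B;t_0}S^{-1}$ is equivalent to the paper's Lemma stating $F_t^{B;t_0}=S^{-1}F_t^{-B^\top;t_0}S$), and part (3) is proved by the same induction showing that $F_t^{B_J;t_0}$ is the $J$-principal submatrix of $F_t^{B;t_0}$ using that the frozen directions keep $F$-polynomial $1$. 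The only cosmetic difference is that you propose to reprove sign-invariance by induction, whereas the paper simply cites it.
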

To prove Proposition \ref{pr:f-compatibility-symmetry}, we prepare two lemmas:
\begin{lemma}[\cite{fg}*{Theorem 2.8}] \label{thm:F=F}
For any exchange matrix~$B$ and $t_0, t \in \TT_n$, we have
\begin{align}
\label{eq:F=F}
F_t^{-B;t_0}&=F_t^{B;t_0}.
\end{align}
\end{lemma}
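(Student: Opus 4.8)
The statement is about the $F$-\emph{matrices} (equivalently, the $f$-vectors), not the $F$-polynomials themselves: replacing $B$ by $-B$ may change each individual polynomial $F_{j;t}^{B;t_0}(\yy)$, but it fixes their maximal-degree vectors. My plan is therefore to prove the vector identity $\ff_{j;t}^{-B;t_0}=\ff_{j;t}^{B;t_0}$ for every $j$ and every $t$ by induction on the distance $d(t_0,t)$ in $\TT_n$, using the recursion \eqref{f-recursion}. The base case $t=t_0$ is immediate since all $f$-vectors vanish there.

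Two preliminary facts drive the induction. First, negation commutes with matrix mutation: from \eqref{eq:matrix-mutation} one checks $\mu_k(-B)=-\mu_k(B)$, so by induction $B_t^{-B;t_0}=-B_t^{B;t_0}$ for all $t$; consequently $[b_{ik;t}^{-B;t_0}]_+=[-b_{ik;t}^{B;t_0}]_+$, i.e. the two branches of the max in \eqref{f-recursion} have their $b$-parts interchanged when $B$ is replaced by $-B$. Second — and this is the heart of the matter — I will establish the auxiliary $C$-matrix identity
\begin{align*}
C_t^{-B;t_0}=C_t^{B;t_0}+F_t^{B;t_0}B_t^{B;t_0},
\end{align*}
equivalently $\cc_{k;t}^{-B;t_0}=\cc_{k;t}^{B;t_0}+\sum_{i}b_{ik;t}^{B;t_0}\ff_{i;t}^{B;t_0}$ for each column $k$. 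This records exactly how the $c$-vector correction compensates for the interchange of the $b$-parts.

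Granting these, the inductive step for the edge $t\to t'$ in direction $k$ (with $d(t_0,t')=d(t_0,t)+1$) is a formal manipulation. Abbreviate $\vv=\cc_{k;t}^{B;t_0}$, $P=\sum_i[b_{ik;t}^{B;t_0}]_+\ff_{i;t}$ and $M=\sum_i[-b_{ik;t}^{B;t_0}]_+\ff_{i;t}$, so that $\sum_i b_{ik;t}^{B;t_0}\ff_{i;t}=P-M$ and the $C$-matrix identity reads $\cc_{k;t}^{-B;t_0}=\vv+(P-M)$. Using the induction hypothesis $\ff_{i;t}^{-B;t_0}=\ff_{i;t}^{B;t_0}$ together with the two preliminary facts, the recursion \eqref{f-recursion} yields
\begin{align*}
\ff_{k;t'}^{-B;t_0}+\ff_{k;t}^{B;t_0}&=\max\!\big([\vv+P-M]_++M,\ [-\vv-P+M]_++P\big),\\
\ff_{k;t'}^{B;t_0}+\ff_{k;t}^{B;t_0}&=\max\!\big([\vv]_++P,\ [-\vv]_++M\big).
\end{align*}
Both right-hand sides equal $M+\max\!\big([\vv]_++(P-M),[-\vv]_+\big)$ — the second by direct rearrangement, the first via the componentwise scalar identity $\max([v+d]_+,[-v-d]_++d)=\max([v]_++d,[-v]_+)$, valid for all reals $v,d$ (both sides equal $\max(v+d,-v,d,0)$). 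Hence $\ff_{k;t'}^{-B;t_0}=\ff_{k;t'}^{B;t_0}$, and for $j\neq k$ the $f$-vector is unchanged by the mutation, so the step closes.

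It remains to prove the $C$-matrix identity, which is where the real work lies and which I would establish by a separate induction on $d(t_0,t)$ using only $B$-side data: expand $\cc_{k;t'}^{\pm B;t_0}$ via the $c$-vector recursion, $F_{t'}^{B;t_0}$ via \eqref{f-recursion}, and $B_{t'}^{B;t_0}$ via \eqref{eq:matrix-mutation}, then reconcile the resulting expressions. The sign-coherence of $C$-matrices (Theorem~\ref{thm:signs-ci}) is needed here to resolve the terms $[\pm\cc_{k;t}]_+$ appearing in these recursions. Crucially, this identity never refers to $F^{-B;t_0}$, so it is genuinely self-contained and no circularity arises; once it is in hand, the main induction closes as above. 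I expect the bookkeeping in this final step — matching the mutated $c$-vectors against the product $F_t^{B;t_0}B_t^{B;t_0}$ across the sign case analysis — to be the only real obstacle, everything else being the formal reduction to the scalar max-identity.
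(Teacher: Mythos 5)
You should first be aware that the paper contains no proof of this lemma at all: it is imported verbatim from \cite{fg}*{Theorem 2.8}, so your attempt is measured against the literature rather than an in-text argument. That said, your proof is correct in every step you actually executed. The facts $\mu_k(-B)=-\mu_k(B)$ (hence $B_t^{-B;t_0}=-B_t^{B;t_0}$), the interchange of the two branches of the max in \eqref{f-recursion}, and the reduction of both sides to $M+\max\bigl([\vv]_++(P-M),\,[-\vv]_+\bigr)$ via your scalar identity $\max([v+d]_+,[-v-d]_++d)=\max([v]_++d,[-v]_+)=\max(v+d,-v,d,0)$ all check out componentwise. Moreover, your auxiliary identity $C_t^{-B;t_0}=C_t^{B;t_0}+F_t^{B;t_0}B_t$ is true, and it is precisely the companion statement with which \eqref{eq:F=F} is established in the literature by a simultaneous induction; so in spirit your route is the standard one, with the $C$-matrix identity correctly isolated as the crux.

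The one unexecuted component, the induction for the $C$-matrix identity, does close, and more cheaply than you anticipate: no sign-coherence is needed anywhere. Using the exact tropical recursion $\cc_{k;t'}=-\cc_{k;t}$ and $\cc_{j;t'}=\cc_{j;t}+[b_{kj;t}]_+\cc_{k;t}+b_{kj;t}[-\cc_{k;t}]_+$ for $j\neq k$ (beware: the display of this recursion in the paper has its two cases transposed), the column $j=k$ is immediate since only column $k$ of $F_{t'}$ changes and it is paired with $b_{kk;t}=0$; for $j\neq k$, after inserting \eqref{eq:matrix-mutation} and \eqref{f-recursion}, everything cancels down to the unconditional componentwise identity
\begin{align*}
\max\bigl([c]_++p,\ [-c]_++m\bigr)=[c]_++p+[m-p-c]_+ \qquad (p,m\geq 0,\ c\in\mathbb{R}),
\end{align*}
a sibling of your scalar identity, so the terms $[\pm\cc_{k;t}]_+$ resolve formally and Theorem \ref{thm:signs-ci} never enters. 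Alternatively, you can bypass this induction entirely: one checks on \eqref{eq:y-mutation}, using $[b]_+-[-b]_+=b$, that $\tilde y_{j;t}:=\bigl(y_{j;t}^{B;t_0}(y_1^{-1},\dots,y_n^{-1})\bigr)^{-1}$ in $\QQsf(y_1,\dots,y_n)$ satisfies the $(-B)$-pattern recursion with $\tilde y_{j;t_0}=y_j$; evaluating in $\trop(y_1,\dots,y_n)$ via \eqref{eq:Y-F} and noting that $F_{k;t}^{B;t_0}|_{\trop}(y_1^{-1},\dots,y_n^{-1})=\prod_j y_j^{-f_{jk;t}}$ (the tropical sum extracts exactly the componentwise maximal degrees, the minimal ones being $0$ by Proposition \ref{fconstant1}) gives $\cc_{j;t}^{-B;t_0}=\cc_{j;t}^{B;t_0}+\sum_k b_{kj;t}\ff_{k;t}^{B;t_0}$ in one stroke; the same substitution applied to \eqref{eq:xjt=F/F} even yields the sharper polynomial identity $F_{j;t}^{-B;t_0}(\yy)=\yy^{\ff_{j;t}}F_{j;t}^{B;t_0}(y_1^{-1},\dots,y_n^{-1})$, from which \eqref{eq:F=F} is immediate. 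What your route buys over that is a self-contained argument living entirely at the tropical level of $f$- and $c$-vectors.
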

\begin{lemma}\label{lem:F-symmetry}
For any exchange matrix~$B$ and $t_0, t \in \TT_n$, we have
\begin{align}\label{eq:F-symmetry}
F_{t}^{B;t_0}=S^{-1}F_t^{-B^\top;t_0}S.
\end{align}
\end{lemma}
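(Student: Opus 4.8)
The plan is to reduce to a transpose statement and then induct along $\TT_n$ using the $f$-vector recursion. First I would apply Lemma~\ref{thm:F=F} to replace $-B^\top$ by $B^\top$: since $F_t^{-B^\top;t_0}=F_t^{B^\top;t_0}$, the identity \eqref{eq:F-symmetry} is equivalent to $S\,F_t^{B;t_0}\,S^{-1}=F_t^{B^\top;t_0}$, that is, to the entrywise claim $f_{ij;t}^{B^\top;t_0}=(s_i/s_j)\,f_{ij;t}^{B;t_0}$ for all $i,j$ and all $t\in\TT_n$. I would prove this by induction on the distance $d(t_0,t)$, the base case $t=t_0$ being trivial since $\ff_{j;t_0}=\mathbf 0$. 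Two structural facts drive the induction: matrix mutation commutes with transposition, so $B_t^\top=(B^\top)_t$ and hence $b_{ik;t}^{B^\top}=b_{ki;t}^{B}$; and $S$ is a skew-symmetrizer of $B_t$ at \emph{every} vertex, so $s_i b_{ik;t}=-s_k b_{ki;t}$ holds throughout. Since $S=\mathrm{diag}(s_1,\dots,s_n)$ has positive entries, it commutes with the entrywise operations $\max$ and $[\,\cdot\,]_+$, and combining this with the symmetrizer relation yields $[b_{ki;t}]_+\,s_i^{-1}=s_k^{-1}[-b_{ik;t}]_+$.

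Granting the inductive hypothesis $\ff_{i;t}^{B^\top}=s_i^{-1}S\,\ff_{i;t}^{B}$ for all $i$, this identity transforms the $b$-weighted sums in the recursion \eqref{f-recursion}: one finds $\sum_i[b_{ik;t}^{B^\top}]_+\ff_{i;t}^{B^\top}=\vv$ and $\sum_i[-b_{ik;t}^{B^\top}]_+\ff_{i;t}^{B^\top}=\uu$, where $\uu:=s_k^{-1}S\sum_i[b_{ik;t}]_+\ff_{i;t}^{B}$ and $\vv:=s_k^{-1}S\sum_i[-b_{ik;t}]_+\ff_{i;t}^{B}$ are precisely the two $b$-weighted sums of the $B$-recursion, rescaled by $s_k^{-1}S$ but attached to the \emph{opposite} branch of the $\max$. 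The constant summand $-\ff_{k;t}$ rescales correctly by the hypothesis, so, since $S$ commutes with $\max$, the entire inductive step reduces to the single vector identity
\[
\max\!\Big([\cc_{k;t}^{B^\top}]_+ + \vv,\; [-\cc_{k;t}^{B^\top}]_+ + \uu\Big)
=\max\!\Big(s_k^{-1}S[\cc_{k;t}^{B}]_+ + \uu,\; s_k^{-1}S[-\cc_{k;t}^{B}]_+ + \vv\Big),
\]
where $\cc_{k;t}^{B}=\cc_{k;t}^{B;t_0}$. The $b$-parts $\uu,\vv$ already match up under the interchange of branches, so it is tempting to compare the $c$-headers termwise; this, however, is exactly where the difficulty lies.

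The hard part is that the $c$-vectors of $B$ and of $B^\top$ are \emph{not} related by the rescaling $s_k^{-1}S$ — one checks already in rank-$2$ examples of type $B_2$ that $\cc_{k;t}^{B^\top}\neq\pm\,s_k^{-1}S\,\cc_{k;t}^{B}$ — so the two maxima above cannot be equated branch by branch. Instead I would argue that they agree through domination: using the sign-coherence of $C$-matrices (Theorem~\ref{thm:signs-ci}), each of $\cc_{k;t}^{B}$ and $\cc_{k;t}^{B^\top}$ is sign-homogeneous, leaving only a few cases for the pair of signs; in each case I would identify the dominant argument of each $\max$ and check that the two dominant vectors coincide, the discrepancy in the $c$-headers being absorbed by $\uu$ and $\vv$. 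Verifying this max-equality in all sign cases is the main obstacle, and it is genuinely a statement about the \emph{bounding box} of the Newton polytopes of the $F$-polynomials rather than the polytopes themselves, which do not rescale. As a consistency check one may note that, by self-duality (Theorem~\ref{thm:F-opposite}) together with Lemma~\ref{thm:F=F}, the desired identity for $(B,t_0,t)$ is equivalent to the same identity for $(B_t,t,t_0)$; a more conceptual alternative would run the argument inside the scattering diagram $\mathfrak D_0(B)$, where the skew-symmetrizer $S$ enters the wall-crossing automorphisms $E_\vv$ explicitly and the passage $B\mapsto -B^\top=SBS^{-1}$ is realized by the linear change of coordinates $\mm\mapsto S\mm$.
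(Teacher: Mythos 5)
There is a genuine gap, and it is one you create yourself in the very first step. By invoking Lemma~\ref{thm:F=F} to replace $-B^\top$ by $B^\top$, you discard a sign that matters for everything except the $F$-matrices themselves: while $F_t^{-B^\top;t_0}=F_t^{B^\top;t_0}$, the $C$-matrices and exchange matrices of $B^\top$ and $-B^\top$ are \emph{not} equal, and it is precisely $-B^\top$ that is conjugate to $B$. Concretely, the second duality of Nakanishi--Zelevinsky (\cite{nz}*{(2.7)}, quoted in the paper as \eqref{eq:C-symmetry}) gives $C_t^{B;t_0}=S^{-1}C_t^{-B^\top;t_0}S$, i.e.\ $\cc_{k;t}^{-B^\top;t_0}=s_k^{-1}S\,\cc_{k;t}^{B;t_0}$, and likewise $(-B^\top)_t=-(B_t)^\top=SB_tS^{-1}$ by \eqref{eq:B-symmetry}. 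Since $S$ has positive diagonal entries, these conjugations preserve signs entrywise, so in the recursion \eqref{f-recursion} for $F_t^{-B^\top;t_0}$ both the $c$-headers and the $b$-weighted sums match those of the $B$-recursion \emph{branch by branch}, each rescaled by $s_k^{-1}S$; as $s_k^{-1}S$ commutes with the entrywise $\max$, the induction closes in one line. This is the paper's proof. Your observation that $\cc_{k;t}^{B^\top}$ is not $\pm s_k^{-1}S\,\cc_{k;t}^{B}$ is correct, but it is a symptom of having transposed without negating: it is the reason your two maxima have mismatched headers on swapped branches, not an intrinsic difficulty of the lemma.

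The consequence is that the core of your inductive step --- the displayed max-identity --- is exactly the hard part, and you do not prove it; you only sketch a ``domination'' strategy via sign-coherence of $C$-matrices and acknowledge that verifying it in all sign cases is the main obstacle. As written, the argument is therefore incomplete. The repair is simple: do not apply Lemma~\ref{thm:F=F} at the outset. Run your induction directly on $F_t^{-B^\top;t_0}$ against $SF_t^{B;t_0}S^{-1}$, substituting \eqref{eq:C-symmetry} and \eqref{eq:B-symmetry} into the matrix form \eqref{eq:fmat-frontmutation} of the recursion; no case analysis on signs, and no appeal to sign-coherence, is needed. (Your closing remarks on self-duality and scattering diagrams are plausible consistency checks but do not substitute for the missing step.)
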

\begin{proof}
By \eqref{f-recursion}, we have a recursion of the $F$-matrices
\begin{align}\label{eq:fmat-frontmutation}
 F^{B;t_0}_{t'}=F^{B;t_0}_t J_{\ell}+\max([C^{B; t_0}_t]^{\bullet \ell}_+ +F^{B; t_0}_t[B_t]^{\bullet \ell}_+, [-C^{B; t_0}_t]^{\bullet \ell}_+ +F^{B; t_0}_t[-B_t]^{\bullet \ell}_+).
\end{align}
By \cite{nz}*{(2.7)} and definition of $S$, for any $t$, we have
\begin{align}
\label{eq:C-symmetry}
C_{t}^{B;t_0}&=S^{-1}C_t^{-B^\top;t_0}S ,\\
\label{eq:B-symmetry}
B_t&=S^{-1}(-B_t^{\top})S.
\end{align}
We prove \eqref{eq:F-symmetry} by induction on the distance from $t_0$ to $t$. If $t=t_0$, then \eqref{eq:F-symmetry} holds clearly because $F_{t_0}^{B;t_0}$ is the zero matrix. When we assume \eqref{eq:F-symmetry} holds at $t$, we have \eqref{eq:F-symmetry} holds at $t'$ by substituting \eqref{eq:fmat-frontmutation} with \eqref{eq:C-symmetry} and \eqref{eq:B-symmetry}.
\end{proof}
\begin{proof}[Proof of Proposition \ref{pr:f-compatibility-symmetry}]
First, we prove (1). By Theorem \ref{thm:F-opposite} and Lemma \ref{thm:F=F}, for any $t$ and $t'$, we have
\begin{align}
F_{t'}^{B_t;t}=\left( F_{t}^{-B_{t'}^\top;t'}\right)^\top.
\end{align}
Thus, we have 
\begin{align}
f_{ij;t'}^{B_t;t}=f_{ji;t}^{-B_{t'}^\top;t'}.
\end{align}
This implies the first statement of (1). Furthermore, if $B$ is skew-symmetric, then we have $B_{t'}=-B_{t'}^\top$. This implies the second statement of (1). Second, we prove (2). By (1) and Lemma \ref{lem:F-symmetry}, for any $t$ and $t'$, we have
\begin{align}
F_{t'}^{B_t;t}=\left( F_{t}^{-B_{t'}^\top;t'}\right)^\top=\left( SF_{t}^{B_{t'};t'}S^{-1}\right)^\top=S^{-1}\left(F_{t}^{B_{t'};t'}\right)^\top S.
\end{align}
Thus, we have 
\begin{align}
f_{ij;t'}^{B_t;t}=s_i^{-1}s_jf_{ji;t}^{B_{t'};t'}.
\end{align}
This implies (2). Finally, we prove (3). Without loss of generality, we can assume $J=\{1,\dots,m\}$. It suffice to show that $F_t^{B_J;t_0}$ equals the $m\times m$ principal submatrix of $F_t^{B;t_0}$ for any $t_0\in \TT_n$ and $t\in \TT_m$, where $\TT_m$ is the $m$-regular graph whose labels of edges are $1,\dots,m$ and which is a connected component of $\TT_n$ containing $t_0$. 
We prove (3) by induction on the distance from $t_0$ to $t$. The base case $t=t_0$ is immediate as $F_{t_0}^{B;t_0}=0$ and $F_{t_0}^{B_J;t_0}=0$. Let $C_t^{B_J;t_0}=(\overline{c}_{ij;t})$ and we abbreviate $F_{i;t}^{B;t_0}(\yy)=F_{i;t}$ and $F_{i;t}^{B_J;t_0}(\yy)=\overline{F}_{i;t}$. We have the following fact by direct calculation: ${B_J}_t$ equals the $m\times m$ principal matrix $B_t$, $F_{i;t}=1$ for all $i\in\{m+1,\dots,n\}$, and the left side $m\times n$ submatrix of $C_t^{B;t_0}$ is $\begin{bmatrix} C_t^{B_J;t_0}\\ 0\end{bmatrix}$. By these facts and the inductive assumption, for \begin{xy}(0,1)*+{t}="A",(10,1)*+{t'}="B",\ar@{-}^\ell "A";"B" \end{xy}, we have
\begin{align*}
\overline{F}_{i;t'}&= \overline{F}_{i;t}=F_{i;t} =F_{i;t'} \quad \text{if $i\neq
 \ell$,}\\
\overline{F}_{\ell;t'} &= \frac{\displaystyle\prod_{j=1}^{m} y_j^{[\overline{c}_{j\ell;t}]_+}
\prod_{i=1}^{m} \overline{F}_{i;t}^{[\overline{b}_{i\ell;t}]_+}
+
\prod_{j=1}^{m} y_j^{[-\overline{c}_{j\ell;t}]_+}
\prod_{i=1}^{m} \overline{F}_{i;t}^{[-\overline{b}_{i\ell;t}]_+}}{\overline{F}_{\ell;t}}\\
&= \frac{\displaystyle\prod_{j=1}^{n} y_j^{[c_{j\ell;t}]_+}
\prod_{i=1}^{n} F_{i;t}^{[b_{i\ell;t}]_+}
+
\prod_{j=1}^{n} y_j^{[c_{j\ell;t}]_+}
\prod_{i=1}^n F_{i;t}^{[-b_{i\ell;t}]_+}}{F_{\ell;t}}\\
&={F}_{\ell;t'}.
\end{align*}
Therefore, $F_t^{B_J;t_0}$ equals the $m\times m$ principal submatrix of $F_t^{B;t_0}$. 
\end{proof}
\begin{remark}
We can prove the second statement of (1) by using the first statement of (2) because when $B$ is skew-symmetric, then $s_i^{-1}s_j$ is always 1. 
\end{remark}
Next, we consider the \emph{compatibility property}, which is an analogue of Proposition \ref{classical-compatibility}.
\begin{theorem}\label{fcompatibilityproperty}
For any cluster algebra $\Acal(B)$ and any set $X$ of cluster variables, there exists a cluster $\xx$ such that $\xx$ contains $X$ if and only if the compatibility degrees of any pairs of cluster variables in $X$ are $0$.
\end{theorem}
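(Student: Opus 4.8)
The plan is to reduce the global statement to its pairwise version, which is essentially Theorem~\ref{ftheorem}(3), and then bootstrap from pairwise compatibility to simultaneous compatibility by an induction on $|X|$ whose inductive step is carried by Lemma~\ref{compatibility}.

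First I would record the \emph{pairwise} reformulation. Unwinding Definition~\ref{f-compatibility}, if $x=x_{i;t}$ and $x'=x_{j;t'}$ then $(x\parallel x')=f^{B_t;t}_{ij;t'}$, and by Theorem~\ref{ftheorem}(3), read with $t$ as the rooted vertex so that $x=x_{i;t}$ plays the role of the $i$th initial variable, this entry vanishes exactly when some cluster contains both $x$ and $x'$. Proposition~\ref{pr:f-compatibility-symmetry}(2) guarantees that $(x\parallel x')=0$ if and only if $(x'\parallel x)=0$, so the hypothesis ``all pairwise compatibility degrees in $X$ vanish'' is unambiguous. With this in hand the ``only if'' direction is immediate: if a cluster $\xx$ contains $X$, then any two elements of $X$ lie in the common cluster $\xx$, whence every pairwise degree is $0$.

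For the ``if'' direction I would induct on $|X|$, the cases $|X|\le 2$ being precisely the pairwise statement above. Given a pairwise compatible $X$ with $|X|=m\ge 3$, choose $z\in X$ and put $Y=X\setminus\{z\}$; by induction $Y$ is contained in some cluster $\xx_s$. Since the compatibility degree is an invariant of a pair of cluster variables that is independent of the rooted vertex (Theorem~\ref{ftheorem}(2)) and the cluster complex is coefficient-independent, I may re-root $\TT_n$ at $s$, that is, pass to $\Acal(B_s)$ with $\Sigma_s$ as initial seed; after relabelling, $Y$ becomes a set of initial cluster variables, say $\{x_{p+1},\dots,x_n\}$. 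Re-rooting does not alter which subsets of cluster variables form clusters, and connectivity of the relevant seeds is covered by Lemma~\ref{pathconnect}. For each $i\in\{p+1,\dots,n\}$ the variables $z$ and $x_i$ remain compatible, so by the pairwise statement there is a cluster containing $z$ and $x_i$.

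The crux is the final step: feeding these witnesses into Lemma~\ref{compatibility} with $x=z$ yields a \emph{single} cluster $\xx$ containing $z$ together with all of $x_{p+1},\dots,x_n$, and this $\xx$ contains $Y\cup\{z\}=X$, closing the induction. The main obstacle is therefore entirely packaged inside Lemma~\ref{compatibility}, namely the upgrade from a family of separate pairwise witnesses (one cluster for $z$ and each $x_i$) to one simultaneous witness. I would also note that the argument incidentally forces $|X|\le n$: were $m=n+1$, the induction would place $z$ in a cluster already equal to the full initial cluster $\{x_1,\dots,x_n\}$, which Lemma~\ref{compatibility} cannot do for $z\notin Y$, so no pairwise compatible set of size exceeding $n$ exists. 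This side remark keeps the equivalence consistent with the fact that every cluster has exactly $n$ elements.
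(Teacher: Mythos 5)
Your proof is correct and follows essentially the same route as the paper, which deduces the theorem from the pairwise characterization of compatibility (Theorem~\ref{ftheorem}(3), via Theorem~\ref{dtheorem}(3)) together with Lemma~\ref{compatibility}; you have merely made explicit the induction on $|X|$ and the re-rooting step that the paper leaves as ``immediate.'' The closing remark about $|X|\le n$ is a harmless aside and does not affect the argument.
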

\begin{proof}
It follows from Theorem \ref{ftheorem} (3) and Lemma \ref{compatibility} immediately.
\end{proof}

Let us compare the compatibility degree with the $d$-compatibility degree. It is proved by \cite{cl2} that $d$-compatibility degree also has the similar property of Theorem \ref{fcompatibilityproperty}.
\begin{theorem}[\cite{cl2}*{Theorem 7.4}]\label{dcompatibilityproperty}
For any cluster algebra $\Acal(B)$ and any set $X$ of cluster variables, there exists a cluster $\xx$ such that $\xx$ contains $X$ if and only if the $d$-compatibility degrees of any pairs of cluster variables in $X$ are $0$.
\end{theorem}

However, the $d$-compatibility degree does not satisfy the similar property of the duality and symmetry properties (Proposition \ref{pr:f-compatibility-symmetry} (1),(2)). 
Actually, if these properties hold for $d$-vectors, the $D$-matrices must satisfy the following equation when $B$ is skew-symmetric:
 \begin{gather}\label{eq:D-opposite}
\big(D_t^{B;t_0}\big)^\top = D_{t_0}^{-B_t^\top;t}= D_{t_0}^{B_t;t}.
\end{gather}

However, this equation does not hold generally unlike the $F$-matrices.
For the class of cluster algebras arising from marked surfaces, \cite{rs} gave a complete classification of marked surfaces whose corresponding cluster algebras satisfy \eqref{eq:D-opposite}.
\begin{theorem}[\cite{rs}*{Theorem 2.4}]
 \label{D surface}
 The equation \eqref{eq:D-opposite}
 holds for the cluster algebra arising from a marked surface if and only if the marked surface is one of the following.
 \begin{itemize}
 \item[(1)]
 \label{disk good}
 A disk with at most one puncture (finite types A and D).
 \item[(2)]
 \label{small Atilde good}
 An annulus with no punctures and one or two marked points on each boundary component (affine types $\tilde A_{1,1}$, $\tilde A_{2,1}$, and $\tilde A_{2,2}$).
 \item[(3)]
 \label{small Dtilde good}
 A disk with two punctures and one or two marked points on the boundary component (affine types $\tilde D_3$ and $\tilde D_4$).
 \item[(4)]
 \label{sphere 4 good}
 A sphere with four punctures and no boundary components.
 \item[(5)]
 \label{torus 1 good}
 A torus with exactly one marked point (either one puncture or one boundary component containing one marked point).
 \end{itemize}
\end{theorem}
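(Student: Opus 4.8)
The plan is to reduce \eqref{eq:D-opposite} to a statement about the defect between $d$-vectors and $f$-vectors, and only then to invoke surface combinatorics. Since the $F$-matrices \emph{always} satisfy the self-duality $(F_t^{B;t_0})^\top = F_{t_0}^{-B_t^\top;t}$ (Theorem \ref{thm:F-opposite} combined with Lemma \ref{thm:F=F}), the identity \eqref{eq:D-opposite} is equivalent to demanding that the defect matrix $\Delta_t := D_t^{B;t_0} - F_t^{B;t_0}$ obey the same self-duality relation. By Theorem \ref{dtheorem}(1) and Corollary \ref{f-dcorrespondence}, each entry $d_{ij} - f_{ij}$ is $-1$ exactly at the initial-variable positions, is $0$ wherever the Fomin--Zelevinsky identity $\ff_{i;t} = [\dd_{i;t}]_+$ holds, and is strictly positive precisely where that identity fails. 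Thus \eqref{eq:D-opposite} reduces to: the locus and values where $d>f$ form a self-dual pattern. The strategy is therefore (i) to show that on the listed surfaces this defect is self-dual, typically because $\ff = [\dd]_+$ holds there outright, and (ii) to show that off the list the identity $\ff = [\dd]_+$ breaks at a non-initial pair in an asymmetric fashion.

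For the ``if'' direction I would split the disk with at most one puncture from the four sporadic surfaces. For the disk with at most one puncture (finite types $A$ and $D$), Theorem \ref{f=d}(1) gives $\ff_{i;t}=[\dd_{i;t}]_+$ at every vertex, so $\Delta_t$ is supported on the initial-variable $-1$'s alone; that pattern is self-dual because ``$x_{i;t}$ is an initial variable at index $m$'' is the transpose of ``$x_m$ lies in the cluster at $t$ at index $i$'' (Theorem \ref{dtheorem}(2)(3)), which yields \eqref{eq:D-opposite}. The annulus $\tilde A_{1,1}$ has rank $2$, so Theorem \ref{f=d}(2) applies verbatim. For the remaining sporadic surfaces $\tilde A_{2,1}, \tilde A_{2,2}, \tilde D_3, \tilde D_4$, the four-punctured sphere and the once-marked torus, the set of cluster variables (tagged arcs) is small and explicitly enumerable, and I would verify the self-duality of $\Delta_t$ by a finite computation, checking that wherever $d>f$ occurs it occurs symmetrically.

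For the ``only if'' direction I would exhibit, for every marked surface off the list, a non-initial pair of tagged arcs $\gamma,\delta$ with $d_{\gamma\delta}\neq d_{\delta\gamma}$. Passing to the Fomin--Shapiro--Thurston surface model, the $f$-vectors count geometric (tagged) intersection numbers while the $d$-vectors overcount along self-folded triangles and winding arcs; the two first diverge, asymmetrically, on a short list of minimal surfaces lying just beyond each item of the classification---representatively an annulus with three marked points on one boundary, a twice-punctured or thrice-punctured disk, a five-punctured sphere, a twice-marked torus, and the higher-genus cases. For each such minimal surface I would write down an explicit flip sequence and compute $d_{\gamma\delta}$ and $d_{\delta\gamma}$ to see that they differ. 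One then reduces an arbitrary non-listed surface to one of these minimal ones by filling punctures, deleting boundary marked points, or cutting along a separating arc, using the embedding property in its denominator form (the $d$-analogue of Proposition \ref{pr:f-compatibility-symmetry}(3), together with Corollary \ref{f-dcorrespondence} to control vanishing) so that the offending pair and its asymmetric pairing survive the restriction.

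The main obstacle is exactly this last reduction. Unlike the $f$-vectors, whose restriction to a sub-surface is clean by Proposition \ref{pr:f-compatibility-symmetry}(3), the $d$-vectors can gain or lose their excess $d-f$ when punctures are filled or arcs are cut, so I must choose the offending pair $\gamma,\delta$ to lie entirely within the embedded sub-surface and to avoid precisely the self-folded and winding corrections that the cutting would alter. Proving that such a pair always exists and that the minimal-surface list is exhaustive is the delicate, surface-topological heart of the argument; by contrast the finitely many sporadic verifications in the ``if'' direction are routine, albeit tedious, hand computations.
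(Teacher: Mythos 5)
The first thing to say is that the paper does not prove this statement at all: it is imported verbatim from \cite{rs}*{Theorem 2.4} and used as a black box, so there is no internal proof to compare yours against. Your organizing idea---that \eqref{eq:D-opposite} is equivalent to self-duality of the defect $D_t^{B;t_0}-F_t^{B;t_0}$, because the $F$-matrices always satisfy $(F_t^{B;t_0})^\top=F_{t_0}^{B_t;t}$ by Theorem \ref{thm:F-opposite} and Lemma \ref{thm:F=F}---is sound, and it genuinely disposes of item (1) and the rank-$2$ case of item (2) via Theorem \ref{f=d} and Theorem \ref{dtheorem}. Beyond that, however, the proposal has concrete gaps.

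First, the remaining surfaces on the list ($\tilde A_{2,1}$, $\tilde A_{2,2}$, $\tilde D_3$, $\tilde D_4$, the four-punctured sphere, the once-marked torus) all yield cluster algebras of \emph{infinite} type, with infinitely many seeds and cluster variables; ``verify the self-duality of $\Delta_t$ by a finite computation'' is therefore not an available move, and this is precisely where the substance of the ``if'' direction lies. Second, your sign analysis of the defect is wrong: you assert $d_{ij}-f_{ij}$ is strictly positive wherever $\ff_{i;t}=[\dd_{i;t}]_+$ fails, but the counterexample the paper itself reproduces (\cite{fk}*{Example 6.7}, in Section \ref{ss:counterexam}) has $\ff=(1,1,2)^\top$ and $\dd=(1,1,1)^\top$, so the defect is $-1$ at a non-initial entry; any argument built on the claimed positivity is unsafe. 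Third, the ``only if'' direction is a plan rather than a proof: the list of ``minimal'' offending surfaces is asserted, no explicit pair with $d_{\gamma\delta}\neq d_{\delta\gamma}$ is actually computed, and you yourself concede that the reduction to a sub-surface breaks down because $d$-vectors do not enjoy the clean embedding property of Proposition \ref{pr:f-compatibility-symmetry}(3)---which is exactly the step that would need a genuine topological argument. As it stands you have proved the finite-type and rank-$2$ items and sketched the rest; for the full classification you should do as the paper does and cite \cite{rs}.
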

According to Theorem \ref{D surface}, for example, cluster algebras arising from a disk with three punctures and one marked point on the boundary component do not satisfy \eqref{eq:D-opposite}. Let us see a concrete example.
\begin{example}\label{counterexsym}
We set $\PP=\{1\}$ the trivial semifield and consider a seed $(\xx,B)$, where 
\begin{align*}
\xx=(x_1,x_2,x_3,x_4,x_5,x_6,x_7), \quad
 B=
 \begin{bmatrix}
 0&0&-1&0&1&0&0\\
 0&0&-1&0&1&0&0\\
 1&1&0&-1&-1&1&0\\
 0&0&1&0&0&-1&1\\
 -1&-1&1&0&0&-1&1\\
 0&0&-1&1&1&0&-1\\
 0&0&0&-1&-1&1&0
 \end{bmatrix}.
\end{align*}
Moreover, we set 
\begin{align*}
\xx'=(x_1,x_2',x_3',x_4',x_5',x_6,x_7')=\mu_7\mu_5\mu_4\mu_3\mu_2(\xx).
\end{align*}
Then, we have $(x_2 \parallel x_7')_d=2,(x_7^{'\vee} \parallel x_2^{\vee})_d=(x'_7 \parallel x_2)_d=1$. Let us see this fact by using marked surface and their flips (cf. \cite{fst} or Section \ref{subsection:markedsurface} in this paper). $\Acal(B$) is a cluster algebra arising from the marked surface in Figure \ref{Bsurface}.
\begin{figure}[ht]
 \centering
 \caption{Marked surface corresponding to $B$ \label{Bsurface}}
 \vspace{-1cm}\hspace{5mm}
\begin{tikzpicture}[baseline=0mm,scale=0.7]
 \coordinate (d) at (0,-2);
 \coordinate (cd) at (0,-0.5);
 \coordinate (cl) at (150:1);
 \coordinate (cr) at (30:1);
 \draw (0,0) circle (2);
 \draw (d) to node[fill=white,inner sep=1]{$1$} (cd);
 \draw (d) to [out=-50,in=-100,relative] node[pos=0.85]{\rotatebox{60}{\footnotesize $\bowtie$}} node[fill=white,inner sep=1,pos=0.55]{$2$} (cd);
 \draw (d) .. controls (-2,0.5) and (0.5,0) .. 
 node[fill=white,inner sep=1,pos=0.55]{$3$} (cr);
 \draw (d) to [out=50,in=100,relative] 
 node[fill=white,inner sep=1,pos=0.55]{$4$} (cl);
 \draw (d) to [out=-50,in=-100,relative] 
 node[fill=white,inner sep=1,pos=0.55]{$5$} (cr);
 \draw (cl) to [out=50,in=130,relative] 
 node[fill=white,inner sep=1,pos=0.5]{$6$} (cr);
 \draw (d) .. controls (-30:3.5) and (70:4) .. 
 node[fill=white,inner sep=1,pos=0.55]{$7$} (cl);
 \fill(d) circle (1mm); \fill(cd) circle (1mm); \fill(cl) circle (1mm); \fill(cr) circle (1mm);
\end{tikzpicture}
\end{figure}
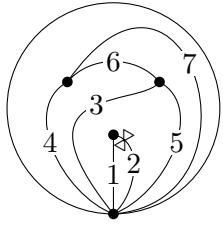
We consider flipping the marked surface in Figure \ref{Bsurface} at 2,3,4,5,7. The relative position of arc corresponding to $x_2$ and $x'_7$ is as in Figure \ref{twovariableposition}. 

\begin{figure}[ht]
 \centering
 \caption{Relative position of arc corresponding to $x_2$ and $x'_7$\label{twovariableposition}}
\begin{tikzpicture}[baseline=0mm,scale=0.7]
 \coordinate (d) at (0,-2);
 \coordinate (0) at (0,0);
 \draw (d) to node[pos=0.85]{\rotatebox{0}{\footnotesize $\bowtie$}} node[fill=white,inner sep=1,pos=0.55]{$x_2$} (0);
 \draw (0) .. controls (150:3.5) and (30:3.5) .. 
 node[fill=white,inner sep=1,pos=0.5]{$x'_7$} (0);
 \fill(d) circle (1mm); \fill(0) circle (1mm);
\end{tikzpicture}
\end{figure}
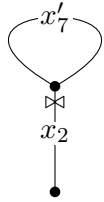
As mentioned in \cite{fst}*{Example 8.5}, considering the intersection number(\cite{fst}*{Definition 8.4}), we have $(x_2 \parallel x_7')_d=2,(x'_7 \parallel x_2)_d=1$. This example implies $(\cdot \parallel \cdot)_d$ does not satisfy the similar property of Proposition \ref{pr:f-compatibility-symmetry} (1),(2). 
\end{example} 

Next, we consider a generalization of Proposition \ref{classical-exchangeability}. The following statement is clear:
\begin{theorem}\label{f-exchangeability1}
For any cluster algebra $\Acal(B)$ and any pair of its cluster variables $x$ and $x'$, if there exists a set $X$ of cluster variables such that $X\cup x$ and $X\cup x'$ are both clusters, then $(x\parallel x')=(x'\parallel x)=1$.
\end{theorem}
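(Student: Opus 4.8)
The plan is to reduce the assertion to a one-step evaluation of the $f$-vector recursion \eqref{f-recursion}. We may assume $x\neq x'$, the exchange being otherwise vacuous. Since the compatibility degree is well defined independently of the chosen seed representatives by Theorem \ref{ftheorem} (2), I would represent $x$ as the $k$-th cluster variable $x=x_{k;t_0}$ at a vertex $t_0$ whose cluster is $X\cup\{x\}$, so that $X=\{x_{i;t_0}\mid i\neq k\}$. Because $X\cup\{x'\}$ is a cluster of cardinality $n$ with $x'\neq x$, we have $x'\notin X$, so $X\cup\{x'\}$ is a second cluster containing $X$ and distinct from $\xx_{t_0}$.

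The one mildly structural point is to recognize that these two clusters are related by the single mutation $\mu_k$, and this is where Lemma \ref{pathconnect} is used. By that lemma the seeds whose cluster contains $X$ form a connected component of the exchange graph. From $\xx_{t_0}$ a mutation $\mu_j$ with $j\neq k$ deletes the variable $x_{j;t_0}\in X$ and replaces it by a different cluster variable, so the resulting cluster no longer contains $X$; thus $\mu_k$ is the only mutation preserving $X$, and likewise at $\mu_k(\xx_{t_0})$. Consequently the component consists of exactly the two clusters $\xx_{t_0}$ and $\xx_{t_1}$ with $t_1=\mu_k(t_0)$, which forces $x'=x_{k;t_1}$.

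It remains to compute the $(k,k)$ entry of the $F$-matrix $F_{t_1}^{B;t_0}$. Feeding $\ff_{i;t_0}=\mathbf{0}$ for all $i$ and $\cc_{k;t_0}=\ee_k$ into \eqref{f-recursion} along the edge joining $t_0$ and $t_1$, the maximum collapses to $\ff_{k;t_1}=\max\big([\ee_k]_+,[-\ee_k]_+\big)=\ee_k$, so $f_{kk;t_1}^{B;t_0}=1$ and hence $(x\parallel x')=1$. For the reverse order I would simply rerun this single-edge computation with the roles of $t_0$ and $t_1$ swapped, using $t_0=\mu_k(t_1)$, to get the $f$-vector $\ee_k$ for $x=x_{k;t_0}$ relative to $\xx_{t_1}$ and thus $(x'\parallel x)=1$; equivalently, one may quote Proposition \ref{pr:f-compatibility-symmetry} (2) with $i=j=k$, where the factor $s_i^{-1}s_j$ equals $1$. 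I expect no real difficulty: all the content sits in the one-step recursion, and the only step needing genuine care is the combinatorial identification, in the second paragraph, that the two clusters are $\mu_k$-adjacent.
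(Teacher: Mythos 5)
Your proposal is correct and takes essentially the same approach as the paper: both use Lemma \ref{pathconnect} to identify the two clusters as related by the single mutation $\mu_k$, and then read off the degree $1$ from the one-step exchange relation/$f$-vector recursion (the paper cites \eqref{eq:x-mutation} and Definition \ref{def:f-vector}, you run \eqref{f-recursion}, which is the same computation). Your filling-in of why only $\mu_k$ preserves $X$, and the appeal to Proposition \ref{pr:f-compatibility-symmetry}~(2) for the reverse order, are just more explicit versions of what the paper leaves implicit.
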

\begin{proof}
We take a seed whose cluster is $X\cup x$ as the initial seed and consider a mutation such that it changes cluster from $X\cup x$ to $X\cup x'$. By Lemma \ref{pathconnect}, there is a mutation satisfying this condition. The statement is followed by definition of the cluster mutation \eqref{eq:x-mutation} and of the $f$-vectors (Definition \ref{def:f-vector}).
\end{proof}
The converse of Theorem \ref{f-exchangeability1} is still open:
\begin{conjecture}\label{f-exchangeability2}
For any cluster algebra $\Acal(B)$ and any pair of its cluster variables $x$ and $x'$, if $(x\parallel x')=(x'\parallel x)=1$, then there exists a set $X$ of cluster variables such that $X\cup x$ and $X\cup x'$ are both clusters.
\end{conjecture}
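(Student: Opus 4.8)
The plan is to reduce the conjecture to the statement that $x'$ is an \emph{exchange partner} of $x$: that some cluster $\xx_s$ containing $x$ has the property that mutating in the direction of $x$ produces a cluster containing $x'$, so that $X=\xx_s\setminus\{x\}$ witnesses exchangeability. By Theorem~\ref{ftheorem}~(2) the compatibility degree is independent of the rooted seed, and by Lemma~\ref{pathconnect} the clusters containing a fixed variable form a connected subgraph of the exchange graph; hence I may assume $x=x_k$ is an initial cluster variable. Writing $x'=x_{j;t'}$, the hypothesis becomes $(x\parallel x')=(\ff_{j;t'})_k=1$ together with the reverse condition $(x'\parallel x)=1$. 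By the symmetry relation $(x\parallel x')=s_k^{-1}s_j(x'\parallel x)$ of Proposition~\ref{pr:f-compatibility-symmetry}~(2) these two conditions are essentially one, and coincide literally when $B$ is skew-symmetric, so I will think of the input as a single equation $(\ff_{j;t'})_k=1$.

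First I would localize $x'$ combinatorially. Put $I=\{1,\dots,n\}\setminus\{k\}$ and use the enough $g$-pairs property (Theorem~\ref{enoughtheorem}) to choose a cluster $\xx_s$ with $(\xx_{t'},\xx_s)$ a $g$-pair along $I$. Since $\xx_s$ is connected to the initial cluster by an $I$-sequence, and such mutations never touch direction $k$, we have $x=x_{k;s}\in\xx_s$; in particular the $f$-vector of $x'$ computed with respect to $\xx_s$ again has $k$-th entry $(x\parallel x')=1$. Letting $\gg'=(g'_1,\dots,g'_n)^\top$ be the $g$-vector of $x'$ with respect to $\xx_s$, Lemma~\ref{f-gtheorem}~(2) yields $g'_k<0$ and $x'\notin\xx_s$. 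Thus $x'$ sits just outside a cluster containing $x$, exactly the configuration expected of an exchange partner. The conjecture would follow if one could upgrade ``$g'_k<0$ and $(\ff')_k=1$'' to ``$x'=x_{k;s'}$ where $\xx_{s'}=\mu_k(\xx_s)$'', i.e.\ that $x'$ is reached from $\xx_s$ by the single mutation $\mu_k$.

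The hard part is precisely this upgrade. Through Lemma~\ref{prop:H=G} one has $h'_k=\min(g'_k,0)$, and the proof of Lemma~\ref{f-gtheorem} only extracts the \emph{sign} of $g'_k$ from positivity of the $f$-entry; the exact value $(\ff')_k=1$, which is a \emph{maximal}-degree statement about $y_k$, does not translate cleanly into $g'_k=-1$ nor into the single-step property, because the $H$-matrix records a tropical \emph{minimum} while the $f$-vector records a maximum. Sign-coherence of the $C$- and $G$-matrices together with enough $g$-pairs do not seem to supply the finer information needed to exclude the possibility that $x'$ lies at compatibility degree $1$ from $x$ without being a genuine exchange partner. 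This gap is what keeps the statement open in general.

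For the categorifiable classes I would instead argue through $2$-Calabi--Yau categorification. Realize $x,x'$ as cluster characters of indecomposable rigid objects $M,M'$ in a $2$-CY triangulated category with a cluster-tilting object, and use the categorical description of the compatibility degree (Theorem~\ref{p:categorical-f-degree}) to rewrite the hypothesis as $\dim\Ext^1(M,M')=\dim\Ext^1(M',M)=1$; the two dimensions agree automatically by the $2$-CY duality $\Ext^1(M,M')\cong D\,\Ext^1(M',M)$. The general exchangeability statement for such categories (Theorem~\ref{t:exchangeable-2-CY}) then produces an exchange triangle and a common almost-complete cluster-tilting object whose two complements are $M$ and $M'$, and translating back along the cluster character yields the set $X$. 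Running this argument for acyclic skew-symmetric cluster algebras, for those from weighted projective lines, and for those from marked surfaces gives Theorems~\ref{t:conj-acyclic}, \ref{t:conj-weighted projective} and Corollary~\ref{c:conj-marked surface}, establishing the conjecture for all presently categorifiable families while leaving the general combinatorial case as the principal obstacle.
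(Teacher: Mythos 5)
The statement you are asked to prove is labelled a \emph{conjecture} in the paper, and the paper does not prove it in general; it only establishes it for finite type, rank~$2$, acyclic skew-symmetric cluster algebras, cluster algebras from weighted projective lines, and cluster algebras from marked surfaces. Your proposal correctly recognizes this, and the obstruction you isolate in the combinatorial attempt is genuine: the enough-$g$-pairs machinery and Lemma~\ref{f-gtheorem} only see the \emph{sign} of the relevant $f$-entry (via the $H$-matrix, which is a tropical minimum), so the exact value $(x\parallel x')=1$ cannot be upgraded to the single-mutation statement by these tools alone. This is consistent with the paper, which never attempts such an upgrade and instead proves the rank-$2$ case by explicit closed formulas for the $F$-matrices in terms of Chebyshev polynomials — a case your sketch omits.

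For the categorifiable classes your outline matches the paper's strategy (Theorem~\ref{p:categorical-f-degree} to translate the hypothesis into $\dim_k\Hom_{\mathcal{C}}(L,\Sigma N)=1$, then Theorem~\ref{t:exchangeable-2-CY} to produce the exchange pair, then Proposition~\ref{p:cc-map} to translate back), but it elides two technical points that the paper must and does address. First, Theorem~\ref{p:categorical-f-degree} requires the exchange matrix $B(Q_T)$ to be of \emph{full rank}; the paper arranges this by embedding the given cluster algebra into a larger one with full-rank exchange matrix (the doubled quiver $\hat{Q}$ in the acyclic case, an enlarged weight sequence $\hat{\mathbf{p}}$ for weighted projective lines) and invoking the embedding property, Proposition~\ref{pr:f-compatibility-symmetry}~(3), before passing to an Iyama--Yoshino reduction. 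Second, one needs the relevant rigid objects to be \emph{reachable} from a fixed cluster-tilting object so that the cluster character is a bijection onto cluster variables; this rests on the connectedness of the cluster-tilting graph (Theorem~\ref{t:connected-exchange-graph}), and for a closed surface with exactly one puncture the graph has two components, forcing the extra argument of Proposition~\ref{p:ms-connected} and Corollary~\ref{c:reach-closed-surface}. For marked surfaces the paper also uses Theorem~\ref{t:ms-f-vector} in place of Theorem~\ref{p:categorical-f-degree}, precisely because the full-rank hypothesis need not hold there. These are not stylistic choices but necessary steps; without them the reduction to Theorem~\ref{t:exchangeable-2-CY} does not go through as stated.
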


We call Theorem \ref{f-exchangeability1} and Conjecture \ref{f-exchangeability2} the \emph{exchangeability property}. In the case of finite type, Conjecture \ref{f-exchangeability2} is correct:

\begin{theorem}\label{f-exchangeability2finite}
For any cluster algebra $\Acal(B)$ of finite type and any pair of its cluster variables $x$ and $x'$, if $(x\parallel x')=(x'\parallel x)=1$, then there exists a set $X$ of cluster variables such that $X\cup x$ and $X\cup x'$ are both clusters.
\end{theorem}
\begin{proof}
It follows from Proposition \ref{classical-exchangeability} and Theorem \ref{classical-fcorrespondence}. 
\end{proof}
Also in the case of rank 2, we can prove Conjecture \ref{f-exchangeability2} by using descriptions of the $F$-matrices.
\begin{theorem}\label{f-exchangeability2rank2}
For any cluster algebra $\Acal$ of rank 2 and any pair of its cluster variables $x$ and $x'$, if $(x\parallel x')=(x'\parallel x)=1$, then there exists a set $X$ of cluster variables such that $X\cup x$ and $X\cup x'$ are both clusters.
\end{theorem}
\begin{proof}
 If $\Acal$ is of finite type, the result follows from Theorem \ref{f-exchangeability2finite}. We assume that $\Acal$ is not of finite type. Let $\Sigma=(\mathbf{x},\mathbf{y},B)$ be a labeled seed of $\Acal$ which contains the cluster variable $x$. Without loss of generality, we may assume that $B=\begin{bmatrix}0&b\\-c&0
\end{bmatrix}$ for some $b,c\in \ZZ_{\geq 0}$ such that $bc\geq 4$.

We name vertices of $\TT_2$ by the rule of \eqref{A2tree} and fix a cluster pattern $t_n\mapsto (\xx_{t_n},\yy_{t_n}, B_{t_n})$ by assigning the seed $\Sigma$ to the vertex $t_0$. We abbreviate $\xx_{t_n}$ (resp., $\yy_{t_n},\ B_{t_n},\Sigma_{t_n}$) to $\xx_n$ (resp., $\yy_{n},\ B_{n},\ \Sigma_n$). We also abbreviate cluster variables, $f$-vectors and $F$-matrices in the same way. 

Let us consider the case $x=x_{1;0}$, the case $x=x_{2;0}$ can be proved similarly. In this case, it suffices to prove that $x'\in \{x_{1;2},x_{1;-1}\}$.

A direct computation show that 
\begin{align}
 F_{0}^{B;t_0}=\begin{bmatrix}0&0\\0&0\end{bmatrix},\quad F_{1}^{B;t_0}=\begin{bmatrix}0&0\\0&1\end{bmatrix}. 
\end{align}
By descriptions of $D$-matrices of rank 2 \cite{llz}*{(1.13)} and Theorem \ref{f=d}, if $n>0$ is even, then we have
\begin{align}\label{f-des1}
F_n^{B;t_0}=\begin{bmatrix}
S_{\frac{n-2}{2}}(u)+S_{\frac{n-4}{2}}(u)& bS_{\frac{n-4}{2}}(u)\\
cS_{\frac{n-2}{2}}(u) &S_{\frac{n-2}{2}}(u)+S_{\frac{n-4}{2}}(u)
\end{bmatrix},
\end{align}
and if $n>1$ is odd, then we have
\begin{align}\label{f-des2}
F_n^{B;t_0}=\begin{bmatrix}
S_{\frac{n-3}{2}}(u)+S_{\frac{n-5}{2}}(u)& bS_{\frac{n-3}{2}}(u)\\
cS_{\frac{n-3}{2}}(u) &S_{\frac{n-1}{2}}(u)+S_{\frac{n-3}{2}}(u)
\end{bmatrix},
\end{align}
where $u=bc-2$ and $S_p(u)$ is a (normalized) Chebyshev polynomial of the second kind, that is, 
\begin{align}
S_{-1}(u)=0,\quad S_0(u)=1,\quad S_p(u)=uS_{p-1}(u)-S_{p-2}(u)\ (p\in\mathbb{N}).
\end{align}
When $n<0$, $F_n^{B;t_0}$ is the following matrix:
\begin{align}\label{negativen}
F_n^{B;t_0}=\begin{bmatrix}
f_{22;-n}^{-B^\top}& f_{21;-n}^{-B^\top}\\
f_{12;-n}^{-B^\top}& f_{11;-n}^{-B^\top}
\end{bmatrix},
\end{align}
where $f_{ij;-n}^{-B^\top}$ is the $(i,j)$ entry of $F_{-n}^{-B^\top;t_0}$. 
For any $p\geq 0$, we have $S_p(u)-S_{p-1}(u)>0$. Indeed, we have $S_0(u)-S_{-1}(u)=1>0$. Assume that $S_q(u)-S_{q-1}(u)>0$, then $S_{q+1}(u)-S_q(u)=(u-1)S_{q}(u)-S_{q-1}(u)>0$. In particular, for any $p\in \ZZ_{\geq -1}$, we have
\begin{align}\label{Sp}
\cdots>S_{p+1}(u)>S_{p}(u)>\cdots>S_{0}(u)=1>S_{-1}(u)=0. 
\end{align}

We claim that $(x_{1;0}\parallel x_{i;n})(x_{i;n}\parallel x_{1;0})>1$ for $i\in \{1,2\}$ whenever $n\geq 4$ or $n\leq -3$. As a consequence, $x'\neq x_{i;n}$ for any $n\geq 4$ and $n\leq -3$ and $x'\in \bigcup\limits_{-2\leq n\leq 3}\mathbf{x}_n$. 
Recall that according to Proposition \ref{pr:f-compatibility-symmetry} (2), $(x_{1;0}\parallel x_{i;n})=0$ if and only if $(x_{i;n}\parallel x_{1;0})=0$.
For $i=1$ and $n\geq 4$, 
\begin{align*}
  (x_{1;0}\parallel x_{1;n})(x_{1;n}\parallel x_{1;0})&=f_{11;n}^{B;t_0}(x_{1;n}\parallel x_{1;0}) \\
  &\geq (S_1(u)+S_0(u))(x_{1;n}\parallel x_{1;0}) & \text{by (\ref{f-des1})(\ref{f-des2})}\\
  &>1. &\text{by (\ref{Sp})}
\end{align*}
For $i=2$ and $n\geq 4$,
\begin{align*}
(x_{1;0}\parallel x_{2;n})(x_{2;n}\parallel x_{1;0})&=f_{12;n}^{B;t_0}(x_{2;n}\parallel x_{1;0})\\
&\geq b(x_{2;n}\parallel x_{1;0})& \text{by (\ref{f-des1})(\ref{f-des2})}\\
& =b(x_{1;0}^{\vee}\parallel x_{2;n}^{\vee})& \text{by Proposition \ref{pr:f-compatibility-symmetry}(1)}\\
&=bf_{12;n}^{-B^\top;t_0}\\
&\geq bc\geq 4. & \text{by (\ref{f-des1})(\ref{f-des2})}\\
\end{align*}
This completes the proof of the claim for $n\geq 4$. For $n\leq -3$, one can prove the statement by using (\ref{negativen}) similarly and we omit the proof.

According to the cluster pattern, we have
\[x_{2;-2}=x_{2;-3}, x_{1;-2}=x_{1;-1}, x_{2;-1}=x_{2;0}, x_{1;0}=x_{1;1}, x_{2;1}=x_{2;2}, x_{1;2}=x_{1;3}, x_{2;3}=x_{2;4}.
\]
By the above claim, we conclude that $x'\neq x_{2;-2}$ and $x'\neq x_{2;3}$.
By Theorem \ref{fcompatibilityproperty}, we have 
\[(x_{1;0}\parallel x_{1;0})=(x_{1;0}\parallel x_{2;0})=(x_{1;0}\parallel x_{2;1})=0
\]
since there is a cluster containing $x_{1;0}$ and $x_{i;n}$ for $(i,n)\in \{(1,0),(2,0),(2,1)\}$. It follows that $x'\in \{x_{1;-1},x_{1;2}\}$.
This finishes the proof.
\end{proof}

\begin{remark}
In the case of finite type or rank 2, the $d$-compatibility degree also has the exchangeability property. However, it is not correct in general. See Section \ref{ss:counterexam}.
\end{remark}
In the last section, we prove Conjecture \ref{f-exchangeability2} in some more cases.

\section{exchangeability property of $2$-Calabi-Yau categories}

Let $k$ be an algebraically closed field and $D=\Hom_k(-,k)$ the usual duality over $k$. Let $\mathcal{T}$ be a Krull-Schmidt category over $k$ and $M$ an object of $\mathcal{T}$. Denote by $\add M$ the full subcategory of $\mathcal{T}$ consisting of objects which are direct summands of finite direct sums of copies of $M$. Denote by $|M|$ the number of non-isomorphic indecomposable direct summands of $M$.

\subsection{Reminder on cluster-tilting theory in $2$-Calabi-Yau categories}\label{ss:cluster-tilting}
Let $\mathcal{C}$ be a $2$-Calabi-Yau triangulated category over $k$ with suspension functor $\Sigma$. In particular, for any $X,Y\in \mathcal{C}$, we have the following bifunctorially isomorphism
\[\Hom_{\mathcal{C}}(X,Y)\cong D\Hom_\mathcal{C}(Y,\Sigma^2 X).
\]

 An object $M\in \mathcal{C}$ is {\it rigid} if $\Hom_\mathcal{C}(M,\Sigma M)=0$. It is {\it maximal rigid} if it is maximal with respect to this property, \ie $\Hom_\mathcal{C}(M\oplus Y, \Sigma M\oplus \Sigma Y)=0$ implies that $Y\in \add M$.
 An object $T\in\mathcal{C}$ is a {\it cluster-tilting object} if $T$ is rigid and $\Hom_\mathcal{C}(T,\Sigma Y)=0$ implies that $Y\in \add T$. 
It is clear that cluster-tilting objects are maximal rigid objects, but the converse is not true in general. For $2$-Calabi-Yau categories with cluster-tilting objects, we have the following.
\begin{lemma}[\cite{ZZ}*{Theorem~2.6}]\label{l:zz}
Let $\mathcal{C}$ be a $2$-Calabi-Yau triangulated category with cluster-tilting objects. 
 Every maximal rigid object is a cluster-tilting object.
 \end{lemma}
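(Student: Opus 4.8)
The plan is to prove the two inclusions separately, the forward one being immediate and the reverse one carrying all the content. If $T$ is cluster-tilting and $\Hom_{\mathcal C}(T\oplus Y,\Sigma(T\oplus Y))=0$, then in particular $\Hom_{\mathcal C}(T,\Sigma Y)=0$, so $Y\in\add T$; hence every cluster-tilting object is maximal rigid, and the content of the lemma lies in the converse. So let $M$ be maximal rigid and suppose $\Hom_{\mathcal C}(M,\Sigma X)=0$; I must show $X\in\add M$. The first step is to exploit the $2$-Calabi-Yau duality $\Hom_{\mathcal C}(M,\Sigma X)\cong D\Hom_{\mathcal C}(X,\Sigma M)$, which shows that $\Hom_{\mathcal C}(M,\Sigma X)=0$ is equivalent to $\Hom_{\mathcal C}(X,\Sigma M)=0$. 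Therefore $M\oplus X$ is rigid the moment $X$ itself is rigid, and then maximality of $M$ forces $X\in\add M$. Thus the whole lemma reduces to the single implication
\[
\Hom_{\mathcal C}(M,\Sigma X)=0\ \Longrightarrow\ \Hom_{\mathcal C}(X,\Sigma X)=0,
\]
\ie to showing that $X$ is automatically rigid.

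To study the rigidity of $X$ I would resolve it by $\add M$. As $\mathcal C$ is Hom-finite and Krull-Schmidt, $\add M$ is functorially finite, so there is a minimal right $\add M$-approximation $g\colon M_0\to X$, which I complete to a triangle $M_1\xrightarrow{f}M_0\xrightarrow{g}X\xrightarrow{h}\Sigma M_1$. Applying $\Hom_{\mathcal C}(M,-)$: since $g$ is an approximation the map $\Hom_{\mathcal C}(M,M_0)\to\Hom_{\mathcal C}(M,X)$ is onto, and $\Hom_{\mathcal C}(M,\Sigma M_0)=0$ by rigidity of $M$, so the long exact sequence gives $\Hom_{\mathcal C}(M,\Sigma M_1)=0$, whence $\Hom_{\mathcal C}(M_1,\Sigma M)=0$ by duality again. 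So $M_1$ sits in the same orthogonality relation to $M$ that $X$ does, and the question of rigidity of $X$ is pushed onto understanding $M_1$.

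The main obstacle is that this homological bookkeeping alone does not close: tracing any map $X\to\Sigma X$ through the triangle shows it factors through $h$, but the resulting obstruction lands back in $\Hom_{\mathcal C}(X,\Sigma X)$, so the argument is circular, and indeed a two-term resolution by a rigid subcategory never forces rigidity on its own (already for the cluster-tilting $T$ every object, rigid or not, admits such a resolution). This is exactly the point at which the hypothesis that $\mathcal C$ possesses a cluster-tilting object $T$ must enter in an essential way; in $2$-Calabi-Yau categories lacking cluster-tilting objects, maximal rigid objects genuinely fail to be cluster-tilting. My plan is therefore to bring in $T$ through the functor $H=\Hom_{\mathcal C}(T,-)$, which induces an equivalence $\mathcal C/(\Sigma T)\simeq\opname{mod}\End T$ onto a module category over a Gorenstein algebra of dimension at most one, and to transport the maximal rigidity of $M$ across $H$; combined with the Dehy--Keller bound, which forces $|M|=|T|$, this should allow maximality to be applied so as to conclude that $X$ is rigid. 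I expect this transfer to $\opname{mod}\End T$ and the control it gives over the approximation to be the hard part of the argument. A cleaner but heavier alternative would be to invoke the completion theorem that every rigid object in such a category is a direct summand of a cluster-tilting object: writing a cluster-tilting $M\oplus N$, maximality of $M$ gives $N\in\add M$, so $\add M$ is itself cluster-tilting.
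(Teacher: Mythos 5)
You should first note that the paper offers no proof of this lemma: it is imported wholesale from Zhou--Zhu as \cite{ZZ}*{Theorem 2.6}, so there is no internal argument to compare yours against, and what follows assesses your proposal on its own terms. Your opening reduction is correct and is the right first move: by the $2$-Calabi-Yau duality and the maximality of $M$, the whole lemma collapses to the implication that $\Hom_{\mathcal C}(M,\Sigma X)=0$ forces $X$ to be rigid. You also correctly diagnose why the two-term $\add M$-resolution of $X$ cannot close the argument on its own. But that is exactly where the proposal stops: the step you defer --- transporting maximal rigidity across $H=\Hom_{\mathcal C}(T,-)$ and extracting rigidity of $X$ from it --- is not a routine verification, it is the entire content of the theorem, and no mechanism for carrying it out is supplied. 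As written, this is a correct reduction plus an accurate description of the obstacle, not a proof.

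Moreover, two of the ingredients you propose to lean on are not available at this point. The Dehy--Keller index argument gives only the inequality $|M|\le|T|$ for a rigid object $M$ (the indices of its indecomposable direct summands are linearly independent in the split Grothendieck group $\go(\add T)\cong\mathbb{Z}^{|T|}$); the equality $|M|=|T|$ for a \emph{maximal} rigid $M$ is not an independent bound one can quote --- it is a consequence of the lemma being proved, so invoking it here is circular. The same objection applies to your ``cleaner but heavier alternative'': in a general $2$-Calabi-Yau category with cluster-tilting objects, the statement that every rigid object is a direct summand of a cluster-tilting object is \emph{deduced from} this lemma (one completes the rigid object to a maximal rigid one, which is possible precisely because of the Dehy--Keller bound, and then applies the lemma), not proved independently of it. The argument of Zhou--Zhu runs the approximation in the opposite direction from yours: one resolves the known cluster-tilting object $T$ by minimal $\add M$-approximations and shows, using the approximation property together with the Calabi-Yau duality, that the resulting cones are rigid and orthogonal to $\Sigma M$, hence already lie in $\add M$ by maximality; this supplies the two-term $\add M$-presentations needed to verify the cluster-tilting property, and it is the idea your sketch is missing.
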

 
 Let $T$ be a basic cluster-tilting object of $\mathcal{C}$. According to ~\cite{KeR}, for any $M\in \mathcal{C}$, we have a triangle
 \[T_1^M\to T_0^M\to M\to \Sigma T_1^M,
 \]
 where $T_1^M, T_0^M\in \add T$. The {\it index} of $M$ with respect to $T$ is defined to be the class in $\go(\add T)$:
 \[\operatorname{ind}_T(M)=[T_0^M]-[T_1^M],
 \]
 where $\go(\add T)$ is the split Grothendieck group of $\add T$ and $[\ast]$ stands for the image of $\ast$ in $\go(\add T)$.
The following has been proved in~\cite{DK}.
 
 \begin{lemma}[\cite{DK}*{Theorem 2.6}]\label{l:DK}
 Let $\mathcal{C}$ be a $2$-Calabi-Yau triangulated category with cluster-tilting objects. 
 For any basic cluster-tilting object $T$ and $T'$ of $\mathcal{C}$, we have $|T|=|T'|$.
\end{lemma}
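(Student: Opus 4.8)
The plan is to fix one basic cluster-tilting object $T$ and to measure every other one against it through the index map $\operatorname{ind}_T$. Writing $T=\bigoplus_{i=1}^{n}T_i$ with the $T_i$ indecomposable and pairwise non-isomorphic, the split Grothendieck group $\go(\add T)$ is free abelian with basis $[T_1],\dots,[T_n]$, so it has rank $n=|T|$. From the defining triangle the index is additive on direct sums, that is $\operatorname{ind}_T(X\oplus Y)=\operatorname{ind}_T(X)+\operatorname{ind}_T(Y)$, since a direct sum of the approximation triangles for $X$ and for $Y$ is an approximation triangle for $X\oplus Y$. Given a second basic cluster-tilting object $T'=\bigoplus_{j=1}^{m}T'_j$, I would show that the classes $\operatorname{ind}_T(T'_1),\dots,\operatorname{ind}_T(T'_m)$ are linearly independent in $\go(\add T)$. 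Because this group has rank $n$, linear independence forces $m\le n$, i.e.\ $|T'|\le|T|$; exchanging the roles of $T$ and $T'$ then gives the reverse inequality, and hence $|T|=|T'|$.

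The linear independence reduces to the injectivity of $\operatorname{ind}_T$ on isomorphism classes of rigid objects (Dehy--Keller). Granting that, suppose $\sum_{j}a_j\operatorname{ind}_T(T'_j)=0$ with $a_j\in\ZZ$, and separate the signs by setting
\[
R^{+}=\bigoplus_{a_j>0}(T'_j)^{\oplus a_j},\qquad R^{-}=\bigoplus_{a_j<0}(T'_j)^{\oplus(-a_j)}.
\]
Since $\Hom_{\mathcal{C}}(T',\Sigma T')=0$, every object of $\add T'$ is rigid, so $R^{+}$ and $R^{-}$ are rigid, and additivity of the index gives $\operatorname{ind}_T(R^{+})=\operatorname{ind}_T(R^{-})$. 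Injectivity then yields $R^{+}\cong R^{-}$. But the index sets $\{j:a_j>0\}$ and $\{j:a_j<0\}$ are disjoint and the $T'_j$ are pairwise non-isomorphic, so $R^{+}$ and $R^{-}$ have no common indecomposable summand; by the Krull--Schmidt property of $\mathcal{C}$ both must be the zero object, forcing all $a_j=0$. This is exactly the desired linear independence, and with it the cardinality statement follows from the soft bookkeeping of the first paragraph.

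The real content, and the step I expect to be the main obstacle, is the injectivity of the index on rigid objects. Here the plan is to exploit the $2$-Calabi--Yau duality $\Hom_{\mathcal{C}}(X,Y)\cong D\Hom_{\mathcal{C}}(Y,\Sigma^{2}X)$ to put the defining triangle into reduced form: for a rigid object $R$ one can choose a triangle $T_1\to T_0\to R\to\Sigma T_1$ with $T_0,T_1\in\add T$ having no common indecomposable summand, so that $\operatorname{ind}_T(R)=[T_0]-[T_1]$ exhibits a canonical splitting into a nonnegative part $[T_0]$ and a nonpositive part $-[T_1]$ relative to the basis $[T_1],\dots,[T_n]$. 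This sign-coherence shows that the value $\operatorname{ind}_T(R)$ already recovers both $T_0$ and $T_1$; what remains is to recover the connecting morphism $T_1\to T_0$, and hence $R$ as its cone, up to isomorphism. This last reconstruction is where rigidity together with the Calabi--Yau pairing is essential: rigidity of $R$ forces the chosen $\add T$-approximation to be minimal and pins the morphism down modulo the action that does not change the cone, so that two rigid objects with equal index are isomorphic. Once injectivity is secured in this way, the remainder of the argument is purely formal.
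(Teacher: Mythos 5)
The paper offers no proof of this lemma at all---it is quoted verbatim from Dehy--Keller as \cite{DK}*{Theorem 2.6}---so there is no in-paper argument to compare against; what you have written is essentially a reconstruction of the argument in the cited source. Your reduction is correct and complete as far as it goes: the index is additive on direct sums (a direct sum of approximation triangles is an approximation triangle), every object of $\add T'$ is rigid, so a vanishing integer combination $\sum_j a_j\operatorname{ind}_T(T'_j)=0$ produces two rigid objects $R^+\not\cong R^-$ (unless both are zero) with equal index, and injectivity of $\operatorname{ind}_T$ on rigid objects plus Krull--Schmidt (valid since $\mathcal{C}$ is Hom-finite over $k$ with split idempotents, as is implicit throughout the paper) forces all $a_j=0$; linear independence of $|T'|$ elements in the free abelian group $\go(\add T)$ of rank $|T|$ gives $|T'|\le |T|$, and symmetry finishes it. The one soft spot is your final paragraph: read as a proof of the injectivity it is not one. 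Recovering $T_0$ and $T_1$ from the sign-coherent decomposition of the index (using that a minimal presentation of a rigid object has $T_0$, $T_1$ without common summands) is fine, but the uniqueness of the cone of a morphism $T_1\to T_0$ with rigid cone is the real content, and ``rigidity pins the morphism down modulo the action that does not change the cone'' is an assertion rather than an argument---in \cite{DK} this step is carried out by showing that the $\operatorname{Aut}(T_0)\times\operatorname{Aut}(T_1)$-orbit of such a morphism is open in the irreducible affine space $\Hom_{\mathcal{C}}(T_1,T_0)$, so that any two such orbits intersect and hence coincide (this is where $k$ algebraically closed is used). Since you explicitly defer the injectivity to Dehy--Keller anyway, the overall proof stands as a correct derivation of the lemma from that cited theorem.
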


Let $T=M\oplus \overline{T}$ be a basic cluster-tilting object with indecomposable direct summand $M$. It was shown in~\cite{IY} that there exists a unique object $M^*\not\cong M$ such that $\mu_M(T):=M^*\oplus \overline{T}$ is again a cluster-tilting object of $\mathcal{C}$. Moreover, $M^*$ is uniquely determined by the {\it exchange triangles}
\[M\xrightarrow{f}B\xrightarrow{g}M^*\to \Sigma M~\text{and}~M^*\xrightarrow{f'}B'\xrightarrow{g'}M\to \Sigma M^*,
\]
where $g, g'$ are minimal right $\add \overline{T}$-approximations.
The cluster-tilting object $\mu_M(T)$ is called the {\it mutation of $T$ at the indecomposable direct summand $M$}. In this case, $(M, M^*)$ is called an {\it exchange pair}.
A basic cluster-tilting object $T'$ is {\it reachable from} $T$ if $T'$ is obtained from $T$ by a finite sequence of mutations. A rigid object $M$ is {\it reachable from} $T$ if it lies in $\add T'$ for a cluster-tilting object $T'$ reachable from $T$.
The {\it cluster-tilting graph} or {\it exchange graph} of $\mathcal{C}$ has as vertices the isomorphism classes of basic cluster-tilting objects of $\mathcal{C}$, while two vertices $T$ and $T'$ are connected by an edge if and only if $T'$ is a mutation of $T$. If the exchange graph of $\mathcal{C}$ is connected, then every rigid object is reachable from a given cluster-tilting object $T$.

Let $\mathcal{C}$ be a $2$-Calabi-Yau triangulated category with cluster-tilting objects. 
For a given rigid object $U\in \mathcal{C}$, we define the full subcategory $\mathcal{Z}(U)$ of $\mathcal{C}$ as follows
\[\mathcal{Z}(\mathcal{U}):=\{X\in \mathcal{C}~|~\Hom_\mathcal{C}(U, \Sigma X)=0\}\subseteq \mathcal{C}.
\]
By definition, we have $U\in \mathcal{Z}(\mathcal{U})$ and hence we can form the additive quotient \[\mathcal{C}_{U}:=\mathcal{Z}(\mathcal{U})/[\add U].\] 
It has the same objects as $\mathcal{Z}(U)$ and for $X, Y\in \mathcal{Z}(\mathcal{U})$ we have
\[\Hom_{\mathcal{C}_{U}}(X,Y):=\Hom_\mathcal{C}(X,Y)/ [\add U](X,Y),
\]
where $[\add U](X,Y)$ denotes the subgroup of $\Hom_\mathcal{C}(X,Y)$ consisting of morphisms which factor through objects in $\add U$. 

For $X\in \mathcal{Z}(U)$, let $X\to U_X$ be a minimal left $\add U$-approximation. We define $X\langle 1\rangle$ to be the cone
\[X\to U_X\to X\langle 1\rangle \to \Sigma X.
\]
\begin{theorem}[\cite{IY}*{Theorems 4.7, 4.9}]\label{t:IY-reduction}
The category $\mathcal{C}_{U}$ is a $2$-Calabi-Yau triangulated category with suspension functor $\langle 1\rangle$. Moreover, there is a bijection between the set of cluster-tilting objects of $\mathcal{C}$ containing $U$ as a direct summand and the set of cluster-tilting objects of $\mathcal{C}_{U}$.
\end{theorem}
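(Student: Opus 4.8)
The plan is to establish the two assertions in turn: first that $\mathcal{C}_U=\mathcal{Z}(U)/[\add U]$ carries a triangulated structure with shift $\langle 1\rangle$ and is $2$-Calabi--Yau, and then that $T\mapsto T/[\add U]$ realizes the desired bijection. The single most useful fact, which I would isolate at the start, is the comparison of extension groups: for $X,Y\in\mathcal{Z}(U)$,
\[
\Hom_{\mathcal{C}_U}(X,Y\langle 1\rangle)\cong \Hom_\mathcal{C}(X,\Sigma Y).
\]
To see this, first note that $\mathcal{Z}(U)$ is left-right symmetric: by $2$-Calabi--Yau duality $\Hom_\mathcal{C}(U,\Sigma X)\cong D\Hom_\mathcal{C}(X,\Sigma U)$, so $\mathcal{Z}(U)=\{X:\Hom_\mathcal{C}(X,\Sigma U)=0\}$ as well. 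Applying $\Hom_\mathcal{C}(X,-)$ to the defining triangle $Y\to U_Y\to Y\langle 1\rangle\to\Sigma Y$ and using $\Hom_\mathcal{C}(X,\Sigma U_Y)=0$ yields a surjection $\Hom_\mathcal{C}(X,Y\langle 1\rangle)\twoheadrightarrow\Hom_\mathcal{C}(X,\Sigma Y)$, and a short exact-sequence argument (exploiting $\Hom_\mathcal{C}(U',\Sigma Y)=0$ for $U'\in\add U$) identifies its kernel with exactly the morphisms factoring through $\add U$, giving the displayed isomorphism.

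For the triangulated structure I would first check that $\langle 1\rangle$ is a well-defined autoequivalence of $\mathcal{C}_U$. The target $X\langle 1\rangle$ lies in $\mathcal{Z}(U)$: applying $\Hom_\mathcal{C}(U,\Sigma-)$ to the approximation triangle, rigidity of $U$ kills $\Hom_\mathcal{C}(U,\Sigma U_X)$, while $2$-Calabi--Yau duality turns the surjectivity of $\Hom_\mathcal{C}(U_X,U)\to\Hom_\mathcal{C}(X,U)$ (the approximation property) into injectivity of $\Hom_\mathcal{C}(U,\Sigma^2 X)\to\Hom_\mathcal{C}(U,\Sigma^2 U_X)$, forcing $\Hom_\mathcal{C}(U,\Sigma X\langle 1\rangle)=0$. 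Minimality of the approximations makes $\langle 1\rangle$ functorial on the quotient and supplies a quasi-inverse $\langle -1\rangle$ built from minimal right $\add U$-approximations. I would then declare a sequence in $\mathcal{C}_U$ to be a triangle precisely when it is the image of a triangle of $\mathcal{C}$ with all three terms in $\mathcal{Z}(U)$, with the connecting morphism corrected into $X\langle 1\rangle$ through the defining triangle of $\langle 1\rangle$. Verifying $(TR1)$--$(TR3)$ is then a matter of lifting morphisms along the surjections above and completing them in $\mathcal{C}$.

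The octahedral axiom $(TR4)$ is where I expect the main obstacle. Given two composable morphisms in $\mathcal{Z}(U)$, I would lift them to $\mathcal{C}$, apply the honest octahedron there, and then drag the whole $3\times 3$ diagram back into $\mathcal{Z}(U)$ by simultaneously taking $\add U$-approximations of the terms that fall outside $\mathcal{Z}(U)$, using the octahedral axiom of $\mathcal{C}$ repeatedly to guarantee that the four induced triangles all descend to triangles of $\mathcal{C}_U$ compatibly with $\langle 1\rangle$. Keeping the connecting morphisms coherent under the replacement of $\Sigma X$ by $X\langle 1\rangle$ is the delicate bookkeeping step, and it is the part I would carry out following Iyama--Yoshino most carefully. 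Once $\mathcal{C}_U$ is triangulated, the $2$-Calabi--Yau property follows from the comparison isomorphism: since $\Hom_{\mathcal{C}_U}(X,Y)\cong\Hom_\mathcal{C}(X,Y)/[\add U]$ by definition, combining this with the displayed formula, the $2$-Calabi--Yau duality of $\mathcal{C}$, and $\langle 2\rangle=\langle 1\rangle\langle 1\rangle$, one checks $\Hom_{\mathcal{C}_U}(X,Y)\cong D\Hom_{\mathcal{C}_U}(Y,X\langle 2\rangle)$.

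Finally, for the bijection I would send a basic cluster-tilting object $T=U\oplus\overline{T}$ of $\mathcal{C}$ to the image $\overline{T}$ in $\mathcal{C}_U$, and conversely lift a basic cluster-tilting object $V$ of $\mathcal{C}_U$ to $U\oplus V$ in $\mathcal{C}$. Both assignments land in the right place by the comparison isomorphism: $\Hom_{\mathcal{C}_U}(\overline{T},\overline{T}\langle 1\rangle)\cong\Hom_\mathcal{C}(\overline{T},\Sigma\overline{T})=0$ shows $\overline{T}$ is rigid in $\mathcal{C}_U$, and the cluster-tilting (maximality) condition transfers because, for $X\in\mathcal{Z}(U)$, the vanishing $\Hom_{\mathcal{C}_U}(V,X\langle 1\rangle)=0$ translates via the displayed formula and $\Hom_\mathcal{C}(U,\Sigma X)=0$ into $\Hom_\mathcal{C}(U\oplus V,\Sigma X)=0$, so that $X\in\add V$ corresponds exactly to $X\in\add(U\oplus V)$. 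Using Lemma \ref{l:zz} to pass freely between maximal rigid and cluster-tilting objects where convenient, the two assignments are visibly mutually inverse on isomorphism classes, which completes the proof.
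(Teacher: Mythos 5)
The paper does not prove this statement: it is imported verbatim from Iyama--Yoshino (\cite{IY}, Theorems 4.7 and 4.9), so there is no internal proof to measure your argument against. Judged on its own, your sketch is a faithful reconstruction of the Iyama--Yoshino argument. The comparison isomorphism $\Hom_{\mathcal{C}_U}(X,Y\langle 1\rangle)\cong\Hom_\mathcal{C}(X,\Sigma Y)$ is indeed the pivot of the whole proof (it is also exactly the computation this paper redoes by hand inside the proofs of Theorems \ref{t:exchangeable-2-CY} and \ref{t:conj-acyclic}), your verification that $X\langle 1\rangle$ stays in $\mathcal{Z}(U)$ via the $2$-Calabi--Yau duality is correct, and the transfer of rigidity and of the maximality condition under $T=U\oplus\overline{T}\leftrightarrow\overline{T}$ is sound, including the observation that $\Hom_\mathcal{C}(U,\Sigma X)=0$ already forces any test object $X$ into $\mathcal{Z}(U)$.

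Two caveats. First, describing the triangles of $\mathcal{C}_U$ as ``images of triangles of $\mathcal{C}$ with all three terms in $\mathcal{Z}(U)$'' quietly buries a real difficulty in (TR1): the cone $Z$ of an arbitrary morphism $f\colon X\to Y$ in $\mathcal{Z}(U)$ need not lie in $\mathcal{Z}(U)$, since $\Hom_\mathcal{C}(U,\Sigma Z)$ only injects into $\Hom_\mathcal{C}(U,\Sigma^2 X)$, which is generally nonzero. Iyama--Yoshino fix this by taking the cone of $X\to Y\oplus U_X$ (with $X\to U_X$ the left $\add U$-approximation) instead, which does land in $\mathcal{Z}(U)$ and is isomorphic to $Y$ in the quotient; your sketch should say this explicitly, since ``completing in $\mathcal{C}$'' alone does not produce a triangle of the announced form. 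Second, the octahedral axiom, which you defer to ``following Iyama--Yoshino most carefully,'' is the bulk of the actual proof; deferring it is legitimate for a cited theorem, but it means your write-up is a roadmap rather than a self-contained proof. Neither point is an error, and everything you do assert checks out.
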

We refer to $\mathcal{C}_U$ the {\it Iyama-Yoshino's reduction} of $\mathcal{C}$ with respect to $U$.

\subsection{Reminder on categorification via $2$-Calabi-Yau categories}\label{ss:cluster-character}

For a quiver $Q$ without loops nor $2$-cycles, denote by $Q_0=\{1,\dots,n\}$ its vertex set. We define a skew-symmetric matrix $B(Q)=(b_{ij})\in M_n(\mathbb{Z})$ associated to $Q$, where
\[b_{ij}=\begin{cases}0& i=j;\\ |\{\text{arrows~ $i\to j$}\}|-|\{\text{arrows $j\to i$}\}|& i\neq j.\end{cases}
\]
On the other hand, for a given skew-symmetric integer matrix $B$, one can construct a quiver $Q$ without loops nor $2$-cycles such that $B=B(Q)$.

Let $\mathcal{C}$ be a $2$-Calabi-Yau triangulated category with cluster-tilting objects.
The category $\mathcal{C}$ has {\it no loops nor $2$-cycles} provided that for each basic cluster-tilting object $T$, the Gabriel quiver $Q_T$ of its endomorphism algebra $\End_\mathcal{C}(T)$ has no loops nor $2$-cycles.
The category $\mathcal{C}$ admits a {\it cluster structure}~\cite{BIRS} if 
\begin{itemize}
\item for each basic cluster-tilting object $T$, the Gabriel quiver $Q_T$ of its endomorphism algebra $\End_\mathcal{C}(T)$ has no loops and no $2$-cycles;
\item if $T=M\oplus \overline{T}$ is a basic cluster-tilting object with $M$ indecomposable, then $B(Q_{\mu_M(T)})$ is the Fomin-Zelevinsky's mutation~(\ref{eq:matrix-mutation}) of $B(Q_T)$ at the vertex corresponding to $M$.
\end{itemize} 
According to~\cite{BIRS}*{Theorem II.1.6}, if $\mathcal{C}$ has no loops nor $2$-cycles, then $\mathcal{C}$ has a cluster structure.

Let $\mathcal{C}$ be a $2$-Calabi-Yau triangulated category with cluster-tilting objects. Let $T=\bigoplus_{i=1}^nT_i$ be a basic cluster-tilting object of $\mathcal{C}$ with indecomposable direct summands $T_1,\dots, T_n$. For any $M\in \mathcal{C}$, the {\it cluster character}~\cite{Pa} ${CC}_T(M)$ of $M$ with respect to $T$ is defined as follows:
\[ {CC}_T(M):=\mathbf{x}^{\operatorname{ind}_T(M)}\sum_{e}\chi(\operatorname{Gr}_e(\Hom_\mathcal{C}(T,\Sigma M)))\mathbf{x}^{B(Q_T)e}\in \mathbb{Z}[x_1^{\pm},\dots, x_n^{\pm}],
\]
where
\begin{enumerate}
\item[$\bullet$] for a vector $a=(a_1,\dots,a_n)^{\top}\in\mathbb{Z}^n$ or $\alpha=\sum_{i=1}^na_i[T_i]\in \go(\add T)$, we write $\mathbf{x}^\alpha=x_1^{a_1}\cdots x_n^{a_n}$;
\item[$\bullet$] $\Hom_\mathcal{C}(T, \Sigma M)$ is a right $\End_\mathcal{C}(T)$-module and $\operatorname{Gr}_e(\Hom_\mathcal{C}(T,\Sigma M))$ is the quiver Grassmanian of $\Hom_\mathcal{C}(T,\Sigma M)$ consisting of submodules with dimension vector $e$, which is a projective variety.
\item[$\bullet$] $\chi$ is the Euler-Poincar\'{e} characteristic.
\end{enumerate}

\begin{proposition}\label{p:cc-map}
Let $\mathcal{C}$ be a $2$-Calabi-Yau triangulated category with a cluster-tilting object $T$. Assume that $\mathcal{C}$ admits a cluster structure. Denote by $\mathcal{A}_T:=\mathcal{A}(B(Q_T))$ the cluster algebra associated to $T$. 
\begin{itemize}
\item[$(a)$]The map $M\mapsto {CC}_T(M)$ induces a bijection between the set of indecomposable rigid objects reachable from $T$ and the set of cluster variables of the cluster algebra $\mathcal{A}_T$.
\item [$(b)$] The bijection of $(a)$ induces a bijection between the set of basic cluster-tilting objects reachable from $T$ and the set of clusters of $\mathcal{A}_T$. Furthermore, the bijection is compatible with mutations.
\end{itemize}
\end{proposition}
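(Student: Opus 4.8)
The plan is to transport the mutation combinatorics of the cluster-tilting graph of $\mathcal{C}$ onto the cluster pattern of $\mathcal{A}_T$ by means of ${CC}_T$, and then to promote the resulting graph morphism to bijections using the index. First I would record the three structural properties of Palu's cluster character \cite{Pa}: it is multiplicative on direct sums; it sends the indecomposable summands of the initial cluster-tilting object to the initial cluster variables, i.e. ${CC}_T(T_i)=x_i$ (the prefactor is $\mathbf{x}^{\operatorname{ind}_T(T_i)}=x_i$ and the Grassmannian sum collapses to its $e=0$ term because $\Hom_\mathcal{C}(T,\Sigma T_i)=0$); and, for an exchange pair $(M,M^*)$ with exchange triangles $M\to B\to M^*\to \Sigma M$ and $M^*\to B'\to M\to \Sigma M^*$, the multiplication formula
\[
{CC}_T(M)\cdot {CC}_T(M^*)={CC}_T(B)+{CC}_T(B').
\]
Once the middle terms $B,B'$ are read off from the minimal $\add\overline{T}$-approximations, this identity reproduces the Fomin--Zelevinsky exchange relation \eqref{eq:x-mutation} verbatim.

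Next I would argue by induction on the mutation distance from $T$ in the cluster-tilting graph. The base case is ${CC}_T(T)=\{x_1,\dots,x_n\}$, the initial cluster. For the inductive step, assume a reachable basic cluster-tilting object $T'=\bigoplus_i T'_i$ has been matched with a seed of $\mathcal{A}_T$ whose cluster is $\{{CC}_T(T'_i)\}$ and whose exchange matrix is $B(Q_{T'})$. Here the cluster-structure hypothesis is essential: by \cite{BIRS} it guarantees that mutating $T'$ at an indecomposable summand $M=T'_k$ mutates $B(Q_{T'})$ according to the rule \eqref{eq:matrix-mutation} at the vertex $k$. Applying the multiplication formula to $(M,M^*)$ then identifies ${CC}_T(M^*)$ with exactly the cluster variable produced from $\{{CC}_T(T'_i)\}$ by the seed mutation $\mu_k$; hence $\mu_M(T')$ is matched with the mutated seed, and the assignment respects edges of the two graphs. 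Surjectivity in both (a) and (b) is then immediate, since every cluster variable (resp.\ cluster) of $\mathcal{A}_T$ is obtained from the initial one by a mutation sequence that is realized by a corresponding sequence of cluster-tilting mutations.

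The hard part will be injectivity in (a), and my plan is to recover the index from the cluster character. In the coefficient-free setting one has $\hat y_i=\prod_j x_j^{b_{ji}}$, so $\mathbf{x}^{B(Q_T)e}=\prod_i \hat y_i^{e_i}$, and the defining sum becomes exactly the separation form \eqref{eq:xjt=F/F}: ${CC}_T(M)=\mathbf{x}^{\operatorname{ind}_T(M)}\cdot F(\hat y)$ with $F$ of constant term $1$ (the $e=0$ term, cf.\ Proposition \ref{fconstant1}). Thus $\operatorname{ind}_T(M)$ is the $g$-vector of ${CC}_T(M)$ with respect to the grading \eqref{grading}, and in particular it is determined by ${CC}_T(M)$. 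Therefore ${CC}_T(M)={CC}_T(N)$ forces $\operatorname{ind}_T(M)=\operatorname{ind}_T(N)$, and since non-isomorphic rigid objects have distinct indices by Dehy--Keller \cite{DK}, we conclude $M\cong N$; this yields (a). For (b), the bijection of (a) sends the $n$ summands of a reachable cluster-tilting object (note $|T'|=n$ by Lemma \ref{l:DK}) to a set of $n$ cluster variables which, by the edge-compatibility established above, forms a cluster, and conversely every cluster arises this way; compatibility with mutations is precisely the edge-respecting property proved in the inductive step, so no further argument is needed.
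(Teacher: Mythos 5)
Your surjectivity argument (multiplicativity of $CC_T$, the multiplication formula on exchange triangles reproducing the exchange relation \eqref{eq:x-mutation}, and induction along the cluster-tilting graph using the cluster-structure hypothesis) is correct and is essentially the content of what the paper cites for this half, namely Palu's work via \cite{fk}*{Proposition 2.3}; the paper itself gives no argument beyond the citation, so you are fleshing out the standard proof here.

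The injectivity step, however, has a genuine gap. You claim that $\operatorname{ind}_T(M)$ can be read off from the coefficient-free Laurent polynomial $CC_T(M)=\mathbf{x}^{\operatorname{ind}_T(M)}\sum_e\chi_e\,\mathbf{x}^{B(Q_T)e}$ because ``$\operatorname{ind}_T(M)$ is the $g$-vector of $CC_T(M)$''. But in the coefficient-free setting there are no variables $y_i$ and no grading \eqref{grading} available: all you have is a Laurent polynomial in $x_1,\dots,x_n$ whose exponent set is $\{\operatorname{ind}_T(M)+B(Q_T)e\}$. To isolate $\operatorname{ind}_T(M)$ from this set you need, at the very least, the map $e\mapsto B(Q_T)e$ to be injective, i.e.\ $B(Q_T)$ of full rank --- and even then one must argue which exponent corresponds to $e=0$. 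No full-rank hypothesis appears in the proposition, and it genuinely fails in examples (e.g.\ odd-rank skew-symmetric matrices are never of full rank). The paper is careful about exactly this point: in Theorem \ref{p:categorical-f-degree}, where the authors run the analogous separation argument, they explicitly assume $B(Q_T)$ is of full rank and use it to conclude $Be\neq Bf$ for $e\neq f$. For Proposition \ref{p:cc-map} they instead invoke \cite{CKLP}*{Corollary 3.5}, the linear-independence-of-cluster-monomials theorem, which establishes injectivity of $M\mapsto CC_T(M)$ without any rank assumption by a substantially deeper argument (generic cluster characters and decomposition results), not by recovering the index from the Newton polytope. So your reduction to Dehy--Keller is fine in spirit (distinct reachable rigid objects do have distinct indices), but the bridge from ``equal Laurent polynomials'' to ``equal indices'' is exactly the hard part and is not supplied by your argument. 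Part (b) then inherits this gap only through (a); the rest of your (b) is fine.
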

\begin{proof}
The surjectivity of $M\mapsto {CC}_T(M)$ follows from~\cite{Pa}, and we refer to~\cite{fk}*{Proposition 2.3} for a proof. The injectivity of the map has been proved by~\cite{CKLP}*{Corollary 3.5} in a general setting.

\end{proof}
\begin{remark}\label{r:categorification}
In the above proposition, in order to categorify the cluster algebra $\mathcal{A}_T$, the condition that $\mathcal{C}$ admits a cluster structure can be weakened. Namely, it suffices to assume that each basic cluster-tilting object which is reachable from $T$ satisfies the conditions in the definition of cluster structure.
\end{remark}
 
\begin{theorem}\label{p:categorical-f-degree}
Let $\mathcal{C}$ be a $2$-Calabi-Yau triangulated category which admits a cluster structure. Let $T$ be a basic cluster-tilting object of $\mathcal{C}$ such that $B(Q_T)$ is of full rank.
For any indecomposable rigid objects $L,N$ reachable from $T$, we have
\[({CC}_T(L)\parallel {CC}_T(N))=\dim_k\Hom_\mathcal{C}(L, \Sigma N).
\]
\end{theorem}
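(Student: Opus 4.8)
The plan is to compute the compatibility degree by choosing a convenient seed and then to identify the relevant entry of the $F$-matrix with the dimension of a $\Hom$-space via the cluster character. First I would exploit that the compatibility degree is an intrinsic invariant of pairs of cluster variables: by Theorem~\ref{ftheorem}(2) it is independent of the seed chosen as initial, and by Proposition~\ref{p:cc-map}(b) the assignment of a cluster variable to a reachable indecomposable rigid object is compatible with mutation, hence independent (up to the canonical identification of the cluster algebras) of the chosen initial cluster-tilting object. Since $L$ is reachable from $T$, I would pick a basic cluster-tilting object $T'$, reachable from $T$, having $L$ as an indecomposable direct summand, say $L=T'_i$; then $N$ is still reachable from $T'$, the seed attached to $T'$ realizes $\mathcal{A}_{T'}=\mathcal{A}(B(Q_{T'}))$ with $CC_{T'}(L)=x_i$, and $(CC_T(L)\parallel CC_T(N))$ becomes the $i$th entry of the $f$-vector of the cluster variable $CC_{T'}(N)$ with respect to the initial seed $T'$, i.e. the maximal degree of $y_i$ in the $F$-polynomial $F_{CC_{T'}(N)}(\mathbf{y})$.

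The heart of the argument is to express this $F$-polynomial through quiver Grassmannians, and this is precisely where the full-rank hypothesis is used. From the definition of the cluster character and the separation formula \eqref{eq:xjt=F/F} (with trivial coefficients, where $F|_{\PP}=1$ and $\hat y_i=\prod_j x_j^{b_{ji}}$), together with the identity $\operatorname{ind}_{T'}(N)=\gg$ between the index and the $g$-vector, I would compare
\[\mathbf{x}^{\operatorname{ind}_{T'}(N)}\sum_e\chi(\operatorname{Gr}_e(\Hom_\mathcal{C}(T',\Sigma N)))\,\mathbf{x}^{B(Q_{T'})e}=\mathbf{x}^{\operatorname{ind}_{T'}(N)}F_{CC_{T'}(N)}|_{\Fcal}(\hat y),\]
and cancel the common index factor to obtain $\sum_e\chi(\operatorname{Gr}_e(\Hom_\mathcal{C}(T',\Sigma N)))\,\hat{\mathbf{y}}^e=F_{CC_{T'}(N)}|_{\Fcal}(\hat y)$, noting $\hat{\mathbf{y}}^e=\mathbf{x}^{B(Q_{T'})e}$. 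Since matrix mutation preserves rank, $B(Q_{T'})$ has full rank by the hypothesis on $B(Q_T)$, so $e\mapsto B(Q_{T'})e$ is injective and the Laurent monomials $\hat{\mathbf{y}}^e$ are pairwise distinct; comparing coefficients then yields
\[F_{CC_{T'}(N)}(\mathbf{y})=\sum_e\chi(\operatorname{Gr}_e(\Hom_\mathcal{C}(T',\Sigma N)))\,\mathbf{y}^e.\]

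Finally I would read off the maximal $y_i$-degree. Put $X=\Hom_\mathcal{C}(T',\Sigma N)$, a finite-dimensional right $\End_\mathcal{C}(T')$-module whose $i$th dimension-vector coordinate is $\dim_k\Hom_\mathcal{C}(T'_i,\Sigma N)$. Any $e$ contributing a nonzero coefficient has $\operatorname{Gr}_e(X)\neq\emptyset$, hence $e\le\underline{\dim}X$ coordinatewise; and $e=\underline{\dim}X$ does contribute, since $\operatorname{Gr}_{\underline{\dim}X}(X)$ is a single point with Euler characteristic $1$. Hence the maximal degree of $y_i$ equals $\dim_k\Hom_\mathcal{C}(T'_i,\Sigma N)=\dim_k\Hom_\mathcal{C}(L,\Sigma N)$, completing the proof. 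I expect the main obstacle to be the middle step: guaranteeing that the cluster character recovers the $F$-polynomial itself rather than only its evaluation at the $\hat y_i$, which fails without full rank because distinct quiver Grassmannian contributions may then collapse onto the same monomial $\mathbf{x}^{B(Q_{T'})e}$. A secondary technical point is the clean bookkeeping of the change of initial object from $T$ to $T'$, handled by Proposition~\ref{p:cc-map}(b) together with Theorem~\ref{ftheorem}(2).
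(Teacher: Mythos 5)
Your proposal is correct and follows essentially the same route as the paper: reduce to a seed whose cluster-tilting object contains $L$ as the $i$th summand, use the separation formula together with $\operatorname{ind}_{T'}(N)=g_z$ and the full-rank hypothesis to identify the $F$-polynomial with $\sum_e\chi(\operatorname{Gr}_e(\Hom_\mathcal{C}(T',\Sigma N)))\mathbf{y}^e$, and read off the maximal $y_i$-degree. The only cosmetic differences are that the paper works with principal rather than trivial coefficients and leaves the final step (top Grassmannian is a point, all contributing $e$ are bounded by $\underline{\dim}\,\Hom_\mathcal{C}(T',\Sigma N)$) implicit, which you spell out.
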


\begin{proof}
Denote by $|T|=n$.
We fix a {\it cluster-tilting pattern} for cluster-tilting objects which are reachable from $T$. Namely, for each vertex $t\in \mathbb{T}_n$, we assign $t$ a basic cluster-tilting object $T_t=\bigoplus_{i=1}^n T_{i,t}$ reachable from $T$ such that the basic cluster-tilting objects assigned to $t$ and $t'$ linked by an edge labeled $j$ are obtained from each other by mutation at the $j$th indecomposable direct summand. For each $t\in \mathbb{T}_t$, we denote by $B_t:=B(Q_{T_t})$, which is the exchange matrix of the labeled cluster $\mathbf{x}_t:=({CC}_T(T_{1,t}),\dots, {CC}_T(T_{n,t}))$. We abbreviate $B_{t_0}=B$. 

Let $t_0$ and $s$ be two vertices of $\mathbb{T}_n$ such that $L\in \add T_{t_0}$ and $N\in \add T_s$. Without loss of generality, we may assume that $N\cong T_{1,s}$. 
We consider the cluster algebra $\mathcal{A}_\bullet(B)$ with principal coefficients at the rooted vertex $t_0$ and set 
 $\mathbf{x}_{t_0}:=(x_1,\dots, x_n)$ and $\mathbf{y}_{t_0}:=(y_1,\dots, y_n)$.

Let $z:=z_{1,s}^{B;t_0}$ be the first cluster variable of $\mathcal{A}_\bullet(B)$ associated to the vertex $s\in \mathbb{T}_n$. Let $F_z(\mathbf{y}):=F_{1,s}^{B;t_0}(\mathbf{y})=\sum_{v\in \mathbb{N}^n}c_vy^v\in \mathbb{Z}[y_1,\cdots, y_n]$ be the $F$-polynomial of the cluster variable $z$, where $c_v\in \mathbb{Z}$. By definition, it suffices to prove that 
\[F_z(\mathbf{y})=\sum_e\chi(\opname{Gr}_e(\Hom_\mathcal{C}(T_{t_0}, \Sigma N)))\mathbf{y}^e.
\]
Denote by $g_z\in \mathbb{Z}^n$ the $g$-vector of $z$. By the separation formula \eqref{eq:xjt=F/F}, we have
\[z=\mathbf{x}^{g_z}F_z(\hat{y}_1, \dots, \hat{y}_n),
\]
where $\hat{y}_i=y_i\prod_{j=1}^nx_j^{b_{ji;t_0}}$ for $1\leq i\leq n$.
Note that we also have 
\[{CC}_{T_{t_0}}(N)=z|_{y_1=\cdots =y_n=1}.
\]
Consequently, 
\[\mathbf{x}^{\opname{ind}_{T_{t_0}}(N)}\sum_{e}\chi(\opname{Gr}_e(\Hom_\mathcal{C}(T_{t_0}, \Sigma N)))\mathbf{x}^{Be}=\mathbf{x}^{g_z}\sum_vc_v\mathbf{x}^{Bv}.
\]
It follows from \cite{DK}*{Section 4} and \cite{dwz}*{Theorem 1.7} that $g_z=\opname{ind}_{T_{t_0}}(N)$ by identifying $\mathbb{Z}^n$ with $ \go(\add T_{t_0})$. Hence we have
\[\sum_{e}\chi(\opname{Gr}_e(\Hom_\mathcal{C}(T_{t_0}, \Sigma N)))\mathbf{x}^{Be}=\sum_vc_v\mathbf{x}^{Bv}.
\]
As note that the rank of an exchange matrix is invariant under mutations, by the condition that $B(Q_T)$ is of full rank, we know that $\opname{rank} B=n$. Therefore, $Be\neq Bf$ whenever $e\neq f\in \mathbb{Z}^n$. It follows that 
\[F_{1,s}^{B;t_0}(\mathbf{y})=F_z(\mathbf{y})=\sum_e\chi(\opname{Gr}_e(\Hom_\mathcal{C}(T_{t_0}, \Sigma N)))\mathbf{y}^e.
\]
\end{proof}

\subsection{The exchangeability property of $2$-Calabi-Yau categories}\label{ss:exchangeable-property-2-CY}
The aim of this subsection is to establish the exchangeability property of $2$-Calabi-Yau categories. We begin with the following special case.

\begin{lemma}\label{l:special}
Let $\mathcal{C}$ be a $2$-Calabi-Yau triangulated categories with cluster-tilting objects. Let $X$ be an indecomposable rigid object such that $\Hom_\mathcal{C}(X,X)\cong k$ and $X\cong\Sigma^2 X$. Then $(X, \Sigma X)$ is an exchange pair of $\mathcal{C}$.
\end{lemma}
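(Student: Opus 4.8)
The statement asks to show that if $X$ is an indecomposable rigid object with $\Hom_\mathcal{C}(X,X)\cong k$ and $X\cong \Sigma^2 X$, then $(X,\Sigma X)$ is an exchange pair. Let me sketch what this requires. By definition of exchange pair (Section~\ref{ss:cluster-tilting}), I need a basic cluster-tilting object $T=X\oplus \overline{T}$ such that mutation at the summand $X$ produces $\mu_X(T)=\Sigma X\oplus \overline{T}$; equivalently, the object $X^*$ uniquely determined by Iyama–Yoshino's result~\cite{IY} must be $\Sigma X$. So the plan has two halves: first produce a cluster-tilting object containing $X$ as a summand, and second identify the mutated summand $X^*$ with $\Sigma X$.

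For the first half, I would observe that since $X$ is rigid, by the general theory it extends to a (basic) maximal rigid object, which by Lemma~\ref{l:zz} is a cluster-tilting object $T=X\oplus \overline{T}$ of $\mathcal{C}$. The hypothesis $X\cong \Sigma^2 X$ is the key structural input: applying the $2$-Calabi-Yau duality $\Hom_\mathcal{C}(X,Y)\cong D\Hom_\mathcal{C}(Y,\Sigma^2 X)$ together with $\Sigma^2 X\cong X$ tells me that $\Hom_\mathcal{C}(X,\Sigma X)\cong D\Hom_\mathcal{C}(\Sigma X,\Sigma^2 X)\cong D\Hom_\mathcal{C}(X,\Sigma X)$, which is consistent with rigidity, and more usefully that $\Sigma X$ is again rigid with $\Hom_\mathcal{C}(\Sigma X,\Sigma X)\cong \Hom_\mathcal{C}(X,X)\cong k$. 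This symmetry under $\Sigma$ is what should force $\Sigma X$ to play the role of $X^*$.

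For the second half, which I expect to be the crux, I would analyze the exchange triangles
\[
X\xrightarrow{f}B\xrightarrow{g}X^*\to \Sigma X,\qquad X^*\xrightarrow{f'}B'\xrightarrow{g'}X\to \Sigma X^*,
\]
where $g,g'$ are minimal right $\add\overline{T}$-approximations. The natural candidate is to take $B=B'=0$, i.e.\ to claim that the minimal right $\add\overline{T}$-approximation of $X^*$ is zero, which would force the triangle to read $X\to 0\to X^*\to \Sigma X$, hence $X^*\cong \Sigma X$. To justify that $B=0$ I would show $\Hom_\mathcal{C}(\overline{T},\Sigma X)=0$ would be the wrong vanishing; rather I need $\Hom_\mathcal{C}(\overline{T},X^*)$-type approximations to be trivial. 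The clean route is to verify directly that the candidate triangle $X\to 0\to \Sigma X\xrightarrow{\id}\Sigma X$ is an exchange triangle: I must check that $\Sigma X\oplus \overline{T}$ is cluster-tilting (rigidity of $\Sigma X$ from the first half, together with $\Hom_\mathcal{C}(\Sigma X,\Sigma \overline T)\cong D\Hom_\mathcal{C}(\overline T,\Sigma^2\cdot\Sigma X)\cong D\Hom_\mathcal{C}(\overline T, \Sigma X)$, which vanishes since $T$ is rigid), and that $0$ is a minimal right $\add\overline{T}$-approximation of $\Sigma X$, i.e.\ $\Hom_\mathcal{C}(\overline{T},\Sigma X)=0$.

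The main obstacle is precisely verifying $\Hom_\mathcal{C}(\overline{T},\Sigma X)=0$, equivalently that no indecomposable summand of $\overline{T}$ receives a nonzero map to $\Sigma X$ along the approximation. Here the hypotheses combine: by $2$-Calabi-Yau duality and $X\cong\Sigma^2 X$ one gets $\Hom_\mathcal{C}(\overline{T},\Sigma X)\cong D\Hom_\mathcal{C}(\Sigma X,\Sigma^2\overline T)\cong D\Hom_\mathcal{C}(X,\Sigma\overline T)$, and the latter vanishes because $T=X\oplus\overline T$ is rigid (so $\Hom_\mathcal{C}(X,\Sigma \overline T)=0$). This is the place where the special hypothesis $X\cong\Sigma^2 X$ does essential work, converting the ``wrong-direction'' Hom into a rigidity statement about $T$. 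Once $\Hom_\mathcal{C}(\overline T,\Sigma X)=0$ is established, the exchange triangle $X\to 0\to \Sigma X\to\Sigma X$ is forced, uniqueness of $X^*$ gives $X^*\cong\Sigma X$, and hence $(X,\Sigma X)$ is an exchange pair.
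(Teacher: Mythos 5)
Your overall strategy is the same as the paper's: complete $X$ to a basic cluster-tilting object $T=X\oplus\overline{T}$ and show that $\Sigma X\oplus\overline{T}$ is again cluster-tilting, so that uniqueness of the complement forces $X^*\cong\Sigma X$. However, the crucial vanishing is mishandled. To get rigidity of $\Sigma X\oplus\overline{T}$ you must show $\Hom_\mathcal{C}(\Sigma X,\Sigma\overline{T})=0$ (equivalently, by Serre duality, $\Hom_\mathcal{C}(\overline{T},\Sigma(\Sigma X))=\Hom_\mathcal{C}(\overline{T},X)=0$). Your computation
$\Hom_\mathcal{C}(\Sigma X,\Sigma\overline{T})\cong D\Hom_\mathcal{C}(\overline{T},\Sigma^2\Sigma X)\cong D\Hom_\mathcal{C}(\overline{T},\Sigma X)$
drops a shift: the duality $\Hom_\mathcal{C}(A,B)\cong D\Hom_\mathcal{C}(B,\Sigma^2A)$ gives $D\Hom_\mathcal{C}(\Sigma\overline{T},\Sigma^3X)\cong D\Hom_\mathcal{C}(\overline{T},\Sigma^2X)\cong D\Hom_\mathcal{C}(\overline{T},X)$, not $D\Hom_\mathcal{C}(\overline{T},\Sigma X)$. (More directly, $\Hom_\mathcal{C}(\Sigma X,\Sigma\overline{T})\cong\Hom_\mathcal{C}(X,\overline{T})$.) The space $\Hom_\mathcal{C}(\overline{T},X)$ is \emph{not} killed by rigidity of $T$ — for a general cluster-tilting object there are plenty of nonzero maps between distinct summands — so this step is a genuine gap, not a typo. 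The vanishing that you do prove correctly at the end, $\Hom_\mathcal{C}(\overline{T},\Sigma X)=0$, is simply a component of $\Hom_\mathcal{C}(T,\Sigma T)=0$ and needs no duality; it is not where the difficulty lies.

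The missing idea, which is how the paper closes exactly this gap, is the Auslander--Reiten triangle: since $\Sigma$ is the AR translation of a $2$-Calabi-Yau category, the AR triangle ending at $X$ is $\Sigma X\to E\to X\xrightarrow{h}\Sigma^2X\cong X$, and because $h$ is a nonzero element of $\Hom_\mathcal{C}(X,\Sigma^2X)\cong\Hom_\mathcal{C}(X,X)\cong k$ it is an isomorphism, forcing $E=0$. Hence every non-section out of $\Sigma X$ — in particular every map $\Sigma X\to\Sigma\overline{T}$, as $X\notin\add\overline{T}$ — factors through $0$, giving $\Hom_\mathcal{C}(\Sigma X,\Sigma\overline{T})=0$. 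Two further points you should make explicit even after this is repaired: (i) rigidity of $\Sigma X\oplus\overline{T}$ alone does not make it cluster-tilting; you need $\Sigma X\notin\add\overline{T}$ and the summand count $|{\Sigma X\oplus\overline{T}}|=|{X\oplus\overline{T}}|$ together with Lemmas~\ref{l:zz} and~\ref{l:DK} to upgrade maximal rigid to cluster-tilting; (ii) your identification of the exchange triangle as $X\to 0\to\Sigma X\to\Sigma X$ presupposes that $\Sigma X\oplus\overline{T}$ is already known to be cluster-tilting, so it cannot substitute for establishing that fact first.
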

\begin{proof}
Since $\mathcal{C}$ is $2$-Calabi-Yau, we know that the suspension functor $\Sigma$ coincides with the Auslander-Reiten translation of $\mathcal{C}$. By the condition that $\Hom_\mathcal{C}(X,X)\cong k$ and $X\cong \Sigma^2 X$, we conclude that 
\[\Sigma X\xrightarrow{f} 0\to X\to \Sigma^2X\cong X
\]
 is the Auslander-Reiten triangle ending at $X$. Let $\overline{T}$ be an object such that $\overline{T}\oplus X$ is a basic cluster-tilting object of $\mathcal{C}$. In particular, we have $X\not\in \add \overline{T}$. Such a $\overline{T}$ exists by Lemma~\ref{l:zz}, namely, $X$ can be complemented to a basic maximal rigid object. By definition of AR triangles, each morphism from $\Sigma X$ to $\Sigma\overline{T}$ factors through the morphism $f$. In particular, $\Hom_\mathcal{C}(\Sigma X,\Sigma\overline{T})=0$. 
 Consequently, $\Sigma X\oplus \overline{T}$ is rigid. 
 By $X\cong \Sigma^2X$ and $\Hom_\mathcal{C}(X, \Sigma \overline{T})=0$, we deduce that $\Sigma X\not\in \add \overline{T}$. It follows that 
 $|X\oplus \overline{T}|=|\Sigma X\oplus \overline{T}|$, we conclude that $\Sigma X\oplus \overline{T}$ is a cluster-tilting object by Lemma~\ref{l:zz} and Lemma~\ref{l:DK}.
\end{proof}
The following is the main result of this subsection, which generalizes~\cite{bmrrt}*{Theorem 7.5} for cluster categories associated with hereditary algebras.
\begin{theorem}\label{t:exchangeable-2-CY}
Let $\mathcal{C}$ be a $2$-Calabi-Yau triangulated category with cluster-tilting objects.
Let $L, N$ be two indecomposable rigid objects of $\mathcal{C}$.
 If $\dim_k\Hom_{\mathcal{C}}(L, \Sigma N)=1$, then $(L, N)$ is an exchange pair. 
 Assume moreover that $\mathcal{C}$ admits a cluster structure, then $\dim_k\Hom_{\mathcal{C}}(L, \Sigma N)=1$ if and only if $(L, N)$ is an exchange pair.
\end{theorem}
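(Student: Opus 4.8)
The plan is to prove the two implications separately, the forward one (the dimension-one condition forces an exchange pair) being the substantial one, and to handle the converse by a short Hom-computation along the exchange triangle. For the forward implication I would first record, from the $2$-Calabi-Yau duality $\Hom_\mathcal{C}(N,\Sigma L)\cong D\Hom_\mathcal{C}(L,\Sigma N)$, that $\dim_k\Hom_\mathcal{C}(N,\Sigma L)=1$ as well, so the hypothesis is symmetric in $L$ and $N$. The goal is then to reduce, via Iyama--Yoshino reduction (Theorem \ref{t:IY-reduction}), to the rank-one situation governed by Lemma \ref{l:special}. Concretely, the plan is to produce a basic rigid object $U$ with $|U|=n-1$ (where $n=|T|$ is the common number of indecomposable summands of a cluster-tilting object, Lemma \ref{l:DK}) such that $L,N\in\mathcal{Z}(U)$ and $L,N\notin\add U$; since $\Hom_\mathcal{C}(U,\Sigma L)=0\Leftrightarrow\Hom_\mathcal{C}(L,\Sigma U)=0$ by $2$-Calabi-Yau duality, both $U\oplus L$ and $U\oplus N$ are then rigid, hence maximal rigid by Lemma \ref{l:DK} and cluster-tilting by Lemma \ref{l:zz}. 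Thus $U$ becomes an almost-complete cluster-tilting object admitting both $L$ and $N$ as complements.

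Once such a $U$ is in hand, $\mathcal{C}_U$ is a $2$-Calabi-Yau category with cluster-tilting objects of rank one (Theorem \ref{t:IY-reduction} together with Lemma \ref{l:DK}), in which the images $\bar L,\bar N$ are nonzero, hence indecomposable, and rigid. One verifies that the reduction preserves the relevant extension space, $\Hom_{\mathcal{C}_U}(\bar L,\langle1\rangle\bar N)\cong\Hom_\mathcal{C}(L,\Sigma N)$, so $\dim_k\Hom_{\mathcal{C}_U}(\bar L,\langle1\rangle\bar N)=1$. In rank one every indecomposable rigid object is cluster-tilting (a larger rigid object would contradict Lemma \ref{l:DK}), and the Iyama--Yoshino description gives the zero object exactly two complements; as rigidity of $\bar L$ yields $\Hom_{\mathcal{C}_U}(\bar L,\langle1\rangle\bar L)=0$, the value $1\neq 0$ forces $\bar N\not\cong\bar L$, so $\bar N$ must be the mutation $\bar L^{*}=\langle1\rangle\bar L$. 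The hypotheses of Lemma \ref{l:special} then hold automatically: $\langle2\rangle\bar L\cong\bar L$ in rank one, and since $\langle1\rangle\bar N\cong\langle2\rangle\bar L$ the identity $\dim_k\Hom_{\mathcal{C}_U}(\bar L,\langle2\rangle\bar L)=1$ forces $\End(\bar L)\cong k$. Hence $(\bar L,\bar N)$ is an exchange pair of $\mathcal{C}_U$, and transporting back through the mutation-compatible bijection of Theorem \ref{t:IY-reduction} shows that $(L,N)$ is an exchange pair of $\mathcal{C}$.

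The main obstacle is the construction of the reducing object $U$, equivalently the production of an almost-complete cluster-tilting object complemented by both $L$ and $N$. A naive maximality argument fails: a maximal rigid object compatible with both $L$ and $N$ need not complete to a cluster-tilting object, because an indecomposable $W$ compatible with $L$ but not with $N$ can obstruct maximality of $U\oplus L$. Instead I would build $U$ through the exchange triangle $L\to E\to N\xrightarrow{\epsilon}\Sigma L$ attached to a nonzero $\epsilon\in\Hom_\mathcal{C}(N,\Sigma L)$: the technical heart is to show, using $\dim_k\Hom_\mathcal{C}(L,\Sigma N)=\dim_k\Hom_\mathcal{C}(N,\Sigma L)=1$, that $E$ is rigid and lies in $\mathcal{Z}(L)\cap\mathcal{Z}(N)$, so that reducing by a basic version of $E$ lowers the rank and collapses the triangle to $\bar L\to 0\to\bar N\to\langle1\rangle\bar L$, giving $\bar N\cong\langle1\rangle\bar L$. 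Iterating this reduction (equivalently, enlarging $E$ to a full almost-complete cluster-tilting object while preserving compatibility with both $L$ and $N$) until the rank drops to one is the crux, and it is precisely here that the strict equality $\dim=1$, rather than merely nonvanishing, is indispensable, since it is what prevents surviving common-compatible directions in the reduced category.

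For the converse, assume $\mathcal{C}$ admits a cluster structure and that $(L,N)$ is an exchange pair, with exchange triangle $N\xrightarrow{f'}B'\xrightarrow{g'}L\xrightarrow{h'}\Sigma N$, where $B'\in\add\overline T$ and $g'$ is a minimal right $\add\overline T$-approximation for the common complement $\overline T$. Applying $\Hom_\mathcal{C}(L,-)$ gives the exact sequence $\Hom_\mathcal{C}(L,B')\xrightarrow{g'_*}\End_\mathcal{C}(L)\xrightarrow{h'_*}\Hom_\mathcal{C}(L,\Sigma N)\to\Hom_\mathcal{C}(L,\Sigma B')$, and $\Hom_\mathcal{C}(L,\Sigma B')=0$ because $L\oplus\overline T$ is rigid and $B'\in\add\overline T$. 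The image of $g'_*$ is the two-sided ideal of $\End_\mathcal{C}(L)$ consisting of endomorphisms factoring through $\add\overline T$; since $g'$ is not a split epimorphism ($L$ is indecomposable and $L\notin\add\overline T$), this ideal is proper and hence contained in $\mathrm{rad}\,\End_\mathcal{C}(L)$. The no-loop condition in the definition of a cluster structure forces $\End_\mathcal{C}(L)\cong k$, so this ideal vanishes, $h'_*$ is an isomorphism, and $\dim_k\Hom_\mathcal{C}(L,\Sigma N)=\dim_k\End_\mathcal{C}(L)=1$. Alternatively one can deduce the converse from the cluster-character dictionary, combining Theorem \ref{p:categorical-f-degree} with Theorem \ref{f-exchangeability1}, at the cost of invoking the full-rank hypothesis present there.
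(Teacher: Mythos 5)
Your converse direction is essentially the paper's argument and is fine in outline, though one intermediate claim is off: the no-loop condition does not force $\End_{\mathcal{C}}(L)\cong k$; it forces every radical endomorphism of $L$ to factor through the minimal right $\add\overline{T}$-approximation $B'\to L$, so that $\im g'_*=\mathrm{rad}\,\End_\mathcal{C}(L)$ and $\Hom_\mathcal{C}(L,\Sigma N)\cong\End_\mathcal{C}(L)/\mathrm{rad}\,\End_\mathcal{C}(L)\cong k$ by locality. The conclusion is unaffected, but the justification as written is not correct.

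The forward direction, however, has a genuine gap exactly at the point you yourself flag as the crux. Your plan is to manufacture an almost-complete cluster-tilting object $U$ admitting both $L$ and $N$ as complements, by ``iterating'' Iyama--Yoshino reduction until the rank drops to one. No argument is given for why the iteration can be continued: after reducing by the middle term(s) of the triangle(s), you would need to keep finding rigid objects compatible with both $\bar L$ and $\bar N$ until only one slot remains, and producing such a $U$ is essentially equivalent to the statement that $(L,N)$ is an exchange pair --- it is the theorem, not a step toward it. The paper's proof shows that no such iteration is needed: Lemma \ref{l:special} is not a rank-one statement. One reduces \emph{once}, by a basic object $R$ with $\add R=\add(M\oplus M')$ where $M$ and $M'$ are the middle terms of \emph{both} triangles $L\to M\to N\xrightarrow{h}\Sigma L$ and $N\to M'\to L\xrightarrow{t}\Sigma N$ (you use only one of these; with only $M$ you get $N\cong L\langle1\rangle$ in the reduction but not $L\cong N\langle1\rangle$, hence not $L\cong L\langle2\rangle$). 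In $\mathcal{C}_R$ one then checks $L\cong L\langle2\rangle$ and $\Hom_{\mathcal{C}_R}(L,L)\cong k$, and Lemma \ref{l:special} --- via the Auslander--Reiten triangle $\langle1\rangle L\to 0\to L\to\langle2\rangle L$ and completion of $L$ to a maximal rigid, hence cluster-tilting, object --- produces the missing complement $P$ in $\mathcal{C}_R$ at whatever rank $\mathcal{C}_R$ happens to have. Lifting through Theorem \ref{t:IY-reduction} finishes the proof. So the ingredients you identify (the triangles attached to the nonzero morphisms, rigidity of the middle terms, reduction, Lemma \ref{l:special}) are the right ones, but the reduction-to-rank-one scheme does not close, and the step you defer is the whole difficulty.
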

\begin{proof}Let $h\in \Hom_\mathcal{C}(N, \Sigma L)$ and $t\in \Hom_\mathcal{C}(L,\Sigma N)$ be non-zero morphisms.
We consider the triangles determined by $h$ and $t$: 
\begin{align}\label{f:5.1}
L\xrightarrow{f} M\xrightarrow{g} N\xrightarrow{h} \Sigma L 
\end{align}
~and 
\begin{align}\label{f:5.2}
N\xrightarrow{r} M'\xrightarrow{s}L\xrightarrow{t} \Sigma N.
\end{align}
If both $h$ and $t$ are isomorphisms, then we are in the situation of Lemma~\ref{l:special} and we are done.
Now suppose that at least one of $h$ and $t$ is not an isomorphism. In particular, at least one of $M$ and $M'$ is non-zero.

 Applying $\Hom_\mathcal{C}(-,\Sigma L)$ to the triangle ~(\ref{f:5.1}), we obtain a long exact sequence
\[\Hom_{\mathcal{C}}(\Sigma L, \Sigma L)\xrightarrow{h^*}\Hom_{\mathcal{C}}(N,\Sigma L)\to \Hom_{\mathcal{C}}(M,\Sigma L)\to \Hom_{\mathcal{C}}(L,\Sigma L).
\]
 As note that $h^\ast(\id_{\Sigma L})=h\neq 0$, hence $h^*$ is non-zero.
Since $\dim_k\Hom_\mathcal{C}(N,\Sigma L)=1$, we conclude that $h^*$ is surjective. On the other hand, by $\Hom_{\mathcal{C}}(L,\Sigma L)=0$, we deduce that $\Hom_{\mathcal{C}}(M,\Sigma L)=0$. By the $2$-Calabi-Yau duality, we obtain
\[\Hom_{\mathcal{C}}(M,\Sigma L)=0=\Hom_{\mathcal{C}}(L,\Sigma M).
\]
Similarly, by applying the functor $\Hom_\mathcal{C}(\Sigma^{-1}N,-)$ to the triangle ~(\ref{f:5.1}), one can show that 
\[\Hom_\mathcal{C}(N,\Sigma M)=0=\Hom_{\mathcal{C}}(M,\Sigma N).
\]
Now applying the functor $\Hom_{\mathcal{C}}(-,\Sigma M)$ to the triangle ~(\ref{f:5.1}), we have 
\[\Hom_\mathcal{C}(M,\Sigma M)=0.
\]
In particular, we have proved that $L\oplus M$ and $N\oplus M$ are rigid objects of $\mathcal{C}$.
Similarly, one can show that $L\oplus M'$ and $N\oplus M'$ are rigid. 

By applying $\Hom_\mathcal{C}(-,\Sigma M')$ to the triangle ~(\ref{f:5.1}), we obtain
\[\Hom_\mathcal{C}(N, \Sigma M')\to \Hom_\mathcal{C}(M,\Sigma M')\to \Hom_\mathcal{C}(L,\Sigma M').
\]
In particular, we have $\Hom_\mathcal{C}(M,\Sigma M')=0$. By the $2$-Calabi-Yau duality, we obtain $\Hom_\mathcal{C}(M',\Sigma M)=0$.

Let $R$ be a basic rigid object such that $\add R=\add M\oplus M'$.
In particular, $R\oplus L$ and $R\oplus N$ are rigid objects. It follows that $L, N\not\in \add R$. Indeed, if $L\in \add R$, then by $\Hom_\mathcal{C}(N, \Sigma R)=0$, we deduce that $\Hom_\mathcal{C}(N, \Sigma L)=0$, a contradiction.
It remains to show that there exists an object $P\in \mathcal{C}$ such that $P\oplus R\oplus L$ and $P\oplus R\oplus N$ are cluster-tilting objects.

We consider the Iyama-Yoshino's reduction $\mathcal{C}_R$ of $\mathcal{C}$ with respect to $R$. 
 It is routine to check that
\begin{itemize}
\item $f$ and $r$ are minimal left $\add R$--approximations of $L$ and $N$ respectively;
\item $g$ and $s$ are minimal right $\add R$--approximations of $N$ and $L$ respectively.
\end{itemize}

Consequently, we have $L\cong N\langle 1\rangle$ and $ N\cong L\langle 1\rangle$ in $\mathcal{C}_R$.
Furthermore, $L\cong L\langle 2\rangle$ in $\mathcal{C}_R$.
Applying the functor $\Hom_\mathcal{C}(L,-)$ to the triangle ~(\ref{f:5.2}), we obtain a long exact sequence
\[\Hom_\mathcal{C}(L,N)\to \Hom_\mathcal{C}(L, M')\xrightarrow{s^*} \Hom_\mathcal{C}(L, L)\xrightarrow{t^*}\Hom_\mathcal{C}(L,\Sigma N)\cong k.
\]
Since $s$ is a right $\add R$-approximation of $L$, we deduce that
\[\Hom_{\mathcal{C}_R}(L, N\langle 1\rangle)=\Hom_{\mathcal{C}_R}(L,L)\cong \Hom_\mathcal{C}(L,L)/\im s^*\cong k.
\]

Applying Lemma~\ref{l:special} to the $2$-Calabi-Yau category $\mathcal{C}_R$ and the object $L$, we conclude that there is an object $P\in \mathcal{C}_R$ such that $L\oplus P$ and $L\langle 1\rangle \oplus P=N\oplus P$ are cluster-tilting objects of $\mathcal{C}_R$.
Now by applying Theorem~\ref{t:IY-reduction}, we conclude that $P\oplus L\oplus R$ and $P\oplus N\oplus R$ are cluster-tilting objects of $\mathcal{C}$. This completes the proof of the first statement.

Now assume moreover that $\mathcal{C}$ admits a cluster structure. It remains to show that if $(L,N)$ is an exchange pair, then $\Hom_\mathcal{C}(L,\Sigma N)\cong k$. Let $\overline{T}$ be a rigid object such that $\overline{T}\oplus L$ and $\overline{T}\oplus N$ are cluster-tilting objects. Denote by 
\[N\to U\xrightarrow{v} L\to \Sigma N
\]
one of the exchange triangles associated with $L$ and $N$.
 By definition of cluster structure, we know that $\mathcal{C}$ has no loops. In particular, each non-isomorphism $u:L\to L$ factors through $v$. Applying $\Hom_\mathcal{C}(L,-)$ to the triangle yields that 
 \[\Hom_\mathcal{C}(L,\Sigma N)\cong \Hom_\mathcal{C}(L,L)/\im v^*\cong k,
 \]
 where the last isomorphism follows from the fact that $\Hom_\mathcal{C}(L,L)$ is a local algebra.
\end{proof}

\section{exchangeability property for three classes of cluster algebras}
In this section, we prove Conjecture~\ref{f-exchangeability2} for
\begin{enumerate}
 \item[$\bullet$] acyclic cluster algebras of skew-symmetric type;
 \item[$\bullet$] cluster algebras arising from weighted projective lines;
 \item[$\bullet$] cluster algebras arising from marked surfaces. 
\end{enumerate}
Our approach depends on the existence of additive categorification by $2$-Calabi-Yau categories.

\subsection{Reminder on cluster categories associated with hereditary categories}

Let $\mathcal{H}$ be a hereditary abelian category with tilting objects. Let $\mathcal{D}^b(\mathcal{H})$ be the bounded derived category of $\mathcal{H}$. Denote by $\tau:\mathcal{D}^b(\mathcal{H})\to \mathcal{D}^b(\mathcal{H}) $ the Auslander-Reiten translation and $\Sigma$ the suspension functor of $\mathcal{D}^b(\mathcal{H})$. The {\it cluster category} $\mathcal{C}_{\mathcal{H}}$ of $\mathcal{H}$ is defined as the orbit category $\mathcal{D}^b(\mathcal{H})/\tau^{-1}\circ \Sigma$ of $\mathcal{D}^b(\mathcal{H})$~\cite{bmrrt} ({\it cf.} ~\cite{ccs} for the $\mathbb{A}_n$ case). Keller~\cite{ke} proved that $\mathcal{C}_{\mathcal{H}}$ admits a canonical triangle structure such that the projection functor $\pi:\mathcal{D}^b(\mathcal{H})\to \mathcal{C}_{\mathcal{H}}$ is a triangle functor. Moreover, $\mathcal{C}_{\mathcal{H}}$ is a $2$-Calabi-Yau triangulated category. Let $T$ be a tilting object of $\mathcal{H}$, it was proved in~\cite{bmrrt}*{Theorem 3.3 and Proposition 3.4} that $\pi(T)$ is a cluster-tilting object of $\mathcal{C}_{\mathcal{H}}$.

Let $Q$ be a finite acyclic quiver and $kQ$ its path algebra. The category $\opname{mod} kQ$ of finitely generated right $kQ$-modules is a hereditary abelian category with tilting objects. In particular, the free module $kQ$ is a tilting object. Another important class of hereditary abelian categories with tilting objects comes from weighted projective lines introduced by Geigle and Lenzing~\cite{gl}.
According to Happel's classification theorem~\cite{ha}, each connected hereditary abelian category with tilting object is derived equivalent to $\opname{mod} kQ$ for the path algebra of an acyclic quiver $Q$ or to the category $\opname{coh} \mathbb{X}$ of coherent sheaves over a weighted projective line $\mathbb{X}$. We remark that if $\mathcal{H}$ and $\mathcal{H}'$ are hereditary abelian categories with tilting objects such that $\mathcal{D}^b(\mathcal{H})\cong \mathcal{D}^b(\mathcal{H'})$, then $\mathcal{C}_{\mathcal{H}}\cong \mathcal{C}_{\mathcal{H}'}$. By Happel's result, we may unify ~\cite{bmr08}*{Theorem 1.3 and Proposition 3.2} and ~\cite{bkl}*{Theorem 3.1} as follows.
\begin{theorem}\label{t:cluster-cat}
Let $\mathcal{H}$ be a hereditary abelian category with tilting objects. The cluster category $\mathcal{C}_{\mathcal{H}}$ admits a cluster structure.
\end{theorem}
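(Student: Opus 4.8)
The plan is to reduce the general statement to the two families of hereditary categories furnished by Happel's classification and then to invoke the corresponding known descriptions of the endomorphism algebras of cluster-tilting objects together with the criterion of \cite{BIRS}.

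First I would apply Happel's theorem \cite{ha}: every hereditary abelian category $\mathcal{H}$ with tilting objects is derived equivalent either to $\opname{mod} kQ$ for a finite acyclic quiver $Q$, or to the category $\opname{coh}\mathbb{X}$ of coherent sheaves over a weighted projective line $\mathbb{X}$. A triangle equivalence $\mathcal{D}^b(\mathcal{H})\cong \mathcal{D}^b(\mathcal{H}')$ induces a triangle equivalence $\mathcal{C}_{\mathcal{H}}\cong \mathcal{C}_{\mathcal{H}'}$ of the associated orbit categories, and such an equivalence carries cluster-tilting objects to cluster-tilting objects, preserves their endomorphism algebras (hence their Gabriel quivers), and commutes with the intrinsic mutation of \cite{IY}. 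Consequently the property of admitting a cluster structure is transported along it, so it suffices to establish the conclusion in the two cases above.

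Next I would use the reduction furnished by \cite{BIRS}*{Theorem II.1.6}: it is enough to check that $\mathcal{C}_{\mathcal{H}}$ has no loops nor $2$-cycles, that is, that for every basic cluster-tilting object $T$ the Gabriel quiver $Q_T$ of $\End_{\mathcal{C}_{\mathcal{H}}}(T)$ has neither loops nor $2$-cycles. For the acyclic case $\mathcal{H}=\opname{mod} kQ$ this is exactly \cite{bmr08}*{Theorem 1.3 and Proposition 3.2}, which moreover establishes the compatibility of cluster-tilting mutation with Fomin-Zelevinsky matrix mutation \eqref{eq:matrix-mutation}. For the weighted projective line case $\mathcal{H}=\opname{coh}\mathbb{X}$ the same two properties are \cite{bkl}*{Theorem 3.1}. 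Combining these two inputs with the reduction above yields the theorem.

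The genuine difficulty lies not in the formal reduction but inside the two cited theorems, where one must analyze the endomorphism algebras of arbitrary cluster-tilting objects — the cluster-tilted algebras attached to $Q$, respectively to $\mathbb{X}$ — and rule out loops and $2$-cycles in their Gabriel quivers. The reduction step itself is routine once one records that the triangle equivalence $\mathcal{C}_{\mathcal{H}}\cong \mathcal{C}_{\mathcal{H}'}$ respects every ingredient entering the definition of a cluster structure; the only mild care needed is to note, using the characterization of mutation in \cite{IY}, that this transport is compatible with mutation of cluster-tilting objects.
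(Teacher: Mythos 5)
Your argument is correct and coincides with the paper's own justification: the theorem is stated there precisely as a unification, via Happel's classification and the derived invariance $\mathcal{D}^b(\mathcal{H})\cong\mathcal{D}^b(\mathcal{H}')\Rightarrow\mathcal{C}_{\mathcal{H}}\cong\mathcal{C}_{\mathcal{H}'}$, of \cite{bmr08}*{Theorem 1.3 and Proposition 3.2} for the acyclic case and \cite{bkl}*{Theorem 3.1} for the weighted projective line case. Your additional remark invoking \cite{BIRS}*{Theorem II.1.6} to pass from ``no loops nor $2$-cycles'' to a cluster structure is exactly the reduction the paper records just before the theorem.
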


As a consequence, for each basic cluster-tilting object $T$ of $\mathcal{C}_\mathcal{H}$, we have a skew-symmetric integer matrix $B(Q_T)$. The cluster algebra $\mathcal{A}(B(Q_T))$ is an acyclic cluster algebra if $\mathcal{H}=\opname{mod} kQ$ for an acyclic quiver $Q$. If $Q'$ is mutation equivalent to an acyclic quiver $Q$, then there exists a cluster-tilting object $T$ in $\mathcal{C}_{\mathcal{H}}$ such that $Q'=Q_T$. Therefore, if the initial matrix $B(Q')$ is obtained from $Q'$, then $\Acal(B(Q'))$ is an acyclic cluster algebra. We refer to $\mathcal{A}(B(Q_T))$ a {\it cluster algebra arising from weighted projective line} whenever $\mathcal{H}=\opname{coh} \mathbb{X}$ for a weighted projective line $\mathbb{X}$.

The connectedness of cluster-tilting graph of $\mathcal{C}_{\mathcal{H}}$ plays a key role in our approach to the exchangeability property of cluster algebras arising from $\mathcal{C}_{\mathcal{H}}$.
\begin{theorem}[\cite{fugeng}*{Theorem 1.2}]\label{t:connected-exchange-graph}

Let $\mathcal{H}$ be a hereditary abelian category with tilting objects. The cluster-tilting graph of $\mathcal{C}_{\mathcal{H}}$ is connected.
\end{theorem}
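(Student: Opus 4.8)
The plan is to reduce to the two model cases furnished by Happel's classification and then to transport the connectedness of tilting theory in the hereditary heart up to the cluster category. By Happel's theorem~\cite{ha} any hereditary abelian category $\mathcal{H}$ with tilting objects is derived equivalent either to $\opname{mod} kQ$ for a finite acyclic quiver $Q$ or to $\opname{coh}\mathbb{X}$ for a weighted projective line $\mathbb{X}$; since a derived equivalence $\mathcal{D}^b(\mathcal{H})\cong \mathcal{D}^b(\mathcal{H}')$ induces a triangle equivalence $\mathcal{C}_{\mathcal{H}}\cong \mathcal{C}_{\mathcal{H}'}$ carrying cluster-tilting objects to cluster-tilting objects, it suffices to prove connectedness for these two models. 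Fixing a tilting object $T$ of $\mathcal{H}$, its image $T_0=\pi(T)$ is a basic cluster-tilting object of $\mathcal{C}_{\mathcal{H}}$, and the goal becomes to show that every basic cluster-tilting object of $\mathcal{C}_{\mathcal{H}}$ is reachable from $T_0$ by a finite sequence of mutations.

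The first main step is to record that the images under $\pi$ of the tilting objects of $\mathcal{H}$ span a connected subgraph of the cluster-tilting graph. Working in a fundamental domain for $\mathcal{C}_{\mathcal{H}}$ of the form $\operatorname{ind}\mathcal{H}\cup\{\Sigma E_1,\dots,\Sigma E_n\}$, where the $E_i$ are the indecomposable summands of a fixed tilting object (the indecomposable projectives in the module case), a basic cluster-tilting object all of whose indecomposable summands lie in $\operatorname{ind}\mathcal{H}$ is exactly a tilting object of $\mathcal{H}$: rigidity in $\mathcal{C}_{\mathcal{H}}$ of an object of $\mathcal{H}$ amounts to the vanishing of its self-extensions in $\mathcal{H}$, and the cardinality constraint $n=|T_0|$ (finite by Lemma~\ref{l:DK}) forces maximality. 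An exchange of a single summand of a tilting object within $\mathcal{H}$ is then a special case of a mutation of cluster-tilting objects, hence an edge of the cluster-tilting graph, since $\mathcal{C}_{\mathcal{H}}$ admits a cluster structure by Theorem~\ref{t:cluster-cat}. Thus the tilting graph of $\mathcal{H}$ embeds as a subgraph of the cluster-tilting graph, and I would invoke the classical connectedness of the tilting graph of a hereditary algebra (together with its analogue for $\opname{coh}\mathbb{X}$) to place all images $\pi(T')$ of tilting objects $T'$ of $\mathcal{H}$ in one component with $T_0$.

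The second main step is to connect an arbitrary basic cluster-tilting object $S$ to one of the form $\pi(T')$. If $S$ already lies in $\operatorname{ind}\mathcal{H}$ there is nothing to do; otherwise $S$ has at least one summand $\Sigma E_i$. The plan is to introduce a monovariant — for instance the number of such shifted summands, or a homological measure of the distance of $S$ from the heart — and to show that a mutation at a shifted summand strictly decreases it while preserving the cluster-tilting property, so that after finitely many mutations every summand lands in $\operatorname{ind}\mathcal{H}$. Combined with the first step, this produces a mutation path from $S$ to $T_0$ and proves connectedness.

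The hard part will be this second step in the wild and tubular regimes, where there are infinitely many cluster-tilting objects and no counting argument is available: one must produce a genuine monovariant guaranteeing that the shifted summands can be driven into the heart in finitely many mutations while controlling that the procedure does not regenerate them. A secondary difficulty is the input connectedness of the tilting graph for $\opname{coh}\mathbb{X}$, which does not follow formally from the module case and must be established on its own. I expect that Iyama--Yoshino reduction (Theorem~\ref{t:IY-reduction}), coupled with induction on the rank $n=|T_0|$, is the natural device for both: reducing along a well-chosen common rigid summand produces a $2$-Calabi--Yau category of smaller rank whose cluster-tilting graph is again connected by induction, and feeding this back lets one bridge $S$ and $T_0$ even when they share no obvious summand.
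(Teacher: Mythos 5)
This statement is not proved in the paper at all: it is imported verbatim from \cite{fugeng}*{Theorem 1.2} as a black box, so there is no internal argument to compare yours against. Judged on its own merits, your outline sets up the right frame (Happel's classification \cite{ha} plus transport of cluster-tilting objects along the induced equivalence $\mathcal{C}_{\mathcal{H}}\cong\mathcal{C}_{\mathcal{H}'}$), but it leaves unproved exactly the two steps that constitute the content of the theorem, and you say so yourself. The connectedness of the tilting graph of $\opname{coh}\mathbb{X}$ --- above all in the tubular case, where there are infinitely many tilting sheaves organized by slope and no inductive handle on the rank --- is the main theorem of \cite{fugeng}, not an input one can ``invoke''; and your proposed remedy (Iyama--Yoshino reduction, Theorem~\ref{t:IY-reduction}, plus induction on $n=|T_0|$) does not close up, because the reduction of $\mathcal{C}_{\mathcal{H}}$ at a rigid object is not known to be again the cluster category of a hereditary category, so the inductive hypothesis does not apply to it.

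There is also a concrete structural error in your second step. For $\mathcal{H}=\opname{coh}\mathbb{X}$ the Auslander--Reiten translation $\tau$ is an autoequivalence of $\mathcal{H}$ itself, so every object of $\mathcal{C}_{\mathcal{H}}=\mathcal{D}^b(\mathcal{H})/\tau^{-1}\circ\Sigma$ is isomorphic to one concentrated in the heart: the fundamental domain is $\operatorname{ind}\mathcal{H}$ with \emph{no} shifted summands $\Sigma E_i$. Your ``monovariant counting shifted summands'' is therefore vacuous precisely in the case where the theorem is hard, and the entire difficulty collapses onto the tilting-graph connectedness you deferred. Conversely, in the case $\mathcal{H}=\opname{mod} kQ$, where shifted projectives do occur, the known argument (\cite{bmrrt}*{Propositions 3.4, 3.5}) does not proceed by showing that mutation at a shifted summand decreases their number --- a mutation can leave the fundamental domain or create new shifted summands --- but by showing that every cluster-tilting object is induced by a tilting module over a hereditary algebra \emph{derived equivalent} to $kQ$ and then comparing tilting graphs across derived equivalences. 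So the gap is genuine: the proposal identifies the right skeleton but supplies neither of the two nontrivial bones.
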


\subsection{Acyclic cluster algebras of skew-symmetric type}\label{ss:ex-acylic}

This subsection aims to establish Conjecture~\ref{f-exchangeability2} for acyclic cluster algebras of skew-symmetric type.
\begin{theorem}\label{t:conj-acyclic}
Conjecture~\ref{f-exchangeability2} holds for acyclic cluster algebras of skew-symmetric type.
\end{theorem}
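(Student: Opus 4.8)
The plan is to deduce the statement from the categorification of acyclic cluster algebras by cluster categories of hereditary algebras, reducing everything to the exchangeability property of $2$-Calabi--Yau categories (Theorem~\ref{t:exchangeable-2-CY}). Write $B=B(Q)$ for an acyclic quiver $Q$ on $\{1,\dots,n\}$. Since $B$ is skew-symmetric, Proposition~\ref{pr:f-compatibility-symmetry}(1) gives $(x\parallel x')=(x'\parallel x)$, so the hypothesis reduces to $(x\parallel x')=1$. The obstruction to applying Theorem~\ref{p:categorical-f-degree} directly is that it requires $B$ to be of full rank, which already fails for linear $A_3$. To circumvent this I would embed $B$ into the skew-symmetric matrix
\[
B'=\begin{bmatrix} B & -I_n\\ I_n & 0\end{bmatrix},
\]
which satisfies $\det B'=1$ and equals $B(Q')$ for the quiver $Q'$ obtained from $Q$ by adjoining, for each $i$, a new source $i'$ with a single arrow $i'\to i$. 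Then $Q'$ is again acyclic and $B=B'_J$ for $J=\{1,\dots,n\}$. By the embedding property (Proposition~\ref{pr:f-compatibility-symmetry}(3)), the compatibility degree is unchanged when $x,x'$ are regarded as cluster variables of $\mathcal A(B')$, so $(x\parallel x')_{\mathcal A(B')}=1$.

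Next I would run the full-rank machinery inside $\mathcal C'=\mathcal C_{kQ'}$. By Theorem~\ref{t:cluster-cat} it admits a cluster structure, by Theorem~\ref{t:connected-exchange-graph} its cluster-tilting graph is connected, and by Proposition~\ref{p:cc-map} the cluster character $CC_{T'}$ identifies cluster variables of $\mathcal A(B')$ with indecomposable rigid objects; write $x=CC_{T'}(L')$ and $x'=CC_{T'}(N')$. Because $B'$ is of full rank, Theorem~\ref{p:categorical-f-degree} yields $\dim_k\Hom_{\mathcal C'}(L',\Sigma N')=(x\parallel x')_{\mathcal A(B')}=1$.

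It remains to descend exchangeability from $\mathcal A(B')$ to $\mathcal A(B)$, which is the heart of the argument. Let $U$ be the sum of the indecomposable summands of $T'$ over the adjoined vertices. Since $x,x'$ are $J$-part variables, their $F$-polynomials involve only the $y_j$ with $j\in J$ (this is exactly what the proof of Proposition~\ref{pr:f-compatibility-symmetry}(3) establishes), so their $f$-vectors vanish in the coordinates $>n$; applying Theorem~\ref{p:categorical-f-degree} again gives $\Hom_{\mathcal C'}(U,\Sigma L')=\Hom_{\mathcal C'}(U,\Sigma N')=0$, and dually $\Hom_{\mathcal C'}(L'\oplus N',\Sigma U)=0$. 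Hence $L',N'\in\mathcal Z(U)$, and I would form the Iyama--Yoshino reduction $\mathcal C'_U$ (Theorem~\ref{t:IY-reduction}), a $2$-Calabi--Yau category in which $L',N'$ remain indecomposable rigid. Feeding the defining triangle $N'\to U_{N'}\to N'\langle 1\rangle\to\Sigma N'$ into $\Hom_{\mathcal C'}(L',-)$ and using $\Hom_{\mathcal C'}(L',\Sigma U)=0$ shows $\Hom_{\mathcal C'_U}(L',N'\langle 1\rangle)\cong\Hom_{\mathcal C'}(L',\Sigma N')$, which is one-dimensional. Applying Theorem~\ref{t:exchangeable-2-CY} inside $\mathcal C'_U$ then makes $(L',N')$ an exchange pair there, so there is $R_0$ with $R_0\oplus L'$ and $R_0\oplus N'$ cluster-tilting in $\mathcal C'_U$. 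By Theorem~\ref{t:IY-reduction} these lift to cluster-tilting objects $U\oplus R_0\oplus L'$ and $U\oplus R_0\oplus N'$ of $\mathcal C'$, both containing $U$. Under $CC_{T'}$, cluster-tilting objects of $\mathcal C'$ containing $U$ correspond to clusters of $\mathcal A(B')$ containing all the frozen variables $x_{n+1},\dots,x_{2n}$; freezing these recovers the principal-coefficient algebra $\mathcal A_\bullet(B)$, whose clusters coincide with those of $\mathcal A(B)$ by Proposition~\ref{independentP}. Taking $X$ to be the $CC_{T'}$-image of the summands of $R_0$ therefore gives clusters $X\cup\{x\}$ and $X\cup\{x'\}$ of $\mathcal A(B)$, proving Conjecture~\ref{f-exchangeability2} in this case.

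I expect the genuine difficulty to be this descent, that is, making the full-rank completion usable despite the possible rank deficiency of $B$. Two points require care: that $B'$ is simultaneously full rank and acyclic (so that $\mathcal C'$ is a cluster category to which Theorems~\ref{t:cluster-cat}, \ref{t:connected-exchange-graph}, and~\ref{p:categorical-f-degree} apply), and that Iyama--Yoshino reduction at the coefficient summand $U$ preserves the one-dimensionality of the relevant $\Hom$-space so that Theorem~\ref{t:exchangeable-2-CY} can be invoked inside $\mathcal C'_U$. The observation that makes both work is that the $f$-vectors of $J$-part variables are supported on $J$, which forces $U$ to be compatible with $L'$ and $N'$ and thereby places the whole exchange within the reduced category categorifying $\mathcal A(B)$.
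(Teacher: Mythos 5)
Your proposal is correct and follows essentially the same route as the paper: the auxiliary quiver $Q'$ with matrix $\begin{bmatrix} B & -I_n\\ I_n & 0\end{bmatrix}$ is exactly the paper's framed quiver $\hat{Q}$, and the chain of reductions (embedding property to pass to the full-rank algebra $\mathcal A(B')$, Theorem~\ref{p:categorical-f-degree} to translate the degree into $\dim_k\Hom_{\mathcal C'}(L',\Sigma N')=1$, Iyama--Yoshino reduction at the coefficient summand $U$, Theorem~\ref{t:exchangeable-2-CY} in $\mathcal C'_U$, and lifting back via Theorem~\ref{t:IY-reduction}) matches the paper's proof step for step. The only cosmetic difference is that you justify $L',N'\in\mathcal Z(U)$ via the vanishing of the extra $f$-vector coordinates and a second application of Theorem~\ref{p:categorical-f-degree}, which the paper leaves implicit.
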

\begin{proof}
According to \cite{CKLP}*{Corollary 5.5}, the exchange graph of a skew-symmetric cluster algebra only depends on its initial exchange matrix. On the other hand, the definition of $f$-vectors does not depend on the coefficients. It suffices to prove this statement for acyclic skew-symmetric cluster algebras with principal coefficients. Furthermore, since a compatibility degree is not depend on the initial matrix (or quiver), we can assume the initial quiver $Q$ is acyclic. 

Let $Q$ be an acyclic quiver and $\mathcal{A}_\bullet(B(Q))$ the cluster algebra with principal coefficients. We are going to show that Conjecture~\ref{f-exchangeability2} is true for $\mathcal{A}_\bullet(B(Q))$. Let $Q_0=\{1,\dots,n\}$ be the vertex set of $Q$ and $Q_1$ the arrow set of $Q$. We introduce a new acyclic quiver $\hat{Q}$ as follows
\begin{itemize}
\item the set of vertices $\hat{Q}_0:=Q_0\sqcup \{i^*~|~\forall i\in Q_0\}$;
\item the set of arrows $\hat{Q}_1:=Q_1\sqcup\{i^*\to i~|~\forall i\in Q_0\}$.
\end{itemize}
In particular, $\mathcal{A}_\bullet(B(Q))$ is a subalgebra of the cluster algebra $\mathcal{A}(B(\hat{Q}))$ with trivial coefficients. We fix a cluster pattern of $\mathcal{A}(B(\hat{Q}))$ (resp. $\mathcal{A}_\bullet(B(Q))$) by assigning the initial seed to the rooted vertex $t_0\in \mathbb{T}_{2n}$ (resp. $t_0'\in \mathbb{T}_n$). By identifying the vertex $t_0'$ with $t_0$, the cluster pattern of $\mathcal{A}_\bullet(B(Q))$ can be identified with a subgraph of $\mathbb{T}_{2n}$.
Let $\mathcal{C}_{\hat{Q}}$ be the cluster category associated to $\opname{mod} k\hat{Q}$. For each vertex $i\in \hat{Q}_0$, denote by $P_i$ the indecomposable projective $k\hat{Q}$-modules associated to $i$. Set $T=\bigoplus_{i\in \hat{Q}_0} \Sigma P_i$. It is known that $T$ is a basic cluster-tilting object of $\mathcal{C}_{\hat{Q}}$. According to Theorem~\ref{t:cluster-cat}, Theorem~\ref{t:connected-exchange-graph} and Proposition~\ref{p:cc-map}, the cluster character ${CC}_T(?)$ yields a bijection between the set of indecomposable rigid objects of $\mathcal{C}_{\hat{Q}}$ and the set of cluster variables of $\mathcal{A}(B(\hat{Q}))$. 
Denote by $U:=\bigoplus_{i\in Q_0} \Sigma P_{i^*}$ and define
\[\mathcal{Z}(U):=\{X\in \mathcal{C}_{\hat{Q}}~|~\Hom_{\mathcal{C}_{\hat{Q}}}(U, \Sigma X)=0\}.\]
Let $L$ be an indecomposable rigid object of $\mathcal{C}_{\hat{Q}}$. The cluster variable ${CC}_T(L)$ belongs to $\mathcal{A}_\bullet(B(Q))$ if and only if $L\in \mathcal{Z}(U)$. 

Now let $L,N\in \mathcal{Z}(U)$ be indecomposable rigid objects of $\mathcal{C}_{\hat{Q}}$ such that \[({CC}_T(L)\parallel{CC}_T(N))_{\mathcal{A}_\bullet(B(Q))}=({CC}_T(N)\parallel{CC}_T(L))_{\mathcal{A}_\bullet(B(Q))}=1,\]
where the subscript is to indicate the compatibility degree of $\mathcal{A}_\bullet(B(Q))$. By Proposition~\ref{pr:f-compatibility-symmetry}~(3), we have
\[({CC}_T(L)\parallel{CC}_T(N))_{\mathcal{A}(B(\hat{Q}))}=({CC}_T(N)\parallel{CC}_T(L))_{\mathcal{A}(B(\hat{Q}))}=1.\]
Note that, as the exchange matrix $B(\hat{Q})$ is of full rank, we have $\dim_k\Hom_{\mathcal{C}_{\hat{Q}}}(L,\Sigma N)=1$ by Theorem~\ref{p:categorical-f-degree}.
Denote by $\mathcal{C}=\mathcal{Z}(U)/[\add U]$ the Iyama-Yoshino's reduction of $\mathcal{C}_{\hat{Q}}$ with respect to $U$. In particular, $L,N$ belong to $\mathcal{C}$. 
Let \[N\xrightarrow{f} B\to N\langle 1\rangle \to \Sigma N\] be a triangle of $\mathcal{C}_{\hat{Q}}$, where $f$ is a minimal left $\add U$-approximation. By applying $\Hom_{\mathcal{C}_{\hat{Q}}}(L,-)$ to the triangle yields a long exact sequence
\[\Hom_{\mathcal{C}_{\hat{Q}}}(L,N)\xrightarrow{f^*}\Hom_{\mathcal{C}_{\hat{Q}}}(L,B)\to \Hom_{\mathcal{C}_{\hat{Q}}}(L,N\langle 1\rangle)\to \Hom_{\mathcal{C}_{\hat{Q}}}(L,\Sigma N)\to \Hom_{\mathcal{C}_{\hat{Q}}}(L,\Sigma B)\cong 0.
\]
By definition of $\mathcal{C}$, we obtain that \[\Hom_\mathcal{C}(L, N\langle1\rangle)\cong \Hom_{\mathcal{C}_{\hat{Q}}}(L,\Sigma N).\] Hence \[\dim_k\Hom_\mathcal{C}(L, N\langle1\rangle)=1.\]
By Theorem~\ref{t:exchangeable-2-CY}, $(L,N)$ is an exchange pair of $\mathcal{C}$.
In other words, there is a rigid object $M\in \mathcal{C}$ such that $L\oplus M$ and $N\oplus M$ are cluster-tilting objects of $\mathcal{C}$. According to Theorem~\ref{t:IY-reduction}, $L\oplus M\oplus U$ and $N\oplus M\oplus U$ are cluster-tilting objects of $\mathcal{C}_{\hat{Q}}$. Now the statement follows from Proposition~\ref{p:cc-map}.
\end{proof}

\subsection{Cluster algebras arising from weighted projective lines}\label{ss:ex-wpl}

Fix a positive integer $t\geq 2$.
A {\it weighted projective line} $\mathbb{X}=\mathbb{X}(\mathbf{p},\boldsymbol{\lambda})$ over $k$ is given by a weight sequence $\mathbf{p}=(p_1,\dots,p_t)$
of positive integers, and 
a parameter sequence $\boldsymbol{\lambda}=(\lambda_1,\dots,\lambda_t)$ of pairwise distinct points of the projective line $\mathbb{P}_1(k)$. We always assume that $p_i\geq 2$ for all $1\leq i\leq t$.
Denote by $\opname{coh}\mathbb{X}$ the category of coherent sheaves over $\mathbb{X}$. We refer to~\cite{gl} for the construction of $\opname{coh}\mathbb{X}$.
It has been proved in~\cite{gl} that $\opname{coh}\mathbb{X}$ is a hereditary abelian category with tilting objects. In this case, we denote by $\mathcal{C}_{\mathbb{X}}$ the cluster category associated with $\opname{coh}\mathbb{X}$.

For a weight sequence $\mathbf{p}=(p_1,\cdots, p_t)$, we introduce a quiver $Q_{\mathbf{p}}$ in Figure~\ref{f:1}.
\begin{figure}[ht]

\caption{Quiver $Q_{\mathbf{p}}$.}\label{f:1}

\begin{tikzpicture}[
 scale=2.5,axis/.style={very thick, ->, >=stealth'},
 arrow/.style={->, >=stealth'},
 important line/.style={thick},
 dashed line/.style={dashed, thin},
 pile/.style={thick, ->, >=stealth', shorten <=2pt, shorten
 >=2pt},
 every node/.style={color=black}
 ]

 \draw[arrow](-0.5,0.02)--(0.5,0.02);
 \draw[arrow](-0.5,-0.02)--(0.5,-0.02);
 \draw[arrow](0.5,0.02)--(0,0.5);
 \draw[arrow](0,0.5)--(-0.5,0.02);
 \draw[arrow](0.5,-0.02)--(0,-0.25);
 \draw[arrow](0,-0.25)--(-0.5,-0.02);
 \node at (-0.55,0){\tiny{$\circ$}};
 \node at (0.6,0){\tiny{$\bullet$}};
 \node at (0.05,0.6){\tiny{$S_1^{[p_1-1]}$}};
 \node at (0.06,-0.15){\tiny{$S_2^{[p_2-1]}$}};
 \node at (0,-0.35){\tiny{$\vdots$}};
 \draw[arrow](0.5,-0.02)--(0,-0.5);
 \draw[arrow](0,-0.5)--(-0.5,-0.02);
 \node at (0.05,-0.6){\tiny{$S_t^{[p_t-1]}$}};
 
 \draw[arrow](0,0.5)--(0.5,0.5);
 \node at (0.5,0.6){\tiny{$S_1^{[p_1-2]}$}};
 \draw[dashed line](0.5,0.5)--(1,0.5);
 \node at (1,0.6){\tiny{$S_1^{[2]}$}};
 \draw[arrow](1,0.5)--(1.5,0.5);
\node at (1.5,0.6){\tiny{$S_1^{[1]}$}};
 
 \draw[arrow](0,-0.25)--(0.5,-0.25);
 \node at (0.6,-0.15){\tiny{$S_2^{[p_2-2]}$}};
 \draw[dashed line](0.5,-0.25)--(1,-0.25);
 \node at (1,-0.15){\tiny{$S_2^{[2]}$}};
 \draw[arrow](1,-0.25)--(1.5,-0.25);
\node at (1.5,-0.15){\tiny{$S_2^{[1]}$}};
 
 \draw[arrow](0,-0.5)--(0.5,-0.5);
 \node at (0.5,-0.6){\tiny{$S_t^{[p_t-2]}$}};
 \draw[dashed line](0.5,-0.5)--(1,-0.5);
 \node at (1,-0.6){\tiny{$S_t^{[2]}$}};
 \draw[arrow](1,-0.5)--(1.5,-0.5);
\node at (1.5,-0.6){\tiny{$S_t^{[1]}$}};
 
 \node at (0.5, -0.35){\tiny{$\vdots$}};
 \node at (1, -0.35){\tiny{$\vdots$}};
\end{tikzpicture}

\end{figure}

The following fact can be founded in~\cite{bkl}*{Section 8}.
\begin{lemma}\label{l:weighted-sq}
Let $\mathbb{X}(\mathbf{p},\boldsymbol{\lambda})$ be a weighted projective line. There is a basic cluster-tilting object $T_{sq}$ such that its Gabriel quiver $Q_{T_{sq}}$ coincides with $Q_{\mathbf{p}}$.
\end{lemma}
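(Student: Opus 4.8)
The plan is to realize $T_{sq}$ as the image under the projection functor $\pi\colon \mathcal{D}^b(\opname{coh}\mathbb{X})\to \mathcal{C}_{\mathbb{X}}$ of a suitable tilting object of $\opname{coh}\mathbb{X}$, and then to read off its Gabriel quiver by a relation-extension computation. First I would invoke the classical squid tilting object: there is a tilting object $T\in \opname{coh}\mathbb{X}$ whose endomorphism algebra $\Lambda:=\End_{\opname{coh}\mathbb{X}}(T)$ is the squid algebra $\Lambda(\mathbf{p},\boldsymbol{\lambda})$. Its ordinary quiver $Q_\Lambda$ consists of two head vertices joined by a double arrow $\circ\rightrightarrows\bullet$, together with, for each $i$, an arm $\bullet\to S_i^{[p_i-1]}\to S_i^{[p_i-2]}\to\cdots\to S_i^{[1]}$ of $p_i-1$ vertices attached at the target of the double arrow, subject to exactly one relation $\rho_i$ per arm situated at the junction of the head and the $i$-th arm.

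Since $\opname{coh}\mathbb{X}$ is hereditary with tilting objects, \cite{bmrrt}*{Theorem 3.3} applies and $T_{sq}:=\pi(T)$ is a basic cluster-tilting object of $\mathcal{C}_{\mathbb{X}}$; moreover, by Theorem \ref{t:cluster-cat} the category $\mathcal{C}_{\mathbb{X}}$ carries a cluster structure, so $Q_{T_{sq}}$ has neither loops nor $2$-cycles. The heart of the argument is the computation of $Q_{T_{sq}}$. As $\Lambda$ is the endomorphism algebra of a tilting object in a hereditary category, it has global dimension at most $2$, and the cluster-tilted algebra $\End_{\mathcal{C}_{\mathbb{X}}}(T_{sq})$ is the relation extension $\Lambda\ltimes \Ext^2_\Lambda(D\Lambda,\Lambda)$ (see \cite{bkl}). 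Its Gabriel quiver is obtained from $Q_\Lambda$ by adjoining, for each relation of a minimal system of relations of $\Lambda$ running from a vertex $u$ to a vertex $w$, one new arrow $w\to u$. Since the minimal relations of the squid algebra are precisely the $t$ arm-junction relations $\rho_i$, each a combination of paths from the source $\circ$ of the double arrow to the first arm vertex $S_i^{[p_i-1]}$, this adds exactly the $t$ arrows $S_i^{[p_i-1]}\to\circ$, and nothing inside the arms nor between the two head vertices. Comparing with Figure \ref{f:1} then gives $Q_{T_{sq}}=Q_{\mathbf{p}}$.

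The main obstacle will be the relation-extension bookkeeping, namely verifying that a minimal set of relations of $\Lambda$ is given exactly by the $t$ arm-junction relations, so that the $\Ext^2$-term contributes precisely the $t$ closing arrows $S_i^{[p_i-1]}\to\circ$ and produces no spurious arrows---in particular no loops or $2$-cycles, in agreement with the cluster structure of $\mathcal{C}_{\mathbb{X}}$. This requires the explicit description of the squid algebra together with its minimal projective resolutions, and it is precisely the content carried out in \cite{bkl}*{Section 8}.
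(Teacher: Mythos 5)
Your proposal is correct and takes essentially the same route as the paper: the paper establishes this lemma only by citing \cite{bkl}*{Section 8}, and your argument --- projecting the squid tilting object $T$ with $\End_{\opname{coh}\mathbb{X}}(T)$ the squid algebra into $\mathcal{C}_{\mathbb{X}}$, then computing $Q_{T_{sq}}$ via the relation-extension description of the cluster-tilted algebra, which adds exactly one arrow $S_i^{[p_i-1]}\to\circ$ for each of the $t$ arm-junction relations --- is precisely the computation carried out in that reference. The bookkeeping you flag (that the $t$ junction relations form a minimal system, so no spurious arrows, loops or $2$-cycles appear) is the only point of substance, and it is settled by the standard presentation of the squid algebra.
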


 The following is an easy observation and we left the proof as an exercise.
\begin{lemma}\label{l:full-rank-weighted}
Let $\mathbf{p}=(p_1,\dots, p_t)$ be a weight sequence. If $p_i\geq 3$ and $(2,p_i)=1$ for all $1\leq i\leq t$, then the skew-symmetric matrix $B(Q_{\mathbf{p}})$ is of full rank.
\end{lemma}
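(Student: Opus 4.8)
The plan is to compute the determinant of $B:=B(Q_{\mathbf{p}})$ explicitly and show it is nonzero. First I would read off the combinatorial shape of $Q_{\mathbf{p}}$ from Figure~\ref{f:1}: apart from the two central vertices (the source $\circ$ and the sink $\bullet$, joined by a \emph{double} arrow), the quiver is a union of $t$ linear \emph{arms}, the $i$-th arm being a path $S_i^{[p_i-1]}\to S_i^{[p_i-2]}\to\cdots\to S_i^{[1]}$ on $p_i-1$ vertices whose top vertex $S_i^{[p_i-1]}$ is the only arm vertex attached to the central pair (via $\bullet\to S_i^{[p_i-1]}\to\circ$). Thus $B$ is a skew-symmetric integer matrix of size $n=2+\sum_{i=1}^t(p_i-1)$. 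The first thing to record is a parity remark: since each $p_i$ is odd, $p_i-1$ is even, so $n$ is even. This is necessary, because a skew-symmetric matrix of odd size is automatically singular.

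Because $B$ is skew-symmetric we have $\det B=\operatorname{Pf}(B)^2$, so it suffices to show the Pfaffian is nonzero; equivalently, that the signed sum over perfect matchings $M$ of the underlying graph, $\operatorname{Pf}(B)=\sum_{M}\operatorname{sgn}(M)\prod_{\{a,b\}\in M}b_{ab}$, does not vanish. The computational engine is the standard degree-one reduction: if a vertex $a$ has degree one, joined only to $b$ by an edge of weight $c$, then every perfect matching pairs $a$ with $b$, whence $\operatorname{Pf}(B)=\pm c\cdot\operatorname{Pf}(B')$, where $B'$ deletes the rows and columns indexed by $a$ and $b$. I would apply this repeatedly, peeling each arm from its free end $S_i^{[1]}$ (which has degree one) inward in consecutive pairs.

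The crux, and the only place the oddness of the $p_i$ is used, is that this peeling exhausts each arm \emph{completely} and pins down a unique matching. In any perfect matching $S_i^{[1]}$ must pair with $S_i^{[2]}$; deleting this pair makes $S_i^{[3]}$ the new free end, and so on, so that because $p_i-1$ is even the arm is consumed after $(p_i-1)/2$ steps, each contributing a factor $\pm 1$, and in particular $S_i^{[p_i-1]}$ is matched \emph{inside} its arm rather than to a central vertex. (Had some $p_i$ been even, the top vertex would be forced outward and competing matchings with cancellation could survive.) After all arms are removed only $\circ$ and $\bullet$ remain, joined by the double arrow, giving the block $\left(\begin{smallmatrix}0&2\\-2&0\end{smallmatrix}\right)$. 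Collecting factors yields $\operatorname{Pf}(B)=\pm 2$, hence $\det B=4\neq 0$ and $B$ has full rank $n$. The main obstacle here is not analytic but careful bookkeeping: one must extract the quiver correctly from the figure, in particular the multiplicity-two arrow between the central vertices (which is exactly what makes the determinant $4$ rather than $0$ or $1$), and verify that after each deletion the next free end genuinely has degree one so the induction keeps running. I would isolate the degree-one reduction and the ``oddness forces internal matching'' observation as the two lemmas driving the argument.
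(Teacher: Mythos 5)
The paper does not actually contain a proof of this lemma --- it is explicitly ``left as an exercise'' --- so there is nothing to compare your argument against except its correctness, and your argument is correct and complete. You read the quiver $Q_{\mathbf{p}}$ off the figure accurately (two central vertices joined by a double arrow, $t$ arms of $p_i-1$ vertices each attached to the center only through the top vertex $S_i^{[p_i-1]}$), and the Pfaffian computation goes through: each free end $S_i^{[1]}$ has a unique neighbour, the forced pairings consume the arm in $(p_i-1)/2$ steps precisely because $p_i-1$ is even, the top vertex is therefore matched internally, and the underlying graph has a unique perfect matching, giving $\operatorname{Pf}(B(Q_{\mathbf{p}}))=\pm2$ and $\det B(Q_{\mathbf{p}})=4$. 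The degree-one reduction for Pfaffians that you isolate is standard (it is the Pfaffian expansion along a row with a single nonzero entry), and the same bookkeeping can be phrased without Pfaffians as iterated Laplace expansion of $\det B$ along the rows and columns of the degree-one vertices, which reduces everything to $\det\left(\begin{smallmatrix}0&2\\-2&0\end{smallmatrix}\right)=4$; either phrasing is a perfectly good way to fill the gap the paper leaves. Your parenthetical observation that the hypothesis $(2,p_i)=1$ is exactly what forces the top vertex to be matched inside its arm is also the right diagnosis of where the assumption is used.
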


\begin{theorem}\label{t:conj-weighted projective}
Conjecture~\ref{f-exchangeability2} holds true for cluster algebras arising from weighted projective lines.
\end{theorem}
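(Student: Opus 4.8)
The plan is to run the same categorical strategy as in the proof of Theorem~\ref{t:conj-acyclic}, the only new ingredient being a device that forces the relevant exchange matrix to have full rank so that Theorem~\ref{p:categorical-f-degree} becomes available. As in the acyclic case, since $B(Q_{\mathbf p})$ is skew-symmetric I would invoke \cite{CKLP}*{Corollary 5.5} together with the coefficient-independence of $f$-vectors to reduce to the combinatorial cluster algebra $\mathcal A(B(Q_{\mathbf p}))$, and then fix cluster variables $x,x'$ with $(x\parallel x')=(x'\parallel x)=1$.

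First I would enlarge the weight sequence. Given $\mathbf p=(p_1,\dots,p_t)$, choose $\mathbf p'=(p_1',\dots,p_t')$ with $p_i'\geq p_i$, $p_i'\geq 3$ and $p_i'$ odd for every $i$ (possible, since every $p_i\geq 2$ admits such a $p_i'$). Inspecting the quiver $Q_{\mathbf p'}$ of Figure~\ref{f:1}, the full subquiver on $\circ,\bullet$ together with the top $p_i-1$ vertices $S_i^{[p_i'-1]},\dots,S_i^{[p_i'-p_i+1]}$ of each arm is isomorphic to $Q_{\mathbf p}$; hence $B(Q_{\mathbf p})$ is the principal submatrix $B(Q_{\mathbf p'})_J$ for the corresponding index set $J$, while $B(Q_{\mathbf p'})$ is of full rank by Lemma~\ref{l:full-rank-weighted}. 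Let $\mathbb X'=\mathbb X(\mathbf p',\boldsymbol\lambda')$ be a weighted projective line with weight sequence $\mathbf p'$, and let $T'=T_{sq}$ be the cluster-tilting object of $\mathcal C_{\mathbb X'}$ from Lemma~\ref{l:weighted-sq}, so that $Q_{T'}=Q_{\mathbf p'}$. By the embedding property (Proposition~\ref{pr:f-compatibility-symmetry}~(3)), regarding $x,x'$ as cluster variables of $\mathcal A(B(Q_{\mathbf p'}))$ we still have $(x\parallel x')=(x'\parallel x)=1$.

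Next I would transport everything into $\mathcal C_{\mathbb X'}$ exactly as in the acyclic proof. Using Theorem~\ref{t:cluster-cat}, Theorem~\ref{t:connected-exchange-graph} and Proposition~\ref{p:cc-map}, the cluster character ${CC}_{T'}$ gives a bijection between indecomposable rigid objects of $\mathcal C_{\mathbb X'}$ and cluster variables of $\mathcal A(B(Q_{\mathbf p'}))$; let $L,N$ be the indecomposable rigid objects with ${CC}_{T'}(L)=x$ and ${CC}_{T'}(N)=x'$. Writing $U=\bigoplus_{i\notin J}T_i'$ for the sum of the ``extra'' summands and $\mathcal Z(U)=\{Z\mid \Hom_{\mathcal C_{\mathbb X'}}(U,\Sigma Z)=0\}$, the identification of the cluster pattern of $\mathcal A(B(Q_{\mathbf p}))$ with the mutations of $\mathcal A(B(Q_{\mathbf p'}))$ in the directions of $J$ shows, just as in Theorem~\ref{t:conj-acyclic}, that ${CC}_{T'}(L)\in\mathcal A(B(Q_{\mathbf p}))$ forces $L,N\in\mathcal Z(U)$. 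Since $B(Q_{\mathbf p'})$ is of full rank, Theorem~\ref{p:categorical-f-degree} gives $\dim_k\Hom_{\mathcal C_{\mathbb X'}}(L,\Sigma N)=(x\parallel x')=1$. Passing to the Iyama-Yoshino reduction $\mathcal C=\mathcal Z(U)/[\add U]$ and applying $\Hom_{\mathcal C_{\mathbb X'}}(L,-)$ to the triangle defining $N\langle 1\rangle$, using $\Hom(L,\Sigma U)=0$ (which holds by $2$-Calabi-Yau duality from $L\in\mathcal Z(U)$), one obtains $\dim_k\Hom_{\mathcal C}(L,N\langle 1\rangle)=1$.

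Finally, Theorem~\ref{t:exchangeable-2-CY} applied inside $\mathcal C$ shows $(L,N)$ is an exchange pair of $\mathcal C$, so there is a rigid $M\in\mathcal C$ with $L\oplus M$ and $N\oplus M$ cluster-tilting in $\mathcal C$; Theorem~\ref{t:IY-reduction} then lifts these to cluster-tilting objects $L\oplus M\oplus U$ and $N\oplus M\oplus U$ of $\mathcal C_{\mathbb X'}$. Because all summands of $M$ lie in $\mathcal Z(U)$, the set $X=\{{CC}_{T'}(M_j)\}$ consists of cluster variables of $\mathcal A(B(Q_{\mathbf p}))$, and by Proposition~\ref{p:cc-map} both $X\cup\{x\}$ and $X\cup\{x'\}$ are clusters of $\mathcal A(B(Q_{\mathbf p}))$, proving the conjecture. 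I expect the main obstacle to be precisely the full-rank issue: verifying that the arm-lengthening really does realize $Q_{\mathbf p}$ as a full subquiver of a full-rank $Q_{\mathbf p'}$, and, more delicately, confirming that the reduction $\mathcal C$ genuinely categorifies $\mathcal A(B(Q_{\mathbf p}))$ so that the object $M$ produced in $\mathcal C$ descends to honest cluster variables of the original algebra.
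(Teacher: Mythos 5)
Your proposal is correct and follows essentially the same route as the paper: enlarge the weight sequence to an odd one $\hat{\mathbf p}$ with $\hat p_i\geq 3$ so that $B(Q_{\hat{\mathbf p}})$ has full rank (Lemma~\ref{l:full-rank-weighted}), realize $Q_{\mathbf p}$ as a full subquiver, use the embedding property and Theorem~\ref{p:categorical-f-degree} to get $\dim_k\Hom(L,\Sigma N)=1$, and then run the Iyama--Yoshino reduction and Theorem~\ref{t:exchangeable-2-CY} exactly as in the acyclic case. The only cosmetic difference is that the paper packages the subalgebra via an explicit tropical coefficient choice $\mathrm{Trop}(x_{n+1},\dots,x_m)$, which your coefficient-independence reduction handles equivalently.
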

\begin{proof}
According to Theorem~\ref{t:connected-exchange-graph} and Lemma~\ref{l:weighted-sq}, 
 it suffices to prove the result for a cluster algebra with initial exchange matrix $B(Q_{\mathbf{p}})$ for a weighted sequence $\mathbf{p}$. Similar to Theorem~\ref{t:conj-acyclic}, it suffices to prove the statement for a particular choice of coefficients.

Let $\mathbf{\hat{p}}=(\hat{p}_1,\dots, \hat{p}_t)$ be a weight sequence such that 
\[\hat{p}_i\geq 3,~\hat{p}_i\geq p_i~\text{and}~ (2, \hat{p}_i)=1~\text{for all $1\leq i\leq t$.}
\] 
In particular, the matrix $B(Q_{\mathbf{\hat{p}}})$ is of full rank by Lemma~\ref{l:full-rank-weighted}. We identify $Q_{\mathbf{p}}$ as a full subquiver of $Q_{\mathbf{\hat{p}}}$. We label the vertices of $Q_{\mathbf{\hat{p}}}$ by $\{1,\dots, n,n+1,\dots, m\}$ such that $\{1,\dots, n\}$ are precisely the vertices of $Q_{\mathbf{p}}$. In this way, 
\[B(Q_{\mathbf{\hat{p}}})=\begin{bmatrix}B(Q_{\mathbf{p}})&-C^{\top}\\ C&A
\end{bmatrix},
\]
where $C=(c_{ij})\in M_{(m-n)\times n}(\mathbb{Z})$ and $A\in M_{m-n}(\mathbb{Z})$.
 By Lemma~\ref{l:weighted-sq}, there is a basic cluster-tilting object $T_{sq}=\bigoplus_{i=1}^mT_i$ of $\mathcal{C}_{\mathbb{X}_{\mathbf{\hat{p}}}}$ such that it Gabriel quiver $Q_T$ is $Q_{\mathbf{\hat{p}}}$. Here we label the indecomposable direct summand $T_i$ according to the vertex $i$. Let $\mathcal{A}(B(Q_{\mathbf{\hat{p}}}))$ be the cluster algebra with trivial coefficients and initial seed $(\mathbf{x}=(x_1,\dots, x_m), B(Q_{\mathbf{\hat{p}}}) )$ and $\mathcal{A}(B(Q_{\mathbf{p}}))$ the cluster algebra over $\mathbb{P}:=\text{Trop}(x_{n+1}, \dots, x_m)$ with initial seed $(\mathbf{z}=(x_1,\dots, x_n), \mathbf{y}=(\prod_{i=1}^{m-n}x_{i+n}^{c_{i1}},\dots, \prod_{i=1}^{m-n}x_{i+n}^{c_{in}}),B(Q_{\mathbf{p}}))$. In particular, $\mathcal{A}_\bullet(B(Q_{\mathbf{p}}))$ is a subalgebra of $\mathcal{A}(B(Q_{\mathbf{\hat{p}}}))$. Similar to Proposition~\ref{pr:f-compatibility-symmetry}~(3), we identify the cluster pattern of $\mathcal{A}(B(Q_{\mathbf{p}}))$ with a subgraph of the cluster pattern of $\mathcal{A}(B(Q_{\mathbf{\hat{p}}}))$. We are going to prove the statement for the cluster algebra $\mathcal{A}(B(Q_{\mathbf{p}}))$.
 
 Applying Theorem~\ref{t:cluster-cat}, Theorem~\ref{t:connected-exchange-graph} and Proposition~\ref{p:cc-map}, the cluster character ${CC}_T(?)$ yields a bijection between the set of indecomposable rigid objects of $\mathcal{C}_{\mathbb{X}_{\mathbf{\hat{p}}}}$ and the set of cluster variables of $\mathcal{A}(B(Q_{\mathbf{\hat{p}}}))$. For an indecomposable rigid object $L\in \mathcal{C}_{\mathbb{X}_{\mathbf{\hat{p}}}}$, the cluster variable ${CC}_T(L)$ belong to $\mathcal{A}(B(Q_{\mathbf{p}}))$ if and only if $L\in \mathcal{Z}(U)$, where $U=\bigoplus_{j=n+1}^m T_{j}$ and $\mathcal{Z}(U)=\{X\in \mathcal{C}_{\mathbb{X}_{\mathbf{\hat{p}}}}|~\Hom_{\mathcal{C}_{\mathbb{X}_{\mathbf{\hat{p}}}}}(U, \Sigma X)=0\}$.
 
 Now assume that $L,N\in \mathcal{Z}(U)$ are indecomposable rigid objects such that 
 \[({CC}_T(L)\parallel {CC}_T(N))_{\mathcal{A}(B(Q_{\mathbf{p}}))}=({CC}_T(N)\parallel {CC}_T(L))_{\mathcal{A}(B(Q_{\mathbf{p}}))}=1.
 \]
By Proposition~\ref{pr:f-compatibility-symmetry}~(3), we have
\[({CC}_T(L)\parallel {CC}_T(N))_{\mathcal{A}(B(Q_{\mathbf{\hat{p}}}))}=({CC}_T(N)\parallel {CC}_T(L))_{\mathcal{A}(B(Q_{\mathbf{\hat{p}}}))}=1.
\]
Note that, as $B(Q_{\mathbf{\hat{p}}})$ is of full rank, we deduce that 
\[\dim_k\Hom_{\mathcal{C}_{\mathbb{X}_{\mathbf{\hat{p}}}}}(L,\Sigma N)=1
\]
by Theorem~\ref{p:categorical-f-degree}. Now the remaining proof is the same as the one of Theorem~\ref{t:conj-acyclic}.
\end{proof}

\subsection{Cluster algebras arising from marked surfaces}\label{subsection:markedsurface}
Let $(S, \mathbb{M})$ be a {\it marked surface}, \ie $S$ is a connected compact oriented Riemann surface with (possibly empty) boundary and $\mathbb{M}$ a non-empty finite set of marked points on $S$ with at least one marked point on each boundary component if $S$ has boundaries.
 A marked point in the interior of $S$ is a {\it puncture}. For technical reasons, $(S,\mathbb{M})$ is not a monogon with at most one puncture, a digon without punctures, a triangle without punctures nor a sphere with at most three punctures (cf.~\cite{fst}). The {\it tagged arc}, {\it tagged triangulation} and their {\it flips} of $(S,\mathbb{M})$ were introduced in~\cite{fst}. The {\it exchange graph} $\mathcal{E}_{(S,\mathbb{M})}$ of $(S, \mathbb{M})$ consists of tagged triangulations as vertices, while two vertices $T$ and $T'$ are connected by an edge if and only if $T'$ is obtained from $T$ by a flip.
 
 For a given tagged triangulation $T$ of $(S, \mathbb{M})$, Fomin, Shapiro and Thurston constructed a quiver $Q_T$ without loops nor $2$-cycles associated to $T$. Hence a cluster algebra $\mathcal{A}(B(Q_T))$ can be associated with a tagged triangulation $T$. 
 We refer to~\cite{fst} for the precise definition and construction. A cluster algebra $\mathcal{A}$ with exchange matrix $B(Q_T)$ for some tagged triangulation over certain marked surface $(S,\mathbb{M})$ is called a {\it cluster algebra arising from marked surfaces}.
 
 \begin{theorem}[\cite{fst}*{Theorem 7.11}~\cite{ft}*{Theorem 6.1}]\label{t:ms-arc-cv}Let $(S, \mathbb{M})$ be a marked surface and $T$ a tagged triangulation. Denote by $\mathcal{E}_T$ the connected component of $\mathcal{E}_{(S,\mathbb{M})}$ containing $T$.
 \begin{itemize}
 \item[(1)] If $(S,\mathbb{M})$ is a closed surface with exactly one puncture, then $\mathcal{E}_{(S, \mathbb{M})}$ has precisely two isomorphic connected components: one in which all ends of tagged arcs are plain and one in which they are notched. Otherwise, it is connected, {\it i.e.} any two tagged triangulation of $(S,\mathbb{M})$ are connected by a finite sequence of flips.
 
 \item[(2)] There is a bijection between the set of tagged arcs which belong to tagged triangulations contained in $\mathcal{E}_T$ and the set of cluster variables of the cluster algebra $\mathcal{A}(B(Q_T))$. It induces a bijection between the set of tagged triangulations lying in $\mathcal{E}_T$ and the set of clusters of $\mathcal{A}(B(Q_T))$. Moreover, a flip of tagged triangulations corresponds to a mutation of clusters.
 \end{itemize}
 \end{theorem}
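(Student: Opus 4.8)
The plan is to prove the two assertions separately, using the geometry of decorated Teichm\"uller spaces for the bijection in (2) and a topological analysis of the flip graph for the connectivity in (1). For part (2), I would first set up the decorated Teichm\"uller space of $(S,\mathbb{M})$ in the sense of Penner, namely the bundle over Teichm\"uller space whose fibre records a choice of horocycle at each marked point, and attach to every tagged arc $\gamma$ its \emph{lambda length} $\lambda(\gamma)$, a positive function on this space. The key geometric input is Penner's Ptolemy relation: if $\gamma$ and $\gamma'$ are the two diagonals of an ideal quadrilateral with consecutive sides $a,b,c,d$, then
\[
\lambda(\gamma)\,\lambda(\gamma') = \lambda(a)\,\lambda(c) + \lambda(b)\,\lambda(d).
\]
I would then identify this with the exchange relation \eqref{eq:x-mutation} in the coefficient-free case: fixing an initial tagged triangulation $T$ with quiver $Q_T$, the arrows of $Q_T$ are defined so as to record exactly which arcs of $T$ cobound a triangle and with which orientation, and a local check shows that the two monomials on the right-hand side of the Ptolemy relation are precisely $\prod_i x_i^{[b_{ik}]_+}$ and $\prod_i x_i^{[-b_{ik}]_+}$. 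Sending each tagged arc to its lambda length thus carries a flip to a mutation; since the lambda lengths of an initial triangulation are algebraically independent the map is injective, and surjectivity onto cluster variables follows because every cluster variable is reached from the initial cluster by mutations, each realized by a flip.

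The genuine difficulty, and the step I expect to be the main obstacle, is the treatment of punctures: an ordinary triangulation may contain a \emph{self-folded triangle}, and the flip of its enclosed arc is not defined in the naive sense. This is exactly what the tagged formalism repairs, so I would pass to the \emph{opened surface}, replacing each puncture by a small boundary circle carrying one marked point, carry out the flip--mutation comparison upstairs where all triangles are ordinary, and then descend, checking that plain versus notched ends correspond to the two ways an arc can approach a puncture. Verifying that the Fomin--Zelevinsky mutation \eqref{eq:matrix-mutation} of $B(Q_T)$ matches the tagged flip through every local configuration around a puncture is the principal case analysis and the technical heart of the argument.

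For part (1), I would first recall that the flip graph of \emph{ideal} (untagged) triangulations is connected, which is proved by induction on the complexity of $(S,\mathbb{M})$ (genus, number of boundary components, number of marked points), or equivalently from the contractibility of the arc complex. I would then lift connectivity to tagged triangulations: tagged flips realize every ideal flip and, away from the exceptional case, also connect the different taggings lying over a fixed ideal triangulation. The sole exception is a closed surface with exactly one puncture; there every tagged triangulation is either entirely plain or entirely notched at that puncture, the global change-of-tagging involution $\mathbb{Z}/2$ interchanges these two families, and no finite sequence of flips crosses between them because there is no boundary to anchor a local retagging. This yields the two isomorphic connected components and completes the argument.
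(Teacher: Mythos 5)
This statement is not proved in the paper at all: it is imported verbatim from Fomin--Shapiro--Thurston and Fomin--Thurston (the bracketed citations \cite{fst}*{Theorem 7.11} and \cite{ft}*{Theorem 6.1} are the ``proof''), so there is no internal argument to compare against. Your sketch follows the same route as those references: part (2) via lambda lengths on decorated Teichm\"uller space, the Ptolemy relation matching the exchange relation \eqref{eq:x-mutation}, and the opened-surface construction to make sense of flips at self-folded triangles and of plain versus notched tags; part (1) via connectivity of the ideal flip graph plus the observation that on a closed once-punctured surface every arc has both ends at the puncture, so compatibility forces a uniform tag that no flip can alter. Two sentences of caution. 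First, the step ``since the lambda lengths of an initial triangulation are algebraically independent the map is injective'' does not follow: algebraic independence of the initial lambda lengths only makes the assignment land in a purely transcendental extension; injectivity on the set of \emph{all} tagged arcs requires separately showing that distinct tagged arcs have distinct lambda-length functions (in the literature this is done by distinguishing arcs via their intersection/denominator vectors or via tropical lambda lengths of laminations), and this is a real gap in your outline rather than a notational one. Second, the flip--mutation compatibility of the matrices $B(Q_T)$ under tagged flips, which you correctly identify as the technical heart, is exactly the content that occupies most of the cited proofs, so a complete write-up would need the full local case analysis around punctures that you only gesture at.
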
 
 
Let $T$ be a tagged triangulation and $Q_T$ the associated quiver. It follows from ~\cites{lab09,lab16,lad,tv} that there is a non-degenerate potential $W_T$ on $Q_T$ such that the associated Jacobian algebra $J(Q_T,W_T)$ is finite-dimensional. By applying Amiot's construction of generalized cluster category~\cite{Amiot}, we obtain a $2$-Calabi-Yau category $\mathcal{C}_{(Q_T,W_T)}$ with cluster-tilting objects. In particular, the tagged triangulation $T$ induces a basic cluster-tilting object $\Gamma$ in $\mathcal{C}_{(Q_T,W_T)}$. 
Moreover, the generalized cluster category $\mathcal{C}_{(Q_T,W_T)}$ is independent of the choice of the triangulation $T$, which we will denote it by $\mathcal{C}_{(S,\mathbb{M})}$.

By applying
Proposition \ref{p:cc-map} and Remark \ref{r:categorification}, the cluster character $CC_{\Gamma}(?)$ induces a bijection $\operatorname{cl}_\Gamma$ between the set of indecomposable rigid objects reachable from $\Gamma$ and the set of cluster variables of the cluster algebra $\mathcal{A}(B(Q_T))$. Combining the bijection in Theorem \ref{t:ms-arc-cv} with the inverse of $\operatorname{cl}_\Gamma$, we obtain a bijection $\phi_+$ from the set of tagged arcs which belong to triangulations lying in $\mathcal{E}_T$ and the set of indecomposable rigid objects reachable from $\Gamma$. We remark that the bijection $\phi_+$ induces a bijection between the set of triangulations lying in $\mathcal{E}_{T}$ and the set of cluster-tilting objects reachable from $\Gamma$ which commutes with flips and mutations. If $(S, \mathbb{M})$ is not a closed surface with exactly one puncture, we know that the cluster-tilting graph of $\mathcal{C}_{(S, \mathbb{M})}$ is connected by \cite{Yur19}*{Corollary 1.4} and the exchange graph of $(S, \mathbb{M})$ is also connected by Theorem \ref{t:ms-arc-cv}. In particular, $\phi:=\phi_+$ is a bijection from the set of tagged arcs of $(S, \mathbb{M})$ to the set of indecomposable rigid objects of $\mathcal{C}_{(S, \mathbb{M})}$.

Let us assume that $(S, \mathbb{M})$ is a closed surface with exactly one puncture. In this case, by \cite{Yur19}*{Corollary 1.4}, the cluster-tilting graph of $\mathcal{C}_{(S, \mathbb{M})}$ has precisely two connected components: $\mathcal{G}_\Gamma$ and $\mathcal{G}_{\Sigma \Gamma}$, where $\mathcal{G}_*$ stands for the connected component containing $*$. On the other hand, the exchange graph of $\mathcal{E}_{(S, \mathbb{M})}$ also has two connected components which we denote by $\mathcal{E}_{T_+}:=\mathcal{E}_{T}$ and $\mathcal{E}_{T_-}$.
Similar to the construction of $\phi_+$, there is a bijection $\phi_-$ from the set of tagged arcs which belong to triangulations lying in $\mathcal{E}_{T_-}$ to the set of indecomposable rigid objects reachable from $\Sigma \Gamma$ (cf. \cite{Yur19}*{Section 3 and 4}). Now for any tagged arc $l$, we define
\[\phi(l)=\begin{cases}
\phi_+(l)& \text{if $l$ belongs to a triangulation lying in $\mathcal{E}_{T_+}$;}\\
\phi_-(l)& \text{if $l$ belongs to a triangulation lying in $\mathcal{E}_{T_-}$;}
\end{cases}
\]

The following result is a refinement of \cite{Yur19}*{Corollary 1.4 and Section 3.2, 4.2} ({cf}.~\cite{qz} for marked surfaces with non-empty boundary).
\begin{proposition}\label{p:ms-connected}
Let $(S, \mathbb{M})$ be a marked surface. 
The map $\phi$ is a bijection from the set of tagged arcs of $(S, \mathbb{M})$ to the set of indecomposable rigid objects of $\mathcal{C}_{(S,\mathbb{M})}$. It induces a bijection between the set of triangulations of $(S, \mathbb{M})$ and the set of basic cluster-tilting objects of $\mathcal{C}_{(S,\mathbb{M})}$ which commutes with flips and mutations.
\end{proposition}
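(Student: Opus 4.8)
The plan is to split along the dichotomy of Theorem~\ref{t:ms-arc-cv}~(1) and to treat the partial maps $\phi_+$ (and, in the exceptional case, $\phi_-$) constructed just before the statement as the building blocks, so that the proposition reduces to a bookkeeping of connected components on both the combinatorial and the categorical side.

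First I would dispose of the generic case, where $(S,\mathbb{M})$ is not a closed surface with exactly one puncture. Then $\mathcal{E}_{(S,\mathbb{M})}=\mathcal{E}_{T}$ is connected by Theorem~\ref{t:ms-arc-cv}~(1), so every tagged arc belongs to some triangulation lying in $\mathcal{E}_{T}$; dually, the cluster-tilting graph of $\mathcal{C}_{(S,\mathbb{M})}$ is connected, so every indecomposable rigid object is reachable from $\Gamma$. Thus the source and target of $\phi_+$ already coincide with the full set of tagged arcs and the full set of indecomposable rigid objects, and $\phi=\phi_+$ is the desired bijection. Compatibility with flips and mutations is inherited from the construction of $\phi_+$ through $\operatorname{cl}_\Gamma$, together with Proposition~\ref{p:cc-map}~(b) and Theorem~\ref{t:ms-arc-cv}~(2).

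The substantive case is the once-punctured closed surface, where both graphs break into two pieces, $\mathcal{E}_{T_+}\sqcup\mathcal{E}_{T_-}$ and $\mathcal{G}_\Gamma\sqcup\mathcal{G}_{\Sigma\Gamma}$. By their very construction $\phi_+$ is a bijection from the tagged arcs appearing in $\mathcal{E}_{T_+}$ onto the indecomposable rigid objects reachable from $\Gamma$, and $\phi_-$ a bijection from the arcs appearing in $\mathcal{E}_{T_-}$ onto those reachable from $\Sigma\Gamma$, each commuting with flips and mutations. It then remains to glue these into a single map on all tagged arcs, which amounts to checking two facts: (i) every tagged arc appears in a triangulation lying in \emph{exactly one} of $\mathcal{E}_{T_+},\mathcal{E}_{T_-}$, so that $\phi$ is well-defined and total; and (ii) the two target sets partition the indecomposable rigid objects of $\mathcal{C}_{(S,\mathbb{M})}$. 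Granting (i) and (ii), the piecewise map $\phi$ is a bijection, and since no flip connects $\mathcal{E}_{T_+}$ to $\mathcal{E}_{T_-}$ and no mutation connects $\mathcal{G}_\Gamma$ to $\mathcal{G}_{\Sigma\Gamma}$, the componentwise compatibility with flips and mutations upgrades to a global one.

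The main obstacle is precisely this gluing, i.e.\ properties (i) and (ii). For (i) I would use the combinatorics of tagged triangulations from~\cite{fst}: the two exchange-graph components are interchanged by the involution reversing all tags at the unique puncture, and each tagged arc is either all-plain or all-notched there, placing it in exactly one component. Property~(ii) is the genuine crux. Its disjointness can be argued as follows: if an indecomposable rigid $X$ were reachable from both $\Gamma$ and $\Sigma\Gamma$, then $X$ would be a common direct summand of cluster-tilting objects lying in $\mathcal{G}_\Gamma$ and in $\mathcal{G}_{\Sigma\Gamma}$; passing to the Iyama-Yoshino reduction $\mathcal{C}_X$ via Theorem~\ref{t:IY-reduction} would exhibit these two cluster-tilting objects as connected, contradicting $\mathcal{G}_\Gamma\neq\mathcal{G}_{\Sigma\Gamma}$. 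Exhaustiveness then follows by matching against the two-component description of the cluster-tilting graph in~\cite{Yur19}. This is the step that carries the refinement genuinely beyond the componentwise statements already recorded in~\cite{Yur19}, and where care with the once-punctured geometry is indispensable.
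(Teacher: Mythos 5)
Your overall architecture matches the paper's: dispose of the generic case by connectedness of both graphs, and in the once-punctured closed case reduce everything to showing that the two reachability classes $\mathcal{G}_\Gamma$ and $\mathcal{G}_{\Sigma\Gamma}$ hit disjoint sets of indecomposable rigid objects. Your point (i) and the exhaustiveness half of (ii) are fine and agree with what the paper does (surjectivity of $\phi$ is exactly \cite{Yur19}*{Corollary 1.4}).

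The gap is in your disjointness argument. You claim that if $X$ were a common direct summand of $T_M\in\mathcal{G}_\Gamma$ and $T'_M\in\mathcal{G}_{\Sigma\Gamma}$, then passing to the Iyama--Yoshino reduction $\mathcal{C}_X$ would ``exhibit these two cluster-tilting objects as connected.'' Theorem~\ref{t:IY-reduction} only gives a bijection between cluster-tilting objects of $\mathcal{C}$ containing $X$ and cluster-tilting objects of $\mathcal{C}_X$; it says nothing about the connectedness of the cluster-tilting graph of $\mathcal{C}_X$, and there is no reason the images of $T_M$ and $T'_M$ should lie in the same mutation component of $\mathcal{C}_X$ --- for a once-punctured closed surface the ambient cluster-tilting graph is itself disconnected, so connectivity of reductions cannot be taken for granted. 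In fact the statement you are implicitly invoking, that two cluster-tilting objects sharing a nonzero common summand are mutation-equivalent, is precisely Corollary~\ref{c:reach-closed-surface}, which the paper \emph{deduces from} this proposition; used here it is circular.

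The paper closes this gap by an index computation instead: if $M$ is reachable from both $\Gamma$ and $\Sigma\Gamma$, take $T'_M\in\mathcal{G}_{\Sigma\Gamma}$ containing $M$; then $\Sigma^{-1}T'_M$ is reachable from $\Gamma$, hence from $T_M$, and the $g$-vector of the cluster variable $x_M$ with respect to the triangulation corresponding to $\Sigma^{-1}T'_M$ equals $\operatorname{ind}_{\Sigma^{-1}T'_M}(M)=-[\Sigma^{-1}M]$ (by \cite{DK}*{Section 4} and \cite{dwz}*{Theorem 1.7}), which contradicts \cite{Yur19}*{Theorem 1.2}. You would need this, or some equally concrete obstruction, to make the disjointness step go through.
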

\begin{proof}
There is nothing to prove if $(S, \mathbb{M})$ is not a closed surface with exactly one puncture. Let us assume that $(S, \mathbb{M})$ is a closed surface with exactly one puncture. It remains to show that $\phi$ is a bijection.
We fix a triangulation $T$ of $(S, \mathbb{M})$ and denote by $\Gamma$ the induced basic cluster-tilting object in $\mathcal{C}_{(S,\mathbb{M})}$.
 
 By \cite{Yur19}*{Corollary 1.4}, each indecomposable rigid object of $\mathcal{C}_{(S, \mathbb{M})}$ is either reachable from $\Gamma$ or reachable from $\Sigma \Gamma$. It follows that $\phi$ is a surjection by definition. To prove that $\phi$ is a bijection, it suffices 
 to show that there does not exist an indecomposable rigid object $M$ which is reachable from $\Gamma$ and $\Sigma \Gamma$. Otherwise, there exists a basic cluster-tilting object $T_M$ (resp. $T_M'$) containing $M$ as a direct summand which is reachable from $\Gamma$(resp. $\Sigma \Gamma$). By applying the inverse of the suspension functor $\Sigma$, we know that $\Sigma^{-1}T_M'$ is reachable from $\Gamma$. Consequently, $\Sigma^{-1}T_M'$ is reachable from $T_M$. Let $T'$ be the triangulation corresponding to $\Sigma^{-1}T_M'$ and $\mathcal{A}(B(Q_{T'}))$ the associated cluster algebra. Denote by $x_M$ the cluster variable of $\mathcal{A}(B(Q_{T'}))$ corresponding to $M$. It follows from \cite{DK}*{Section 4} and \cite{dwz}*{Theorem 1.7} that the $g$-vector $g(x_M)$ of $x_M$ is $\operatorname{ind}_{\Sigma^{-1}T_M'}(M)=-[\Sigma^{-1}M]$ by identifying $\mathbb{Z}^n$ with $ \operatorname{G}_0(\add \Sigma^{-1}T_M')$. Applying \cite{Yur19}*{Theorem 1.2} to the triangulation $T'$, we obtain a contradiction. This completes the proof.
\end{proof}
As a byproduct, we obtain
\begin{corollary}\label{c:reach-closed-surface}
Let $(S, \mathbb{M})$ be a closed surface with exactly one puncture. Let $T$ and $T'$ be two cluster-tilting objects of $\mathcal{C}_{(S, \mathbb{M})}$. If $\add T\cap \add T'\neq \{0\}$, then $T$ is reachable from $T'$.
\end{corollary}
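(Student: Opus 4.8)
The plan is to deduce this immediately from the dichotomy established in Proposition~\ref{p:ms-connected}. Recall that for a closed surface with exactly one puncture, the cluster-tilting graph of $\mathcal{C}_{(S,\mathbb{M})}$ decomposes into exactly two connected components $\mathcal{G}_\Gamma$ and $\mathcal{G}_{\Sigma\Gamma}$, consisting respectively of the basic cluster-tilting objects reachable from $\Gamma$ and from $\Sigma\Gamma$. By definition, $T$ is reachable from $T'$ precisely when $T$ and $T'$ lie in the same connected component of this graph, so it suffices to rule out the possibility that $T$ and $T'$ lie in different components.

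First I would observe that the hypothesis $\add T\cap \add T'\neq\{0\}$ provides a common indecomposable direct summand $M$ of both $T$ and $T'$. The key input is the disjointness statement proved in the course of establishing Proposition~\ref{p:ms-connected}: no indecomposable rigid object of $\mathcal{C}_{(S,\mathbb{M})}$ is simultaneously reachable from $\Gamma$ and from $\Sigma\Gamma$. I would combine this with the elementary fact that every indecomposable direct summand of a cluster-tilting object lying in $\mathcal{G}_\Gamma$ (resp.\ $\mathcal{G}_{\Sigma\Gamma}$) is itself reachable from $\Gamma$ (resp.\ $\Sigma\Gamma$); this is immediate from the definition of reachability, since such a summand lies in the additive closure of a cluster-tilting object reachable from $\Gamma$ (resp.\ $\Sigma\Gamma$).

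Then I would argue by contradiction. Suppose $T$ and $T'$ lie in different components, say $T\in\mathcal{G}_\Gamma$ and $T'\in\mathcal{G}_{\Sigma\Gamma}$. Since $M$ is a direct summand of $T$, it is reachable from $\Gamma$; since $M$ is also a direct summand of $T'$, it is reachable from $\Sigma\Gamma$. This contradicts the disjointness recalled above. Hence $T$ and $T'$ must lie in the same component, and therefore $T$ is reachable from $T'$.

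The argument is short and I do not anticipate a serious obstacle; the only point requiring care is to make explicit that reachability of a cluster-tilting object transfers to each of its indecomposable summands, so that a shared summand forces the two objects into the same component. All the substantive work has already been carried out inside the proof of Proposition~\ref{p:ms-connected}, which is exactly why this statement appears as a byproduct.
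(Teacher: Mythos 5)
Your argument is correct and is exactly the one the paper intends: the corollary is stated as a byproduct of Proposition~\ref{p:ms-connected}, whose proof establishes precisely the disjointness fact you invoke (no indecomposable rigid object is reachable from both $\Gamma$ and $\Sigma\Gamma$), and combining this with the two-component dichotomy from \cite{Yur19} and the observation that a summand of a reachable cluster-tilting object is itself reachable gives the statement. No gaps.
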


The $f$-vectors for cluster algebras associated to marked surfaces with principal coefficients have been investigated in~\cite{y}. Combining \cite{y}*{Theorem 1.8}, Theorem~\cite{qz}*{Theorem 1.1} and \cite{zzz}*{Theorem 3.4}, one obtains the following result:
\begin{theorem}\label{t:ms-f-vector}
Let $T$ be a tagged triangulation of a marked surface $(S, \mathbb{M})$. Let $\mathcal{A}_\bullet(B(Q_T))$ be the cluster algebra associated to $T$ with principal coefficients. Let $x,z$ be two cluster variables and $X,Z\in \mathcal{C}_{(S,\mathbb{M})}$ the corresponding indecomposable rigid objects, then
\[(x\parallel z)=\dim_k\Hom_{\mathcal{C}_{(S,\mathbb{M})}}(X,\Sigma Z).
\]
\end{theorem}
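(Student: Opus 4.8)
The plan is to prove the identity by routing both sides through a single combinatorial quantity, the geometric intersection number of the two tagged arcs underlying $x$ and $z$, and then quoting the three cited results to identify each side with that number. Write $x=x_{i;t}$ and $z=x_{j;t'}$, so that by Definition~\ref{f-compatibility} we have $(x\parallel z)=f_{ij;t'}^{B_t;t}$, the $(i,j)$-entry of the $F$-matrix computed with the triangulation $T_t$ at $t$ as initial seed; since the compatibility degree is well defined (Theorem~\ref{ftheorem}~(2)) I am free to take $T_t$ to be a triangulation whose $i$-th arc is the tagged arc $\gamma$ corresponding to $x$ under Theorem~\ref{t:ms-arc-cv}, and I write $\delta$ for the tagged arc corresponding to $z$. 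First I would invoke \cite{y}*{Theorem 1.8}, which computes the $f$-vector of a cluster variable of a surface cluster algebra with principal coefficients entry by entry as the intersection number of the associated arc with each arc of the initial triangulation. Applied with $T_t$ as initial triangulation this gives $f_{ij;t'}^{B_t;t}=\operatorname{Int}(\gamma,\delta)$, whence $(x\parallel z)=\operatorname{Int}(\gamma,\delta)$.

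For the right-hand side I would use the bijection $\phi$ of Proposition~\ref{p:ms-connected} identifying tagged arcs of $(S,\mathbb{M})$ with indecomposable rigid objects of $\mathcal{C}_{(S,\mathbb{M})}$, so that $X=\phi(\gamma)$ and $Z=\phi(\delta)$, and the goal becomes $\dim_k\Hom_{\mathcal{C}_{(S,\mathbb{M})}}(X,\Sigma Z)=\operatorname{Int}(\gamma,\delta)$. This is furnished by \cite{qz}*{Theorem 1.1}, which expresses the dimension of the relevant morphism space (equivalently, by $2$-Calabi-Yau duality, the extension space) in the generalized cluster category of the marked surface as the intersection number of the underlying tagged arcs; \cite{zzz}*{Theorem 3.4} is then used to reconcile the combinatorial normalizations, so that the intersection number appearing in the $f$-vector formula of \cite{y} agrees with the one appearing in the Hom-dimension formula of \cite{qz}, including the extra contributions at punctures coming from the tagging. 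Combining the two computations yields
\begin{align}
(x\parallel z)=\operatorname{Int}(\gamma,\delta)=\dim_k\Hom_{\mathcal{C}_{(S,\mathbb{M})}}(X,\Sigma Z),
\end{align}
which is the assertion.

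The main obstacle, and the place where genuine care is required, is that the formulas of \cite{y} and \cite{qz} use a priori different conventions for the intersection number of \emph{tagged} arcs --- in particular the additional terms that arise when two arcs share an endpoint at a puncture and carry different taggings --- so the substance of the argument lies in checking that, after the identification $\phi$, these conventions coincide; this matching is precisely what is assembled from \cite{zzz}*{Theorem 3.4}. A second point needing attention is the exceptional case in which $(S,\mathbb{M})$ is a closed surface with exactly one puncture, where both the exchange graph of tagged triangulations and the cluster-tilting graph of $\mathcal{C}_{(S,\mathbb{M})}$ split into two isomorphic components: here one must verify that the bijection $\phi$ built in Proposition~\ref{p:ms-connected} from the two pieces $\phi_+$ and $\phi_-$ is compatible with the intersection-number computation on each component and across the plain/notched symmetry implemented by $\Sigma$. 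Once these compatibilities are in place, the chain of equalities above completes the proof.
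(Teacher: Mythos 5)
Your proposal is correct and follows essentially the same route as the paper, which offers no written proof beyond the sentence "Combining \cite{y}*{Theorem 1.8}, \cite{qz}*{Theorem 1.1} and \cite{zzz}*{Theorem 3.4}, one obtains the following result." Your elaboration — identifying both sides with the tagged-arc intersection number via those three references and flagging the convention-matching and the once-punctured closed surface case as the points requiring care — is exactly the intended argument.
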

{\begin{remark}
Let $T$ be a tagged triangulation of a marked surface $(S, \mathbb{M})$ and $\mathcal{A}(B(Q_T))$ the cluster algebra associated to $T$. Let $x_\delta$ and $x_\gamma$ be the cluster variables of $\mathcal{A}(B(Q_T))$ corresponding to the tagged arcs $\delta$ and $\gamma$, respectively. We have 
\[(x_{\delta}\parallel x_{\gamma})=\operatorname{Int}(\delta, \gamma),
\]
where $\operatorname{Int}(\delta, \gamma)$ is the intersection number of $\delta$ and $\gamma$ in the sense of \cite{qz} (cf. \cite{y}*{Theorem 1.8}).
\end{remark}}

The following is the main result of this subsection.
\begin{corollary}\label{c:conj-marked surface}
Let $(S,\mathbb{M})$ be a marked surface and $T$ a tagged triangulation. Let $\mathcal{A}$ be the cluster algebra with initial exchange matrix $B(Q_T)$ associated to $T$. Then Conjecture~\ref{f-exchangeability2} holds true for $\mathcal{A}$.
\end{corollary}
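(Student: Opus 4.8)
The plan is to transport the exchangeability statement into the generalized cluster category $\mathcal{C}_{(S,\mathbb{M})}$ and then invoke the abstract exchangeability result of Theorem~\ref{t:exchangeable-2-CY}. Since both the compatibility degree and the property of lying in a common cluster depend only on the initial exchange matrix $B(Q_T)$ and not on the coefficient semifield (Proposition~\ref{independentP} together with the coefficient-independence of $f$-vectors), it suffices to prove the statement for the cluster algebra $\mathcal{A}_\bullet(B(Q_T))$ with principal coefficients, which is exactly the setting of Theorem~\ref{t:ms-f-vector}.

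First I would take cluster variables $x,x'$ of $\mathcal{A}_\bullet(B(Q_T))$ with $(x\parallel x')=(x'\parallel x)=1$, and let $X,X'$ be the corresponding indecomposable rigid objects of $\mathcal{C}_{(S,\mathbb{M})}$ under the bijection $\phi$ of Proposition~\ref{p:ms-connected} (so that $x=CC_\Gamma(X)$ and $x'=CC_\Gamma(X')$). Theorem~\ref{t:ms-f-vector} then yields $\dim_k\Hom_{\mathcal{C}_{(S,\mathbb{M})}}(X,\Sigma X')=(x\parallel x')=1$. As $\mathcal{C}_{(S,\mathbb{M})}$ is a $2$-Calabi-Yau triangulated category with cluster-tilting objects, the first statement of Theorem~\ref{t:exchangeable-2-CY} applies (note this direction does not require a cluster structure) and shows that $(X,X')$ is an exchange pair; hence there is a basic rigid object $R$ such that $R\oplus X$ and $R\oplus X'$ are both cluster-tilting objects of $\mathcal{C}_{(S,\mathbb{M})}$.

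It then remains to pull this back to clusters, i.e.\ to guarantee that $R\oplus X$ and $R\oplus X'$ are \emph{reachable}, so that the bijection of Proposition~\ref{p:cc-map} sends them to genuine clusters of $\mathcal{A}$. If $(S,\mathbb{M})$ is not a closed surface with exactly one puncture, the cluster-tilting graph of $\mathcal{C}_{(S,\mathbb{M})}$ is connected, so every cluster-tilting object is automatically reachable and nothing more is needed: setting $\mathcal{X}$ to be the set of cluster variables $CC_\Gamma(R_i)$ attached to the indecomposable summands $R_i$ of $R$, the sets $\mathcal{X}\cup\{x\}$ and $\mathcal{X}\cup\{x'\}$ are precisely the clusters corresponding to $R\oplus X$ and $R\oplus X'$. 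In the remaining case, the closed one-puncture surface, the cluster-tilting graph is disconnected, and here I would invoke Corollary~\ref{c:reach-closed-surface}: since $X$ is a cluster variable of $\mathcal{A}(B(Q_T))$ it is reachable from $\Gamma$ and hence lies in some reachable cluster-tilting object $T''$; because $T''$ and $R\oplus X$ share the summand $X$, the corollary makes $R\oplus X$ reachable from $T''$, hence from $\Gamma$; and because $R\oplus X$ and $R\oplus X'$ share the summand $R$, the same corollary makes $R\oplus X'$ reachable as well. With both cluster-tilting objects reachable, the conclusion follows as before.

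The main obstacle is exactly this last reachability bookkeeping in the one-puncture closed case: Theorem~\ref{t:exchangeable-2-CY} only manufactures an abstract exchange pair, with no a priori guarantee that the two cluster-tilting objects it produces correspond to honest tagged triangulations (equivalently, to clusters of $\mathcal{A}$). Taming this is precisely the role of Corollary~\ref{c:reach-closed-surface}, whose shared-summand hypothesis requires $R\neq 0$; this is harmless, since the closed one-puncture surfaces all have rank at least $3$, so $R$ has at least two indecomposable summands, while the degenerate low-rank situations lie outside this case and are handled directly.
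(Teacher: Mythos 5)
Your proposal is correct and follows essentially the same route as the paper's own proof: reduce to the categorical setting via the bijection of Proposition~\ref{p:ms-connected}, convert the compatibility degree into $\dim_k\Hom_{\mathcal{C}_{(S,\mathbb{M})}}(X,\Sigma X')=1$ by Theorem~\ref{t:ms-f-vector}, produce the exchange pair by Theorem~\ref{t:exchangeable-2-CY}, and handle reachability in the closed one-puncture case via Corollary~\ref{c:reach-closed-surface}. Your write-up is in fact slightly more explicit than the paper's on the final reachability step (applying the corollary to both $R\oplus X$ and $R\oplus X'$ and checking $R\neq 0$), but the argument is the same.
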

\begin{proof}
Denote by $\Gamma$ the induced cluster-tilting object in $\mathcal{C}_{(S, \mathbb{M})}$. 

Assume that $(S, \mathbb{M})$ is not a closed surface with exactly one puncture. According to Proposition \ref{p:ms-connected} and Theorem \ref{t:ms-arc-cv}, the cluster character $CC_{\Gamma}(?)$ yields a bijection between the set of indecomposable rigid objects of $\mathcal{C}_{(S, \mathbb{M})}$ and the set of cluster variables of $\mathcal{A}$.
Now the result is a direct consequence of Theorem~\ref{t:exchangeable-2-CY} and Theorem~\ref{t:ms-f-vector}. 

Let us assume that $(S, \mathbb{M})$ is a closed surface with exactly one puncture. In this case,
the cluster character $CC_\Gamma(?)$ induces a bijection between the set of indecomposable rigid objects reachable from $\Gamma$ and the set of cluster variables of $\mathcal{A}(B(Q_T))$. 
Let $x,x'$ be two cluster variables of $\mathcal{A}(B(Q_T))$ such that 
\[(x\parallel x')=(x'\parallel x)=1.
\]
Denote by $X$ (resp. $X'$) the corresponding indecomposable rigid objects of $x$ (resp. $x'$).
In particular, there is a basic cluster-tilting object $T_x$ (resp. $T_{x'}$) reachable from $\Gamma$ which admits $X$ (resp. $X'$) as a direct summand. Let us rewrite $T_x$ as $T_x=\overline{T}_x\oplus X$.
By Theorem \ref{t:ms-f-vector}, we obtain that 
\[\dim_k\Hom_\mathcal{C}(X,\Sigma X')=1.
\]
It follows from Theorem \ref{t:exchangeable-2-CY} that $(X, X')$ is an exchange pair. In particular, there is a basic rigid object $M$ such that both $M\oplus X$ and $M\oplus X'$ are cluster-tilting objects of $\mathcal{C}_{(S,\mathbb{M})}$. 
We conclude that $M\oplus X$ is reachable from $\Gamma$ by Corollary \ref{c:reach-closed-surface} and we are done.

\end{proof}

\subsection{Counterexamples of exchangeability property for the $d$-vectors}\label{ss:counterexam}

The $d$-compatibility degree does not satisfy the analogous property of Conjecture \ref{f-exchangeability2}. Let us see some examples.
\begin{example}
We set $\PP=\{1\}$ the trivial semifield and consider a seed $(\xx,B)$, where 
\begin{align*}
\xx=(x_1,x_2,x_3,x_4,x_5), \quad
 B=
 \begin{bmatrix}
 0&1&0&-1&0\\
 -1&0&1&1&-1\\
 0&-1&0&1&0\\
 1&-1&-1&0&1\\
 0&1&0&-1&0\\
 \end{bmatrix}.
\end{align*}
This is of type $\hat{D}_4$. Moreover, we set 
\begin{align*}
\xx'=(x_1',x_2',x_3',x_4',x_5)=\mu_4\mu_3\mu_2\mu_1(\xx).
\end{align*}
Then, we have $(x_1 \parallel x'_4)_d=(x'_4 \parallel x_1)_d=1$ and $(x_1 \parallel x'_4)=(x'_4 \parallel x_1)=2$. Let us see this fact by using marked surface and their flips. $\Acal(B)$ is a cluster algebra arising from the marked surface in Figure \ref{Bsurface2}.
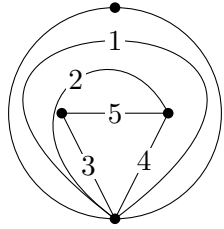
\begin{figure}[ht]
 \centering
 \caption{Marked surface corresponding to $B$ \label{Bsurface2}}
\begin{tikzpicture}[baseline=0mm,scale=0.7]
 \coordinate (d) at (0,-2);
 \coordinate (u) at (0,2);
 \coordinate (0) at (0,0);
 \coordinate (cl) at (180:1);
 \coordinate (cr) at (0:1);
 \draw (0) circle (2);
 \draw (d) to node[fill=white,inner sep=1]{$3$} (cl);
 \draw (d) to node[fill=white,inner sep=1,pos=0.55]{$4$} (cr);
 \draw (cl) to node[fill=white,inner sep=1,pos=0.5]{$5$} (cr);
 \draw (d) .. controls (-6,2.5) and (6,2.5) .. 
 node[fill=white,inner sep=1,pos=0.5]{$1$} (d);
 \draw (d) .. controls (180:2.7) and (90:2) .. 
 node[fill=white,inner sep=1,pos=0.55]{$2$} (cr);
 \fill(d) circle (1mm); \fill(u) circle (1mm); \fill(cl) circle (1mm); \fill(cr) circle (1mm);
 
\end{tikzpicture}
\end{figure}
We consider flipping the marked surface in Figure \ref{Bsurface2} at 1,2,3,4. The relative position of arc corresponding to $x_1$ and $x'_4$ is as in Figure \ref{twovariableposition2}. 

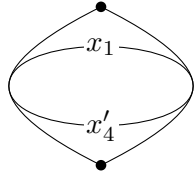
\begin{figure}[ht]
 \centering
 \caption{Relative position of arc corresponding to $x_1$ and $x'_4$\label{twovariableposition2}}
\begin{tikzpicture}[baseline=0mm,scale=0.7]
 \coordinate (d) at (0,-3);
 \coordinate (0) at (0,0);
 \draw (d) .. controls (-6,0) and (6,0) .. 
 node[fill=white,inner sep=1,pos=0.5]{$x_1$} (d);
 \draw (0) .. controls (-6,-3) and (6,-3) .. 
 node[fill=white,inner sep=1,pos=0.5]{$x'_4$} (0);
 \fill(d) circle (1mm); \fill(0) circle (1mm);
\end{tikzpicture}
\end{figure}
Considering the intersection number induced by the $d$-vector (\cite{fst}*{Definition 8.4}), we have $(x_1 \parallel x'_4)_d=(x'_4 \parallel x_1)_d=1$. On the other hand, considering the intersection number induced by the $f$-vector (\cite{y}*{Section 1}), we have $(x_1 \parallel x'_4)=(x'_4 \parallel x_1)=2$. Therefore, by Corollary \ref{c:conj-marked surface}, $x_1$ and $x'_4$ are not exchangeable. This example implies $(\cdot \parallel \cdot)_d$ does not satisfy the similar property of Conjecture \ref{f-exchangeability2}. We remark that $\Acal(B)$ satisfies the similar property of Proposition \ref{pr:f-compatibility-symmetry} for the $d$-vectors because of Theorem \ref{D surface} (3).
\end{example}

\begin{example}
We set $\mathbb{P}=\{1\}$ and consider a seed $(\mathbf{x}, B)$, where
\begin{align*}
\mathbf{x}=(x_1.x_2.x_3), \quad B=\begin{bmatrix}
0&2&-1\\
-2&0&1\\
1&-1&0
\end{bmatrix}.
\end{align*}
We fix a cluster pattern by assigning $\Sigma_{t_0}=(\mathbf{x}, B)$ to the rooted vertex $t_0$ of $\mathbb{T}_3$. The cluster algebra $\mathcal{A}(B)$ is acyclic (type $\hat{A}_2$).
Let 
\[\xymatrix{t_0\ar@{-}[r]^3&t_1\ar@{-}[r]^2&t_2\ar@{-}[r]^1&t_3}
\]
be a subgraph of $\mathbb{T}_3$. According to \cite{fk}*{Example 6.7}, we have
\[\mathbf{f}_{1,t_3}^{B,t_0}=\begin{bmatrix}1\\ 1\\ 2\end{bmatrix}, \quad \mathbf{d}_{1;t_3}^{B,t_0}=\begin{bmatrix}1\\ 1\\1
\end{bmatrix}.
\]
Therefore $(x_3\parallel x_{1;t_3})=(x_{1;t_3}\parallel x_3)=2$ by Proposition~\ref{pr:f-compatibility-symmetry} (2). Hence $x_3$ and $x_{1;t_3}$ are not exchangeable by Theorem~\ref{t:conj-acyclic}.
On the other hand, a direct computation shows that 
\[\mathbf{d}_{3,t_0}^{B_{t_3};t_3}=\begin{bmatrix}1\\1\\1
\end{bmatrix}.
\]
Hence $(x_3\parallel x_{1;t_3})_d=(x_{1,t_3}\parallel x_3)_d=1$. In particular, $(\cdot\parallel \cdot)_d$ does not satisfy the similar property of Conjecture~\ref{f-exchangeability2}.

\end{example}

\bibliography{myrefs}
\end{document}